\newtheorem{theorem}{Theorem}
\newtheorem{proposition}{Proposition}
\newtheorem{lemma}{Lemma}
\newtheorem{corollary}{Corollary}
\newtheorem{remark}{Remark}
\newtheorem{definition}{Definition}
\newcommand{\E}{{\mathbb E}}
\newcommand{\F}{{\mathcal{F}}}
\newcommand{\G}{{\mathcal{G}}}
\newcommand{\R}{{\mathbb R}}
\renewcommand{\P}{{\mathbb P}}
\newcommand{\fst}{{\mathcal{F}^{d, s}_{\text{ST}}}}
\newcommand{\fstmid}{{\mathcal{F}^{d, s}_{\text{ST}, \bullet}}}  
\newcommand{\fstinf}{{\mathcal{F}^{d, s}_{\infty-\text{ST}}}}
\newcommand{\fstinfd}{{\mathcal{F}^{d, d}_{\infty-\text{ST}}}}
\newcommand{\oldfstinf}{{\widetilde{\mathcal{F}}^{d, s}_{\infty-\text{ST}}}}
\newcommand{\vxgb}{{V^{d, s}_{\text{XGB}}}}
\newcommand{\vinfxgb}{{V^{d, s}_{\infty-\text{XGB}}}}
\newcommand{\oldvinfxgb}{{\widetilde{V}^{d, s}_{\infty-\text{XGB}}}}
\newcommand{\tv}{{\text{TV}}}
\newcommand{\vit}{{\text{Vit}}}
\newcommand{\hk}{{\text{HK}}}
\newcommand{\lsefstinf}{{\hat{f}^{d, s}_{n, V}}} 
\newcommand{\fcnu}{{f^{d, s}_{c, \{\nu_{L, U}\}}}} 
\newcommand{\minimax}{{\mathfrak{M}^{d, s}_{n, V}}}
\newcommand{\xbf}{{\mathbf{x}}}
\newcommand{\wbf}{{\mathbf{w}}}
\newcommand{\lbf}{{\mathbf{l}}}
\newcommand{\ubf}{{\mathbf{u}}}
\newcommand{\abf}{{\mathbf{a}}}
\newcommand{\pbf}{{\mathbf{p}}}
\newcommand{\qbf}{{\mathbf{q}}}
\newcommand{\rbf}{{\mathbf{r}}}
\newcommand{\mbf}{{\mathbf{m}}}
\newcommand{\Mbf}{{\mathbf{M}}}
\newcommand{\zbf}{{\mathbf{z}}}
\newcommand{\ibf}{{\mathbf{i}}}
\newcommand{\tbf}{{\mathbf{t}}}
\newcommand{\etabf}{{\boldsymbol{\eta}}}
\newcommand{\betabf}{{\boldsymbol{\beta}}}
\newcommand{\deltabf}{{\boldsymbol{\delta}}}
\newcommand{\phibf}{{\boldsymbol{\phi}}}
\newcommand{\zerovec}{{\mathbf{0}}}
\newcommand{\onevec}{{\mathbf{1}}}
\newcommand{\ind}{{\mathbf{1}}}
\newcommand{\ceil}[1]{\lceil #1 \rceil}
\newcommand{\floor}[1]{\lfloor #1 \rfloor}
\newcommand\inner[2]{\langle #1, #2 \rangle}
\def\qt#1{\qquad\text{#1}}
\def\argmin{\mathop{\rm argmin}}
\begin{document}

\title{What Functions Does XGBoost Learn?}
\author{Dohyeong Ki\thanks{\texttt{dohyeong\_ki@berkeley.edu}} }
\author{Adityanand Guntuboyina\thanks{\texttt{aditya@stat.berkeley.edu}}}
\affil{Department of Statistics, University of California, Berkeley}
\date{} 

\maketitle
\vspace{-1.5em}

\begin{abstract}

This paper establishes a rigorous theoretical foundation for the
function class implicitly learned by XGBoost, bridging the gap 
between its empirical success and our theoretical understanding. 
We introduce an infinite-dimensional function class 
$\mathcal{F}^{d, s}_{\infty-\text{ST}}$ that extends finite ensembles 
of bounded-depth regression trees, together with a complexity measure 
$V^{d, s}_{\infty-\text{XGB}}(\cdot)$ that generalizes the $L^1$ 
regularization penalty used in XGBoost. We show that every optimizer 
of the XGBoost objective is also an optimizer of an equivalent 
penalized regression problem over $\mathcal{F}^{d, s}_{\infty-\text{ST}}$ 
with penalty $V^{d, s}_{\infty-\text{XGB}}(\cdot)$, providing an 
interpretation of XGBoost as implicitly targeting a broader function 
class. We also develop a smoothness-based interpretation of 
$\mathcal{F}^{d, s}_{\infty-\text{ST}}$ and 
$V^{d, s}_{\infty-\text{XGB}}(\cdot)$ in terms of 
Hardy--Krause variation. We prove that the least squares estimator over 
$\{f \in \mathcal{F}^{d, s}_{\infty-\text{ST}}: 
V^{d, s}_{\infty-\text{XGB}}(f) \le V\}$ achieves a nearly 
minimax-optimal rate of convergence 
$n^{-2/3} (\log n)^{4(\min(s, d) - 1)/3}$, thereby avoiding the 
curse of dimensionality. Our results provide the first rigorous 
characterization of the function space underlying XGBoost, clarify 
its connection to classical notions of variation, and identify an 
important open problem: whether the XGBoost algorithm itself 
achieves minimax optimality over this class.   
\vspace{0.5em} \\
\noindent\textbf{MSC 2010 subject classifications:} Primary 62G08. \\
\noindent\textbf{Keywords and phrases:} Gradient boosting, 
Hardy–Krause variation, L1 regularization, mixed partial derivatives,
nonparametric regression, tree ensemble methods. 
\end{abstract}

\section{Introduction}

Consider the standard regression problem with data 
$(\xbf^{(1)}, y_1), \dots, (\xbf^{(n)}, y_n)$ where each 
$\xbf^{(i)} \in \R^d$ and $y_i \in \R$. XGBoost, introduced by 
\citet{chen2016xgboost}, fits a finite sum of regression trees by 
(approximately) minimizing an objective function consisting of a 
least squares loss and a regularization penalty. We describe below 
the optimization problem that XGBoost seeks to solve; see the official 
documentation \cite{xgboost_params_docs} for further implementation 
details. 

XGBoost constructs individual regression trees using right-continuous 
splits, meaning that each split is of the form $x_j \ge t_j$ versus 
$x_j < t_j$ where $x_j$ denotes the $j^{\text{th}}$ coordinate of 
the covariate vector $\xbf$. Each tree is further constrained to 
have a user-specified maximum depth, controlled by the 
hyperparameter \texttt{max\_depth} (whose default value is 6). 
Recall that the depth of a tree refers to the maximum number of 
splits along any root-to-leaf path. Let $\fst$ denote the class 
of all finite sums of right-continuous\footnote{Throughout the paper, 
the term ``right-continuous'' refers to coordinate-wise 
right-continuity.} regression trees of depth at most $s$ (ST here 
stands for ``sum of trees''). More precisely, $\fst$ consists of 
all functions of the form $\sum_{k=1}^K f_k$ for some $K \ge 1$,
where each $f_k$ is a regression tree with right-continuous splits 
and depth $\le s$. Each $f_k$ can possibly be a constant function, 
in which case we call it a constant regression tree and say that it 
has depth 0. We place no restriction on $K$ beyond finiteness. In 
implementations, XGBoost allows the user to specify an upper bound 
on $K$, typically on the order of several hundred to a few thousand. 
Since this bound is usually chosen to be large, we leave $K$ 
unrestricted in the definition of $\fst$ for theoretical convenience.

For regression, XGBoost minimizes the least squares loss over the
class $\fst$, augmented with an explicit regularization penalty
described below. This explicit regularization is a key innovation that
distinguishes XGBoost from earlier gradient boosting methods such as
Gradient Boosting Machines (see \citet{friedman2001greedy}), and from
ensemble methods such as Random Forests (see
\citet{breiman2001random}).

For a function $f \in \fst$, suppose $f$
is represented as a finite sum of 
trees, and let $\wbf_k$ denote the vector of leaf weights
associated with the
$k^{\text{th}}$ tree. The XGBoost regularization penalty takes the 
form $\alpha \sum_k \|\wbf_k\|_1$, where $\alpha > 0$ controls 
the strength of regularization and $\| \cdot \|_1$ denotes the 
$L^1$ norm. If the $k^{\text{th}}$ tree is a constant tree, we set 
$\|\wbf_k\|_1 = 0$. This penalty discourages overly complex 
trees by constraining the magnitude of the leaf weights. Since each 
function $f \in \fst$ generally admits multiple representations as a 
finite sum of trees, and since the quantity $\sum_k \|\wbf_k\|_1$ 
depends on the particular representation chosen, we obtain a 
representation-invariant measure of complexity by taking the infimum 
over all possible tree decompositions. Specifically, for 
$f \in \fst$, define 
\begin{equation}\label{eq:vxgb-original-def} 
    \vxgb(f) = \inf \Big\{\sum_k \|\wbf_k\|_1 \Big\}
\end{equation}
where the infimum is taken over all representations of $f$ as a 
finite sum of regression trees with right-continuous splits and 
depth at most $s$, and $\wbf_k$ denotes the leaf-weight vector 
of the $k^{\text{th}}$ tree.

In this notation, XGBoost is a greedy algorithm for solving the
optimization problem:  
\begin{equation}\label{xgb_opti}
    \argmin_{f} \Big\{\sum_{i=1}^n \big(y_i - f(\xbf^{(i)})\big)^2 
    + \alpha \vxgb(f): f \in \fst \Big\}.
\end{equation}
It is also common to include
an additional leaf-count penalty of the form $\gamma \sum_k T_k$ where 
$T_k$ is the number of leaves in the $k^{\text{th}}$ tree. Since the 
default value for $\gamma$ is $\gamma = 0$ (see 
\cite{xgboost_params_docs}), we omit this term throughout. Some 
implementations also replace the $L^1$ penalty with a squared $L^2$ 
penalty $\|\wbf_k\|_2^2$. However, such a penalty is not well 
defined for sums of trees. To illustrate this issue, consider the 
function $(x_1, \dots, x_d) \mapsto \ind(x_1 \ge 0)$, for which the 
squared $L^2$ penalty is 1. The same function can alternatively be 
expressed as $\sum_{k=1}^K (1/K) \cdot \ind(x_1 \ge 0)$, for which 
the total squared $L^2$ penalty equals $1/K$, which tends to zero as 
$K \rightarrow \infty$. The fact that the squared $L^2$ penalty can 
be made arbitrarily close to zero by suitably increasing the number of 
trees in the decomposition—unlike the $L^1$ penalty—renders the squared 
$L^2$ penalty ill-posed for tree ensembles. For this reason, we focus 
exclusively on the $L^1$ penalty \eqref{eq:vxgb-original-def}. 
More broadly, the advantages of $L^1$ over $L^2$ penalties are 
well established in high-dimensional statistics (see, e.g., 
\citet{donoho1994ideal, tibshirani1996regression, 
johnstone2011gaussian}, and \citet{tibshirani2014adaptive}). 
Additional discussion on the differences between $L^1$ and $L^2$ 
penalties for tree ensembles is provided in 
Section \ref{sec:squared-l2-penalty}. 

XGBoost has become one of the most widely used machine learning
algorithms, celebrated for its predictive accuracy and efficiency. 
Indeed, it has played a decisive role in many high-profile machine 
learning competitions, and practitioners often note that for tabular 
data (which includes our regression data setting), XGBoost can 
outperform deep learning methods (see, e.g., 
\citet{shwartz2022tabular, grinsztajn2022tree, borisov2022deep}). 
Yet, despite its empirical success, the theoretical properties of 
XGBoost remain poorly understood. This paper takes a step toward 
closing this gap by providing theoretical insights into the behavior 
of solutions to the XGBoost objective \eqref{xgb_opti}.

Our main contribution is the construction of a function class
$\fstinf$ along with an associated complexity measure
$\vinfxgb(\cdot)$ having the following properties:
\begin{enumerate}
\item $\fst \subsetneq \fstinf$ and $\vinfxgb(f) = \vxgb(f)$ whenever
  $f \in \fst$. In other words, $\fstinf$ is a strictly larger 
  function class than $\fst$, and $\vinfxgb(\cdot)$ is an 
  extension of $\vxgb(\cdot)$ to this larger function class. In fact, 
  $\fstinf$ contains many continuous functions, unlike $\fst$, which 
  only includes piecewise constant functions.
\item Every solution to the XGBoost optimization problem
  \eqref{xgb_opti} also solves the following problem:
\begin{equation}\label{lse_fstinf}
    \argmin_f \Big\{\sum_{i=1}^n \big(y_i - f(\xbf^{(i)})\big)^2 
    + \alpha \vinfxgb(f): f \in \fstinf \Big\}.
\end{equation}
\item Under standard regression assumptions with random design and
  squared error loss, the minimax rate of convergence over the 
  function class 
\begin{equation}\label{fst_V}
    \big\{f \in \fstinf: \vinfxgb(f) \le V \big\}
\end{equation}
for fixed $V > 0$ satisfies
\begin{equation}\label{min_bounds}
    \Omega(n^{-2/3} (\log n)^{2(\min(s, d)-1)/3})
    ~ \le ~ \text{minimax rate} ~ \le ~
    O(n^{-2/3} (\log n)^{4(\min(s, d)-1)/3}),  
\end{equation}
where the constants underlying the $\Omega(\cdot)$ and $O(\cdot)$ 
notations depend on $d, s$, and $V$. In particular, both bounds 
increase with $V$, indicating that the minimax rate deteriorates as $V$ 
increases, as one would intuitively expect.

The upper bound in \eqref{min_bounds} is achieved by a least squares 
estimator over \eqref{fst_V}, which can be viewed as solving a 
constrained version of \eqref{lse_fstinf}. By a standard duality
argument, for each $V > 0$, there exists $\alpha$, possibly depending
on both $V$ and the data, such that a solution to the problem
\eqref{lse_fstinf} achieves the upper bound in
\eqref{min_bounds}. 
\end{enumerate}

Taken together, these results show that XGBoost can be interpreted as
implicitly targeting the larger and more expressive function class
$\fstinf$, even though its fitted solutions are constructed as finite
sums of regression trees. The fast convergence rates in
\eqref{min_bounds}, which do not suffer from the usual curse of
dimensionality, suggest that XGBoost can accurately estimate functions
$f^* \in \fstinf$ provided that their complexity $\vinfxgb(f^*)$ is
not too large. This perspective offers a theoretical explanation, at
least in part, for the strong empirical performance of XGBoost in
practice: although real-world regression functions are rarely
piecewise constant, XGBoost can perform well as long as the underlying
function lies in $\fstinf$ with moderate complexity.

At the same time, our results point to potential limitations of
XGBoost. If the true regression function $f^*$ cannot be well 
approximated by elements of $\fstinf$ with controlled $\vinfxgb(f)$, 
then accurate estimation should not be expected (see 
Section \ref{notfstinf} for more details). Finally, we emphasize that 
this paper does not address algorithmic aspects of XGBoost. Our results 
characterize the statistical properties of solutions to the objective 
function that XGBoost aims to optimize, rather than guaranteeing that 
a specific implementation of the algorithm attains these rates 
(see Section \ref{algo_open} for more details). 

We also provide a smoothness-based characterization of $\fstinf$ that
does not rely on any explicit connection to trees. Specifically, we
show that $\fstinf$ is closely related to the class of functions with
finite Hardy–Krause (HK) variation. More precisely, in 
Proposition \ref{prop:fstinf-charac}, we prove that $\fstinf$ 
coincides with the class of right-continuous functions that have 
finite HK variation and do not exhibit interactions of order greater 
than $s$, in the precise sense formalized in 
\eqref{eq:interaction-restriction-cond}.

HK variation is a classical notion of multivariate variation (see,
e.g., \citet{leonov1996total, aistleitner2015functions, 
owen2005multidimensional}) and can be interpreted as a measure of
smoothness. Indeed, for sufficiently smooth functions $f$, 
HK variation is closely related to the $L^1$ norms of mixed partial
derivatives of $f$ of maximal order one, that is,
\begin{equation*}
    \frac{\partial^{r_1 + \dots + r_d} f}{\partial x_1^{r_1} \cdots
    \partial x_d^{r_d}} \qt{with $\max_j r_j = 1$}. 
\end{equation*}
See equation \eqref{eq:hk-smooth-characterization} in 
Section \ref{sec:hk-variation} for the precise relationship. We 
further show that the complexity measure $\vinfxgb(\cdot)$ is 
tightly connected to HK variation: it is bounded above by HK 
variation, and bounded below by a constant (depending on $s$ and $d$) 
times HK variation (see Proposition \ref{prop:vinfxgb-hka-rel}).

This perspective suggests that XGBoost can be viewed as performing
smoothness-constrained nonparametric regression, where smoothness is
quantified through control of mixed derivatives of maximal order one. 
Tree-based methods such as XGBoost are often classified as belonging
to the ``algorithmic modeling'' tradition, distinct from statistical
modeling (see, e.g., \citet{breiman2001statistical}). In contrast, 
our results place XGBoost squarely within a traditional statistical 
framework of regularized estimation governed by an interpretable 
smoothness penalty. 

HK variation has previously been employed as a
regularization penalty in nonparametric regression in
\citet{fang2021multivariate} and in \citet{benkeser2016highly,
van2023efficient, schuler2022lassoed} (in the latter group of papers, 
the method is called ``Highly Adaptive Lasso''). However, HK variation 
suffers from a lack of symmetry that makes it somewhat unnatural as a 
regularization penalty. For example, when $d = 2$, the indicator 
functions $\ind(x_1 \ge t_1, x_2 \ge t_2)$ and 
$\ind(x_1 < t_1, x_2 < t_2)$ have different HK variation values. This 
asymmetry arises because HK variation needs the specification of 
an anchor point \cite{aistleitner2015functions, 
owen2005multidimensional}, and any particular choice of anchor breaks 
symmetry. Prior work \cite{fang2021multivariate, benkeser2016highly} 
typically anchors at the lower-left corner of the domain 
($(-\infty, \dots, -\infty)$ in our setting), but, in principle, any 
point $(a_1, \dots, a_d)$ with $a_j \in \{-\infty, +\infty\}$ may be 
used as the anchor point. All such choices induce a form of 
asymmetry in the resulting HK variation (see 
Section~\ref{sec:connection-vinfxgb-hk}).

In contrast, the complexity measure $\vinfxgb(\cdot)$ does not suffer
from this lack of symmetry: the two indicator functions above receive
identical values under $\vinfxgb(\cdot)$. Owing to this symmetry,
$\vinfxgb(\cdot)$ provides a more natural regularizer than HK
variation. Moreover, since $\vinfxgb(\cdot)$ is uniformly smaller than
HK variation (for any choice of anchor), its use avoids excessive
shrinkage while still offering effective control of model complexity. 

The remainder of the paper is organized as follows. 
Sections \ref{sec:fstinf} and \ref{sec:vinfxgb} introduce the 
function class $\fstinf$ and the complexity measure 
$\vinfxgb(\cdot)$, and describe their connections to Hardy–Krause 
variation. Section \ref{sec:optimization} studies the relationship 
between the XGBoost optimization \eqref{xgb_opti} and the 
optimization \eqref{lse_fstinf} over the broader class $\fstinf$. 
Section \ref{sec:minimax-rate} analyzes minimax rates of 
convergence over \eqref{fst_V}. The discussion section highlights 
several issues related to our main results. Proofs of all results 
appear in the Appendix.

\section{The Function Class $\fstinf$}\label{sec:fstinf}

Our definition of $\fstinf$ is built upon a specific class of basis
functions associated with regression trees. These basis functions
take the form
\begin{equation}\label{basis_func}
    b^{L, U}_{\lbf, \ubf}(x_1, \dots, x_d) 
    = \prod_{j \in L} \ind(x_j \ge l_j) 
    \cdot \prod_{j \in U} \ind(x_j < u_j),
\end{equation}
where $L$ and $U$ are (not necessarily disjoint) subsets of 
$[d] := \{1, \dots, d\}$ with $0 < |L| + |U| \le s$ (with $|\cdot|$ 
denoting set cardinality), and $\lbf := (l_j, j \in L)$ and 
$\ubf := (u_j, j \in U)$ are vectors of real-valued thresholds. 
Since the thresholds may take arbitrary real values, the collection 
of basis functions \eqref{basis_func} is uncountable.   

Any non-constant regression tree with right-continuous splits and 
depth $\le s$, and hence any finite sum of such trees, can be 
expressed as a finite linear combination of these basis functions. 
This representation is obtained by decomposing the tree into 
indicator functions corresponding to individual paths from the root 
to each leaf. For each root-to-leaf path, take 
\begin{equation*}
    L := \{j \in [d]: \text{the path contains at least one split of 
    the form } x_j \ge t \},
\end{equation*}
\begin{equation*}
    U := \{j \in [d]: \text{the path contains at least one split of 
    the form } x_j < t \},
\end{equation*}
and for each $j \in L$ (respectively $j \in U$), take $l_j$ 
(respectively $u_j$) to be the maximum (respectively minimum) of the
thresholds $t$ appearing in those splits along the path. Note that a
coordinate may belong to both $L$ and $U$ if it is split in both
directions along the same path. Thus, $|L| + |U|$ is bounded above 
by the depth of the tree, which is at most $s$.

Since there are uncountably many choices for the threshold vectors 
$\lbf$ and $\ubf$, it is convenient to represent finite linear 
combinations of $b^{L, U}_{\lbf, \ubf}$ using signed measures. 
More precisely, signed measures can be used to encode the 
coefficients multiplying $b^{L, U}_{\lbf, \ubf}$ for different 
threshold vectors $\lbf$ and $\ubf$. For finite signed Borel
measures $\nu_{L, U}$ (indexed by $L, U \subseteq [d]$ with 
$0 < |L| + |U| \le s$) on $\R^{|L| + |U|}$ and $c \in \R$, define 
\begin{equation}\label{f_fst}
    \fcnu(x_1, \dots, x_d) 
    = c + \sum_{0 < |L| + |U| \le s} \int_{\R^{|L| + |U|}} 
    b^{L, U}_{\lbf, \ubf}(x_1, \dots, x_d) \, d\nu_{L, U}(\lbf, \ubf).
\end{equation}
This expression provides a simple and unified way to represent
finite linear combinations of the basis functions 
$b^{L, U}_{\lbf, \ubf}$. Any finite linear combination of 
$b^{L, U}_{\lbf, \ubf}$—and hence every element of $\fst$—can be 
written in this form with discrete signed measures $\nu_{L, U}$ 
having finitely many support points. The next result 
(proved in Appendix \ref{pf:alt-charac-fst})
shows that the converse is also true: all such functions 
\eqref{f_fst} with discrete signed measures $\nu_{L, U}$ having 
finite support belong to $\fst$.

\begin{proposition}\label{prop:alt-charac-fst}
    The class $\fst$ of all finite sums of right-continuous 
    regression trees of depth at most $s$ can be characterized as 
    \begin{equation*}
        \fst = \big\{\fcnu: \nu_{L, U} \text{ are discrete signed 
        measures with finitely many support points}\big\}.
    \end{equation*}
\end{proposition}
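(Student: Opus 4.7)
The plan is to prove the two inclusions separately.

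For the forward inclusion $\fst \subseteq \{\fcnu: \nu_{L,U} \text{ discrete with finite support}\}$, I would take an arbitrary $f \in \fst$, write it as a finite sum $\sum_k f_k$ of right-continuous regression trees of depth $\le s$, and decompose each $f_k$ into root-to-leaf contributions. Constant trees contribute only to the additive constant $c$ in \eqref{f_fst}. For a non-constant tree $f_k$, its value at any point $\xbf$ equals the weight of the unique leaf into which $\xbf$ falls, so $f_k$ equals a sum over leaves of (leaf weight) times the indicator of the root-to-leaf path. Following the recipe spelled out just before Proposition~\ref{prop:alt-charac-fst}, each such path indicator collapses---after combining consecutive splits on the same coordinate in the same direction via $\max$ (for $L$) and $\min$ (for $U$)---to a basis function $b^{L, U}_{\lbf, \ubf}$ with $0 < |L| + |U| \le \text{depth}(f_k) \le s$. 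Aggregating the signed leaf weights across all trees and all paths, I would then define, for each pair $(L, U)$ with $0 < |L| + |U| \le s$, the signed measure $\nu_{L, U}$ to be the sum of point masses at the threshold vectors $(\lbf, \ubf)$ arising in this decomposition, weighted by the corresponding leaf weights. These measures are discrete with finite support by construction, and $f = \fcnu$ with $c$ equal to the sum of all constant-tree contributions.

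For the reverse inclusion, I would take a function $\fcnu$ in which each $\nu_{L, U}$ is finitely supported. Writing the integral in \eqref{f_fst} as a finite sum over atoms yields
\begin{equation*}
    \fcnu(\xbf) = c + \sum_{0 < |L| + |U| \le s} \sum_{(\lbf, \ubf) \in \operatorname{supp}(\nu_{L, U})} \nu_{L, U}(\{(\lbf, \ubf)\}) \cdot b^{L, U}_{\lbf, \ubf}(\xbf),
\end{equation*}
so it suffices to show that the constant $c$ and each scaled basis function $\alpha \cdot b^{L, U}_{\lbf, \ubf}$ with $0 < |L| + |U| \le s$ is itself a right-continuous regression tree of depth $\le s$. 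The constant $c$ is a depth-$0$ tree, and the scaled basis function can be realized by a tree that splits sequentially on the coordinates in $L \cup U$---using splits of the form $x_j \ge l_j$ for $j \in L$ and $x_j < u_j$ for $j \in U$ (both splits used, in either order, when $j \in L \cap U$)---routing each ``failing'' branch to a leaf of weight $0$, and assigning weight $\alpha$ to the unique leaf reached when all conditions are satisfied. The nonzero root-to-leaf path contains exactly $|L| + |U|$ splits, so the resulting tree has depth $\le s$.

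The main obstacle is the combinatorial bookkeeping in the forward direction: justifying that the indicator of an arbitrary root-to-leaf path in a right-continuous regression tree collapses to the clean product form defining $b^{L, U}_{\lbf, \ubf}$, while controlling $|L| + |U|$ by the tree depth. Care is needed because a single path may split on the same coordinate in both directions (so $L$ and $U$ cannot be assumed disjoint), and because multiple same-direction splits on a coordinate must be merged into a single threshold via $\max$ or $\min$; both effects can only decrease $|L| + |U|$ relative to the raw number of splits. Once this reduction is handled cleanly, the remainder of the argument amounts to routine verification that weighted basis functions assemble into, and conversely decompose out of, depth-bounded tree representations.
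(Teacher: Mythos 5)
Your proposal is correct and follows essentially the same route as the paper: the forward inclusion is the root-to-leaf decomposition with thresholds merged via $\max$/$\min$ (exactly the recipe given in the text preceding the proposition), and the reverse inclusion realizes each weighted basis function $\alpha\, b^{L,U}_{\lbf,\ubf}$ as a single depth-$(|L|+|U|)$ tree with one nonzero leaf and then sums. The extra detail you supply on the combinatorial bookkeeping (non-disjoint $L$ and $U$, merging repeated same-direction splits) is exactly what the paper relies on implicitly, so there is no gap.
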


In light of Proposition \ref{prop:alt-charac-fst}, a natural 
extension of $\fst$ can be obtained by allowing $\nu_{L, U}$ in 
\eqref{f_fst} to be arbitrary (that is, not necessarily discrete) 
finite signed measures. This leads to the function class $\fstinf$.
\begin{definition}
For fixed $d \ge 1$ and $s \ge 1$, $\fstinf$ consists of all 
functions $\fcnu$ (defined in \eqref{f_fst}) where $c \in \R$, and 
each $\nu_{L, U}$ is a finite signed Borel measure on $\R^{|L|+|U|}$.
\end{definition}
The following result (proved in Appendix \ref{pf:fstinf-stabilization}) 
records some basic properties of $\fstinf$.

\begin{proposition}\label{prop:fstinf-stabilization}
\begin{enumerate}[label = (\alph*)]
    \item Every function in $\fstinf$ is right-continuous.
    \item For $s_1 \le s_2$, 
    $\F^{d, s_1}_{\infty-\text{ST}} 
    \subseteq \F^{d, s_2}_{\infty-\text{ST}}$.
    \item For every $s \ge d$, 
    $\F^{d, s}_{\infty-\text{ST}} 
    = \F^{d, d}_{\infty-\text{ST}}$.
    \item The function class $\fstinf$ is convex.
\end{enumerate}
\end{proposition}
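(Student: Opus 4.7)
My plan is to handle the four parts separately, addressing (a), (b), and (d) via routine properties of the representation \eqref{f_fst}, and reserving the main work for (c). For (a), every basis function $b^{L,U}_{\lbf,\ubf}$ is right-continuous in $\xbf$ because each factor $\ind(x_j \ge l_j)$ and $\ind(x_j < u_j)$ is right-continuous in its coordinate; combining pointwise right-continuity with the uniform bound $|b^{L,U}_{\lbf,\ubf}| \le 1$ and the finiteness of each $\nu_{L,U}$, dominated convergence delivers right-continuity of each integral $\int b^{L,U}_{\lbf,\ubf}\, d\nu_{L,U}$, and hence of $\fcnu$. Part (b) is immediate: any representation using measures indexed by pairs $(L,U)$ with $|L|+|U| \le s_1$ remains valid for $s_2 \ge s_1$ by declaring the additional measures to be identically zero. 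For (d), the map $(c, \{\nu_{L,U}\}) \mapsto \fcnu$ is linear in its arguments, so $\fstinf$ is closed under convex (in fact, arbitrary linear) combinations by combining the constants and the signed measures coordinate-wise.

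The substantive content is in (c). Part (b) already yields $\F^{d,d}_{\infty-\text{ST}} \subseteq \F^{d,s}_{\infty-\text{ST}}$ when $s \ge d$, so the goal is to show that any integral term $\int b^{L,U}_{\lbf,\ubf}\, d\nu_{L,U}$ with $d < |L|+|U| \le s$ can be rewritten as a finite sum of analogous integrals whose index sets $(L',U')$ satisfy $|L'|+|U'| \le d$. The observation driving the argument is that $|L|+|U| > d$ forces $L \cap U \ne \emptyset$, since $|L|+|U| - |L\cup U| = |L \cap U|$ and $|L \cup U| \le d$. For any $j \in L \cap U$, I would invoke the algebraic identity
\begin{equation*}
    \ind(x_j \ge l_j)\, \ind(x_j < u_j) = \ind(x_j \ge l_j) - \ind(x_j \ge \max(l_j, u_j)),
\end{equation*}
which is valid for all $(l_j, u_j) \in \R^2$ (both sides vanish when $l_j > u_j$, and agree when $l_j \le u_j$). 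This splits $b^{L,U}_{\lbf,\ubf}$ into a difference of two basis functions, each with new index pair $(L, U \setminus \{j\})$, reducing $|L|+|U|$ by one while preserving $L \cup U$. At the measure level, integrating the first term against $\nu_{L,U}$ becomes an integral of $b^{L, U\setminus\{j\}}$ against the marginal of $\nu_{L,U}$ after projecting out the $u_j$-coordinate, and integrating the second term becomes an integral against the pushforward of $\nu_{L,U}$ under $(\lbf,\ubf) \mapsto (\lbf', \ubf_{U \setminus \{j\}})$ with $l'_j := \max(l_j, u_j)$ and $l'_{j'} := l_{j'}$ otherwise; both are finite signed Borel measures. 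Iterating this step at most $|L \cap U|$ times drives the representation to basis functions with $L \cap U = \emptyset$, hence with $|L|+|U| = |L \cup U| \le d$, and aggregating the resulting measures over the finitely many index pairs yields the desired representation in $\F^{d,d}_{\infty-\text{ST}}$.

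The hard part will not be conceptual but measure-theoretic bookkeeping in (c): verifying that the pushforward and marginal constructions yield finite signed Borel measures, that the decomposition commutes correctly with the integral (a Fubini-type step on the appropriate product spaces), and that after iteration the finitely many resulting measures assemble cleanly into a single representation of the form \eqref{f_fst} with $|L|+|U| \le d$. None of these steps are deep, but they require careful notation to execute without confusing the original integration variables with the re-indexed ones arising from the pushforwards.
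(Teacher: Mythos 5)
Your proposal is correct and follows essentially the same route as the paper: parts (a), (b), (d) are handled identically (dominated convergence, padding with zero measures, linearity), and for (c) the paper likewise eliminates the overlap $L \cap U$ via the identity $\ind(x_j \ge l_j)\ind(x_j < u_j) = \ind(x_j \ge l_j) - \ind(x_j \ge u_j)$ on $\{l_j \le u_j\}$, packaged as the alternative representation $f^{d,s}_{b,\{\mu_S\}}$ in \eqref{eq:alternative-representation-of-fstinf} with $S \subseteq [d]$. Your iterative reduction to disjoint $L, U$ is a minor stylistic variant of the paper's one-shot expansion and is equally valid.
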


We next show that $\fstinf$ can be characterized via 
Hardy–Krause (HK) variation. To this end, we first recall the 
definition of HK variation.

\subsection{Hardy–Krause (HK) Variation}
\label{sec:hk-variation}

Hardy–Krause (HK) variation is typically defined for functions on 
compact domains (see, e.g., \citet{leonov1996total, 
owen2005multidimensional, aistleitner2015functions}), but, in this 
paper, we work with functions defined on the whole space $\R^d$. 
We therefore modify the standard definitions slightly to 
accommodate the unbounded domain $\R^d$. Before introducing our 
version of HK variation, we first recall Vitali variation, which 
serves as a key building block of HK variation.

\begin{definition}[Quasi-volume]
Let $g$ be a real-valued function defined on $\R^m$. For 
$(u_1, \dots, u_m), (v_1, \dots, v_m) \in \R^m$ with 
$u_j < v_j$ for all $j \in [m]$, the quasi-volume of $g$ over the 
rectangle $\prod_{j = 1}^{m} [u_j, v_j]$ is defined as
\begin{equation*}
    \Delta \Big(g; \prod_{j = 1}^{m} [u_j, v_j]\Big) 
    = \sum_{\deltabf \in \{0, 1\}^m}
    (-1)^{\delta_1 + \cdots + \delta_m} \cdot 
    g\big((1 - \delta_1)v_1 + \delta_1 u_1, \dots, 
    (1 - \delta_m)v_m + \delta_m u_m\big).
\end{equation*}  
\end{definition}

\begin{definition}[Axis-aligned split]
Let $(a_1, \dots, a_m)$ and $(b_1, \dots, b_m)$ be vectors in $\R^m$ 
with $a_j < b_j$ for all $j$. A collection $\mathcal{P}$ of subsets 
of $\prod_{j = 1}^{m} [a_j, b_j]$ is called an axis-aligned split if 
it consists of rectangles of the form 
\begin{equation*}
    \prod_{j = 1}^{m}\big[u^{(j)}_{l_j}, u^{(j)}_{l_j + 1}\big] 
    \quad \text{for } l_j \in [n_j] \text{ and } j \in [m],
\end{equation*} 
where, for each $j \in [m]$, 
$a_j = u^{(j)}_1 < u^{(j)}_2 < \cdots < u^{(j)}_{n_j + 1} = b_j$ 
is a partition of $[a_j, b_j]$.  
\end{definition}

\begin{definition}[Vitali variation]
\begin{enumerate}[label = (\alph*)]
    \item The Vitali variation of $g$ on 
    $\prod_{j = 1}^{m} [a_j, b_j]$ is defined as
    \begin{equation*}
        \vit\Big(g; \prod_{j = 1}^{m} [a_j, b_j]\Big) 
        = \sup_{\mathcal{P}} \sum_{R \in \mathcal{P}} |\Delta(g; R)|,
    \end{equation*}
    where the supremum is taken over all axis-aligned splits 
    $\mathcal{P}$ of $\prod_{j = 1}^{m} [a_j, b_j]$.
    \item The Vitali variation of $g$ on the whole space $\R^m$ is 
    defined by
    \begin{equation*}
        \vit(g) = \sup_{\prod_{j = 1}^{m} [a_j, b_j] \subseteq \R^m}
        \vit\Big(g; \prod_{j = 1}^{m} [a_j, b_j]\Big).
    \end{equation*}  
\end{enumerate}
\end{definition}
If $g$ is sufficiently smooth, the Vitali variation of $g$ on 
$\R^{m}$ admits the following representation (see, e.g., 
\citet[Section 9]{owen2005multidimensional}):
\begin{equation}\label{vit-smooth-characterization}
    \vit(g) = \int_{\R^m} \Big| 
    \frac{\partial^m g(\xbf)}{\partial x_1 \cdots \partial x_m} 
    \Big| \, d\xbf.
\end{equation}

We are ready to define HK variation for functions on $\R^d$. The
definition of HK variation requires specification of an anchor 
point. When the domain is a bounded axis-aligned rectangle, the 
anchor is chosen to be one of its vertices. For example, when the 
domain is $[0, 1]^d$, a common choice for the anchor is the 
lower-left corner $\zerovec = (0, \dots, 0)$. However, in our 
setting, where the domain is the entire space $\R^d$, the anchor 
point needs to be placed at infinity (either $-\infty$ or $+\infty$). This requires 
functions to be suitably well behaved at infinity, in the sense 
described below.

Let $\abf = (a_1, \dots, a_d) \in \{-\infty, +\infty\}^d$ denote 
the anchor point. For each coordinate, there are two possible 
choices: $-\infty$ or $+\infty$. For a function $f: \R^d \to \R$, 
a subset $S \subseteq [d]$ with $S^c := [d] \setminus S$, and $(x_j, j
\in S) \in \R^{|S|}$, define  
\begin{equation}\label{eq:f_section}
    f^S_{(a_j, j \in S^c)}(x_j, j \in S) 
    = \lim_{(x_j, j \in S^c) \to (a_j, j \in S^c)} 
    f(x_1, \dots, x_d). 
\end{equation}
For each $S \subseteq [d]$, we say that the function
$f^S_{(a_j, j \in S^c)}$ is well defined if the above limit exists 
and is finite for all $(x_j, j \in S) \in \R^{|S|}$. This function 
may be viewed as the restriction of $f$ to the section of the 
domain obtained by fixing the coordinates in $S^c$ at the anchoring 
values $a_j$. It can be verified that $f^S_{(a_j, j \in S^c)}$ is 
well defined for all $S \subseteq [d]$ whenever $f \in \fstinf$. 

\begin{definition}[HK variation]
Fix $\abf \in \{-\infty, +\infty\}^d$. Let $f: \R^d \to \R$ be a
function for which  
$f^S_{(a_j, j \in S^c)}$ is well defined for all $S \subseteq [d]$. 
The HK variation of $f$ anchored at $\abf$ is defined by
\begin{equation*}
    \hk_{\abf}(f) 
    = \sum_{\emptyset \neq S \subseteq [d]} 
    \vit(f^S_{(a_j, j \in S^c)}).
\end{equation*}  
\end{definition}
In words, the HK variation of $f$ is the sum of the Vitali 
variations of the restrictions of $f$ to sections of the domain 
obtained by anchoring some coordinates at $a_j$. This explains 
the term ``anchor'' for $\abf$. For sufficiently smooth functions 
$f$, \eqref{vit-smooth-characterization} implies that 
$\hk_{\abf}(f)$ can also be expressed as
\begin{equation}\label{eq:hk-smooth-characterization}
    \hk_{\abf}(f) = \sum_{\emptyset \neq S \subseteq [d]} 
    \int_{\R^{|S|}}
    \Big| \frac{\partial^{|S|}}{\prod_{j \in S} \partial x_j} 
    f^S_{(a_j, j \in S^c)}(x_j, j \in S) \Big| \, d(x_j, j \in S). 
\end{equation}

\subsection{Connection Between $\fstinf$ and HK Variation}
\label{sec:connection-fstinf-hk}
The following result (proved in Appendix \ref{pf:fstinf-charac}) 
shows that $\fstinf$ consists precisely of all right-continuous 
functions with finite HK variation that satisfy an interaction 
restriction condition: for every subset $S \subseteq [d]$ with 
$|S| > s$, 
\begin{equation}\label{eq:interaction-restriction-cond}
    \sum_{\deltabf \in \{0, 1\}^{|S|}} (-1)^{\sum_{j \in S} 
    \delta_j} \cdot f^S_{(a_j, j \in S^c)}\big((1 - \delta_j) 
    w_j + \delta_j v_j, j \in S\big) = 0
    \quad \text{for all } v_j < w_j, j \in S.
\end{equation}
This condition excludes interactions between variables of 
order greater than $s$. The result holds for any choice of 
the anchor point $\abf$, since finiteness of HK 
variation is equivalent across different anchor points.

\begin{proposition}\label{prop:fstinf-charac}
    The following statements are equivalent:
    \begin{enumerate}[label = (\alph*)]
        \item $f \in \fstinf$.
        \item $\hk_{\abf}(f) < \infty$ for some 
        $\abf \in \{-\infty, +\infty\}^d$, and $f$ is right-continuous 
        and satisfies \eqref{eq:interaction-restriction-cond} for 
        all subsets $S \subseteq [d]$ with $|S| > s$.
        \item $\hk_{\abf}(f) < \infty$ for all 
        $\abf \in \{-\infty, +\infty\}^d$, and $f$ is right-continuous 
        and satisfies \eqref{eq:interaction-restriction-cond} for 
        all subsets $S \subseteq [d]$ with $|S| > s$.
    \end{enumerate}
\end{proposition}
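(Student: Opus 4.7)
The plan is to establish the cycle $(a) \Rightarrow (c) \Rightarrow (b) \Rightarrow (a)$; the implication $(c) \Rightarrow (b)$ is immediate. For $(a) \Rightarrow (c)$, right-continuity comes from Proposition~\ref{prop:fstinf-stabilization}(a). For the interaction restriction, note that each basis function $b^{L,U}_{\lbf,\ubf}$ depends only on coordinates in $L \cup U$, a set of cardinality at most $s$; so for any $S$ with $|S| > s$ there is some $j \in S \setminus (L \cup U)$, the corresponding section is constant in $x_j$, and its quasi-volume over every rectangle in $\R^{|S|}$ telescopes to $0$. Linearity of the representation $\fcnu$ in $\nu_{L,U}$ then transfers this to all of $\fstinf$. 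To bound $\hk_{\abf}(\fcnu) < \infty$ for every anchor $\abf$, I would verify by direct computation that $\hk_{\abf}(b^{L,U}_{\lbf,\ubf}) \le C_{d,s}$ uniformly in $\lbf, \ubf$ and $\abf$ (the relevant sections are indicator products, each of Vitali variation at most $1$), and then combine with the integral triangle inequality $\hk_{\abf}(\fcnu) \le \sum_{L,U} C_{d,s}\,\|\nu_{L,U}\|_{\tv}$.

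The substantive direction is $(b) \Rightarrow (a)$. Fix an anchor $\abf$ for which $\hk_{\abf}(f) < \infty$, and introduce the anchored M\"obius decomposition: for each $T \subseteq [d]$, let $g_T := f^T_{(a_j, j \in T^c)}$ (lifted to a function on $\R^d$ that depends only on coordinates in $T$, with $g_\emptyset := \lim_{\xbf \to \abf} f(\xbf)$) and set
\begin{equation*}
  h_T(\xbf) := \sum_{T' \subseteq T} (-1)^{|T \setminus T'|}\, g_{T'}(\xbf).
\end{equation*}
Boolean M\"obius inversion yields $g_T = \sum_{T' \subseteq T} h_{T'}$, and in particular $f = g_{[d]} = \sum_{T \subseteq [d]} h_T$. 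I would then record three properties of $h_T$ that follow by elementary sign-cancellations: (i) $h_T$ depends only on $(x_j : j \in T)$; (ii) $\lim_{x_j \to a_j} h_T = 0$ for every $j \in T$ (pair each $T' \ni j$ with $T' \setminus \{j\}$ in the defining sum, so the two signs cancel in the limit); and (iii) $\Delta(h_T; R) = \Delta(g_T; R)$ for every axis-aligned rectangle $R$ in $\R^{|T|}$ (since any $h_{T'}$ with $T' \subsetneq T$ telescopes along a coordinate of $T \setminus T'$). In particular, $\vit(h_T) = \vit(g_T) \le \hk_{\abf}(f) < \infty$.

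When $|T| > s$, hypothesis~\eqref{eq:interaction-restriction-cond} states exactly that $\Delta(g_T; R) = 0$ for every rectangle $R$, hence by (iii) every quasi-volume of $h_T$ vanishes; combining with (ii) and sending all corners of a rectangle containing a given $\xbf$ outward to the anchor forces $h_T \equiv 0$. When $1 \le |T| \le s$, $h_T$ (viewed on $\R^{|T|}$) is a right-continuous function of finite Vitali variation that vanishes as any coordinate approaches the anchor. A multivariate Lebesgue--Stieltjes-type representation then produces a (unique) finite signed Borel measure $\nu_{T_L, T_U}$ on $\R^{|T|}$, with $T_L := \{j \in T : a_j = -\infty\}$ and $T_U := \{j \in T : a_j = +\infty\}$, such that
\begin{equation*}
  h_T(\xbf) = \int b^{T_L, T_U}_{\lbf, \ubf}(\xbf)\, d\nu_{T_L, T_U}(\lbf, \ubf).
\end{equation*}
Setting $c := g_\emptyset$ and leaving all remaining $\nu_{L,U}$ zero delivers $f = \fcnu \in \fstinf$.

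The main obstacle is the multivariate Lebesgue--Stieltjes correspondence invoked for each $h_T$: although the $m=1$ case is classical, the statement in dimension $m \ge 2$ that a right-continuous function on $\R^m$ of finite Vitali variation vanishing at the anchor in each coordinate direction is the ``anchored distribution function'' of a unique finite signed Borel measure (with total variation equal to the Vitali variation) requires defining the candidate measure on the semi-ring of axis-aligned rectangles via signed quasi-volumes, applying Carath\'eodory extension, and checking that right-continuity pins down the measure uniquely. The combinatorial cancellations producing properties (i)--(iii), as well as the telescoping that kills $h_T$ for $|T| > s$, are routine once this representation is in hand.
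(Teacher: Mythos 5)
Your overall architecture matches the paper's: the implications $(a)\Rightarrow(c)$ and $(c)\Rightarrow(b)$ are handled the same way (right-continuity from Proposition~\ref{prop:fstinf-stabilization}, HK-finiteness from a bound on the basis functions as in Proposition~\ref{prop:vinfxgb-hka-rel}, and the interaction restriction from the fact that each basis term depends on at most $s$ coordinates so the alternating sum telescopes), and for $(b)\Rightarrow(a)$ your anchored M\"obius decomposition $h_T=\sum_{T'\subseteq T}(-1)^{|T\setminus T'|}g_{T'}$, together with properties (i)--(iii) and the vanishing of $h_T$ for $|T|>s$, is exactly the combinatorial skeleton the paper uses. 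These parts are correct.

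The gap is the step you yourself flag as ``the main obstacle'': the claim that a right-continuous function on $\R^{|T|}$ with finite Vitali variation that vanishes as each coordinate tends to the anchor is the anchored distribution function of a unique finite signed Borel measure. This is not a citable off-the-shelf fact on an unbounded domain, and it is where essentially all of the work in the paper's Step~3 lives. Your proposed route (signed quasi-volumes on the semi-ring of rectangles, Carath\'eodory extension, uniqueness from right-continuity) is plausible but omits the delicate points: Carath\'eodory extension applies to nonnegative premeasures, so you must first perform a Jordan-type decomposition of the finitely additive signed set function on the semi-ring and verify countable additivity of each part from right-continuity; you must then show the total mass is controlled by $\vit(h_T)$ and that $h_T(\xbf)$ is recovered as the measure of the anchored orthant via a limit. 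Note also that you cannot shortcut this by applying the known compact-domain representation (Theorem~3 of Aistleitner--Dick) directly to $h_T$ restricted to $[-N,N]^{|T|}$: on a compact cube $h_T$ does not vanish on the faces through the corner, only in the limit $N\to\infty$, so its HK variation anchored at that corner is not its Vitali variation and the resulting measures would carry spurious boundary mass. The paper avoids this by introducing the exactly-anchored truncations $g^S_N$ (the alternating sum anchored at $-N$ rather than at $-\infty$), which do vanish on the relevant faces, applying the compact-domain theorem to those, verifying that the measures $\nu^S_N$ are consistent extensions of one another, and then proving by hand (Cauchy property from the uniform bound $\sup_N|\nu^S_N|([-N,N]^{|S|})\le\vit(f^S_{(a_j,j\in S^c)})$, plus monotone and dominated convergence for countable additivity) that the limit is a finite signed Borel measure on $\R^{|S|}$. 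Until you either carry out your direct extension argument or adapt the paper's truncate-and-pass-to-the-limit construction, the implication $(b)\Rightarrow(a)$ is not established.
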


\begin{remark}[$d = 1$]
    When $d = 1$, Proposition \ref{prop:fstinf-charac}
    simplifies as follows. For each $s \ge 1$, 
    \begin{equation*}
        \F^{1, s}_{\infty-\text{ST}}  
        = \big\{f: \tv(f) < \infty \text{ and } 
        f \text{ is right-continuous}\big\}.
    \end{equation*}
    Here, $\tv(f)$ denotes the usual total variation of $f$ 
    on $\R$, defined by
    $\tv(f) = \sup_{a < b} \tv(f; [a, b])$, where
    \begin{equation*}
        \tv(f; [a, b]) = \sup
        \sum_{k = 1}^{m} |f(z_{k + 1}) - f(z_k)|,
    \end{equation*}
    with the supremum taken over all $m \ge 1$ and 
    all partitions $a = z_1 < \cdots < z_{m + 1} = b$ of $[a, b]$. 
\end{remark}

Proposition \ref{prop:fstinf-charac} confirms that $\fstinf$ 
contains many continuous functions, in contrast to the subclass 
$\fst$, which consists only of piecewise constant functions. For 
example, any sufficiently smooth function whose mixed partial 
derivatives of maximal order one have finite $L^1$ norms (recall 
\eqref{eq:hk-smooth-characterization}) belongs to 
$\F^{d, d}_{\infty-\text{ST}}$.

\section{The Complexity Measure $\vinfxgb(\cdot)$}
\label{sec:vinfxgb}

Here is the definition of the complexity measure $\vinfxgb(\cdot)$ 
on $\fstinf$. 

\begin{definition}[$\vinfxgb(\cdot)$]
    For $f \in \fstinf$, define
    \begin{equation}\label{vinfxgb_def}
        \vinfxgb(f) := \inf \bigg\{ 
        \sum_{0 < |L| + |U| \le s} \|\nu_{L, U}\|_{\text{TV}}: 
        \fcnu \equiv f \bigg\},
    \end{equation}
    where the infimum is taken over all representations 
    $\fcnu$ of $f$. Here, 
    $\|\nu\|_{\text{TV}} := |\nu|(\R^{|L| + |U|})$ denotes the 
    total variation of the signed measure $\nu$. 
\end{definition}
Basic properties of this complexity measure (proved in 
Appendix \ref{pf:vinfxgb-stabilization}) are summarized below.
\begin{proposition}\label{prop:vinfxgb-stabilization}
\begin{enumerate}[label = (\alph*)]
    \item For $s_1 \le s_2$, 
    $V^{d, s_1}_{\infty-\text{XGB}}(f) 
    \ge V^{d, s_2}_{\infty-\text{XGB}}(f)$ for all 
    $f \in \F^{d, s_1}_{\infty-\text{ST}}$.
    \item For every $s \ge 2d$, 
    $V^{d, s}_{\infty-\text{XGB}}(\cdot) 
    \equiv V^{d, 2d}_{\infty-\text{XGB}}(\cdot)$.
    \item $\vinfxgb(\cdot)$ is convex on $\fstinf$; that is, 
    for all $f, g \in \fstinf$ and $\lambda \in [0, 1]$, 
    \begin{equation*}
        \vinfxgb((1 - \lambda) f + \lambda g) 
        \le (1 - \lambda) \cdot \vinfxgb(f) 
        + \lambda \cdot \vinfxgb(g).
    \end{equation*}
\end{enumerate}
\end{proposition}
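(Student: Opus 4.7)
The plan is to dispose of the three parts separately; each follows directly from the definition \eqref{vinfxgb_def} combined with elementary properties of signed measures and the stabilization results of Proposition \ref{prop:fstinf-stabilization}.

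For part (a), I would first invoke Proposition \ref{prop:fstinf-stabilization}(b) to conclude $\F^{d,s_1}_{\infty-\text{ST}} \subseteq \F^{d,s_2}_{\infty-\text{ST}}$, so that both complexity measures are defined on the smaller class. Then, given any representation $f = f^{d,s_1}_{c,\{\nu_{L,U}\}}$ achieving value close to $V^{d,s_1}_{\infty-\text{XGB}}(f)$, I would extend the family by setting $\nu_{L,U} = 0$ (the zero measure) for every $(L,U)$ with $s_1 < |L|+|U| \le s_2$. This yields a legitimate representation for the $s_2$-infimum with the same sum of total variation norms, and passing to the infimum delivers the inequality. For part (b), the key observation is purely combinatorial: because $L, U \subseteq [d]$, we always have $|L|+|U| \le 2d$, so for any $s \ge 2d$ the constraint $0 < |L|+|U| \le s$ in \eqref{vinfxgb_def} reduces to $0 < |L|+|U| \le 2d$. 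Proposition \ref{prop:fstinf-stabilization}(c) further ensures that the underlying domain $\fstinf$ stabilizes for $s \ge d$, so the two infima for $s \ge 2d$ are taken over identical families of representations and must coincide.

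For part (c), I would run the standard near-infimum-selection argument. Given $\epsilon > 0$, pick representations $f = f^{d,s}_{c_f, \{\nu^f_{L,U}\}}$ and $g = f^{d,s}_{c_g, \{\nu^g_{L,U}\}}$ whose sums exceed $\vinfxgb(f)$ and $\vinfxgb(g)$ by at most $\epsilon$. Linearity of the integral in \eqref{f_fst} with respect to $(c, \{\nu_{L,U}\})$ gives
\begin{equation*}
    (1-\lambda) f + \lambda g
    = f^{d,s}_{(1-\lambda) c_f + \lambda c_g,\, \{(1-\lambda)\nu^f_{L,U} + \lambda \nu^g_{L,U}\}},
\end{equation*}
and subadditivity of the total variation norm yields
$\|(1-\lambda)\nu^f_{L,U} + \lambda \nu^g_{L,U}\|_{\text{TV}} \le (1-\lambda)\|\nu^f_{L,U}\|_{\text{TV}} + \lambda \|\nu^g_{L,U}\|_{\text{TV}}$.
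Summing over $(L,U)$ and letting $\epsilon \to 0$ produces the claimed convexity inequality. I do not anticipate a serious obstacle in any of the three parts; the only point worth flagging is that part (b) genuinely relies on the function-class identity from Proposition \ref{prop:fstinf-stabilization}(c), since without it one could only match the infima at the level of representations rather than as functionals on a common domain.
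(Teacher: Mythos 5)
Your proposal is correct and follows essentially the same route as the paper: part (a) by noting that the set of admissible representations enlarges with $s$ (your "pad with zero measures" step is just the explicit form of this), part (b) from the bound $|L|+|U|\le 2d$, and part (c) from linearity of the representation together with convexity of $\|\cdot\|_{\text{TV}}$ via a near-infimizer argument. The only difference is that you spell out the details (including the domain identification via Proposition \ref{prop:fstinf-stabilization}) that the paper leaves implicit.
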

The next result (proved in Appendix \ref{pf:complexity-equivalence}) 
shows that $\vinfxgb(f)$ agrees with the XGBoost penalty $\vxgb(f)$ 
(defined in \eqref{eq:vxgb-original-def}) whenever $f \in \fst$.
\begin{theorem}\label{thm:complexity-equivalence}
    For every $f \in \fst$, we have $\vinfxgb(f) = \vxgb(f)$.
\end{theorem}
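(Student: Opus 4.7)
The proof of Theorem~\ref{thm:complexity-equivalence} splits into two directions: one routine, one requiring careful measure-theoretic analysis.

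\textbf{Easy direction ($\vinfxgb(f) \le \vxgb(f)$).} The plan is to convert any tree decomposition of $f$ directly into a discrete signed-measure representation with matching mass. Given $f = \sum_{k = 1}^K f_k$ with $\wbf_k$ the leaf-weight vector of tree $f_k$, I would decompose each non-constant $f_k$ as a finite linear combination of basis functions $b^{L, U}_{\lbf, \ubf}$, one per leaf, using the path-to-leaf encoding described just after \eqref{basis_func}. The coefficient on each basis function is the corresponding leaf weight $w_{k, \ell}$, while constant trees contribute only to $c$. Collecting these across all trees into discrete signed measures $\nu_{L, U}$ (summing coincident atoms), the resulting representation satisfies $\fcnu \equiv f$ with $\sum_{L, U} \|\nu_{L, U}\|_{\text{TV}} \le \sum_k \|\wbf_k\|_1$, where the inequality accommodates possible sign cancellations when atoms from different trees coincide. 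Taking infima over both sides yields the bound.

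\textbf{Hard direction ($\vxgb(f) \le \vinfxgb(f)$).} The plan is to show that any signed-measure representation of $f \in \fst$ can be replaced by a finite-support discrete representation with no larger TV-norm sum. The key observation is that $f$ is piecewise constant on a finite rectangular grid $\prod_{j = 1}^d T_j$, where $T_j \subseteq \R$ is the set of breakpoints of $f$ along coordinate $j$, and that the contribution $\int b^{L, U}_{\lbf, \ubf}(\xbf) \, d\nu_{L, U}(\lbf, \ubf)$ has a jump at $x_j = t$ precisely when the marginal of $\nu_{L, U}$ along coordinate $j$ carries a suitable atom at $t$. Since $f$ has no jumps outside the grid, the off-grid contributions from all $(L, U)$-families across all coordinates must jointly cancel. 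I would then stratify each $\nu_{L, U}$ according to which of its marginals are atomic (and whether those atoms fall in $T_j$ or elsewhere), replace non-grid pieces by a grid-supported atomic measure with no larger TV-norm, and absorb any ``purely continuous'' residual (whose contribution to $f$ is continuous and hence must be constant, given the piecewise constancy of $f$) into $c$. The resulting representation is supported within each $\nu_{L, U}$ on the finite grid $\prod_{j \in L \cup U} T_j$, yielding a finite-support discrete representation. By Proposition~\ref{prop:alt-charac-fst}, this corresponds to a tree decomposition whose sum of leaf-weight magnitudes matches its TV-norm sum, completing the proof.

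\textbf{Main obstacle.} The delicate step is the replacement of off-grid contributions by grid-supported atoms while controlling the TV-norm sum. In dimension one with $s = 1$, this is automatic, because the representing measure is forced to be supported exactly on the jumps of $f$. In higher dimensions and with $s \ge 2$, off-grid contributions from distinct $(L, U)$-families can cancel one another through the tensor-product structure of $b^{L, U}_{\lbf, \ubf}$ in subtle ways, and the stratification of non-atomic measures with atomic marginals requires care. A tractable route may be an induction on the number of distinct off-grid thresholds appearing across the supports of the $\nu_{L, U}$: isolate the innermost extraneous threshold, use the product structure to argue that the jumps it induces must cancel independently across $(L, U)$-families on each codimension-one slice, and thereby reduce to a representation with fewer off-grid thresholds without increasing the TV budget. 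Handling non-atomic pieces whose marginals concentrate on hyperplanes adds a further stratification layer but does not change the overall logic.
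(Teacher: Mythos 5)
Your easy direction is fine and matches the paper's Lemma~\ref{lem:alt-charac-vxgb}: converting a tree decomposition into finitely supported discrete measures (and back) shows that $\vxgb(f)$ equals the infimum of $\sum\|\nu_{L,U}\|_{\text{TV}}$ over discrete finite-support representations, whence $\vinfxgb(f)\le\vxgb(f)$ is immediate. The hard direction, however, is where the entire content of the theorem lives, and your plan for it has a genuine gap. Your proposed induction is on ``the number of distinct off-grid thresholds appearing across the supports of the $\nu_{L,U}$,'' but a general finite signed Borel measure need not have finite or even countable support, so this quantity is not defined for the objects you must handle; assuming it is finite presupposes the discreteness you are trying to prove. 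Moreover, your base-case intuition is already wrong: even for $d=s=1$ the representing measures are \emph{not} forced onto the jumps of $f$, because the families $\ind(x\ge l)$ and $\ind(x<u)$ can cancel (e.g.\ $\int\ind(x\ge l)\,d\lambda(l)+\int\ind(x<u)\,d\lambda(u)=\lambda(\R)$ for any $\lambda$, including a continuous one). This cross-family cancellation is precisely the obstruction you flag as the ``main obstacle'' and then defer; it is not a technicality but the core of the proof.

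The paper resolves it quite differently. Rather than manipulating supports, it extracts from $\fcnu\equiv f$ only a finite system of linear constraints — the alternating sums $\Delta^S_{\mbf}$ of $f$ over grid cells, expressed in terms of the measures of explicit product sets under $\nu_{L,U}$ (equation \eqref{eq:equivalence-necessary-condition}) — and proves separately (Lemma~\ref{lem:alternating-sum-condition-equivalence}) that matching these alternating sums determines $f$ up to an additive constant. It then gives an explicit aggregation formula producing grid-supported discrete measures $\mu_{L,U}$ from arbitrary $\nu_{L,\widetilde U}$; crucially this formula mixes different index pairs ($\mu_{L,U}$ is built from all $\nu_{L,\widetilde U}$ with $\widetilde U\supseteq U$, with inclusion--exclusion over $L\cap\widetilde U$), because binning mass to the grid changes which indicators become trivially $0$ or $1$ and hence which $(L,U)$ family a contribution belongs to. The TV bound then follows from disjointness of the binning cells, and attainment of the infimum follows because the constraints form a finite linear system. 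If you want to salvage your route, you would need to replace the threshold-counting induction with some such measure-level aggregation argument and supply an analogue of Lemma~\ref{lem:alternating-sum-condition-equivalence} to certify that your grid-supported replacement still represents $f$ (up to a constant), neither of which is present in the proposal.
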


\subsection{Connection Between $\vinfxgb(\cdot)$ and HK Variation}
\label{sec:connection-vinfxgb-hk}
When $d = 1$, we have the following explicit formula—proved in 
Appendix \ref{pf:vinfxgb-1d-formula}—for $\vinfxgb(\cdot)$ in terms of 
total variation (TV) (recall that HK variation coincides with TV 
when $d = 1$). For $f \in \F^{1, s}_{\infty-\text{ST}}$, we have  
\begin{equation}\label{eq:vinfxgb-1d-formula}
    V^{1, s}_{\infty-\text{XGB}}(f) = 
    \begin{cases}
        \tv(f) & \text{if } s = 1, \\
        \frac{1}{2}\left(\tv(f) + |\Delta(f)|\right) 
        & \text{if } s = 2,
    \end{cases}
\end{equation}
where $\Delta(f) := \lim_{x \rightarrow +\infty} f(x) 
- \lim_{x \rightarrow -\infty} f(x)$. Since
$|\Delta(f)| \le \tv(f)$, it follows that
\begin{equation}\label{1dineq}
    \frac{1}{2}\tv(f) \le V^{1, 2}_{\infty-\text{XGB}}(f) 
    \le \tv(f). 
\end{equation}
When $d \ge 2$, it does not seem possible to provide a direct 
formula for $\vinfxgb(\cdot)$ in terms of HK variation, but an 
inequality analogous to \eqref{1dineq} still holds, as shown in the 
next result (proved in Appendix \ref{pf:vinfxgb-hka-rel}).
\begin{proposition}\label{prop:vinfxgb-hka-rel}
    For every $f \in \fstinf$, we have 
    \begin{equation}\label{eq:vinfxgb-hka-rel}
        \frac{1}{\min(2^s - 1, 2^d)} \cdot 
        \Big(\sup_{\abf \in \{-\infty, +\infty\}^d} 
        \hk_{\abf}(f)\Big) \le \vinfxgb(f) 
        \le \inf_{\abf \in \{-\infty, +\infty\}^d} \hk_{\abf}(f).
    \end{equation}
    Both sides of the inequality are tight, in the sense that there 
    exist non-constant functions in $\fstinf$ for which the left and 
    right inequalities hold with equality, respectively.
\end{proposition}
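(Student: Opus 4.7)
The plan is to prove the two inequalities in \eqref{eq:vinfxgb-hka-rel} separately and then exhibit examples witnessing tightness of each.

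\textbf{Upper bound.} Fix any anchor $\abf \in \{-\infty, +\infty\}^d$ and assume $\hk_\abf(f) < \infty$. For each nonempty $S \subseteq [d]$, set $L_S := \{j \in S: a_j = -\infty\}$ and $U_S := \{j \in S: a_j = +\infty\}$. The standard representation of a function with finite Vitali variation as the signed distribution function of a Borel measure (the multivariate extension of the $d = 1$ identity $f(x) - f(-\infty) = \int \ind(x \ge t) \, d\nu(t)$ with $\|\nu\|_{\tv} = \tv(f)$) produces a finite signed measure $\nu_{L_S, U_S}$ on $\R^{|S|}$ with $\|\nu_{L_S, U_S}\|_{\tv} = \vit(f^S_{(a_j, j \in S^c)})$ such that $f^S_{(a_j, j \in S^c)}$, minus its value at the anchor, equals $\int b^{L_S, U_S}_{\lbf, \ubf} \, d\nu_{L_S, U_S}$. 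The Hardy--Krause identity expressing $f$ as $f(\abf)$ plus the sum over nonempty $S$ of the signed increments over anchored sections then assembles these pieces into a valid $\fcnu$ representation of $f$, giving $\vinfxgb(f) \le \sum_{\emptyset \ne S \subseteq [d]} \vit(f^S_{(a_j, j \in S^c)}) = \hk_\abf(f)$, and taking the infimum over $\abf$ yields the right inequality of \eqref{eq:vinfxgb-hka-rel}.

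\textbf{Lower bound.} Fix an arbitrary representation $f = \fcnu$ and an arbitrary anchor $\abf$. For each nonempty $S$, I would linearize $\vit(f^S_{(a_j, j \in S^c)})$ over the integral representation. The restricted basis $b^{L, U}_{\lbf, \ubf}$ (with $x_j \to a_j$ for $j \in S^c$) factors as a $\{0, 1\}$-valued constant coming from the limits over $S^c$ times a product of indicator factors in $x_j$ for $j \in S$; the constant equals $1$ precisely when $L \cap U \subseteq S \subseteq L \cup U$, $a_j = +\infty$ for every $j \in (L \setminus U) \cap S^c$, and $a_j = -\infty$ for every $j \in (U \setminus L) \cap S^c$. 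Under these conditions the Vitali variation of the restricted basis is exactly $2^{|L \cap U|}$, since each $j \in L \cap U$ contributes an interval indicator of variation $2$ while each $j \in L \triangle U \cap S$ contributes a half-line indicator of variation $1$. Interchanging the order of summation gives
\begin{equation*}
    \hk_\abf(f) \le \sum_{(L, U)} 2^{|L \cap U|} \cdot N(L, U, \abf) \cdot \|\nu_{L, U}\|_{\tv},
\end{equation*}
where $N(L, U, \abf)$ is the number of nonempty $S$ satisfying the survival conditions. Case analysis on $|L \cap U|$ bounds $2^{|L \cap U|} \cdot N(L, U, \abf)$ by $\min(2^s - 1, 2^d)$: when $|L \cap U| \ge 1$, the constraint $|L| + |U| \le s$ forces $|L \cup U| \le s - |L \cap U|$, so the product is at most $2^{|L \cup U|} \le 2^{s-1}$; when $|L \cap U| = 0$, the product is at most $2^{|L \cup U|} - 1 \le 2^s - 1$, where the $-1$ accounts for the empty $S$ that would otherwise be admissible; and the $2^d$ bound is immediate from $|L \cup U| \le d$. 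Taking the infimum over representations and the supremum over $\abf$ then completes the left inequality of \eqref{eq:vinfxgb-hka-rel}.

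\textbf{Tightness and main obstacle.} For the upper bound, take $f = \prod_{j = 1}^{\min(s, d)} \ind(x_j \ge 0)$ with anchor $(-\infty, \ldots, -\infty)$: only the top-dimensional section survives, so $\hk_\abf(f) = 1$, matching $\vinfxgb(f) = 1$ from the Dirac representation. For the lower bound with $s \le d$, the same $f$ with anchor $(+\infty, \ldots, +\infty)$ makes every nonempty $S \subseteq \{1, \ldots, s\}$ contribute $1$ to HK variation, giving $\hk_\abf(f) = 2^s - 1 = (2^s - 1) \cdot \vinfxgb(f)$; when $s \ge 2d$ the $d$-dimensional rectangle $f = \prod_{j = 1}^{d} \ind(l_j \le x_j < u_j)$ has $\hk_\abf(f) = 2^d$ for every anchor and $\vinfxgb(f) = 1$ (with the intermediate regime $d < s < 2d$ handled by a hybrid of rectangle and half-line factors). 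The main obstacle will be the bookkeeping in the lower bound: precisely identifying the surviving sections $S$ for each $(L, U, \abf)$, correctly tracking the Vitali factor $2^{|L \cap U|}$, and treating the $S = \emptyset$ exclusion with care so as to recover the sharp $2^s - 1$ (rather than $2^s$) in the regime $s \le d$. The upper bound, in contrast, is essentially a direct consequence of the Hardy--Krause decomposition read against anchor-appropriate basis functions.
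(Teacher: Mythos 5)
Your argument is correct, and the lower bound takes a genuinely different route from the paper's. The paper first rewrites any representation $\fcnu$ in the anchored form $f^{d,s}_{b,\{\mu_S\}}$ built from $\prod_{j\in S}\ind(x_j\ge t_j)$, proves the algebraic bound $\sum_S\|\mu_S\|_{\text{TV}}\le \min(2^s-1,2^d)\sum_{L,U}\|\nu_{L,U}\|_{\text{TV}}$, and then establishes the \emph{exact} identity $\hk_{\abf}(f)=\inf\sum_S\|\mu_S\|_{\text{TV}}$ (in fact with no infimum needed), which requires the uniqueness/equality part of the Aistleitner--Dick representation theorem. You instead bound $\hk_{\abf}(f)$ directly from an arbitrary representation by restricting each basis function to each anchored section, computing its Vitali variation ($2^{|L\cap U|}$ when the section survives, $0$ otherwise), and counting surviving sections; this only uses the easy subadditivity direction $\vit(\int g_\theta\,d\nu)\le\int\vit(g_\theta)\,d|\nu|$ and so is more elementary, at the cost of a combinatorial case analysis that the paper's change-of-basis computation packages differently. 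Your counting does recover the sharp constant: $2^{|L\cap U|}N\le 2^{|L\cup U|}\le 2^{\min(s,d)}$ in general, improved to $2^{s-1}$ when $L\cap U\neq\emptyset$ (since then $|L\cup U|\le s-|L\cap U|$) and to $2^{|L\cup U|}-1\le 2^s-1$ when $L\cap U=\emptyset$ (the exclusion of $S=\emptyset$). The upper bound and the tightness examples follow essentially the paper's path.

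Two imprecisions are worth fixing. First, in the upper bound, the object represented by $\nu_{L_S,U_S}$ is not ``$f^S_{(a_j,j\in S^c)}$ minus its value at the anchor'' (for $|S|\ge 2$ that function does not vanish on the anchored faces and hence is not of the form $\int b^{L_S,U_S}_{\lbf,\ubf}\,d\nu$); it is the full alternating sum $\sum_{R\subseteq S}(-1)^{|S|-|R|}f^R_{(a_j,j\in R^c)}$, whose Vitali variation equals $\vit(f^S_{(a_j,j\in S^c)})$ because the lower-order terms do not affect quasi-volumes over $S$. Your ``signed increments'' phrasing suggests you intend this, but the displayed claim as written is false. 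Second, for the assembled representation to be admissible you need the terms with $|S|>s$ to vanish (so that $|L_S|+|U_S|=|S|\le s$); this holds because $f\in\fstinf$ satisfies the interaction restriction \eqref{eq:interaction-restriction-cond}, forcing $\vit(f^S_{(a_j,j\in S^c)})=0$ for $|S|>s$, but it should be said. Finally, note that the representation theorem on the unbounded domain $\R^{|S|}$ with $\|\nu\|_{\text{TV}}=\vit$ exactly is not off-the-shelf — the paper proves it by a limiting argument over compact rectangles — so your one-line invocation hides real work, though the result is true.
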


An important distinction between $\vinfxgb(\cdot)$ and HK variation 
is that HK variation is inherently asymmetric, whereas 
$\vinfxgb(\cdot)$ is symmetric. For example, when $d = s = 2$ and 
$\abf = (-\infty, -\infty)$, for any $t_1, t_2 \in \R$, we have
\begin{equation*}
    \hk_{\abf}\big((x_1, x_2) \mapsto 
    \ind(x_1 \ge t_1, x_2 \ge t_2)\big) = 1,
\end{equation*}
while 
\begin{equation*}
    \hk_{\abf}\big((x_1, x_2) \mapsto 
    \ind(x_1 < t_1, x_2 \ge t_2)\big) = 2
    \ \text{ and } \
    \hk_{\abf}\big((x_1, x_2) \mapsto 
    \ind(x_1 < t_1, x_2 < t_2)\big) = 3.
\end{equation*}
Similar asymmetry arises for other choices of $\abf$. In contrast, 
for $\vinfxgb(\cdot)$, we have
\begin{equation*} 
\begin{aligned}
    &\vinfxgb\big((x_1, x_2) \mapsto 
    \ind(x_1 \ge t_1, x_2 \ge t_2)\big) 
    = \vinfxgb\big((x_1, x_2) \mapsto 
    \ind(x_1 < t_1, x_2 \ge t_2)\big) \\
    &\quad = 
    \vinfxgb\big((x_1, x_2) \mapsto 
    \ind(x_1 \ge t_1, x_2 < t_2)\big) 
    = \vinfxgb\big((x_1, x_2) \mapsto 
    \ind(x_1 < t_1, x_2 < t_2)\big) = 1.
\end{aligned}
\end{equation*}

This asymmetry in HK variation arises from the presence of an anchor 
point. HK variation anchors the function at a single corner of the 
domain, thereby inducing asymmetry. Consequently, estimation results 
based on HK variation as a regularization penalty may change if the 
anchor point is moved to another corner or, equivalently, if some 
coordinate axes of the domain are flipped. By contrast, 
$\vinfxgb(\cdot)$ is invariant to axis flipping, as formalized in 
the next proposition (proved in Appendix \ref{pf:vinfxgb-symmetry}), 
which suggests that $\vinfxgb(\cdot)$ provides a more natural 
regularizer than HK variation $\hk_{\abf}(\cdot)$.

\begin{proposition}\label{prop:vinfxgb-symmetry}
    Let $f \in \fstinf$. Fix $j \in [d]$ and $t_j \in \R$, 
    and define $g: \R^d \to \R$ by
    \begin{equation*}
        g(x_1, \dots, x_d) 
        = f(x_1, \dots, x_{j-1}, t_j - x_j, x_{j+1}, \dots, x_d) 
        \quad \text{for } (x_1, \dots, x_d) \in \R^d.
    \end{equation*}
    Then, $\vinfxgb(g) = \vinfxgb(f)$.
\end{proposition}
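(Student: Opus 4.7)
My plan is to prove both inequalities $\vinfxgb(g) \le \vinfxgb(f)$ and $\vinfxgb(f) \le \vinfxgb(g)$ by constructing representations directly. Because the axis flip $x_j \mapsto t_j - x_j$ is an involution, the two inequalities are symmetric, so I will focus on $\vinfxgb(g) \le \vinfxgb(f)$; the reverse follows by applying the same argument with $f$ and $g$ interchanged. The strategy is to take any $\varepsilon$-near-optimal representation of $f$ in the form \eqref{f_fst}, push each signed measure forward through the flip, and verify that the resulting collection represents $g$ with an unchanged total-variation sum.

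Concretely, fix $\varepsilon > 0$ and choose a representation $f = \fcnu$ with $\sum_{0 < |L| + |U| \le s} \|\nu_{L, U}\|_{\tv} \le \vinfxgb(f) + \varepsilon$. For each index pair $(L, U)$, I will build a new pair $(L', U')$ and measure $\nu'_{L', U'}$ by cases on the membership of $j$. If $j \notin L \cup U$, the basis function $b^{L, U}_{\lbf, \ubf}$ is independent of $x_j$, so I keep $(L', U') = (L, U)$ and $\nu'_{L', U'} = \nu_{L, U}$. If $j \in L \setminus U$, I move $j$ from $L$ to $U$ by setting $(L', U') = (L \setminus \{j\}, U \cup \{j\})$ and taking $\nu'_{L', U'}$ to be the pushforward of $\nu_{L, U}$ under the affine bijection sending the $j$-th $\lbf$-coordinate $l_j$ to the new $j$-th $\ubf$-coordinate $t_j - l_j$; the case $j \in U \setminus L$ is symmetric. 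If $j \in L \cap U$, I keep $(L', U') = (L, U)$ but apply $(l_j, u_j) \mapsto (t_j - u_j, t_j - l_j)$, swapping and flipping the two $j$-th components. Each pushforward is an affine bijection with Jacobian $\pm 1$, so $\|\nu'_{L', U'}\|_{\tv} = \|\nu_{L, U}\|_{\tv}$ index by index, and clearly $|L'| + |U'| = |L| + |U| \le s$.

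The key pointwise identities used to verify $f_{c, \{\nu'_{L', U'}\}} \equiv g$ are $\ind(t_j - x_j \ge l_j) = \ind(x_j \le t_j - l_j)$ and $\ind(t_j - x_j < u_j) = \ind(x_j > t_j - u_j)$; after right-continuous modification in $x_j$ these become $\ind(x_j < t_j - l_j)$ and $\ind(x_j \ge t_j - u_j)$, precisely the factor types indexed by the relabeled $(L', U')$. Substituting and applying change of variables confirms the identity and yields $\vinfxgb(g) \le \sum \|\nu'_{L', U'}\|_{\tv} \le \vinfxgb(f) + \varepsilon$, and the result follows on letting $\varepsilon \downarrow 0$. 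The hardest part will be exactly this right-continuity subtlety: the literal composition $g(\xbf) = f(\ldots, t_j - x_j, \ldots)$ is left-continuous rather than right-continuous in $x_j$ at atoms of the measures $\nu_{L, U}$, so when any $\nu_{L, U}$ is atomic the constructed representation matches $g$ only after passing to a right-continuous modification on a measure-zero set of $x_j$ values. I plan to handle this either by first proving the result for non-atomic $\nu_{L, U}$ and passing to the general case via a density/approximation argument, or by interpreting the proposition modulo the canonical right-continuous representative of $g$.
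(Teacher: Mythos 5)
Your proposal is correct and follows essentially the same route as the paper: the paper likewise reduces to one inequality by symmetry and defines the new measures as pushforwards of the old ones under the coordinate flip, swapping the roles of $L$ and $U$ for coordinate $j$ and using $(l_j, u_j) \mapsto (t_j - u_j, t_j - l_j)$ when $j \in L \cap U$, with total variation preserved index by index. The right-continuity subtlety you flag is genuine — the literal composition $f(\ldots, t_j - x_j, \ldots)$ need not lie in $\fstinf$ when the measures have atoms (e.g. $\ind(x \ge 0) \mapsto \ind(x \le 0)$) — and the paper's ``one readily checks'' silently adopts your second resolution, reading $g$ as its right-continuous representative.
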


Additional insight into the asymmetry of HK variation, contrasted 
with the symmetry of $\vinfxgb(\cdot)$, is provided in 
Section \ref{inf_conv}.

\section{Optimization Equivalence Between XGBoost and 
$\eqref{lse_fstinf}$}
\label{sec:optimization}

In this section, we analyze the optimization problems 
\eqref{xgb_opti} and \eqref{lse_fstinf}. Our first result proves 
the existence of solutions to both problems and shows that any 
solution to \eqref{xgb_opti} is simultaneously a solution to 
\eqref{lse_fstinf}. Consequently, XGBoost can be viewed as 
implicitly optimizing over the broader class $\fstinf$, which 
contains smooth functions as well as piecewise constant ones.
\begin{theorem}\label{thm:xgb-opti-lse}
    There exists a solution to \eqref{lse_fstinf} that is also a
    solution to \eqref{xgb_opti}. Moreover, every solution to
    \eqref{xgb_opti} is also a solution to \eqref{lse_fstinf}. 
\end{theorem}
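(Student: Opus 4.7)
The plan is to reduce the theorem to a single key claim: the infimum of the objective in \eqref{lse_fstinf} over $\fstinf$ equals the infimum of the objective in \eqref{xgb_opti} over $\fst$. Since the two objectives agree on $\fst$ by Theorem \ref{thm:complexity-equivalence} and $\fst \subseteq \fstinf$, the inequality $\inf_{\fstinf} \le \inf_{\fst}$ is immediate, so the crux is to show that every $f \in \fstinf$ can be matched at the $n$ design points by some $f' \in \fst$ with $\vxgb(f') \le \vinfxgb(f) + \epsilon$ for arbitrary $\epsilon > 0$.

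\textbf{Step 1 (discretization).} I would fix $f \in \fstinf$ and $\epsilon > 0$, choose a representation $f = \fcnu$ with $\sum_{L, U} \|\nu_{L, U}\|_{\tv} \le \vinfxgb(f) + \epsilon$, and for each $(L, U)$ with $0 < |L| + |U| \le s$ partition $\R^{|L|+|U|}$ into cells $\{C_{L, U, k}\}_k$ obtained as products of half-open intervals determined on each axis by the sorted data coordinates $\{x_j^{(i)}: i \in [n]\}$ (the $l_j$-axis for $j \in L$ and the $u_j$-axis for $j \in U$). By construction, $b^{L, U}_{\lbf, \ubf}(\xbf^{(i)})$, viewed as a function of $(\lbf, \ubf)$, is constant on each cell for every $i \in [n]$. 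After choosing a representative $(\lbf_{L, U, k}, \ubf_{L, U, k}) \in C_{L, U, k}$, I would set
$$\nu'_{L, U} := \sum_k \nu_{L, U}(C_{L, U, k}) \, \delta_{(\lbf_{L, U, k}, \ubf_{L, U, k})}.$$
For every $i$, both $\int b^{L, U}_{\lbf, \ubf}(\xbf^{(i)}) \, d\nu'_{L, U}$ and $\int b^{L, U}_{\lbf, \ubf}(\xbf^{(i)}) \, d\nu_{L, U}$ reduce to the same cellwise sum and therefore coincide; moreover $\|\nu'_{L, U}\|_{\tv} = \sum_k |\nu_{L, U}(C_{L, U, k})| \le \sum_k |\nu_{L, U}|(C_{L, U, k}) = \|\nu_{L, U}\|_{\tv}$. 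By Proposition \ref{prop:alt-charac-fst}, $f' := f_{c, \{\nu'_{L, U}\}}$ belongs to $\fst$; it matches $f$ at each $\xbf^{(i)}$ and, by Theorem \ref{thm:complexity-equivalence}, satisfies $\vxgb(f') = \vinfxgb(f') \le \sum_{L, U} \|\nu'_{L, U}\|_{\tv} \le \vinfxgb(f) + \epsilon$. Sending $\epsilon \downarrow 0$ gives $\inf_{\fst} \le \inf_{\fstinf}$ and hence equality of the two infima.

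\textbf{Steps 2--3 (existence and conclusion).} Applying Step 1 to $f \in \fst$ itself shows that the infimum in \eqref{xgb_opti} equals the minimum of the finite-dimensional convex program
$$\min_{c, \, \{c_{L, U, k}\}} \bigg\{ \sum_{i=1}^n \Bigl(y_i - c - \sum_{L, U, k} c_{L, U, k} \, b^{L, U}_{\lbf_{L, U, k}, \ubf_{L, U, k}}(\xbf^{(i)})\Bigr)^2 + \alpha \sum_{L, U, k} |c_{L, U, k}| \bigg\}.$$
This program is continuous, convex, and coercive (the $\ell^1$ penalty bounds the $c_{L, U, k}$ and the quadratic loss bounds $c$), hence admits a minimizer whose associated $f^* \in \fst$ attains the infimum in \eqref{xgb_opti}. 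Because the two infima coincide and $\vxgb(f^*) = \vinfxgb(f^*)$, $f^*$ is also a solution of \eqref{lse_fstinf}; the identical reasoning applied to any other optimizer of \eqref{xgb_opti} yields the second assertion.

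\textbf{Main obstacle.} The substantive content is Step 1: for each $\nu_{L, U}$, building a finitely supported discrete measure whose integrals against $b^{L, U}_{\lbf, \ubf}(\xbf^{(i)})$ match those of $\nu_{L, U}$ for every $i$ without inflating the total variation. Matching integrals hinges on the basis indicators being piecewise constant in $(\lbf, \ubf)$ with jumps only at the data coordinates, and the total-variation bound rests on the elementary inequality $|\nu(C)| \le |\nu|(C)$. The only technical subtlety is to choose the half-open form of the cells so that the $\ge$ and $<$ conventions of the indicators in \eqref{basis_func} are respected uniformly on each cell.
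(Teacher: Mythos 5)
Your proposal is correct and follows essentially the same route as the paper: the paper also reduces both problems to a common finite-dimensional $\ell^1$-penalized least squares problem via a discretization lemma (Lemma \ref{lem:discretization} and Lemma \ref{lem:reduction-to-discrete-measures}) that aggregates each $\nu_{L,U}$ onto cells determined by the observed covariate values, matching $f$ at the design points without increasing total variation. The only differences are cosmetic: you keep the unbounded cells and work with an $\epsilon$-near-optimal representation, whereas the paper marginalizes the outer cells into lower-order terms and establishes exact attainment of the infimum in $\vinfxgb$ (which it needs later for the risk analysis, but not for this theorem).
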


In standard XGBoost implementations, split thresholds for 
regression trees are typically restricted to midpoints between 
observed covariate values. More precisely, for each coordinate 
$j$, split thresholds are chosen from the midpoints between 
consecutive observed values of the $j^{\text{th}}$ covariate. Let 
$v^{(j)}_1 < \dots < v^{(j)}_{n_j}$ denote 
the distinct observed values of the $j^{\text{th}}$ covariate 
$x_j$, sorted in increasing order. Note that
\begin{equation*}
    \big\{v^{(j)}_1, \dots, v^{(j)}_{n_j}\big\} 
    = \big\{x^{(1)}_j, \dots, x^{(n)}_j\big\}
\end{equation*}
where $x^{(i)}_j$ denotes the $j^{\text{th}}$ coordinate of the 
$i^{\text{th}}$ data point $\xbf^{(i)}$. Let $\fstmid$ denote the 
subclass of $\fst$ consisting of finite sums of (right-continuous) 
trees with depth at most $s$, where each individual tree restricts 
split thresholds on the $j^{\text{th}}$ coordinate to the set:
\begin{equation*}
    \Big\{ (v^{(j)}_1 + v^{(j)}_2)/2, \dots, 
    (v^{(j)}_{n_j - 1} + v^{(j)}_{n_j})/2 \Big\}.  
\end{equation*}
Then, the XGBoost algorithm can also be viewed as a greedy 
procedure for solving: 
\begin{equation}\label{xgb_opti_mid}
    \argmin_{f} \Big\{\sum_{i=1}^n 
    \big(y_i - f(\xbf^{(i)})\big)^2 + \alpha \vxgb(f): 
    f \in \fstmid \Big\}.
\end{equation}
The following result (proved in Appendix \ref{pf:xgb-opti-lse-mid}) 
shows that the problem \eqref{lse_fstinf} is also closely related 
to \eqref{xgb_opti_mid}.
\begin{theorem}\label{thm:xgb-opti-lse-mid}
    There exists a solution to \eqref{lse_fstinf} that is also 
    a solution to \eqref{xgb_opti_mid}. Moreover, every solution 
    to \eqref{xgb_opti_mid} is also a solution to 
    \eqref{lse_fstinf}.   
\end{theorem}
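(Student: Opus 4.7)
The plan is to exploit the inclusion $\fstmid \subseteq \fst \subseteq \fstinf$ together with the midpoint structure of the data to reduce the theorem to a rounding argument. The key intermediate step is to associate to every $f \in \fstinf$ some $\tilde f \in \fstmid$ with (i) $\tilde f(\xbf^{(i)}) = f(\xbf^{(i)})$ for all $i \in [n]$, and (ii) $\vxgb(\tilde f) \le \vinfxgb(f)$. Once this rounding is in hand, Theorem \ref{thm:xgb-opti-lse-mid} follows from a short two-sided comparison and an appeal to Theorem \ref{thm:xgb-opti-lse}.

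To construct $\tilde f$, I would start from a representation $f = \fcnu$ as in \eqref{f_fst} and act threshold-by-threshold. For fixed $(L, U)$ and $j \in L$, the value of $\ind(v^{(j)}_k \ge l_j)$ depends only on which of the intervals $(-\infty, v^{(j)}_1],\, (v^{(j)}_1, v^{(j)}_2],\, \dots,\, (v^{(j)}_{n_j - 1}, v^{(j)}_{n_j}],\, (v^{(j)}_{n_j}, \infty)$ contains $l_j$. Define a rounding map $\pi_j$ that sends $l_j \in (v^{(j)}_k, v^{(j)}_{k+1}]$ to the midpoint $(v^{(j)}_k + v^{(j)}_{k+1})/2$, treats $l_j \le v^{(j)}_1$ by dropping the factor $\ind(x_j \ge l_j)$ (since it is identically $1$ on the data and thus reduces the basis function to one indexed by $L \setminus \{j\}$), and discards the whole basis function when $l_j > v^{(j)}_{n_j}$ (since it vanishes on all data). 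An analogous rule applies to each $u_j$. Applying these rules to the measure $\nu_{L, U}$ and collecting terms yields a representation of a new function $\tilde f = c' + \sum \int b^{L', U'}_{\lbf, \ubf}\, d\tilde\nu_{L', U'}$ supported on the midpoint grid, hence $\tilde f \in \fstmid$; and by construction $\tilde f(\xbf^{(i)}) = f(\xbf^{(i)})$ for all $i$.

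The main obstacle is the total-variation bookkeeping needed to verify $\vxgb(\tilde f) \le \vinfxgb(f)$. Since the pushforward of a signed measure has total variation at most that of the original, rounding alone cannot increase $\sum_{L, U} \|\nu_{L, U}\|_{\tv}$; simplifications only shuffle mass into coefficients of basis functions with strictly smaller $|L'| + |U'|$, and drops subtract mass entirely. I expect it is cleanest to perform the rounding one coordinate at a time, iterating over $j \in L \cup U$ and arguing inductively that each step preserves data-point values and does not increase the total TV of the representation. Combining the resulting bound $\sum_{L', U'} \|\tilde\nu_{L', U'}\|_{\tv} \le \sum_{L, U} \|\nu_{L, U}\|_{\tv}$ with Theorem \ref{thm:complexity-equivalence} (giving $\vxgb(\tilde f) = \vinfxgb(\tilde f)$) and then taking the infimum over representations of $f$ gives $\vxgb(\tilde f) \le \vinfxgb(f)$.

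The two assertions of the theorem then follow quickly. For any $f \in \fstinf$, the rounded $\tilde f \in \fstmid$ has the same squared-error term and no larger penalty, so the optimal value of \eqref{xgb_opti_mid} is at most that of \eqref{lse_fstinf}. Conversely, $\fstmid \subseteq \fstinf$ together with Theorem \ref{thm:complexity-equivalence} gives the reverse inequality, so the two optimal values coincide. Consequently every solution of \eqref{xgb_opti_mid} attains the common minimum over $\fstinf$ and is therefore a solution of \eqref{lse_fstinf}. Existence of a common solution is obtained by applying the rounding to any solution of \eqref{lse_fstinf}, which exists by Theorem \ref{thm:xgb-opti-lse}.
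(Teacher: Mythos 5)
Your proposal is correct and follows essentially the same route as the paper: the midpoint rounding of thresholds (with always-true factors dropped into lower-order terms, always-false basis functions discarded, and pushforward/aggregation arguments showing the total variation cannot increase) is precisely the content of the paper's Lemma \ref{lem:discretization}, proved via Lemma \ref{lem:reduction-to-discrete-measures}, and the two-sided comparison of optimal values over $\fstmid$ and $\fstinf$ is exactly how the paper deduces the theorem. The one caveat is the existence step: since the paper obtains Theorem \ref{thm:xgb-opti-lse} by the same argument as this theorem, citing it here risks circularity, and it is cleaner to get existence directly from the rounding itself — it reduces \eqref{xgb_opti_mid} to a finite-dimensional $\ell^1$-penalized least squares problem over the midpoint lattice, whose minimum is attained.
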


The above pair of theorems is a direct consequence of the following
lemma (proved in Appendix \ref{pf:discretization}), 
which asserts that for every $f \in \fstinf$, there exists a 
function in $\fstmid$ that agrees with $f$ at every data point 
$\xbf^{(i)}$ and has no greater complexity.
\begin{lemma}\label{lem:discretization}
    For every $f \in \fstinf$, there exists 
    $\fcnu \in \fstmid$ such that
    \begin{enumerate}[label = (\alph*)]
    \item $\nu_{L, U}$ are discrete signed Borel measures supported 
    on the lattices
    \begin{equation}\label{lattice}
        \prod_{j \in L} \Big\{ (v^{(j)}_1 + v^{(j)}_2)/2, \dots, 
        (v^{(j)}_{n_j - 1} + v^{(j)}_{n_j})/2 \Big\} \times 
        \prod_{j \in U} \Big\{ (v^{(j)}_1 + v^{(j)}_2)/2, \dots, 
        (v^{(j)}_{n_j - 1} + v^{(j)}_{n_j})/2 \Big\}
    \end{equation}
    \item $\fcnu(\xbf^{(i)}) = f(\xbf^{(i)})$ for 
    $i = 1, \dots, n$
    \item 
    \begin{equation*}
        \vinfxgb(\fcnu) 
        = \sum_{0 < |L| + |U| \le s} \|\nu_{L, U}\|_{\text{TV}}
        \le \vinfxgb(f).
    \end{equation*}
    \end{enumerate}
\end{lemma}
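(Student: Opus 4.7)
The plan is to discretize any finite signed-measure representation of $f$ onto the midpoint lattice by partitioning each threshold space into cells on which the relevant basis functions take constant values at the data points, then pushing the mass in each cell to a representative lattice atom (or, when the corresponding basis function reduces to a lower-order one or a constant, cascading it down accordingly). Applying this construction to an approximately optimal representation and passing to the limit via finite-dimensional compactness will yield the desired $\fcnu$.

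The first step is to set up the partition. For each pair $(L,U)$ with $0 < |L|+|U|\le s$ and each coordinate $j$, I would partition the threshold axis into the intervals $(v^{(j)}_k, v^{(j)}_{k+1}]$ for $k=0,1,\dots,n_j$ (using $v^{(j)}_0:=-\infty$ and $v^{(j)}_{n_j+1}:=+\infty$), so that $\R^{|L|+|U|}$ splits into a product of such intervals. A direct check shows that on each such interval the indicator $\ind(x^{(i)}_j \ge l_j)$ (respectively $\ind(x^{(i)}_j < u_j)$), as a function of $l_j$ (resp.\ $u_j$), is identically $1$, identically $0$, or constantly equal to its value at the lattice midpoint from \eqref{lattice}. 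Hence on each product cell $C$, the map $(\lbf,\ubf)\mapsto b^{L,U}_{\lbf,\ubf}(\xbf^{(i)})$ is constant, and coincides at every data point with a reduced basis function $b^{L',U'}_{\tilde\lbf,\tilde\ubf}$ obtained by deleting always-on coordinates and substituting lattice midpoints for midpoint coordinates, or with $0$ if any coordinate is always-off.

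The second step is the pushforward itself. Given $\epsilon>0$, pick a representation with $\sum_{L,U}\|\nu_{L,U}\|_{\tv}\le \vinfxgb(f)+\epsilon$, and, for every cell $C$ in the partition of $\R^{|L|+|U|}$, deposit the signed mass $\nu_{L,U}(C)$ into the atom of a new discrete measure $\tilde\nu^{(\epsilon)}_{L',U'}$ at the corresponding lattice midpoints; mass routed to $L'=U'=\emptyset$ is absorbed into the constant term $\tilde c^{(\epsilon)}$, and mass in a cell with any always-off factor is discarded. By the constancy of the integrand on each cell, the resulting function $f_{\tilde c^{(\epsilon)},\tilde\nu^{(\epsilon)}}\in\fstmid$ matches $f$ at every data point, and the triangle inequality yields
\begin{equation*}
    \sum_{L',U'} \|\tilde\nu^{(\epsilon)}_{L',U'}\|_{\tv}
    \;\le\; \sum_{L,U}\sum_{C}|\nu_{L,U}(C)|
    \;\le\; \sum_{L,U}\|\nu_{L,U}\|_{\tv}
    \;\le\; \vinfxgb(f)+\epsilon.
\end{equation*}

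Because the lattice is finite, the measures $\tilde\nu^{(\epsilon)}_{L',U'}$ lie in a finite-dimensional space in which closed TV-balls are compact, and $|\tilde c^{(\epsilon)}|$ is controlled by the matching equality $f_{\tilde c^{(\epsilon)},\tilde\nu^{(\epsilon)}}(\xbf^{(1)})=f(\xbf^{(1)})$ together with the preceding display. Extracting a subsequence as $\epsilon\downarrow 0$ produces a limit $\fcnu\in\fstmid$ satisfying (a) and (b) with $\sum_{L,U}\|\nu_{L,U}\|_{\tv}\le \vinfxgb(f)$; the equality $\vinfxgb(\fcnu)=\sum_{L,U}\|\nu_{L,U}\|_{\tv}$ in (c) can then be enforced by replacing the representation with one that attains the minimum TV sum among all lattice-supported representations of $\fcnu$, which exists by the same compactness argument and equals $\vinfxgb(\fcnu)$ because the pushforward shows that the infimum over arbitrary $\fstinf$-representations is realized on the lattice. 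The main technical difficulty lies in the pushforward step: one must verify that cells from different $(L,U)$'s that cascade onto the same lattice atom combine correctly without breaking the data-point agreement or the global TV bound, but once the partition and reduction structure above are in place everything is a matter of the triangle inequality.
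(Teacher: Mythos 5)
Your pushforward construction is essentially the paper's own route (its Lemma \ref{lem:reduction-to-discrete-measures}): partition each threshold axis by the observed values plus the two unbounded tails, note that on each cell the basis function is constant at every data point and reduces to a lower-order lattice basis function (or to a constant, or to zero), and redistribute the mass accordingly. The cascading bookkeeping you flag as the "main technical difficulty" is real but works out exactly as you expect, and your compactness argument correctly yields a lattice-supported $\fcnu$ satisfying (a), (b), and the \emph{inequality} in (c).

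The gap is in your final step, where you claim the \emph{equality} $\vinfxgb(\fcnu) = \sum \|\nu_{L,U}\|_{\text{TV}}$ follows "because the pushforward shows that the infimum over arbitrary $\fstinf$-representations is realized on the lattice." It does not: the pushforward of a representation of a function $g$ is a lattice representation of a function that agrees with $g$ only at the data points $\xbf^{(1)}, \dots, \xbf^{(n)}$, not everywhere. Since the design matrix of lattice basis functions evaluated at the data points is in general not injective, a competing non-lattice representation of your limit function $g$ could, after pushforward, represent a \emph{different} element of $\fstmid$; so minimizing the TV sum over lattice representations of the fixed function $g$ need not recover $\vinfxgb(g)$, and your justification collapses. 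The repair is to reorder the optimization, as the paper does: define $W$ as the minimum of $\sum\|\nu_{L,U}\|_{\text{TV}}$ over lattice-supported coefficients subject \emph{only} to matching $f$ at the data points (attained by your compactness argument, and $\le \vinfxgb(f)$ by your pushforward), and let $\fcnu$ be the function produced by that minimizer. Then any representation of $\fcnu$ whatsoever matches $\fcnu$, hence $f$, at the data points, so its pushforward is feasible for the same minimization and has TV sum $\ge W$; combined with $\vinfxgb(\fcnu) \le W$ from the chosen representation, this forces $\vinfxgb(\fcnu) = W$ and closes condition (c).
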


Lemma \ref{lem:discretization} continues to hold even if the
midpoint $(v^{(j)}_{m_j} + v^{(j)}_{m_j + 1})/2$ is replaced by 
any other point in the interval $(v^{(j)}_{m_j}, v^{(j)}_{m_j + 1})$. 
We use midpoints because this choice aligns with standard XGBoost 
implementations. By default, XGBoost uses midpoints when the dataset 
is small, although it switches to quantile-based splits for larger 
datasets due to computational limitations 
(see \cite{xgboost_params_docs}).

The equality in the first part of condition (c) deserves special
attention. Since $\vinfxgb(\cdot)$ is defined as an infimum over
all admissible integral representations \eqref{f_fst}, in general, 
only an inequality holds between $\vinfxgb(\fcnu)$ and the sum of 
the total variations of the signed measures $\nu_{L, U}$. However, 
for the functions constructed in Lemma \ref{lem:discretization}, 
equality is attained. This eliminates the need to take an infimum 
and allows the penalty term to be expressed explicitly as a sum 
of the total variations of the associated signed measures.

\section{Minimax Risk}
\label{sec:minimax-rate}

In this section, we study the minimax rate of convergence over 
the function class \eqref{fst_V}. Throughout, we assume 
$(\xbf^{(1)}, y_1), \dots, (\xbf^{(n)}, y_n)$ are generated  
according to the model
\begin{equation*}
    y_i = f^*(\xbf^{(i)}) + \xi_i
\end{equation*}
where $f^*$ is the true regression function. We work in the 
random-design setting, in which the covariates $\xbf^{(i)}$ are 
assumed to be i.i.d. with density $p_0$ supported on a 
compact rectangle and bounded from above:
\begin{equation}\label{density-compact-support}
    p_0(\xbf) = 0 \ \ \text{when } \xbf \notin 
    \prod_{j = 1}^{d} \Big[-\frac{M_j}{2}, \frac{M_j}{2}\Big] 
    \quad \text{ and } \quad  
    B := M_1 \cdots M_d \cdot \sup_{\xbf} p_0(\xbf) < \infty.
\end{equation}
Note that when $p_0$ is the density of the uniform distribution on 
$\prod_{j = 1}^{d} [-M_j/2, M_j/2]$, we have $B = 1$.
We further assume that the error terms $\xi_i$ are i.i.d., 
mean-zero, and independent of the covariates $\xbf^{(i)}$.

The minimax risk over the class \eqref{fst_V} is defined as 
\begin{equation}\label{minimax-risk}
    \minimax := 
    \inf_{\hat{f}_n} \sup_{\substack{f^* \in \fstinf \\ 
    \vinfxgb(f^*) \le V}} 
    \E \|\hat{f}_n - f^*\|_{p_0, 2}^2,
\end{equation}
where the infimum is taken over all estimators $\hat{f}_n$ 
based on the data $(\xbf^{(1)}, y_1), \dots, (\xbf^{(n)}, y_n)$. 
Here, $\|\hat{f}_n - f^*\|_{p_0, 2}$ denotes the $L^2(p_0)$ loss 
between $\hat{f}_n$ and $f^*$: 
\begin{equation*}
    \|\hat{f}_n - f^*\|_{p_0, 2}^2
    := \int_{\R^d} (\hat{f}_n - f^*)^2(\xbf) 
    \cdot p_0(\xbf) \, d\xbf.
\end{equation*}

The first main result of this section establishes an upper bound 
on the minimax risk \eqref{minimax-risk}. We obtain this bound 
by analyzing a specific least squares estimator over the class 
\eqref{fst_V}. Specifically, we consider the least squares 
estimator over \eqref{fst_V} subject to the additional 
restrictions that the associated signed measures $\nu_{L, U}$ 
satisfy condition (a) and the equality in condition (c) of 
Lemma \ref{lem:discretization} in Section \ref{sec:optimization}:
\begin{equation}\label{xgb_opti_mid_const}
\begin{aligned}
    &\lsefstinf \in \argmin_{f} 
    \bigg\{\sum_{i = 1}^n \big(y_i - f(\xbf^{(i)})\big)^2: 
    f \equiv f_{c, \{\nu_{L, U}\}} \in \fstinf, \\
    &\qquad \qquad \qquad \qquad \qquad 
    \sum_{0 < |L| + |U| \le s} \|\nu_{L, U}\|_{\text{TV}} 
    \le V, 
    \text{ and } \nu_{L, U} \text{ satisfy condition (a) of 
    Lemma \ref{lem:discretization}} \bigg\}.
\end{aligned}
\end{equation}
Lemma \ref{lem:discretization} guarantees that $\lsefstinf$
also minimizes the least squares criterion over the original 
class \eqref{fst_V}. In other words, it is a least 
squares estimator over the class \eqref{fst_V}:
\begin{equation*}
    \lsefstinf \in \argmin_{f} 
    \bigg\{\sum_{i = 1}^n \big(y_i - f(\xbf^{(i)})\big)^2: 
    f \in \fstinf \text{ and } \vinfxgb(f) \le V \bigg\}.
\end{equation*}
One can further verify that for each $V$, there exists 
$\alpha$, possibly depending on both $V$ and the data, such that 
$\lsefstinf$ is also a solution to the original penalized 
formulation \eqref{lse_fstinf}. More precisely, if $\alpha$ is 
chosen as the solution to the Lagrange dual problem of 
\eqref{xgb_opti_mid_const}, then $\lsefstinf$ is also a solution 
to the penalized version of \eqref{xgb_opti_mid_const} and hence 
a solution to \eqref{lse_fstinf} (recall 
Lemma \ref{lem:discretization}).

The following theorem (proved in Appendix \ref{pf:risk-upper-bound}) 
provides an upper bound on the risk of $\lsefstinf$. For this 
result, we impose an additional assumption on the error terms 
$\xi_i$. Specifically, we assume that they have finite 
$L^{3, 1}$ norm:
\begin{equation}\label{error-l31-norm}
    \| \xi_i \|_{3, 1} 
    := \int_{0}^{\infty} \P(|\xi_i| > t)^{1/3} \, dt 
    < \infty.
\end{equation}
This norm condition is mild: it is stronger than requiring a 
finite $L^3$ norm but weaker than requiring a finite 
$L^{3 + \epsilon}$ norm for any $\epsilon > 0$ (see, e.g., 
\citet[Chapter 1.4]{loukas2014classical}).

\begin{theorem}\label{thm:risk-upper-bound}
    Fix a true regression function $f^* : \R^d \rightarrow \R$, 
    not necessarily belonging to $\fstinf$. Suppose that the density
    $p_0$ satisfies \eqref{density-compact-support} and that the 
    error terms $\xi_i$ satisfy \eqref{error-l31-norm}. Then, for 
    every $f_0 \in \fstinf$ with $\vinfxgb(f_0) < V$, we have  
    \begin{equation}\label{thm:risk-upper-bound.eq}
        \E \big[\|\lsefstinf - f^*\|_{p_0, 2}^2 \big] 
        \le C \|f_0 - f^*\|_{p_0, 2}^2 + 
        O\big(d^{4\widebar{s}}(1 + \log d)^{4(\widebar{s} - 1)}
        (V + 1)^2 \cdot n^{-2/3} (\log n)^{4(\widebar{s} - 1)/3}\big),
    \end{equation}
    where $C > 0$ is a universal constant, $\widebar{s} := \min(s, d)$, 
    and the constant factor underlying $O(\cdot)$ depends on $B, s$, 
    the moments of $\xi_i$, and 
    \begin{equation*}
        \sup_{\xbf \in \prod_{j = 1}^{d} [-M_j/2, M_j/2]} 
        |f_0(\xbf) - f^*(\xbf)|.
    \end{equation*}
\end{theorem}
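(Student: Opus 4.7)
The plan is to follow the standard oracle-inequality analysis for constrained least-squares estimators, combining the basic inequality from optimality of $\lsefstinf$ with an empirical-process bound on the noise term, where the latter is controlled through a metric entropy estimate on the constraint set derived from its connection to Hardy--Krause variation via Proposition \ref{prop:vinfxgb-hka-rel}. First, I would apply Lemma \ref{lem:discretization} to the comparator $f_0$ to obtain a feasible competitor in \eqref{xgb_opti_mid_const} that agrees with $f_0$ on the data and has no larger complexity; comparing $\lsefstinf$ against this competitor yields
\[
\|\lsefstinf - f^*\|_n^2 \;\le\; \|f_0 - f^*\|_n^2 + \frac{2}{n} \sum_{i=1}^n \xi_i \bigl(\lsefstinf(\xbf^{(i)}) - f_0(\xbf^{(i)})\bigr),
\]
where $\|\cdot\|_n$ denotes the empirical $L^2$-norm. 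A preliminary uniform bound, using the compact support \eqref{density-compact-support} together with the constraint $\vinfxgb(\lsefstinf) \le V$ and the representation \eqref{f_fst}, restricts attention to a uniformly bounded subclass $\mathcal{F}_{V,M} := \{f \in \fstinf : \vinfxgb(f) \le V,\ \|f\|_\infty \le M\}$ on $\prod_j[-M_j/2, M_j/2]$, where $M$ depends on $V$ and $\sup_{\xbf}|f_0(\xbf) - f^*(\xbf)|$.

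The core technical step would be a bracketing-entropy bound of the form
\[
\log N_{[\,]}\bigl(\epsilon,\ \mathcal{F}_{V,M},\ \|\cdot\|_{L^2(p_0)}\bigr) \;\lesssim\; \frac{V}{\epsilon}\,\bigl(\log(V/\epsilon)\bigr)^{2(\widebar{s}-1)},
\]
up to constants depending on $d$, $s$, $M$, and $B$. Proposition \ref{prop:vinfxgb-hka-rel} embeds $\mathcal{F}_{V,M}$ into an HK-variation ball of radius $O(V)$ under any fixed anchor, while the representation \eqref{f_fst} together with the interaction restriction \eqref{eq:interaction-restriction-cond} from Proposition \ref{prop:fstinf-charac} allows each $f \in \fstinf$ to be decomposed into components depending on at most $\widebar{s}$ coordinates. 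Applying classical HK-entropy estimates (in the spirit of those underlying \citet{fang2021multivariate}) to each such low-dimensional component and combining via a union bound would produce the display above, with log exponent $2(\widebar{s}-1)$ rather than the naive $2(d-1)$; the $d^{4\widebar{s}}$ factor in \eqref{thm:risk-upper-bound.eq} arises from counting the $(L,U)$ components.

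With this entropy bound in hand, the entropy integral satisfies $J(\sigma) := \int_0^\sigma \sqrt{\log N_{[\,]}(\epsilon, \mathcal{F}_{V,M}, \|\cdot\|_{L^2(p_0)})}\,d\epsilon \lesssim \sqrt{V\sigma}\,(\log(1/\sigma))^{\widebar{s}-1}$. I would then control the multiplier empirical process $n^{-1}\sum_i \xi_i g(\xbf^{(i)})$ indexed by $g \in \mathcal{F}_{V,M} - f_0$ by chaining, where the $L^{3,1}$-norm condition \eqref{error-l31-norm} enters naturally through a Han--Wellner-type multiplier moment inequality for heavy-tailed weights. A peeling argument over dyadic shells of $\|\lsefstinf - f_0\|_{L^2(p_0)}$ converts this into the bound $\|\lsefstinf - f^*\|_{L^2(p_0)}^2 \le C\|f_0 - f^*\|_{L^2(p_0)}^2 + O(\sigma_n^2)$, with critical radius $\sigma_n$ solving $\sigma_n^2 \asymp J(\sigma_n)/\sqrt{n}$, giving $\sigma_n^2 \asymp V^{2/3}\, n^{-2/3}\, (\log n)^{4(\widebar{s}-1)/3}$, consistent with \eqref{thm:risk-upper-bound.eq}. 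A standard uniform law of large numbers on $\mathcal{F}_{V,M}$ (which has the same entropy bound) converts the empirical $L^2$-norm on the left-hand side into the population $L^2(p_0)$-norm.

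The main obstacle will be establishing the entropy bound with the sharper log exponent $2(\widebar{s}-1)$ when $s < d$: off-the-shelf HK-entropy estimates yield $2(d-1)$, and extracting the improvement requires a careful decomposition of each $f \in \fstinf$ according to the subsets $(L, U)$ appearing in \eqref{f_fst}, together with an entropy estimate for each low-dimensional component that does not waste the low-interaction structure. A secondary subtlety is matching the heavy-tailed multiplier condition $L^{3,1}$ to the chaining step without rate loss; this particular Lorentz norm (rather than $L^3$ or $L^{3+\epsilon}$) is essentially forced by the $\epsilon^{-1}$ growth of the entropy, and the corresponding moment inequality must be invoked with care. Final bookkeeping then absorbs the $V^{2/3}$ scaling into the more conservative $(V+1)^2$ factor appearing in \eqref{thm:risk-upper-bound.eq}.
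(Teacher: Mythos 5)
Your proposal is correct and follows essentially the same route as the paper: a basic inequality plus peeling over dyadic shells, a bracketing-entropy bound obtained by decomposing each function into components depending on at most $\widebar{s}$ coordinates and applying distribution-function entropy estimates (the paper invokes \citet{gao2013bracketing} directly on the $\mu_S$-representation rather than passing through HK-variation balls, but these are the same underlying bounds), and the Han--Wellner multiplier inequality to exploit the $L^{3,1}$ condition. The only point where your intermediate claims are optimistic is the critical radius $\sigma_n^2 \asymp V^{2/3} n^{-2/3}(\log n)^{4(\widebar{s}-1)/3}$: the empirical-to-population norm conversion for the quadratic process forces a contraction step with Lipschitz constant of order $V+t$, which is precisely where the paper's $(V+1)^2$ dependence comes from, so this is not mere conservative bookkeeping --- but since the theorem only claims $(V+1)^2$, your argument still yields the stated bound.
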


Theorem \ref{thm:risk-upper-bound} is stated in a misspecified setting,
allowing the true function $f^*$ to be arbitrary. If $f^* \in
\fstinf$ and $\vinfxgb(f^*) < V$, then we can take $f_0 = f^*$ in
\eqref{thm:risk-upper-bound.eq} to deduce
\begin{equation*}
    \E \big[\|\lsefstinf - f^*\|_{p_0, 2}^2 \big] 
    \le O\big(d^{4\widebar{s}}(1 + \log d)^{4(\widebar{s} - 1)}
    (V + 1)^2 \cdot n^{-2/3} (\log n)^{4(\widebar{s} - 1)/3}\big),
\end{equation*}
where the constant factor underlying $O(\cdot)$ depends on 
$B, s$, and the moments of $\xi_i$. This shows that when $f^* \in
\fstinf$, the least squares estimator $\lsefstinf$ over the class
\eqref{fst_V} converges to $f^*$ at the rate $n^{-2/3}$, up to
multiplicative logarithmic factors.  
The upper bound depends on the complexity bound $V$ on $\vinfxgb(f^*)$ 
through the factor $(V + 1)^2$, indicating that the accuracy of 
$\lsefstinf$ deteriorates as the complexity of the target function 
increases. It is also worth noting that the dependence on $d$ in 
the bound is polynomial.

\begin{remark}
    Given the relationship between $\vinfxgb(\cdot)$ and 
    HK variation discussed in 
    Section \ref{sec:connection-vinfxgb-hk}, 
    it is natural to compare Theorem \ref{thm:risk-upper-bound} 
    with existing results on HK variation denoising, such as 
    those in \citet{fang2021multivariate}. In particular, 
    Theorem 4.5 of \cite{fang2021multivariate} shows that the 
    least squares estimator under a HK variation constraint 
    also achieves an $n^{-2/3}$ rate of convergence (up to a 
    slightly different multiplicative logarithmic factor). 

    This similarity is not surprising in light of the 
    close connection between HK variation and $\vinfxgb(\cdot)$, 
    especially Proposition \ref{prop:vinfxgb-hka-rel}. However, 
    there are important differences. Theorem 4.5 of 
    \cite{fang2021multivariate} is established under a 
    fixed-design setting, where the design points $\xbf^{(i)}$ 
    form a lattice, whereas our result assumes random designs, 
    which are more relevant in many applications. Also, 
    the analysis of \cite{fang2021multivariate} is restricted to the
    case $s = d$ (in their framework, this means all interaction
    orders between covariates are allowed), while our result holds
    for all $1 \le s \le d$. Moreover, the bounds in
    \cite{fang2021multivariate} do not explicitly specify the 
    dependence on $d$. 
\end{remark}

The following upper bound on the bracketing entropy 
(proved in Appendix \ref{pf:bracketing-entropy-bound}) 
is a key ingredient for the proof of 
Theorem \ref{thm:risk-upper-bound}. Let $\F_{\Mbf}(V)$ denote the 
class of all functions $\fcnu \in \fstinf$ of the form \eqref{f_fst} 
satisfying: 
\begin{enumerate}[label = (\alph*)]
    \item $\nu_{L, U}$ are supported on 
    $\prod_{j \in L} (-M_j/2, M_j/2] 
    \times \prod_{j \in U} (-M_j/2, M_j/2]$
    \item $\sum_{0 < |L| + |U| \le s} \|\nu_{L, U}\|_{\text{TV}} 
    \le V$. 
\end{enumerate}
The class $\F_{\Mbf}(V)$ is not totally bounded, since it contains all
constant functions. We therefore restrict attention to the subclass
\begin{equation*}
    B(V, t) = \{f \in \F_{\Mbf}(V) : \|f\|_{p_0, 2} \le t\}. 
\end{equation*}

\begin{lemma}\label{lem:bracketing-entropy-bound}
There exist constants $C_s > 0$, depending only on $s$, and 
$C_{B, s} > 0$, depending only on $B$ and $s$, such that 
for every $\epsilon, t, V > 0$,
\begin{equation*}
    \log N_{[ \ ]}(\epsilon, B(V, t), \| \cdot \|_{p_0, 2}) 
    \le \log\Big(2 + \frac{C_s(V + t)}{\epsilon}\Big) 
    + C_{B, s} d^{2\widebar{s}}(1 + \log d)^{2(\widebar{s} - 1)}
    \Big(2 + \frac{V}{\epsilon}\Big) 
    \Big[\log\Big(2 + \frac{V}{\epsilon}\Big)\Big]^{2(\widebar{s} - 1)}.
\end{equation*}
Here, $N_{[ \ ]}(\epsilon, \F, \| \cdot \|_{p_0, 2})$ denotes the 
$\epsilon$-bracketing number of the class $\F$ with respect to 
the norm $\| \cdot \|_{p_0, 2}$.
\end{lemma}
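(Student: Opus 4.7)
The plan is to decompose each $f \in B(V, t)$ into a constant plus interaction-indexed pieces, bound the bracketing entropy of each piece separately, and combine by Minkowski-sum subadditivity. I would first write $f = c + \sum_{0 < |L|+|U| \le s} g_{L,U}$ where $g_{L,U}(\xbf) := \int b^{L,U}_{\lbf,\ubf}(\xbf)\, d\nu_{L,U}(\lbf,\ubf)$ and (up to an arbitrarily small slack from the infimum defining $\vinfxgb$) $\sum_{L,U}\|\nu_{L,U}\|_{\tv} \le V$. Since $|g_{L,U}| \le \|\nu_{L,U}\|_{\tv}$ pointwise, $\|\sum g_{L,U}\|_\infty \le V$; combined with $\|f\|_{p_0,2} \le t$ and the fact that $p_0$ is a probability density, this forces $|c| \le V + t$, so a one-dimensional $\epsilon$-grid on $[-(V+t), V+t]$ produces the $\log(2 + C_s(V+t)/\epsilon)$ summand.

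The core technical step is a single-pair bracketing estimate. For each admissible $(L, U)$ with $k := |L|+|U| \le s$ and mass budget $V_0$, I would bound the bracketing entropy of
\begin{equation*}
    \mathcal{G}_{L,U}(V_0) := \Big\{\xbf \mapsto \int b^{L,U}_{\lbf,\ubf}(\xbf)\, d\nu(\lbf,\ubf) : \|\nu\|_{\tv} \le V_0,\ \mathrm{supp}(\nu) \subseteq \prod_{j \in L}(-M_j/2, M_j/2] \times \prod_{j \in U}(-M_j/2, M_j/2]\Big\}
\end{equation*}
by $C_k (V_0/\epsilon)(\log(V_0/\epsilon))^{2(k-1)}$ with $C_k$ polynomial in $k$. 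The approach is direct discretization: overlay a regular grid of spacing $\delta \asymp (\epsilon/V_0)^2$ on the $k$-dimensional support box, snap the thresholds $(\lbf,\ubf)$ of each basis function to the appropriate grid corner chosen so that the inequality direction in each factor $\ind(x_j \ge l_j)$ or $\ind(x_j < u_j)$ is preserved, and control the resulting $L^2(p_0)$ approximation error by the $p_0$-mass of the ``shell'' regions where the two indicator sets differ. A Cauchy--Schwarz bound against $\|\nu\|_{\tv} \le V_0$ converts this into an $L^2(p_0)$ error of order $V_0\sqrt{k\delta \sup p_0} \lesssim \epsilon$. What remains is to bracket the discretized class---a $V_0$-dilate of the symmetric convex hull of $\sim(V_0/\epsilon)^{2k}$ atomic basis functions subject to an $\ell^1$-budget of $V_0$---which I would handle by a coordinate-by-coordinate peeling argument in the spirit of the Vitali-variation bracketing analyses of Blei--Gao--Li and Fang--Guntuboyina--Sen, producing the $2(k-1)$ exponent on the log.

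Finally, I would combine the pieces via the standard subadditivity $\log N_{[\ ]}(\sum_i \epsilon_i, \sum_i \mathcal{F}_i, \|\cdot\|_{p_0,2}) \le \sum_i \log N_{[\ ]}(\epsilon_i, \mathcal{F}_i, \|\cdot\|_{p_0,2})$ across the $N \le \binom{2d}{s} = O(d^{\widebar{s}})$ admissible pairs, splitting $\epsilon$ equally and using $\sum_{L,U}V_{L,U} \le V$. Expanding $\log(NV/\epsilon) \lesssim \log d + \log(V/\epsilon)$ inside the resulting polylog factor, and absorbing the pair-count and the polynomial constants $C_k$ into a single prefactor, produces the claimed $d^{2\widebar{s}}(1+\log d)^{2(\widebar{s}-1)}(2 + V/\epsilon)[\log(2+V/\epsilon)]^{2(\widebar{s}-1)}$ term. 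The main obstacle is the single-pair estimate: obtaining the sharp $2(k-1)$ power of the log, and keeping $C_k$ polynomial in $k$, requires an iterative argument that peels off one coordinate at a time and exploits the product structure of $b^{L,U}_{\lbf,\ubf}$; naive Maurey-style convex-hull bounds would give $(V_0/\epsilon)^2$ rather than the needed $(V_0/\epsilon)\cdot\mathrm{polylog}$, and bounding the shell-approximation error simultaneously across all $k$ coordinates---rather than sequentially---would also degrade the $d$-dependence from polynomial to exponential.
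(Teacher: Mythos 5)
Your overall architecture --- peel off the constant with a one-dimensional grid, bound each interaction block separately, and recombine by subadditivity of bracketing numbers over a Minkowski sum of $O(d^{\widebar{s}})$ pieces with $\epsilon$ split accordingly --- is exactly the paper's, and your bookkeeping of where the $d^{2\widebar{s}}(1+\log d)^{2(\widebar{s}-1)}$ prefactor comes from is right. The gap is in what you call the core technical step. The single-block bound $\log N_{[\ ]}(\epsilon,\cdot,\|\cdot\|_2)\lesssim (V_0/\epsilon)[\log(V_0/\epsilon)]^{2(k-1)}$ is precisely Theorem 1.1 of Gao (2013) for the class of cumulative distribution functions of nonnegative measures of bounded mass on $[0,1]^k$; the paper invokes it as a black box (after rescaling and a Jordan decomposition to pass from signed to nonnegative measures). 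You instead propose to reprove it via ``discretize, then coordinate-by-coordinate peeling in the spirit of Blei--Gao--Li and Fang--Guntuboyina--Sen,'' but that is an assertion, not an argument: you yourself note that naive convex-hull bounds give $(V_0/\epsilon)^2$, and the reference you lean on for the peeling (Blei--Gao--Li) yields metric entropy, not bracketing entropy --- the paper explicitly points out that metric entropy cannot be substituted here because the risk proof needs brackets. So the hardest step of the lemma is left unproved, with a fallback citation that does not deliver the required quantity.

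A second, more structural omission: you bracket the mixed basis functions $b^{L,U}_{\lbf,\ubf}=\prod_{j\in L}\ind(x_j\ge l_j)\prod_{j\in U}\ind(x_j< u_j)$ directly. For a nonnegative $\nu$, the resulting function $\int b^{L,U}\,d\nu$ is increasing in the coordinates of $L\setminus U$, decreasing in those of $U\setminus L$, and neither in $L\cap U$, so the monotone/CDF structure that both Gao's bracketing construction and any peeling argument exploit is simply not present. The paper handles this by first rewriting every $\ind(x_j<u_j)$ as $1-\ind(x_j\ge u_j)$ and $\ind(x_j\ge l_j)\ind(x_j<u_j)$ as $\ind(x_j\ge l_j)-\ind(x_j\ge u_j)$ (its equation \eqref{eq:modification-of-indicator-function}), converting to an anchored representation $b+\sum_{0<|S|\le s}\int\prod_{j\in S}\ind(x_j\ge l_j)\,d\mu_S$ at the cost of inflating the total-variation budget by $\min(2^s-1,2^d)$. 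Without this reduction (or an honest re-derivation of the bracketing bound for the mixed basis), your single-pair estimate does not go through as sketched. The rest --- the $|c|\le V+t$ bound, the support restriction, the rescaling to $[0,1]^{|S|}$ using the density bound $B$, and the final combination --- is fine.
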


This result builds on the bracketing entropy bounds of 
\citet{gao2013bracketing} for multivariate cumulative distribution 
functions corresponding to probability measures supported on a fixed 
compact rectangle. The connection between the class $\F_{\Mbf}(V)$ 
and the class of multivariate cumulative distribution 
functions follows from the observation that each term 
$\int b^{L, U}_{\lbf, \ubf} \, d\nu_{L, U}(\lbf, \ubf)$ in 
\eqref{f_fst} resembles a cumulative distribution function, since 
the basis functions $b^{L, U}_{\lbf, \ubf}$ are constructed from 
indicator functions.

An earlier work by \citet{blei2007metric} establishes metric entropy 
bounds (rather than bracketing entropy bounds) for the same class of
cumulative distribution functions, with a sharper logarithmic factor. 
However, for the proof of Theorem \ref{thm:risk-upper-bound}, 
bracketing entropy is essential, and the results of 
\citet{blei2007metric} therefore cannot be directly applied.
  
Theorem \ref{thm:risk-upper-bound} immediately implies the following
corollary (proved in Appendix \ref{pf:minimax-upper-bound}). 

\begin{corollary}\label{cor:minimax-upper-bound}
    The minimax risk $\minimax$ satisfies
    \begin{equation*}
        \minimax
        \le O\big(d^{4\widebar{s}}(1 + \log d)^{4(\widebar{s} - 1)} 
        (V + 1)^2 \cdot n^{-2/3} (\log n)^{4(\widebar{s} - 1)/3}\big),
    \end{equation*}
    where the constant factor underlying $O(\cdot)$ depends on 
    $B, s$, and the moments of $\xi_i$.
\end{corollary}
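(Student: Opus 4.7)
The plan is to invoke Theorem \ref{thm:risk-upper-bound} with the estimator $\lsefstinf$ plugged into the infimum defining $\minimax$. The only subtlety is that Theorem \ref{thm:risk-upper-bound} requires $\vinfxgb(f_0) < V$ \emph{strictly}, whereas the supremum in \eqref{minimax-risk} ranges over $f^*$ with $\vinfxgb(f^*) \le V$, possibly with equality. I handle this by approximating $f^*$ from within by a nearby $f_\epsilon$ with $\vinfxgb(f_\epsilon) < V$, applying the theorem, and sending $\epsilon \to 0$.

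Fix $f^* \in \fstinf$ with $\vinfxgb(f^*) \le V$ and a reference point $\xbf_* \in \prod_{j=1}^d [-M_j/2, M_j/2]$. For $\epsilon \in (0, 1)$, define
\[
    f_\epsilon \;=\; f^*(\xbf_*) \;+\; (1 - \epsilon)\bigl(f^* - f^*(\xbf_*)\bigr).
\]
Since constants contribute nothing to $\vinfxgb$ and scaling every $\nu_{L, U}$ in a near-optimal representation of $f^* - f^*(\xbf_*)$ by $1 - \epsilon$ scales total variations by the same factor, we get $\vinfxgb(f_\epsilon) \le (1 - \epsilon) V < V$. Each basis function $b^{L, U}_{\lbf, \ubf}$ takes values in $\{0, 1\}$, so the same representation yields the pointwise oscillation bound $|f^*(\xbf) - f^*(\xbf_*)| \le \vinfxgb(f^*) \le V$ on the support of $p_0$. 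Consequently $\sup_{\xbf} |f_\epsilon(\xbf) - f^*(\xbf)| \le \epsilon V$ and $\|f_\epsilon - f^*\|_{p_0, 2}^2 \le \epsilon^2 V^2$.

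Applying Theorem \ref{thm:risk-upper-bound} with $f_0 = f_\epsilon$, the first term on the right-hand side of \eqref{thm:risk-upper-bound.eq} is at most $C \epsilon^2 V^2$, while the second is of the required order $d^{4\widebar{s}}(1 + \log d)^{4(\widebar{s} - 1)}(V + 1)^2 n^{-2/3}(\log n)^{4(\widebar{s} - 1)/3}$ with hidden constant depending on $\sup|f_\epsilon - f^*| \le \epsilon V$. Since $\E\|\lsefstinf - f^*\|_{p_0, 2}^2$ does not depend on $\epsilon$, sending $\epsilon \to 0$ kills the first term and drives the hidden constant to its value at $\sup|f_0 - f^*| = 0$, which depends only on $B, s$, and the moments of $\xi_i$. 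Taking the supremum over $f^*$ with $\vinfxgb(f^*) \le V$ then produces the stated bound on $\minimax$. The one ingredient requiring care is the continuity at $0$ of the hidden constant from Theorem \ref{thm:risk-upper-bound} in its $\sup|f_0 - f^*|$ argument, which should follow automatically from the usual truncation and boundedness steps in that proof; if one prefers to avoid the limiting argument, choosing $\epsilon = n^{-1/3}$ outright makes $C \epsilon^2 V^2 = C V^2 n^{-2/3}$ already of the required order and keeps $\sup|f_\epsilon - f^*| \le V$ bounded uniformly in $n$, at the mild cost of letting the hidden constant depend on $V$ as well.
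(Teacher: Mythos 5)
Your proposal is correct, and it circumvents the strict inequality $\vinfxgb(f_0) < V$ in Theorem \ref{thm:risk-upper-bound} by a route dual to the paper's. The paper keeps $f_0 = f^*$ fixed and enlarges the estimator, bounding $\minimax$ by $\liminf_{\epsilon \to 0+} \sup_{f^*} \E\|\hat{f}^{d,s}_{n, V+\epsilon} - f^*\|_{p_0,2}^2$ and invoking continuity of the risk bound in $V$; you instead keep the single estimator $\lsefstinf$ at level $V$ and shrink the target to $f_\epsilon$ with $\vinfxgb(f_\epsilon) \le (1-\epsilon)V < V$, paying a misspecification cost that you then send to zero. Your key observations — that $\vinfxgb$ ignores additive constants and scales linearly under scaling of the measures, and that the $\{0,1\}$-valued basis functions give the oscillation bound $\sup_\xbf |f^*(\xbf) - f^*(\xbf_*)| \le \vinfxgb(f^*) \le V$, hence $\|f_\epsilon - f^*\|_{\infty} \le \epsilon V$ — are all sound. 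The continuity point you flag does hold: inspecting the end of the proof of Theorem \ref{thm:risk-upper-bound}, the dependence on $\|f_0 - f^*\|_{\infty,\Mbf}$ enters explicitly as $(1 + \|\xi_1\|_{3,1}^2 + \|f_0 - f^*\|_{\infty,\Mbf}^2)$ and through the additive term $\widebar{t}_n \propto (1 + \|f_0 - f^*\|_{\infty,\Mbf})$, both continuous at $0$, so the limit recovers a constant depending only on $B$, $s$, and the moments of $\xi_i$. Each approach buys something: the paper's version needs no misspecification analysis at all (the terms in $f_0 - f^*$ vanish identically) but formally uses a family of estimators indexed by $\epsilon$, whereas yours uses one fixed estimator and relies on the explicit form of the misspecification terms. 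Your fallback choice $\epsilon = n^{-1/3}$ is indeed weaker than the stated corollary, since it degrades the $(V+1)^2$ factor to roughly $(V+1)^4$ through the $\|f_0 - f^*\|_{\infty,\Mbf}^2 \le V^2$ contribution; the limiting argument is the one that yields the statement as written.
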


We now turn to the second main result of this section
(proved in Appendix \ref{pf:minimax-lower-bound}), 
which establishes a lower bound on the minimax risk. For this lower 
bound result, in addition to \eqref{density-compact-support}, we
further assume that the density $p_0$ is bounded away from zero 
on its support, in the sense that
\begin{equation}\label{density-lower-bounded}
    b := M_1 \cdots M_d \cdot 
    \inf_{\xbf \in \prod_{j = 1}^{d} [-M_j/2, M_j/2]} 
    p_0(\xbf) > 0.
\end{equation}
Note that $b = 1$ when $p_0$ is the uniform density on 
$\prod_{j = 1}^{d} [-M_j/2, M_j/2]$. 
For the error terms $\xi_i$, instead of \eqref{error-l31-norm}, 
we assume that they are Gaussian:
\begin{equation}\label{error-Gaussian}
    \xi_i \overset{\text{i.i.d.}}{\sim} N(0, \sigma^2).
\end{equation}

\begin{theorem}\label{thm:minimax-lower-bound}
    Suppose the density $p_0$ satisfies
    \eqref{density-compact-support} and \eqref{density-lower-bounded}, 
    and the error terms $\xi_i$ satisfy \eqref{error-Gaussian}. 
    Then, there exist constants $C_{b, B, \widebar{s}} > 0$, 
    depending only on $b, B$, and $\widebar{s} = \min(s, d)$, and 
    $C_{B, \widebar{s}} > 0$, depending only on $B$ and $\widebar{s}$, 
    such that
    \begin{equation*}
        \minimax \ge C_{b, B, \widebar{s}} 
        \Big(\frac{\sigma^2 V}{n}\Big)^{2/3} 
        \bigg[\log \Big(\frac{n V^2}{\sigma^2}\Big)
        \bigg]^{2(\widebar{s} - 1)/3},
    \end{equation*}
    provided that $n \ge C_{B, \widebar{s}} (\sigma^2/ V^2)$.
\end{theorem}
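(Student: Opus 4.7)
The strategy is Fano's method applied to a local $L^2(p_0)$-packing of $\{f \in \fstinf : \vinfxgb(f) \le V\}$. I would first reduce to an $\widebar{s}$-dimensional sub-problem by restricting attention to the single-index-set family
\[
    f_\nu(\xbf) = \int \prod_{j = 1}^{\widebar{s}} \ind(x_j \ge l_j)\, d\nu(l_1, \dots, l_{\widebar{s}}),
\]
where $\nu$ is a finite signed Borel measure on $\prod_{j=1}^{\widebar{s}}[-M_j/2, M_j/2]$ with $\|\nu\|_{\text{TV}} \le V$. By the definition of $\vinfxgb$ as an infimum over representations of type \eqref{f_fst}, every such $f_\nu$ lies in $\fstinf$ with $\vinfxgb(f_\nu) \le V$. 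Because $f_\nu$ depends only on the first $\widebar{s}$ coordinates, conditions \eqref{density-compact-support} and \eqref{density-lower-bounded} make the $L^2(p_0)$ norm comparable to the Lebesgue $L^2$ norm on the $\widebar{s}$-dimensional section, with multiplicative constants depending only on $b$, $B$, and $M_1, \dots, M_d$. This reduction is what replaces $d$ by $\widebar{s}$ in the final logarithmic exponent.

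The heart of the argument is a local hypercube packing $\{f_\sigma : \sigma \in \{0,1\}^K\}$ inside this subclass with $\vinfxgb(f_\sigma) \le V$ for every $\sigma$ and pairwise $L^2(p_0)$ distances lying in a window $[c\epsilon, C\epsilon]$ for some $\epsilon > 0$ to be optimized. A naive product-bump construction on a regular $m^{\widebar{s}}$-grid would give $\log N \asymp m^{\widebar{s}}$ and $\epsilon \asymp V/m^{\widebar{s}}$ after Varshamov--Gilbert, yielding the $n^{-2/3}$ leading term but no logarithmic factor. To obtain $\log N \gtrsim (V/\epsilon)(\log(V/\epsilon))^{\widebar{s}-1}$, I would instead adapt the multivariate-distribution-function lower-bound constructions of \citet{blei2007metric} and \citet{gao2013bracketing}: bumps are placed at $O(\log(V/\epsilon))$ dyadic scales in each of $\widebar{s} - 1$ coordinates, producing a $(\widebar{s}-1)$-fold logarithmic factor in the number of free binary choices. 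The budget $\|\nu\|_{\text{TV}} \le V$ is verified by writing each $f_\sigma$ as an alternating sum of half-space indicators and bounding the total variation of the resulting atomic signed measure.

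With this packing in hand, the Gaussian assumption \eqref{error-Gaussian} yields
\[
    \text{KL}(\P_{f_i}\,\|\,\P_{f_j})
    = \frac{n}{2\sigma^2}\,\|f_i - f_j\|_{p_0, 2}^2
    \le \frac{C n \epsilon^2}{\sigma^2},
\]
so Fano's inequality gives $\minimax \gtrsim \epsilon^2$ whenever $n\epsilon^2/\sigma^2 \lesssim \log N$. Balancing $n\epsilon^2/\sigma^2$ against $(V/\epsilon)(\log(V/\epsilon))^{\widebar{s}-1}$ produces $\epsilon^3 \asymp (\sigma^2 V/n)(\log(V/\epsilon))^{\widebar{s}-1}$; substituting the leading behavior $\epsilon \asymp (\sigma^2 V/n)^{1/3}$ into the logarithm shows $\log(V/\epsilon) \asymp \log(nV^2/\sigma^2)$, which recovers the stated bound. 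The hypothesis $n \ge C_{B,\widebar{s}}\,\sigma^2/V^2$ is precisely what keeps $V/\epsilon$ bounded away from $1$, ensuring the logarithm is positive and the packing nontrivial.

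The main obstacle is the packing construction in the second paragraph: one must simultaneously honor the signed-measure budget $\|\nu\|_{\text{TV}} \le V$, extract the full $(\log(V/\epsilon))^{\widebar{s}-1}$ factor, and maintain two-sided control on pairwise $L^2$ distances (to support both the separation in Fano's lower bound and the KL-divergence upper bound). Transferring the sharp packing lower bounds for multivariate distribution functions into an explicit signed-measure construction compatible with the $\vinfxgb$ penalty is the delicate combinatorial step; the remaining Fano computation is standard.
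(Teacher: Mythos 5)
Your strategy is sound and, at the level of the construction, coincides with the paper's: the paper also restricts to the single-index family $\int \prod_{j=1}^{\widebar{s}} \ind(x_j \ge t_j)\, d\nu(\tbf)$ on the first $\widebar{s}$ coordinates, and it extracts the logarithmic factor by superposing Haar-type product bumps over all $|P_l| = \binom{\widebar{s}+l-1}{\widebar{s}-1} \asymp l^{\widebar{s}-1}$ dyadic resolution profiles $(p_1,\dots,p_{\widebar{s}})$ with $\sum_j p_j = l$, each profile contributing $2^l$ cells --- exactly the Blei--Gao-style multiscale scheme you describe. The main methodological difference is the information-theoretic tool: the paper applies Assouad's lemma directly to the hypercube $\{-1,1\}^q$ with $q = |P_l|\,2^l$, which dispenses with the Varshamov--Gilbert extraction your Fano route requires and needs only per-coordinate two-sided distance control rather than control over an exponentially large packing. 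The step you flag as delicate --- honoring $\|\nu\|_{\text{TV}} \le V$ while keeping two-sided $L^2$ separation --- is precisely the content of the paper's Lemma \ref{lem:f-eta-prop}: the TV budget is verified by Cauchy--Schwarz together with the $L^2$-orthogonality of the bump products (which is why the normalization $V/\sqrt{|P_l|}$ appears), the upper bound on distances is a direct computation, and the lower bound is obtained via Bessel's inequality against the orthonormal Haar-product family $\{H_{\pbf,\ibf}\}$, exploiting that $\langle \Psi_{p_j,i_j}, h_{p'_j,i'_j}\rangle$ vanishes unless the resolution profiles and cells match. Your balancing of $n\epsilon^2/\sigma^2$ against $(V/\epsilon)(\log(V/\epsilon))^{\widebar{s}-1}$ and the role you assign to the hypothesis $n \ge C_{B,\widebar{s}}\sigma^2/V^2$ are both consistent with the paper's choice of $2^{-l}$. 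So the route is viable and essentially parallel, but the proposal defers the one genuinely nontrivial verification (the orthogonality-based separation lower bound under the signed-measure budget) rather than carrying it out.
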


Combining Corollary \ref{cor:minimax-upper-bound} and 
Theorem \ref{thm:minimax-lower-bound}, we conclude that the minimax 
rate of convergence over the class \eqref{fst_V} is $n^{-2/3}$, up 
to multiplicative logarithmic factors whose exponent lies between 
$2(\widebar{s} - 1)/3$ and $4(\widebar{s} - 1)/3$. This 
nearly dimension-free rate indicates that the class 
\eqref{fst_V} is sufficiently regularized even in high dimensions. 
In other words, the complexity measure $\vinfxgb(\cdot)$ (and hence 
$\vxgb(\cdot)$) provides effective regularization as the dimension 
$d$ increases, adequately controlling model complexity in 
high-dimensional settings. 

This observation offers a possible explanation for the strong 
empirical performance of XGBoost, complementing the fact 
that every solution to the XGBoost optimization problem 
\eqref{xgb_opti} also solves the penalized least squares problem 
\eqref{lse_fstinf} (Theorem \ref{thm:xgb-opti-lse}) over the 
function class $\fstinf$, which contains many functions beyond 
piecewise constant ones (Proposition \ref{prop:fstinf-charac}).

\section{Discussion}\label{sec:discussion}
\subsection{Connection to a Symmetrized HK Variation}
\label{inf_conv}

As discussed in Section \ref{sec:connection-vinfxgb-hk}, HK variation
has a lack of symmetry due to the need to specify an anchor point. 
One can attempt to restore symmetry by combining all $2^d$ versions 
$\hk_{\abf}(\cdot)$ corresponding to $\abf = (a_1, \dots, a_d)$ with 
$a_j \in \{-\infty, +\infty\}$. In the mathematical image processing 
literature, a natural device for combining multiple notions of 
variation into a single quantity is \textit{infimal convolution} 
(see, e.g., \citet{chambolle1997image, bergounioux2016mathematical,
setzer2008variational, setzer2011infimal, bredies2020higher}). 
Following this idea, one may consider the infimal convolution of 
the Hardy–Krause variations $\hk_{\abf}(\cdot)$ over all 
$\abf \in \{-\infty, +\infty\}^d$: 
\begin{equation}\label{infconv}
    \inf \Big\{\sum_{\abf \in \{-\infty, +\infty\}^d} 
    \hk_{\abf}(f_{\abf}): 
    \sum_{\abf \in \{-\infty, +\infty\}^d} f_{\abf} \equiv f,
    \ f_{\abf} \in \fstinf \ \forall \abf\Big\}.
\end{equation}
A natural question is then how this quantity, which also satisfies 
the symmetry condition described in 
Proposition \ref{prop:vinfxgb-symmetry}, relates to $\vinfxgb(\cdot)$.

It can be shown that if the definitions of $\fstinf$ and 
$\vinfxgb(\cdot)$ are modified to forbid repeated use of the 
same variable for splits within each tree, then the resulting 
complexity measure coincides with \eqref{infconv}. To make this 
precise, consider the function class $\oldfstinf$ consisting of all 
functions $f: \R^d \to \R$ of the form \eqref{f_fst}, but with the 
sum ranging only over \textit{disjoint} subsets $L$ and $U$ of $[d]$ 
satisfying $0 < |L| + |U| \le s$. For each $f \in \oldfstinf$, 
define the complexity $\oldvinfxgb(f)$ of $f$ analogously to 
\eqref{vinfxgb_def}, again restricting the sum to disjoint subsets 
$L$ and $U$ with $0 < |L| + |U| \le s$. 

One can verify that $\oldfstinf = \fstinf$ and that 
$\vinfxgb(f) \le \oldvinfxgb(f) \le \hk_{\abf}(f)$ for all 
$f \in \fstinf$ and all $\abf \in \{-\infty, +\infty\}^d$. More 
importantly, $\oldvinfxgb(\cdot)$ coincides with the infimal 
convolution in \eqref{infconv}, as shown in the following proposition
(proved in Appendix \ref{pf:infimal-convolution}).

\begin{proposition}\label{prop:infimal-convolution}
    For every $f \in \fstinf$, we have
    \begin{equation*}
        \oldvinfxgb(f) 
        = \inf \Big\{\sum_{\abf \in \{-\infty, +\infty\}^d} 
        \hk_{\abf}(f_{\abf}): 
        \sum_{\abf \in \{-\infty, +\infty\}^d} f_{\abf} \equiv f,
        \ f_{\abf} \in \fstinf \ \forall \abf\Big\}.
    \end{equation*}
\end{proposition}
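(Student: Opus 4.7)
The plan is to prove the two inequalities separately, using as the main bridge a classical Jordan-type representation of functions with finite HK variation in terms of signed measures against one-sided indicator basis functions.

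For the inequality $\oldvinfxgb(f) \le \text{(infimal convolution)}$, the key lemma I would establish first is that
\[
\oldvinfxgb(g) \le \hk_{\abf}(g) \qt{for every $\abf \in \{-\infty, +\infty\}^d$ and every $g \in \fstinf$.}
\]
Let $L_{\abf} := \{j : a_j = -\infty\}$ and $U_{\abf} := \{j : a_j = +\infty\}$. By a classical multivariate Jordan-type formula (a direct extension of the 1D identity $F(x) = F(-\infty) + \int \ind(x \ge t)\,dF(t)$, obtained by iterating inclusion--exclusion on sections at $\abf$), any right-continuous $g$ with $\hk_{\abf}(g) < \infty$ admits
\[
g(\xbf) = c + \sum_{\emptyset \ne S \subseteq [d]} \int \prod_{j \in S \cap L_{\abf}} \ind(x_j \ge t_j) \prod_{j \in S \cap U_{\abf}} \ind(x_j < t_j) \, d\mu^{\abf}_S(\tbf),
\]
where $\mu^{\abf}_S$ is the mixed-difference signed measure of the section $g^S_{(a_j,\,j \in S^c)}$ and $\|\mu^{\abf}_S\|_{\tv} = \vit(g^S_{(a_j,\,j \in S^c)})$. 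The index sets $L := S \cap L_{\abf}$ and $U := S \cap U_{\abf}$ are automatically disjoint, so this is an $\oldfstinf$-representation whose total mass equals $\hk_{\abf}(g)$. Combined with the easy sub-additivity $\oldvinfxgb(g_1 + g_2) \le \oldvinfxgb(g_1) + \oldvinfxgb(g_2)$ (add near-optimal representations and use $\|\nu_1 + \nu_2\|_{\tv} \le \|\nu_1\|_{\tv} + \|\nu_2\|_{\tv}$), this gives $\oldvinfxgb(f) \le \sum_{\abf} \oldvinfxgb(f_{\abf}) \le \sum_{\abf} \hk_{\abf}(f_{\abf})$ for any admissible decomposition; infimize to conclude.

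For the reverse inequality, fix $\epsilon > 0$ and an $\oldfstinf$-representation $f = c + \sum_{(L,U) \text{ disjoint}} \int b^{L,U}_{\lbf,\ubf}\,d\nu_{L,U}$ with $\sum_{L,U} \|\nu_{L,U}\|_{\tv} \le \oldvinfxgb(f) + \epsilon$. For each disjoint pair $(L, U)$, pick a \emph{compatible} anchor $\abf(L, U)$ by setting $a_j = -\infty$ for $j \in L$, $a_j = +\infty$ for $j \in U$, and (say) $a_j = -\infty$ for $j \notin L \cup U$ — well defined since $L \cap U = \emptyset$. Group the representation by anchor to define $f_{\abf}$ as the sum of integrals with $\abf(L,U) = \abf$ (assigning the constant $c$ to any single anchor). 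Then $f = \sum_{\abf} f_{\abf}$ with each $f_{\abf} \in \fstinf$. The next step is to verify that $\hk_{\abf}\big(\int b^{L,U}_{\lbf,\ubf}\,d\nu_{L,U}\big) \le \|\nu_{L,U}\|_{\tv}$ whenever $(L,U)$ is compatible with $\abf$. For this, observe that when $L \subseteq L_{\abf}$ and $U \subseteq U_{\abf}$, every section $(b^{L,U}_{\lbf,\ubf})^S_{(a_j,\,j \in S^c)}$ vanishes identically unless $L \cup U \subseteq S$, and for such $S$ the section no longer depends on the free coordinates in $S \setminus (L \cup U)$, so its Vitali variation is $0$ unless $S = L \cup U$ exactly; a direct quasi-volume computation on $\R^{L \cup U}$ gives $\hk_{\abf}(b^{L,U}_{\lbf,\ubf}) = 1$. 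Integrating against $\nu_{L,U}$ and using subadditivity of $\vit$ (hence $\hk_{\abf}$) under measure integration yields the stated bound, and summing over $(L,U)$ mapped to $\abf$ gives $\hk_{\abf}(f_{\abf}) \le \sum_{(L,U):\,\abf(L,U) = \abf} \|\nu_{L,U}\|_{\tv}$. Summing over $\abf$ and letting $\epsilon \to 0$ finishes.

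The main obstacle is the representation theorem in the first paragraph: one must produce the signed measures $\mu^{\abf}_S$ from the raw sections of $g$ and show that the sum of their total variations equals exactly $\hk_{\abf}(g)$ (not just an inequality). Concretely, for each non-empty $S$ one constructs $\mu^{\abf}_S$ as the unique signed Borel measure whose quasi-volumes on rectangles match the mixed differences of the section $g^S_{(a_j,\,j \in S^c)}$, invoking the multivariate analogue of the Lebesgue--Stieltjes correspondence between right-continuous functions of bounded variation and signed measures; the matching $\|\mu^{\abf}_S\|_{\tv} = \vit(g^S_{(a_j,\,j \in S^c)})$ is then exactly the definition of Vitali variation as a supremum of sums of absolute quasi-volumes. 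The anchor-grouping argument in the second paragraph, while notation-heavy, reduces to the single-basis-function computation $\hk_{\abf}(b^{L,U}_{\lbf,\ubf}) = 1$ and is otherwise routine.
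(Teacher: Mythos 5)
Your proposal is correct and follows essentially the same route as the paper: the forward inequality rests on the anchored Jordan/Aistleitner--Dick representation showing $\oldvinfxgb(g)\le\hk_{\abf}(g)$ (the paper's $V_{\abf}(g)=\hk_{\abf}(g)$ from the proof of Proposition \ref{prop:vinfxgb-hka-rel}), and the reverse inequality uses the identical anchor assignment $a_j=+\infty\iff j\in U$, with your subadditivity bound $\hk_{\abf}(f_{\abf})\le\sum\|\nu_{L,U}\|_{\tv}$ standing in for the paper's exact computation. The only point to make explicit is that, in the forward direction, the terms with $|S|>s$ in your representation must vanish for it to be an admissible $\oldvinfxgb$-representation when $s<d$; this holds because $g\in\fstinf$ satisfies the interaction-restriction condition \eqref{eq:interaction-restriction-cond}, which forces $\mu^{\abf}_S=0$ for $|S|>s$.
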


Although $\oldvinfxgb(\cdot)$ is symmetric and admits a clean
characterization via infimal convolution of HK variations across
different anchors, it does not fully reflect the behavior of
regression trees as used in practice. An important aspect of
regression trees is the ability to split on the same variable 
multiple times within a single tree, which enables localized 
refinement along a coordinate. Disallowing such repeated splits can 
reduce estimation accuracy in practice. The complexity 
$\oldvinfxgb(\cdot)$ corresponds to this restricted setting, 
whereas $\vinfxgb(\cdot)$ allows repeated splits on the same 
variable within individual trees. As a result, while both notions 
satisfy symmetry properties, $\vinfxgb(\cdot)$ more closely matches 
the structural flexibility inherent in regression trees and is 
therefore the more appropriate notion of variation in this context.

\subsection{Learnability Beyond $\fstinf$}\label{notfstinf}

We have argued that XGBoost is expected to effectively estimate
functions in the class $\fstinf$. In particular, if $f^* \in \fstinf$
and the complexity measure $\vinfxgb(f^*)$ can be treated as a constant, then
the idealized XGBoost estimator—defined as a solution to the XGBoost 
optimization problem—achieves the curse-of-dimensionality-avoiding 
rate $n^{-2/3}$, up to logarithmic factors.

A natural follow-up question is what happens when $f^*$ lies outside
$\fstinf$. A simple example of such a function is
\begin{equation}\label{func_notfst}
    f^*(\xbf) := \ind(x_1 + \dots + x_d \ge 0, \xbf \in [-1, 1]^d).
\end{equation}
It can be shown that this function does not belong to $\fstinf$. One
way to see this is that $f^*$ has infinite Hardy–Krause variation; 
see, e.g., \citet[Proposition 17]{owen2005multidimensional}.

For functions $f^*$ lying outside $\fstinf$, in light of 
Theorems \ref{thm:risk-upper-bound} and \ref{thm:minimax-lower-bound} 
of Section \ref{sec:minimax-rate}, it is natural to conjecture 
that the risk of the idealized XGBoost estimator takes the form
\begin{equation}\label{conj_misspec}
    \inf_{V} \bigg((V + 1)^{\beta} \cdot n^{-2/3} (\log n)^{\gamma}
    + \inf_{\substack{f_0 \in \fstinf \\ \vinfxgb(f_0) \le V}} 
    \|f_0 - f^*\|_{p_0, 2}^2\bigg),
\end{equation}
for some constants $\beta \in [2/3, 2]$ and 
$\gamma \in [2(\widebar{s} - 1)/3, 4(\widebar{s} - 1)/3]$, where 
$\widebar{s} = \min(s, d)$. The upper bounds on $\beta$ and $\gamma$ 
follow from the risk upper bound in Theorem \ref{thm:risk-upper-bound}, 
while the lower bounds are expected from the minimax lower bound in 
Theorem \ref{thm:minimax-lower-bound}. For simplicity, we suppress 
the dependence on other parameters, such as $d$, $s$, and the 
distributions of the covariates and error terms.

If we assume $\beta = 2/3$ and ignore the logarithmic factor 
$(\log n)^{\gamma}$, then \eqref{conj_misspec} reduces to 
\begin{equation*}
    \inf_{V} \bigg((V + 1)^{2/3} \cdot n^{-2/3}
    + \inf_{\substack{f_0 \in \fstinf \\ \vinfxgb(f_0) \le V}} 
    \|f_0 - f^*\|_{p_0, 2}^2\bigg).
\end{equation*}
For additional simplicity, suppose that $p_0$ is the uniform density 
on $[-1, 1]^d$. Then, for the function \eqref{func_notfst}, one can 
show that for sufficiently large $V$, 
\begin{equation*}
    \inf_{\substack{f_0 \in \fstinf \\ \vinfxgb(f_0) \le V}} 
    \|f_0 - f^*\|_{p_0, 2}^2 = \Omega(V^{-1/(d - 1)}). 
\end{equation*}
Consequently, even in this most favorable scenario (with 
$\beta = 2/3$), the convergence rate of the idealized XGBoost 
estimator for this $f^*$ is no faster than 
\begin{equation*}
    \inf_{V} \big((V + 1)^{2/3} \cdot n^{-2/3}
    + V^{-1/(d - 1)}\big) \asymp  n^{-2/(2d + 1)}.  
\end{equation*}
Unlike the curse-of-dimensionality-avoiding rate achieved when 
$f^* \in \fstinf$ with bounded $\vinfxgb(f^*)$, the above rate 
deteriorates rapidly as the dimension $d$ increases. This suggests 
that while XGBoost may still achieve consistency under 
misspecification, it is not well suited for estimating functions 
that lie far outside the class $\fstinf$.

\subsection{$L^1$ vs $L^2$ Regularization}
\label{sec:squared-l2-penalty}

We have focused on the $L^1$ penalty for XGBoost, as defined in
\eqref{eq:vxgb-original-def}. As mentioned in the Introduction,
XGBoost implementations also commonly employ a squared $L^2$ penalty, 
in which $\|\wbf_k\|_1$ is replaced by $\|\wbf_k\|_2^2$. 
More generally, for any $p \ge 1$ and $f \in \fst$, we may define
\begin{equation*}
    \vxgb(f; p) := \inf \Big\{\sum_k \|\wbf_k\|_p^p \Big\},
\end{equation*}
where $\|\cdot\|_p$ denotes the usual $L^p$ norm and, as in
\eqref{eq:vxgb-original-def}, the infimum is taken over all
representations of $f$ as a finite sum of right-continuous regression 
trees of depth at most $s$. However, this variation functional yields 
a meaningful regularization penalty only when $p = 1$. Specifically, 
when $p > 1$, the penalty becomes degenerate, as shown by the 
following result (proved in Appendix \ref{pf:zero-xgb-penalty}). 
This degeneracy explains why we restrict attention to the $L^1$ 
penalty in this paper.
\begin{lemma}\label{lem:zero-xgb-penalty}
    Suppose $p > 1$. Then, $\vxgb(f; p) = 0$ for every $f \in \fst$. 
\end{lemma}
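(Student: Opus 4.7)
The plan is to exploit exactly the scaling phenomenon already flagged in the introduction, where the function $\ind(x_1 \ge 0)$ was written as $\sum_{k=1}^K (1/K) \cdot \ind(x_1 \ge 0)$, and push this observation through for arbitrary $f \in \fst$. The key point is that for $p > 1$, slicing a tree into many identical scaled copies strictly reduces the $L^p$-type penalty, and the reduction can be made arbitrarily severe by taking the number of copies to infinity.

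Concretely, I would start with any $f \in \fst$ and fix one representation $f = \sum_{k=1}^{K_0} f_k$ as a finite sum of right-continuous regression trees of depth at most $s$, with leaf-weight vectors $\wbf_1, \ldots, \wbf_{K_0}$ (using the convention $\|\wbf_k\|_p = 0$ for constant trees). For each integer $N \ge 1$, I would exhibit the ``split'' representation
\begin{equation*}
    f = \sum_{k=1}^{K_0} \sum_{j=1}^{N} \frac{1}{N} f_k,
\end{equation*}
noting that each summand $(1/N) f_k$ is itself a right-continuous regression tree of depth at most $s$, obtained from $f_k$ by rescaling all leaf weights by $1/N$. Its leaf-weight vector is $\wbf_k/N$, so its contribution to the penalty is $\|\wbf_k/N\|_p^p = N^{-p} \|\wbf_k\|_p^p$.

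Summing the $N K_0$ contributions gives total penalty
\begin{equation*}
    \sum_{k=1}^{K_0} N \cdot N^{-p} \|\wbf_k\|_p^p
    = N^{1-p} \sum_{k=1}^{K_0} \|\wbf_k\|_p^p,
\end{equation*}
which tends to zero as $N \to \infty$ precisely because $p > 1$. Taking the infimum over $N$ in the defining expression of $\vxgb(f;p)$ then forces the value to be $0$. I do not foresee a real obstacle: the argument is essentially a one-line scaling exercise, and the only thing to verify carefully is that rescaling leaf weights by $1/N$ produces a legitimate right-continuous regression tree of the same depth (and hence a legal term in the representations quantified over in the definition of $\vxgb(\cdot;p)$), which is immediate from the definition of a regression tree.
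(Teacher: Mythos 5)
Your proposal is correct and is essentially identical to the paper's own proof: both split each tree $f_k$ into $N$ copies of $(1/N)f_k$ and observe that the resulting penalty $N^{1-p}\sum_k \|\wbf_k\|_p^p$ vanishes as $N \to \infty$ since $p > 1$. No gaps.
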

In practice, XGBoost operates on the function class $\fst(K)$
consisting of functions of the form  $\sum_{k=1}^K f_k$, where each
$f_k$ is a regression tree with right-continuous splits and depth 
$\le s$. Here, $K$ is a fixed finite number that is typically 
selected via cross-validation. The distinction between $\fst$ and 
$\fst(K)$ is that the former allows an arbitrary number of trees, 
whereas the latter restricts attention to ensembles of at most $K$ 
trees.

Within this restricted class $\fst(K)$, we can define a truncated
version of the penalty by 
\begin{equation*}
    \vxgb(f; p, K) := \inf \Big\{\sum_{k=1}^K \|\wbf_k\|_p^p \Big\},
\end{equation*}
where the infimum is now taken over all representations of 
$f \in \fst(K)$ as a sum of at most $K$ regression trees of depth 
at most $s$. This modified penalty is likely well defined for all 
$p \ge 1$, including $p = 2$. However, it is theoretically cumbersome 
due to its rigid dependence on the hyperparameter $K$. Specifically, 
this formulation does not admit a meaningful limit as 
$K \rightarrow \infty$. Moreover, because $K$ is data-dependent in 
practice, it is unnatural to treat it as a fixed number.

For these reasons, we focus exclusively on the case $p = 1$, as this
choice provides a stable regularization penalty that generalizes
naturally to continuum tree ensembles and avoids the 
vanishing-penalty issues inherent to norms with $p > 1$ in the 
absence of a fixed tree count.

\subsection{Analysis of the Iterative Algorithm Used by XGBoost}
\label{algo_open} 

Our analysis focuses on the statistical behavior of solutions to the
regularized optimization problem \eqref{xgb_opti} that XGBoost is 
designed to approximate. We do not study whether the greedy 
tree-boosting algorithm employed in practice achieves the same rates 
of convergence over the class $\fstinf$, and establishing such 
guarantees remains an important open problem. Some recent progress 
has been made in the analysis of greedy tree-building algorithms; 
see, for example, \citet{tan2024statistical}.

Despite this limitation, our results remain directly relevant to the
practice of XGBoost. By characterizing the behavior of the target
optimization problem, our theory provides a principled benchmark for
what XGBoost can achieve under favorable optimization. In particular,
the results clarify when dimension-free rates are attainable
and when intrinsic approximation barriers arise due to
misspecification. This perspective helps disentangle statistical
limitations—stemming from the expressiveness of the tree ensemble
and its associated regularization—from algorithmic limitations of
the greedy boosting procedure itself, thereby offering a coherent
framework for interpreting the empirical successes and failures of
XGBoost in practice.

\section*{Acknowledgements}
The authors thank Ryan Tibshirani, Erez Buchweitz, 
and Arnab Mitra for helpful discussions.

\section*{Funding}
The authors gratefully acknowledge support from NSF Grants 
DMS-2210504 and DMS-2515470.

\bibliographystyle{chicago}
\bibliography{main}

\newpage
\appendix

\section{Proofs}\label{sec:proofs}

\subsection{Proofs of Propositions and Theorem in 
Sections \ref{sec:fstinf} and \ref{sec:vinfxgb}}

\subsubsection{Proof of Proposition \ref{prop:alt-charac-fst}}
\label{pf:alt-charac-fst}
\begin{proof}[Proof of Proposition \ref{prop:alt-charac-fst}]
We have already seen that any element of $\fst$ admits a 
representation of the form \eqref{f_fst} with discrete signed 
measures $\nu_{L, U}$ with finite support. It 
therefore suffices to prove the converse inclusion.

Observe that each basis function $b^{L, U}_{\lbf, \ubf}$ can be 
viewed as a regression tree with right-continuous splits and depth 
at most $s$, whose leaf weights are all zero except for a single 
leaf with weight one. Consequently, each basis function 
$b^{L, U}_{\lbf, \ubf}$ belongs to $\fst$. Since $\fst$ is closed 
under addition and scalar multiplication, it follows that any finite 
linear combination of these basis functions—and hence any function 
of the form \eqref{f_fst} with discrete signed measures $\nu_{L, U}$ 
having finite support—also belongs to $\fst$. This completes the proof.
\end{proof}

\subsubsection{Proof of Proposition \ref{prop:vinfxgb-stabilization}}
\label{pf:vinfxgb-stabilization}
\begin{proof}[Proof of Proposition \ref{prop:vinfxgb-stabilization}]
Recall that the sum in \eqref{f_fst} ranges over all
$L, U \subseteq [d]$ with $0 < |L| + |U| \le s$. Hence, for each function 
$f$, the set of admissible representations $\fcnu \equiv f$ enlarges 
as $s$ increases. Since $\vinfxgb(\cdot)$ is defined as an infimum over 
these representations, this gives (a).

Since $|L| + |U|$ is always bounded by $2d$, increasing $s$ beyond $2d$ 
does not enlarge the set of admissible representations. Consequently, 
$\vinfxgb(\cdot)$ stabilizes once $s \ge 2d$, which proves (b).

Lastly, (c) follows from the convexity of total variation 
$\|\cdot\|_{\text{TV}}$ on the space of finite signed Borel measures.
\end{proof}

\subsubsection{Proof of Theorem \ref{thm:complexity-equivalence}}
\label{pf:complexity-equivalence}
Before proving the theorem, we first observe and prove the following 
alternative characterization of $\vxgb(\cdot)$, originally defined 
via \eqref{eq:vxgb-original-def}.

\begin{lemma}\label{lem:alt-charac-vxgb}
    The complexity measure $\vxgb(\cdot)$ can be alternatively 
    characterized as
    \begin{equation*}
    \begin{aligned}
        &\vxgb(f) = \inf \bigg\{
        \sum_{0 < |L| + |U| \le s} \|\nu_{L, U}\|_{\text{TV}}: 
        \fcnu \equiv f \text{ and } \\
        &\qquad \qquad \qquad \qquad \qquad \qquad \quad
        \nu_{L, U} \text{ are discrete signed measures with 
        finitely many support points} \bigg\}.
    \end{aligned}
    \end{equation*}
\end{lemma}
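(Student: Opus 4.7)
The strategy is to prove both inequalities by explicit conversion between tree-sum representations of $f$ and discrete-measure representations $\fcnu$ that preserve the associated complexity. Denote the two sides of the claimed identity by $W_1(f) = \vxgb(f)$ (the original definition from \eqref{eq:vxgb-original-def}) and $W_2(f)$ (the right-hand side of the lemma). The conversions are essentially the same as those already used in the proof of Proposition \ref{prop:alt-charac-fst}, but now I would track leaf-weight magnitudes in addition to set-membership.

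For $W_2(f) \le W_1(f)$, given any representation $f = \sum_{k=1}^K f_k$ as a finite sum of trees, I would first absorb every constant tree into the additive constant $c$ of the target representation; this is free of penalty by the paper's convention $\|\wbf_k\|_1 = 0$ for constant trees. For each non-constant tree $f_k$, I would apply the root-to-leaf decomposition recalled just after \eqref{basis_func}: each leaf $\ell$ with weight $w_{k,\ell}$ contributes $w_{k,\ell} \cdot b^{L_\ell, U_\ell}_{\lbf_\ell, \ubf_\ell}$ with $0 < |L_\ell| + |U_\ell| \le s$. Gathering contributions by index pair $(L, U)$, I would set $\nu_{L, U} := \sum_{(k, \ell) : (L_\ell, U_\ell) = (L, U)} w_{k, \ell}\, \delta_{(\lbf_\ell, \ubf_\ell)}$, a discrete signed measure with finitely many support points. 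Then $\fcnu \equiv f$ by construction, and the triangle inequality for total variation yields $\sum_{L, U} \|\nu_{L, U}\|_{\text{TV}} \le \sum_{k, \ell} |w_{k, \ell}| = \sum_k \|\wbf_k\|_1$; taking the infimum on the right then gives $W_2(f) \le W_1(f)$.

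For $W_1(f) \le W_2(f)$, given any discrete representation $\fcnu \equiv f$ with $\nu_{L, U} = \sum_m \alpha_{L, U, m}\, \delta_{(\lbf_{L, U, m}, \ubf_{L, U, m})}$, I would build a tree-sum representation using one constant tree with value $c$ (zero penalty by convention) together with, for each atom, a single regression tree realizing $\alpha_{L, U, m} \cdot b^{L, U}_{\lbf_{L, U, m}, \ubf_{L, U, m}}$. Concretely, such a tree is a spine of $|L| + |U|$ consecutive splits, one for each element of $L \cup U$ counted with multiplicity so that a coordinate shared by $L$ and $U$ is split twice in succession (once as $x_j \ge l_j$ and once as $x_j < u_j$), terminating in a single nonzero leaf of weight $\alpha_{L, U, m}$; all off-spine branches terminate immediately at zero-weight leaves. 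This tree has depth exactly $|L| + |U| \le s$ and leaf-weight $L^1$ norm $|\alpha_{L, U, m}|$, so summing across atoms yields a tree-sum representation whose total leaf-weight $L^1$ norm equals $\sum_{L, U} \|\nu_{L, U}\|_{\text{TV}}$, and taking the infimum completes the reverse inequality.

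\textbf{Main obstacle.} The only subtle point is the construction of the single-atom tree when $L \cap U$ is nonempty, which requires two consecutive splits on the same coordinate along the spine, together with the degenerate case $l_j > u_j$ for some $j \in L \cap U$, in which the basis function vanishes identically and the corresponding atom can simply be dropped at no cost. All other steps are routine bookkeeping.
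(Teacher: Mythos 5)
Your proposal is correct and follows essentially the same route as the paper's proof: both directions rest on the same correspondence between root-to-leaf path decompositions of trees (with the triangle inequality for total variation) and single-atom "spine" trees carrying one nonzero leaf weight equal to the atom's mass. The extra details you supply — absorbing constant trees into $c$, the repeated-split construction when $L \cap U \neq \emptyset$, and the degenerate case $l_j > u_j$ — are all consistent with the paper's (more tersely stated) argument.
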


Note that the only difference from the definition \eqref{vinfxgb_def} 
of $\vinfxgb(\cdot)$ is that the signed measures $\nu_{L, U}$ are 
restricted to be discrete with finite support. We will show in the 
proof of Theorem \ref{thm:complexity-equivalence} that this 
additional restriction does not affect the value of the infimum for 
functions in $\fst$.

\begin{proof}[Proof of Lemma \ref{lem:alt-charac-vxgb}]
Suppose first that all $\nu_{L, U}$ are discrete signed measures with 
finitely many support points. Then, $\fcnu$ is a finite linear 
combination of the basis functions $b^{L, U}_{\lbf, \ubf}$ with 
coefficients $\nu_{L, U}(\{(\lbf, \ubf)\})$ (plus a constant). Recall 
that each basis function $b^{L, U}_{\lbf, \ubf}$ can be viewed as a 
regression tree with right-continuous splits and depth at most $s$, 
whose leaf weights are all zero except for a single leaf with weight one. 
Consequently, $\fcnu$ can be represented as a finite sum of regression 
trees of the same type, whose leaf weight vectors each contain a single 
nonzero entry equal to $\nu_{L, U}(\{(\lbf, \ubf)\})$. For this 
representation, the total $\ell^1$ norm of the leaf weight vectors is 
exactly equal to the sum of the total variations of $\nu_{L, U}$. This 
proves that the infimum in the lemma is greater than or equal to the 
infimum in \eqref{eq:vxgb-original-def}. 

Now, suppose that $f \in \fst$ and that it is represented as a finite sum of 
regression trees with right-continuous splits and depth at most $s$. Let 
$\wbf_k$ denote the leaf weight vector of the $k^{\text{th}}$ tree. 
By decomposing each tree into the basis functions $b^{L, U}_{\lbf, \ubf}$ 
corresponding to paths from the root to each leaf, we obtain a 
representation of $f$ as a finite linear combination of these basis 
functions whose coefficient vector has $\ell^1$ norm no larger than 
$\sum_k \|\wbf_k\|_1$. Equivalently, there exists a representation 
$\fcnu \equiv f$ with discrete signed measures $\nu_{L, U}$ having finite 
support such that the sum of the total variations of $\nu_{L, U}$ is no 
larger than $\sum_k \|\wbf_k\|_1$. This shows that the infimum in 
the lemma is no larger than the infimum in \eqref{eq:vxgb-original-def}.
\end{proof}

\begin{proof}[Proof of Theorem \ref{thm:complexity-equivalence}]
We begin by introducing some notation used in the proof. For a function 
$g:[0, 1]^d \to \R$ and a nonempty subset $S \subseteq [d]$, define
\begin{equation*}
    g^S(x_j, j \in S) 
    := \lim_{(x_j, j \in S^c) \to (-\infty, j \in S^c)} 
    g(x_1, \dots, x_d) 
    \qquad \text{for } (x_j, j \in S) \in \R^{|S|}
\end{equation*}
whenever the limit exists, where $S^c = [d] \setminus S$.
    
Fix $f \in \fst$. 
Since $f$ is a finite sum of regression trees, there exists a 
partition $-\infty = v^{(j)}_0 < v^{(j)}_1 < \cdots < v^{(j)}_{n_j} 
< v^{(j)}_{n_j + 1} = +\infty$ of $\R$ for each $j \in [d]$ such that
$f$ is constant on 
\begin{equation}\label{eq:constant-partition}
    \prod_{j \in S} (v^{(j)}_{m_j}, v^{(j)}_{m_j + 1}) 
    \times \prod_{j \in S^c} \{v^{(j)}_{m_j}\}
\end{equation}
for every nonempty $S \subseteq [d]$, 
$m_j \in \{0, \dots, n_j\}$ for $j \in S$, and 
$m_j \in \{1, \dots, n_j\}$ for $j \in S^c$.
    
For each nonempty $S \subseteq [d]$ with $|S| \le s$ and
$\mbf = (m_j, j \in S) \in \prod_{j \in S} [n_j]$, define the 
alternating-sum functional 
\begin{equation*}
    \Delta^S_{\mbf}(g) 
    := \lim_{\epsilon \to 0+} \sum_{\deltabf \in \{0, 1\}^{|S|}}
    (-1)^{\sum_{j \in S} \delta_j} \cdot
    g^S(v^{(j)}_{m_j} - \delta_j \epsilon, j \in S\big)
\end{equation*}
for piecewise constant functions $g$ as in \eqref{eq:constant-partition}. 
    
Suppose $\fcnu \equiv f$. Then, clearly, we have
\begin{equation}\label{eq:alternating-sum-condition}
    \Delta^S_{\mbf}(\fcnu) = \Delta^S_{\mbf}(f)
\end{equation}
for all nonempty $S \subseteq [d]$ with $|S| \le s$ and 
$\mbf \in \prod_{j \in S} [n_j]$.
In fact, condition \eqref{eq:alternating-sum-condition} captures almost 
all of the information contained in the identity $\fcnu \equiv f$. The 
following lemma, whose proof is given after the current proof, makes this 
precise. This lemma will play an important role later.
\begin{lemma}\label{lem:alternating-sum-condition-equivalence}
    If $g, h \in \fst$ are piecewise constant as in 
    \eqref{eq:constant-partition} and satisfy
    \begin{equation*}
        \Delta^S_{\mbf}(g) 
        = \Delta^S_{\mbf}(h)
    \end{equation*}
    for all nonempty $S \subseteq [d]$ with $|S| \le s$ and
    $\mbf \in \prod_{j \in S} [n_j]$, then $g$ and $h$ differ 
    only by an additive constant; that is, there exists $b \in \R$ such that 
    $g(\xbf) = h(\xbf) + b$ for all $\xbf \in \R^d$.
\end{lemma}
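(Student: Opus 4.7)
Let $e := g - h$. Since $\fst$ is closed under linear combinations, $e \in \fst$, and $e$ is piecewise constant on the common grid from \eqref{eq:constant-partition}. By linearity of $\Delta^S_\mbf$, the hypothesis gives $\Delta^S_\mbf(e) = 0$ for every nonempty $S \subseteq [d]$ with $|S| \le s$ and every $\mbf \in \prod_{j \in S}[n_j]$. I will show that $e$ is constant by establishing the vanishing of every nonempty component in the anchored Möbius decomposition of $e$ at the anchor $(-\infty, \ldots, -\infty)$, which immediately yields the lemma with $b := e^\emptyset$.

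For each $T \subseteq [d]$, set $e^T(\xbf_T) := \lim_{(x_j,\, j \notin T) \to -\infty} e(\xbf)$, which is well defined because $e$ is piecewise constant with finitely many cells, and define
\[
e_T(\xbf_T) := \sum_{T' \subseteq T}(-1)^{|T| - |T'|}\, e^{T'}(\xbf_{T'}).
\]
Standard Möbius inversion yields $e(\xbf) = \sum_{T \subseteq [d]} e_T(\xbf_T)$, with $e_\emptyset = e^\emptyset$ a constant and each $e_T$ for $T \neq \emptyset$ tending to zero as any coordinate $x_j$ with $j \in T$ is sent to $-\infty$. It therefore suffices to prove $e_T \equiv 0$ for every nonempty $T$.

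For $|T| > s$: by Proposition \ref{prop:alt-charac-fst}, $e$ is a finite linear combination (plus a constant) of basis functions $b^{L, U}_{\lbf, \ubf}$ with $|L| + |U| \le s$, each depending only on the at most $s$ coordinates in $L \cup U$. Since the anchored ANOVA component indexed by $T$ of a function independent of some coordinate $j \in T$ is zero, each basis function contributes only to components with $T \subseteq L \cup U$, and in particular $|T| \le s$; hence $e_T \equiv 0$ whenever $|T| > s$. For $1 \le |T| \le s$, I proceed by induction on $|T|$. The base case $|T| = 1$ is immediate: $\Delta^{\{j\}}_m(e) = e^{\{j\}}(v^{(j)}_m) - e^{\{j\}}(v^{(j)}_m - 0) = 0$ for every $m \in [n_j]$ forces the piecewise constant right-continuous univariate function $e^{\{j\}}$ to have no jumps at its grid points, hence to be constant, so $e_{\{j\}} \equiv 0$. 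For larger $|T|$, the inductive hypothesis reduces $e^T = \sum_{T' \subseteq T} e_{T'}$ to $e^T = e^\emptyset + e_T$; meanwhile, $\Delta^T_\mbf(e) = 0$ says precisely that the $|T|$-fold quasi-volume of $e^T$ (equivalently, of $e_T$, since $e^\emptyset$ is constant) vanishes on every cell $\prod_{j \in T}[v^{(j)}_{m_j - 1}, v^{(j)}_{m_j}]$. Combined with the boundary conditions $\lim_{x_j \to -\infty} e_T = 0$ for each $j \in T$, a telescoping cell-by-cell sweep of the grid, starting at the anchor corner, forces $e_T \equiv 0$.

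The main obstacle is this last telescoping sweep. Starting from $e_T = 0$ on the "boundary" cells adjacent to the anchor corner, where the $-\infty$ limit conditions make the vertex values vanish, one must propagate vanishing to every interior grid vertex using the zero-quasi-volume identity. This is a grid induction on $\mbf$ along increasing $\sum_{j \in T} m_j$: the quasi-volume equation at a cell expresses $e_T$ at its "upper" vertex in terms of $e_T$ at the $2^{|T|} - 1$ other corners, all of which are already known to vanish by the induction. Right-continuity and piecewise constancy of $e_T$ then upgrade this vertex-wise vanishing to $e_T \equiv 0$ on $\R^{|T|}$, completing the induction and the proof.
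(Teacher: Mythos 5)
Your proof is correct, but it is organized quite differently from the paper's. The paper reduces the statement to a purely discrete claim about the finite vector $\phibf$ of values of $g-h$ at one representative point per grid cell, and then proves that claim by induction on the ambient dimension $d$, slicing along the last coordinate and gluing the slices with the condition for $S=\{d\}$. You instead expand $e=g-h$ into its anchored M\"obius/ANOVA components $e_T$, dispose of the components with $|T|>s$ via the basis-function structure of $\fst$ (the paper achieves the same effect by observing that $\Delta^S_{\mbf}(e)=0$ holds vacuously for $|S|>s$, since the expansion \eqref{eq:alternating-sum-expansion} is empty), and then kill each remaining $e_T$ by a sweep over the grid in increasing $\sum_{j\in T} m_j$, using that the quasi-volume identity over each cell determines the value at the upper corner from the $2^{|T|}-1$ lower corners, which vanish by induction or by the $-\infty$ boundary condition. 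Both arguments rest on the same telescoping of alternating sums; yours makes the interaction structure explicit and meshes nicely with the Hardy--Krause viewpoint used elsewhere in the paper, while the paper's dimension induction avoids introducing the ANOVA decomposition and keeps everything at the level of a single finite array. One small simplification available to you: the outer induction on $|T|$ is unnecessary, since the $|T|$-fold quasi-volume automatically annihilates every component $e_{T'}$ with $T'\subsetneq T$ (such a component is independent of some coordinate in $T$), so the identity ``quasi-volume of $e^T$ equals quasi-volume of $e_T$'' holds without assuming the lower-order components vanish, and each $T$ can be treated independently.
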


We simplify \eqref{eq:alternating-sum-condition} and express it 
in terms of $\nu_{L, U}$ more explicitly. 
Fix a nonempty $S \subseteq [d]$ with $|S| \le s$ and 
$\mbf = (m_j, j \in S) \in \prod_{j \in S} [n_j]$.
Expanding the left-hand side of \eqref{eq:alternating-sum-condition} gives
\begingroup
\allowdisplaybreaks
\begin{align}\label{eq:alternating-sum-expansion}
    \Delta^S_{\mbf}(\fcnu) 
    &= \sum_{\substack{L, U: L \subseteq S \subseteq L \cup U 
    \\ |L| + |U| \le s}}
    \lim_{\epsilon \to 0+}
    \sum_{\deltabf \in \{0, 1\}^{|S|}} (-1)^{\sum_{j \in S} \delta_j} \\
    &\qquad \qquad \qquad \qquad
    \cdot \int_{\R^{|L| + |U|}} \prod_{j \in L} 
    \ind\big(v^{(j)}_{m_j} - \delta_j \epsilon \ge l_j\big)
    \cdot \prod_{j \in U \cap S} 
    \ind\big(v^{(j)}_{m_j} - \delta_j \epsilon < u_j\big) 
    \, d\nu_{L, U}(\lbf, \ubf). \nonumber
\end{align}
\endgroup
The inner limit can be simplified by exchanging the order of summation and 
integration and analyzing each indicator term. Specifically, 
\begin{equation*}
\begin{aligned}
    &\lim_{\epsilon \to 0+}
    \sum_{\deltabf \in \{0, 1\}^{|S|}} (-1)^{\sum_{j \in S} \delta_j} 
    \int_{\R^{|L| + |U|}} \prod_{j \in L} 
    \ind\big(v^{(j)}_{m_j} - \delta_j \epsilon \ge l_j\big)
    \cdot \prod_{j \in U \cap S} 
    \ind\big(v^{(j)}_{m_j} - \delta_j \epsilon < u_j\big) 
    \, d\nu_{L, U}(\lbf, \ubf) \\
    &\ \ = \lim_{\epsilon \to 0+}
    \int_{\R^{|L| + |U|}} \prod_{j \in L \setminus U} \big\{
    \ind\big(v^{(j)}_{m_j} \ge l_j \big)
    - \ind\big(v^{(j)}_{m_j} - \epsilon \ge l_j\big)\big\} \cdot \prod_{j \in L \cap U} \big\{
    \ind\big(l_j \le v^{(j)}_{m_j} < u_j \big)
    - \ind\big(l_j \le v^{(j)}_{m_j} 
    - \epsilon < u_j\big)\big\} \\
    &\qquad \qquad \qquad \qquad 
    \cdot \prod_{j \in (U \setminus L) \cap S} \big\{
    \ind\big(v^{(j)}_{m_j} < u_j \big)
    - \ind\big(v^{(j)}_{m_j} - \epsilon < u_j\big)\big\}
    \, d\nu_{L, U}(\lbf, \ubf) \\
    &\ \ = (-1)^{|(U \setminus L) \cap S|} 
    \sum_{K \subseteq L \cap U} (-1)^{|(L \cap U) \setminus K|} \cdot 
    \nu_{L, U} \Big(\prod_{j \in (L \setminus U) \cup K} 
    \{v^{(j)}_{m_j}\} \times \prod_{j \in (L \cap U) \setminus K} 
    (-\infty, v^{(j)}_{m_j}) \\
    &\qquad \qquad \qquad \qquad \qquad \qquad \qquad \qquad \qquad 
    \qquad \quad
    \times \prod_{j \in K} (v^{(j)}_{m_j}, +\infty) \times 
    \prod_{\substack{j \in ((L \cap U) \setminus K) \\
    \quad \cup ((U \setminus L) \cap S)}} 
    \{v^{(j)}_{m_j}\} \times \R^{|U \setminus S|}\Big).
\end{aligned}
\end{equation*}
Thus, condition \eqref{eq:alternating-sum-condition}, which holds for 
all nonempty $S \subseteq [d]$ with $|S| \le s$ and
$\mbf = (m_j, j \in S) \in \prod_{j \in S} [n_j]$ 
provided that $\fcnu \equiv f$, can be written as
\begin{align}\label{eq:equivalence-necessary-condition}
    &\sum_{\substack{L, U: L \subseteq S \subseteq L \cup U 
    \\ |L| + |U| \le s}} (-1)^{|(U \setminus L) \cap S|} 
    \sum_{K \subseteq L \cap U} (-1)^{|(L \cap U) \setminus K|} 
    \nonumber \\
    &\qquad \quad \cdot \nu_{L, U} \Big(\prod_{j \in (L \setminus U) \cup K} 
    \{v^{(j)}_{m_j}\} \times \prod_{j \in (L \cap U) \setminus K} 
    (-\infty, v^{(j)}_{m_j})
    \times \prod_{j \in K} (v^{(j)}_{m_j}, +\infty) \times
    \prod_{\substack{j \in ((L \cap U) \setminus K) \\
    \quad \cup ((U \setminus L) \cap S)}} 
    \{v^{(j)}_{m_j}\} \times \R^{|U \setminus S|}\Big) \nonumber \\
    &\qquad = \Delta^S_{\mbf}(f).
\end{align}
Consequently, we have
\begin{align}\label{eq:reduction-based-on-alt-sums}
    \vinfxgb(f) &\ge \inf \bigg\{
    \sum_{0 < |L| + |U| \le s} \|\nu_{L, U}\|_{\text{TV}}: 
    \nu_{L, U} 
    \text{ satisfy \eqref{eq:equivalence-necessary-condition}}\bigg\}.
\end{align}
    
Now, we show the infimum in \eqref{eq:reduction-based-on-alt-sums} is
achieved by discrete signed measures supported on 
\begin{equation}\label{eq:lattices-piecewise-constant}
    \prod_{j \in L} \{v^{(j)}_{1}, \dots, v^{(j)}_{n_j}\} 
    \times 
    \prod_{j \in U} \{v^{(j)}_{1}, \dots, v^{(j)}_{n_j}\}.
\end{equation}
Suppose $\nu_{L, U}$ are signed Borel measures satisfying 
\eqref{eq:equivalence-necessary-condition}. 
For each $j \in [d]$, let $V^{(j)} = \{v^{(j)}_{1}, \dots, v^{(j)}_{n_j}\}$, 
$\overline{V}^{(j)}_{m_j} = \{v^{(j)}_{m_j + 1}, \dots, v^{(j)}_{n_j}\}$, and 
$\underline{V}^{(j)}_{m_j} = \{v^{(j)}_{1}, \dots, v^{(j)}_{m_j - 1}\}$.
Define discrete signed measures $\mu_{L, U}$, supported on the 
lattices \eqref{eq:lattices-piecewise-constant}, by
\begin{equation*}
\begin{aligned}
    &\mu_{L, U}\big(\{(v^{(j)}_{p_j}, j \in L; 
    v^{(j)}_{q_j}, j \in U)\}\big)
    = \sum_{\substack{\widetilde{U} \supseteq U \\ 
    |\widetilde{U}| \le s - |L|}} 
    \sum_{T \subseteq (L \cap \widetilde{U}) \setminus U}
    (-1)^{|((L \cap \widetilde{U}) \setminus U) \setminus T|} \\
    &\qquad \qquad \qquad \quad
    \cdot \nu_{L, \widetilde{U}}
    \Big(\prod_{j \in L 
    \setminus (((L \cap \widetilde{U}) \setminus U) \setminus T)} 
    \{v^{(j)}_{p_j}\}  
    \times \prod_{j \in ((L \cap \widetilde{U}) \setminus U) \setminus T} 
    \big((- \infty, v^{(j)}_{p_j}) 
    \setminus \underline{V}^{(j)}_{p_j}\big) \\
    &\qquad \qquad \qquad \qquad \qquad 
    \times \prod_{j \in ((L \cap \widetilde{U})\setminus U) \setminus T} 
    \{v^{(j)}_{p_j}\}
    \times \prod_{j \in T} 
    \big((v^{(j)}_{p_j}, +\infty) \setminus \overline{V}^{(j)}_{p_j}\big)
    \times \prod_{j \in U} \{v^{(j)}_{q_j}\} 
    \times \prod_{j \in (\widetilde{U} \setminus L) \setminus U} 
    (\R \setminus V^{(j)})\Big)
\end{aligned}
\end{equation*}
for $(p_j, j \in L) \in \prod_{j \in L} [n_j]$ and 
$(q_j, j \in U) \in \prod_{j \in U} [n_j]$.
Observe that for $L \subseteq S \subseteq L \cup U$ with 
$|L| + |U| \le s$, 
\begin{equation*}
\begin{aligned}
    &\mu_{L, U} \Big(\prod_{j \in (L \setminus U) \cup K} 
    \{v^{(j)}_{m_j}\} \times \prod_{j \in (L \cap U) \setminus K} 
    (-\infty, v^{(j)}_{m_j})
    \times \prod_{j \in K} (v^{(j)}_{m_j}, +\infty) \times
    \prod_{\substack{j \in ((L \cap U) \setminus K) \\
    \quad \cup ((U \setminus L) \cap S)}} 
    \{v^{(j)}_{m_j}\} \times \R^{|U \setminus S|}\Big) \\ 
    &\quad = \sum_{\rbf \in \prod\limits_{j \in (L \cap U) \setminus K} 
    \underline{V}^{(j)}_{m_j} 
    \times \prod\limits_{j \in K} \overline{V}^{(j)}_{m_j} 
    \times \prod\limits_{U \setminus S} V^{(j)}}  
    \mu_{L, U} \Big(\prod_{j \in (L \setminus U) \cup K} 
    \{v^{(j)}_{m_j}\} \times \prod_{j \in (L \cap U) \setminus K} 
    \{v^{(j)}_{r_j}\} \\
    &\qquad \qquad \qquad \qquad \qquad \qquad \qquad \qquad \qquad \quad
    \times \prod_{j \in K} \{v^{(j)}_{r_j}\} 
    \times \prod_{\substack{j \in ((L \cap U) \setminus K) \\
    \quad \cup ((U \setminus L) \cap S)}} \{v^{(j)}_{m_j}\} 
    \times \prod_{j \in U \setminus S} \{v^{(j)}_{r_j}\}\Big) \\
    &\quad = \sum_{\rbf \in \prod\limits_{j \in (L \cap U) \setminus K} 
    \underline{V}^{(j)}_{m_j} 
    \times \prod\limits_{j \in K} \overline{V}^{(j)}_{m_j} 
    \times \prod\limits_{U \setminus S} V^{(j)}}
    \sum_{\substack{\widetilde{U} \supseteq U \\ 
    |\widetilde{U}| \le s - |L|}} 
    \sum_{T \subseteq (L \cap \widetilde{U}) \setminus U}
    (-1)^{|((L \cap \widetilde{U}) \setminus U) \setminus T|} \\
    &\qquad \qquad \qquad \qquad \quad
    \cdot \nu_{L, \widetilde{U}}
    \Big(\prod_{j \in (L \setminus \widetilde{U}) \cup K \cup T} 
    \{v^{(j)}_{m_j}\}  
    \times \prod_{j \in (L \cap U) \setminus K} 
    \{v^{(j)}_{r_j}\} 
    \times \prod_{j \in ((L \cap \widetilde{U}) \setminus U) \setminus T} 
    \big((- \infty, v^{(j)}_{m_j}) 
    \setminus \underline{V}^{(j)}_{m_j}\big) \\
    &\qquad \qquad \qquad \qquad \qquad \qquad \quad
    \times \prod_{j \in ((L \cap \widetilde{U}) \setminus U) \setminus T} 
    \{v^{(j)}_{m_j}\}
    \times \prod_{j \in T} 
    \big((v^{(j)}_{m_j}, +\infty) \setminus \overline{V}^{(j)}_{m_j}\big) \\
    &\qquad \qquad \qquad \qquad \qquad \qquad \quad
    \times \prod_{j \in K} \{v^{(j)}_{r_j}\} 
    \times \prod_{\substack{j \in ((L \cap U) \setminus K) \\
    \quad \cup ((U \setminus L) \cap S)}} \{v^{(j)}_{m_j}\} 
    \times \prod_{j \in U \setminus S} \{v^{(j)}_{r_j}\}
    \times \prod_{j \in (\widetilde{U} \setminus L) \setminus U} 
    (\R \setminus V^{(j)})\Big) \\
    &\quad = \sum_{\substack{\widetilde{U} \supseteq U \\ 
    |\widetilde{U}| \le s - |L|}} 
    \sum_{T \subseteq (L \cap \widetilde{U}) \setminus U}
    (-1)^{|((L \cap \widetilde{U}) \setminus U) \setminus T|} \\
    &\qquad \qquad \qquad \qquad \quad
    \cdot \nu_{L, \widetilde{U}}
    \Big(\prod_{j \in (L \setminus \widetilde{U}) \cup K \cup T} 
    \{v^{(j)}_{m_j}\}  
    \times \prod_{j \in (L \cap U) \setminus K} 
    \underline{V}^{(j)}_{m_j}
    \times \prod_{j \in ((L \cap \widetilde{U}) \setminus U) \setminus T} 
    \big((- \infty, v^{(j)}_{m_j}) 
    \setminus \underline{V}^{(j)}_{m_j}\big) \\
    &\qquad \qquad \qquad \qquad \qquad \qquad \quad
    \times \prod_{j \in ((L \cap \widetilde{U}) \setminus U) \setminus T} 
    \{v^{(j)}_{m_j}\}
    \times \prod_{j \in T} 
    \big((v^{(j)}_{m_j}, +\infty) \setminus \overline{V}^{(j)}_{m_j}\big) \\
    &\qquad \qquad \qquad \qquad \qquad \qquad \quad
    \times \prod_{j \in K} \overline{V}^{(j)}_{m_j} 
    \times \prod_{\substack{j \in ((L \cap U) \setminus K) \\
    \quad \cup ((U \setminus L) \cap S)}} \{v^{(j)}_{m_j}\} 
    \times \prod_{j \in U \setminus S} V^{(j)}
    \times \prod_{j \in (\widetilde{U} \setminus L) \setminus U} 
    (\R \setminus V^{(j)})\Big).
\end{aligned}
\end{equation*}
Hence, the left-hand side of \eqref{eq:equivalence-necessary-condition} 
for $\mu_{L, U}$ becomes
\begin{equation*}
\begin{aligned}
    &\sum_{\substack{L, U: L \subseteq S \subseteq L \cup U 
    \\ |L| + |U| \le s}} (-1)^{|(U \setminus L) \cap S|} 
    \sum_{K \subseteq L \cap U} (-1)^{|(L \cap U) \setminus K|} \\
    &\qquad \quad \cdot \mu_{L, U} \Big(\prod_{j \in (L \setminus U) \cup K} 
    \{v^{(j)}_{m_j}\} \times \prod_{j \in (L \cap U) \setminus K} 
    (-\infty, v^{(j)}_{m_j})
    \times \prod_{j \in K} (v^{(j)}_{m_j}, +\infty) \times
    \prod_{\substack{j \in ((L \cap U) \setminus K) \\
    \quad \cup ((U \setminus L) \cap S)}} 
    \{v^{(j)}_{m_j}\} \times \R^{|U \setminus S|}\Big) \\ 
    &\quad = \sum_{\substack{L, U: L \subseteq S \subseteq L \cup U 
    \\ |L| + |U| \le s}} (-1)^{|(U \setminus L) \cap S|} 
    \sum_{K \subseteq L \cap U} (-1)^{|(L \cap U) \setminus K|}
    \sum_{\substack{\widetilde{U} \supseteq U \\ 
    |\widetilde{U}| \le s - |L|}} 
    \sum_{T \subseteq (L \cap \widetilde{U}) \setminus U}
    (-1)^{|((L \cap \widetilde{U}) \setminus U) \setminus T|} \\
    &\qquad \qquad \qquad \qquad \qquad \quad
    \cdot \nu_{L, \widetilde{U}}
    \Big(\prod_{j \in (L \setminus \widetilde{U}) \cup K \cup T} 
    \{v^{(j)}_{m_j}\}  
    \times \prod_{j \in (L \cap U) \setminus K} 
    \underline{V}^{(j)}_{m_j}
    \times \prod_{j \in ((L \cap \widetilde{U}) \setminus U) \setminus T} 
    \big((- \infty, v^{(j)}_{m_j}) 
    \setminus \underline{V}^{(j)}_{m_j}\big) \\
    &\qquad \qquad \qquad \qquad \qquad \qquad \qquad \quad
    \times \prod_{j \in ((L \cap \widetilde{U}) \setminus U) \setminus T} 
    \{v^{(j)}_{m_j}\}
    \times \prod_{j \in T} 
    \big((v^{(j)}_{m_j}, +\infty) \setminus \overline{V}^{(j)}_{m_j}\big) \\
    &\qquad \qquad \qquad \qquad \qquad \qquad \qquad \quad
    \times \prod_{j \in K} \overline{V}^{(j)}_{m_j} 
    \times \prod_{\substack{j \in ((L \cap U) \setminus K) \\
    \quad \cup ((U \setminus L) \cap S)}} \{v^{(j)}_{m_j}\} 
    \times \prod_{j \in U \setminus S} V^{(j)}
    \times \prod_{j \in (\widetilde{U} \setminus L) \setminus U} 
    (\R \setminus V^{(j)})\Big).
\end{aligned}
\end{equation*}
By changing the order of summation in $U$ and $\widetilde{U}$ and 
combining the summations in $K$ and $T$ via $\widetilde{K} = K \cup T$,
we can simplify the right-hand side to
\begin{equation*}
\begin{aligned}
    &\sum_{\substack{L, \widetilde{U}: 
    L \subseteq S \subseteq L \cup \widetilde{U} 
    \\ |L| + |\widetilde{U}| \le s}} 
    (-1)^{|(\widetilde{U} \setminus L) \cap S|} 
    \sum_{U: S \setminus L \subseteq U \subseteq \widetilde{U}}
    \sum_{K \subseteq L \cap U} 
    \sum_{T \subseteq (L \cap \widetilde{U}) \setminus U}
    (-1)^{|(L \cap U) \setminus K|} \cdot 
    (-1)^{|((L \cap \widetilde{U}) \setminus U) \setminus T|} \\
    &\qquad \qquad \qquad \qquad \qquad
    \cdot \nu_{L, \widetilde{U}}
    \Big(\prod_{j \in (L \setminus \widetilde{U}) \cup K \cup T} 
    \{v^{(j)}_{m_j}\}  
    \times \prod_{j \in (L \cap U) \setminus K} 
    \underline{V}^{(j)}_{m_j}
    \times \prod_{j \in ((L \cap \widetilde{U}) \setminus U) \setminus T} 
    \big((- \infty, v^{(j)}_{m_j}) 
    \setminus \underline{V}^{(j)}_{m_j}\big) \\
    &\qquad \qquad \qquad \qquad \qquad \qquad \qquad
    \times \prod_{j \in ((L \cap \widetilde{U}) \setminus U) \setminus T} 
    \{v^{(j)}_{m_j}\}
    \times \prod_{j \in T} 
    \big((v^{(j)}_{m_j}, +\infty) \setminus \overline{V}^{(j)}_{m_j}\big) \\
    &\qquad \qquad \qquad \qquad \qquad \qquad \qquad
    \times \prod_{j \in K} \overline{V}^{(j)}_{m_j} 
    \times \prod_{\substack{j \in ((L \cap U) \setminus K) \\
    \quad \cup ((U \setminus L) \cap S)}} \{v^{(j)}_{m_j}\} 
    \times \prod_{j \in U \setminus S} V^{(j)}
    \times \prod_{j \in (\widetilde{U} \setminus L) \setminus U} 
    (\R \setminus V^{(j)})\Big) \\
    &\quad = \sum_{\substack{L, \widetilde{U}: 
    L \subseteq S \subseteq L \cup \widetilde{U} 
    \\ |L| + |\widetilde{U}| \le s}} 
    (-1)^{|(\widetilde{U} \setminus L) \cap S|} 
    \sum_{\widetilde{K} \subseteq L \cap \widetilde{U}} 
    (-1)^{|(L \cap \widetilde{U}) \setminus \widetilde{K}|} 
    \sum_{U: S \setminus L \subseteq U \subseteq \widetilde{U}} \\
    &\qquad \qquad \qquad \qquad \qquad
    \cdot \nu_{L, \widetilde{U}}
    \Big(\prod_{j \in (L \setminus \widetilde{U}) \cup \widetilde{K}} 
    \{v^{(j)}_{m_j}\}  
    \times \prod_{j \in ((L \cap \widetilde{U}) 
    \setminus \widetilde{K}) \cap U} \underline{V}^{(j)}_{m_j}
    \times \prod_{j \in ((L \cap \widetilde{U}) 
    \setminus \widetilde{K}) \setminus U} \big((- \infty, v^{(j)}_{m_j}) 
    \setminus \underline{V}^{(j)}_{m_j}\big) \\
    &\qquad \qquad \qquad \qquad \qquad\qquad \qquad
    \times \prod_{j \in \widetilde{K} \setminus U} 
    \big((v^{(j)}_{m_j}, +\infty) \setminus \overline{V}^{(j)}_{m_j}\big) 
    \times \prod_{j \in \widetilde{K} \cap U} \overline{V}^{(j)}_{m_j} 
    \times \prod_{\substack{j \in 
    ((L \cap \widetilde{U}) \setminus \widetilde{K}) \\
    \quad \cup ((\widetilde{U} \setminus L) \cap S)}} \{v^{(j)}_{m_j}\} \\
    &\qquad \qquad \qquad \qquad \qquad \qquad \qquad
    \times \prod_{j \in U \setminus S} V^{(j)}
    \times \prod_{j \in (\widetilde{U} \setminus L) \setminus U} 
    (\R \setminus V^{(j)})\Big).
\end{aligned}
\end{equation*}
Computing the inner summation over $U$ yields
\begin{equation*}
\begin{aligned}
    &\sum_{\substack{L, U: L \subseteq S \subseteq L \cup U 
    \\ |L| + |U| \le s}} (-1)^{|(U \setminus L) \cap S|} 
    \sum_{K \subseteq L \cap U} (-1)^{|(L \cap U) \setminus K|} \\
    &\qquad \quad \cdot \mu_{L, U} \Big(\prod_{j \in (L \setminus U) \cup K} 
    \{v^{(j)}_{m_j}\} \times \prod_{j \in (L \cap U) \setminus K} 
    (-\infty, v^{(j)}_{m_j})
    \times \prod_{j \in K} (v^{(j)}_{m_j}, +\infty) \times
    \prod_{\substack{j \in ((L \cap U) \setminus K) \\
    \quad \cup ((U \setminus L) \cap S)}} 
    \{v^{(j)}_{m_j}\} \times \R^{|U \setminus S|}\Big) \\
    &\quad= \sum_{\substack{L, \widetilde{U}: 
    L \subseteq S \subseteq L \cup \widetilde{U} 
    \\ |L| + |\widetilde{U}| \le s}} 
    (-1)^{|(\widetilde{U} \setminus L) \cap S|} 
    \sum_{\widetilde{K} \subseteq L \cap \widetilde{U}} 
    (-1)^{|(L \cap \widetilde{U}) \setminus \widetilde{K}|} \\ 
    &\qquad \qquad
    \cdot \nu_{L, \widetilde{U}}
    \Big(\prod_{j \in (L \setminus \widetilde{U}) \cup \widetilde{K}} 
    \{v^{(j)}_{m_j}\}  
    \times \prod_{j \in ((L \cap \widetilde{U}) 
    \setminus \widetilde{K})} (- \infty, v^{(j)}_{m_j})
    \times \prod_{j \in \widetilde{K}} (v^{(j)}_{m_j}, +\infty) 
    \times \prod_{\substack{j \in 
    ((L \cap \widetilde{U}) \setminus \widetilde{K}) \\
    \quad \cup ((\widetilde{U} \setminus L) \cap S)}} \{v^{(j)}_{m_j}\}
    \times \R^{|\widetilde{U} \setminus S|}\Big) \\
    &\quad= \Delta^S_{\mbf}(f), 
\end{aligned}
\end{equation*}
which shows that \eqref{eq:equivalence-necessary-condition} also holds 
for $\mu_{L, U}$.
    
Moreover, by the definition of $\mu_{L, U}$, we have
\begingroup
\allowdisplaybreaks
\begin{align*}
    &\sum_{0 < |L| + |U| \le s} |\mu_{L, U}| (\R^{|L| + |U|}) 
    = \sum_{0 < |L| + |U| \le s} 
    \sum_{\pbf \in \prod_{j \in L} [n_j]} 
    \sum_{\qbf \in \prod_{j \in U} [n_j]}
    \big|\mu_{L, U}\big(\{(v^{(j)}_{p_j}, j \in L)
    \times (v^{(j)}_{q_j}, j \in U)\}\big)\big| \\
    &\qquad \le \sum_{0 < |L| + |U| \le s} 
    \sum_{\pbf \in \prod_{j \in L} [n_j]} 
    \sum_{\qbf \in \prod_{j \in U} [n_j]} 
    \sum_{\substack{\widetilde{U} \supseteq U \\ 
    |\widetilde{U}| \le s - |L|}} 
    \sum_{T \subseteq (L \cap \widetilde{U}) \setminus U} \\
    &\qquad \qquad \qquad \quad
    |\nu_{L, \widetilde{U}}|
    \Big(\prod_{j \in L 
    \setminus (((L \cap \widetilde{U}) \setminus U) \setminus T)} 
    \{v^{(j)}_{p_j}\}  
    \times \prod_{j \in ((L \cap \widetilde{U}) \setminus U) \setminus T} 
    \big((- \infty, v^{(j)}_{p_j}) 
    \setminus \underline{V}^{(j)}_{p_j}\big) \\
    &\qquad \qquad \qquad \qquad \qquad 
    \times \prod_{j \in ((L \cap \widetilde{U})\setminus U) \setminus T} 
    \{v^{(j)}_{p_j}\}
    \times \prod_{j \in T} 
    \big((v^{(j)}_{p_j}, +\infty) \setminus \overline{V}^{(j)}_{p_j}\big)
    \times \prod_{j \in U} \{v^{(j)}_{q_j}\} 
    \times \prod_{j \in (\widetilde{U} \setminus L) \setminus U} 
    (\R \setminus V^{(j)})\Big) \\
    &\qquad = \sum_{0 < |L| + |\widetilde{U}| \le s} 
    \sum_{U \subseteq \widetilde{U}} 
    \sum_{T \subseteq (L \cap \widetilde{U}) \setminus U} 
    \sum_{\pbf \in \prod_{j \in L} [n_j]} 
    \sum_{\qbf \in \prod_{j \in U} [n_j]} \\
    &\qquad \qquad \qquad \quad
    |\nu_{L, \widetilde{U}}|
    \Big(\prod_{j \in L 
    \setminus (((L \cap \widetilde{U}) \setminus U) \setminus T)} 
    \{v^{(j)}_{p_j}\}  
    \times \prod_{j \in ((L \cap \widetilde{U}) \setminus U) \setminus T} 
    \big((- \infty, v^{(j)}_{p_j}) 
    \setminus \underline{V}^{(j)}_{p_j}\big) \\
    &\qquad \qquad \qquad \qquad \qquad
    \times \prod_{j \in ((L \cap \widetilde{U})\setminus U) \setminus T} 
    \{v^{(j)}_{p_j}\}
    \times \prod_{j \in T} 
    \big((v^{(j)}_{p_j}, +\infty) \setminus \overline{V}^{(j)}_{p_j}\big)
    \times \prod_{j \in U} \{v^{(j)}_{q_j}\} 
    \times \prod_{j \in (\widetilde{U} \setminus L) \setminus U} 
    (\R \setminus V^{(j)})\Big) \\
    &\qquad \le \sum_{0 < |L| + |\widetilde{U}| \le s} 
    \sum_{U \subseteq \widetilde{U}} 
    \sum_{T \subseteq (L \cap \widetilde{U}) \setminus U} 
    \sum_{\pbf \in \prod_{j \in L} [n_j]} 
    \sum_{\qbf \in \prod_{j \in U} [n_j]} \\
    &\qquad \qquad \qquad \quad
    |\nu_{L, \widetilde{U}}|
    \Big(\prod_{j \in L 
    \setminus (((L \cap \widetilde{U}) \setminus U) \setminus T)} 
    \{v^{(j)}_{p_j}\}  
    \times \prod_{j \in ((L \cap \widetilde{U}) \setminus U) \setminus T} 
    (\R \setminus V^{(j)}) \\
    &\qquad \qquad \qquad \qquad \qquad
    \times \prod_{j \in ((L \cap \widetilde{U})\setminus U) \setminus T} 
    \{v^{(j)}_{p_j}\}
    \times \prod_{j \in T} (\R \setminus V^{(j)})
    \times \prod_{j \in U} \{v^{(j)}_{q_j}\} 
    \times \prod_{j \in (\widetilde{U} \setminus L) \setminus U} 
    (\R \setminus V^{(j)})\Big) \\
    &\qquad = \sum_{0 < |L| + |\widetilde{U}| \le s} 
    \sum_{U \subseteq \widetilde{U}} 
    \sum_{T \subseteq (L \cap \widetilde{U}) \setminus U} \\
    &\qquad \qquad \qquad \quad
    |\nu_{L, \widetilde{U}}|
    \Big(\prod_{j \in L 
    \setminus (((L \cap \widetilde{U}) \setminus U) \setminus T)} V^{(j)}  
    \times \prod_{j \in ((L \cap \widetilde{U}) \setminus U) \setminus T} 
    (\R \setminus V^{(j)}) \\
    &\qquad \qquad \qquad \qquad \qquad
    \times \prod_{j \in ((L \cap \widetilde{U})\setminus U) \setminus T} V^{(j)} 
    \times \prod_{j \in T} (\R \setminus V^{(j)})
    \times \prod_{j \in U} V^{(j)} 
    \times \prod_{j \in (\widetilde{U} \setminus L) \setminus U} 
    (\R \setminus V^{(j)})\Big) \\
    &\qquad \le \sum_{0 < |L| + |\widetilde{U}| \le s} 
    |\nu_{L, \widetilde{U}}|(\R^{|L| + |\widetilde{U}|}).
\end{align*}
\endgroup
Here, we change the order of summation in $U$ and $\widetilde{U}$ for 
the second equality, and for the second inequality, we use the fact that 
\begin{equation*}
    (-\infty, v^{(j)}_{p_j}) \setminus \underline{V}^{(j)}_{p_j}  
    \subseteq \R \setminus V^{(j)} 
    \ \text{ and } \
    (v^{(j)}_{p_j}, +\infty) \setminus \overline{V}^{(j)}_{p_j}  
    \subseteq \R \setminus V^{(j)}
    \text{ for all } j.
\end{equation*}
The last inequality follows from the observation that for each $L$ 
and $\widetilde{U}$, the sets
\begin{equation*}
\begin{aligned}
    &\prod_{j \in L 
    \setminus (((L \cap \widetilde{U}) \setminus U) \setminus T)} V^{(j)}  
    \times \prod_{j \in ((L \cap \widetilde{U}) \setminus U) \setminus T} 
    (\R \setminus V^{(j)}) \\
    &\qquad \quad  
    \times \prod_{j \in ((L \cap \widetilde{U})\setminus U) \setminus T} V^{(j)} 
    \times \prod_{j \in T} (\R \setminus V^{(j)})
    \times \prod_{j \in U} V^{(j)} 
    \times \prod_{j \in (\widetilde{U} \setminus L) \setminus U} 
    (\R \setminus V^{(j)}) 
\end{aligned}
\end{equation*}
are pairwise disjoint as $U$ ranges over subsets of $\widetilde{U}$ and 
$T$ ranges over subsets of $(L \cap \widetilde{U}) \setminus U$.
This shows that the objective function of 
\eqref{eq:reduction-based-on-alt-sums} for 
$\mu_{L, U}$ is no larger than that for $\nu_{L, U}$ and 
ensures that the infimum of \eqref{eq:reduction-based-on-alt-sums} is 
attained by some discrete signed measures 
$\mu_{L, U}$ supported on \eqref{eq:lattices-piecewise-constant}.
    
Suppose $\mu_{L, U}$ are discrete signed measures 
supported on the lattices \eqref{eq:lattices-piecewise-constant} that 
satisfy \eqref{eq:equivalence-necessary-condition}. 
We can parametrize these measures by 
\begin{equation}\label{eq:measures-discrete-parametrization-piecewise-constant}
    \mu_{L, U} \big(\{(v^{(j)}_{p_j}, j \in L)
    \times (v^{(j)}_{q_j}, j \in U)\}\big)
    = \beta^{L, U}_{\pbf, \qbf}
\end{equation}
for $L, U \subseteq [d]$ with $0 < |L| + |U| \le s$, 
$\pbf = (p_j, j \in L) \in \prod_{j \in L} [n_j]$, and
$\qbf = (q_j, j \in U) \in \prod_{j \in U} [n_j]$. 
Under this parametrization, condition 
\eqref{eq:equivalence-necessary-condition} can be written entirely in terms 
of $\beta^{L, U}_{\pbf, \qbf}$ as
\begingroup
\allowdisplaybreaks
\begin{align}\label{eq:equivalence-necessary-condition-discrete}
    &\sum_{\substack{L, U: L \subseteq S \subseteq L \cup U 
    \\ |L| + |U| \le s}} (-1)^{|(U \setminus L) \cap S|} 
    \sum_{K \subseteq L \cap U} (-1)^{|(L \cap U) \setminus K|} 
    \sum_{\rbf \in \prod\limits_{j \in (L \cap U) \setminus K} 
    \underline{V}^{(j)}_{m_j} 
    \times \prod\limits_{j \in K} \overline{V}^{(j)}_{m_j} 
    \times \prod\limits_{U \setminus S} V^{(j)}}  
    \nonumber \\
    &\qquad \qquad \qquad \quad \beta^{L, U}_{(m_j, j \in (L \setminus U) \cup K; 
    r_j, j \in (L \cap U) \setminus K) \times (r_j, j \in K; 
    m_j, j \in ((L \cap U) \setminus K) 
    \cup ((U \setminus L) \cap S); r_j, j \in U \setminus S)} 
    = \Delta^S_{\mbf}(f).
\end{align}
\endgroup
Consequently, \eqref{eq:reduction-based-on-alt-sums} becomes
\begin{equation*}
    \vinfxgb(f) \ge \inf \big\{\|\beta\|_1: 
    \beta^{L, U}_{\pbf, \qbf}
    \text{ satisfy 
    \eqref{eq:equivalence-necessary-condition-discrete}} \big\}.
\end{equation*}
Since \eqref{eq:equivalence-necessary-condition-discrete} is a system of 
linear equations, there clearly exist minimizers
$\beta^{L, U}_{\pbf, \qbf}$ of the right-hand side. Let 
$\hat{\beta}^{L, U}_{\pbf, \qbf}$ 
denote one such minimizer and let $\hat{\mu}_{L, U}$ be
the corresponding discrete signed measures defined via 
\eqref{eq:measures-discrete-parametrization-piecewise-constant}.
    
Choose any constant $c_0 \in \R$. By construction, the function
$f^{d, s}_{c_0, \{\hat{\mu}_{L, U}\}}$ is piecewise constant as 
in \eqref{eq:constant-partition} and satisfies 
\eqref{eq:alternating-sum-condition}. By Lemma 
\ref{lem:alternating-sum-condition-equivalence}, there exists a constant 
$b \in \R$ such that $f(\xbf) = b + f^{d, s}_{c_0, \{\hat{\mu}_{L, U}\}}(\xbf)$ 
for all $\xbf \in \R^d$. Defining $c = b + c_0$, 
we then have $f^{d, s}_{c, \{\hat{\mu}_{L, U}\}} \equiv f$. 
It follows that
\begin{equation*}
    \|\hat{\beta}\|_1 = \sum_{0 < |L| + |U| \le s} 
    \|\hat{\mu}_{L, U}\|_{\text{TV}} 
    \ge \vinfxgb(f) 
    \ge \|\hat{\beta}\|_1,
\end{equation*}
so that 
\begin{equation*}
    \vinfxgb(f) = \sum_{0 < |L| + |U| \le s} 
    \|\hat{\mu}_{L, U}\|_{\text{TV}}. 
\end{equation*}
Therefore, the minimum in the definition of $\vinfxgb(f)$ is attained by discrete 
signed measures. This proves $\vinfxgb(f) = \vxgb(f)$.
\end{proof}
    
\begin{proof}[Proof of Lemma \ref{lem:alternating-sum-condition-equivalence}]
Since the alternating-sum functional is linear, it suffices to show that
if $f \in \fst$ is piecewise constant as in \eqref{eq:constant-partition} and 
satisfies
\begin{equation}\label{eq:alternating-sum-zero-condition}
    \Delta^S_{\mbf}(f) = 0
\end{equation}
for all $\emptyset \neq S \subseteq [d]$ with $|S| \le s$ and
$\mbf \in \prod_{j \in S} [n_j]$, then $f$ is a constant function.
    
Fix such a $f \in \fst$.
As seen in \eqref{eq:alternating-sum-expansion}, the expansion of 
$\Delta^S_{\mbf}(f)$ involves the summation over 
$L \subseteq S \subseteq L \cup U$ with $|L| + |U| \le s$. When $|S| > s$, this 
summation is vacuous, 
so that $\Delta^S_{\mbf}(f) = 0$ holds automatically. Thus, 
\eqref{eq:alternating-sum-zero-condition} in fact holds for all nonempty
$S \subseteq [d]$.
    
For each $j \in [d]$, define 
\begin{equation*}
    w^{(j)}_0 = v^{(j)}_1 - 1 \ \text{ and } \
    w^{(j)}_{m_j} = v^{(j)}_{m_j} \text{ for } m_j \in [n_j],
\end{equation*}
and let $\phibf$ denote the vector of evaluations of $f$ at 
$(w^{(1)}_{m_1}, \dots, w^{(d)}_{m_d})$; that is,
\begin{equation*}
    \phi(\mbf) = f(w^{(1)}_{m_1}, \dots, w^{(d)}_{m_d})
    \quad \text{for } 
    \mbf = (m_1, \dots, m_d) \in \prod_{j = 1}^d \{0, \dots, n_j\}.
\end{equation*}
Because $f$ is piecewise constant as in \eqref{eq:constant-partition} 
and right-continuous, the vector $\phibf$ completely 
determines $f$. Moreover, for the same reason, 
\eqref{eq:alternating-sum-zero-condition} is equivalent to
\begin{equation}\label{eq:alternating-sum-zero-condition-vector}
    \Delta^S_{\mbf} \phibf 
    := \sum_{\deltabf \in \{0, 1\}^{|S|}} (-1)^{\sum_{j \in S} \delta_j} \cdot 
    \phi\big(m_j - \delta_j, j \in S; 0, j \in S^c\big)
    = 0
    \quad \text{for } 
    \mbf = (m_j, j \in S) \in \prod_{j \in S} [n_j]
\end{equation}
for all nonempty $S$.
Thus, it suffices to show that if $\phibf$ satisfies 
\eqref{eq:alternating-sum-zero-condition-vector},
then $\phibf$ is a constant vector.
    
We prove this claim by induction on $d$. The case $d = 1$ is straightforward. 
When $d = 1$, \eqref{eq:alternating-sum-zero-condition-vector} reduces to
\begin{equation*}
    \Delta^{\{1\}}_{m}\phibf = \phi(m) - \phi(m - 1) = 0
    \quad \text{for } m \in [n_1].
\end{equation*}
Hence, in this case, it is clear that $\phibf$ is a constant vector.
Suppose the claim holds for $d - 1$, and let us prove it for $d$.
For each $m_d \in \{0, \dots, n_d\}$, let $\phibf^{(m_d)}$ denote the 
subvector of $\phibf$ with last index $m_d$; that is,
\begin{equation*}
    \phi^{(m_d)}(m_1, \dots, m_{d - 1}) 
    = \phi(m_1, \dots, m_{d - 1}, m_d)
    \quad \text{for } (m_1, \dots, m_{d - 1}) \in 
    \prod_{j = 1}^{d - 1} \{0, \dots, n_j\}.
\end{equation*}
Clearly, $\phibf^{(0)}$ satisfies 
\eqref{eq:alternating-sum-zero-condition-vector} for $d - 1$. Note that
for $m_d \in [n_d]$, 
\begin{equation*}
    \Delta^{S}_{(m_1, \dots, m_{d - 1})}\phibf^{(m_d)}
    - \Delta^{S}_{(m_1, \dots, m_{d - 1})}\phibf^{(m_d - 1)}
    =  \Delta^{S \cup \{d\}}_{(m_1, \dots, m_{d - 1}, m_d)}\phibf
    = 0
\end{equation*}
for every nonempty $S \subseteq [d - 1]$ and $(m_1, \dots, m_{d - 1}) \in 
\prod_{j = 1}^{d - 1} [n_j]$. Thus, all $\phibf^{(m_d)}$ satisfy 
\eqref{eq:alternating-sum-zero-condition-vector} for $d - 1$.
By the induction hypothesis, it follows that each $\phibf^{(m_d)}$ is a 
constant vector. Lastly, taking $S = \{d\}$ in 
\eqref{eq:alternating-sum-zero-condition-vector} gives
\begin{equation*}
    \Delta^{\{d\}}_{m_d}\phibf
    = \phi(0, \dots, 0, m_d) - \phi(0, \dots, 0, m_d - 1) = 0
    \quad \text{for } m_d \in [n_d].
\end{equation*}
Thus, the constants in $\phibf^{(m_d)}$ are the same for all $m_d$,
which means that $\phibf$ is a constant vector.
\end{proof}

\subsubsection{Proof of \eqref{eq:vinfxgb-1d-formula}}
\label{pf:vinfxgb-1d-formula}

We use the following standard result from real analysis in the proof.
\begin{theorem}[Theorem 3.29 of \citet{folland1999real}]
\label{thm:folland-representation}
    Suppose $f: \R \to \R$ has finite total variation and is 
    right-continuous. Then, there exists a unique constant $c \in \R$ 
    and a unique finite signed Borel measure $\lambda$ on $\R$ such that
    \begin{equation}\label{eq:folland-representation}
        f(x) = c + \int \ind(x \ge t) \, d\lambda(t) 
        \quad \text{ for } x \in \R.
    \end{equation}
    Conversely, if $f: \R \to \R$ is of the form 
    \eqref{eq:folland-representation}, then $f$ has finite total 
    variation, is right-continuous, and 
    $\tv(f) = \|\lambda\|_{\text{TV}}$.
\end{theorem}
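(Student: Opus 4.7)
The plan is to reduce everything to the standard Lebesgue--Stieltjes correspondence between finite Borel measures on $\R$ and right-continuous non-decreasing functions with finite limits at $\pm\infty$. The bridge is the identity $\int \ind(x \ge t) \, d\lambda(t) = \lambda((-\infty, x])$, which lets me rewrite \eqref{eq:folland-representation} as $f(x) = c + \lambda((-\infty, x])$. Thus the theorem becomes the assertion that finite-TV right-continuous functions on $\R$ correspond bijectively to pairs $(c, \lambda)$ with $c \in \R$ and $\lambda$ a finite signed Borel measure, with $\tv(f) = \|\lambda\|_{\text{TV}}$.

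For the forward direction, I would first show that $f$, being right-continuous with finite TV, admits finite limits $f(\pm\infty) := \lim_{x \to \pm\infty} f(x)$, and apply the Jordan decomposition to write $f = f^+ - f^-$ where $f^{\pm}$ are bounded, non-decreasing, and right-continuous (and the Jordan decomposition preserves right-continuity). Each $f^{\pm}$ induces a unique finite Borel measure $\mu^{\pm}$ via $\mu^{\pm}((-\infty, x]) = f^{\pm}(x) - f^{\pm}(-\infty)$, constructed by the standard Carath\'eodory extension from the premeasure on half-open intervals. Setting $c := f(-\infty)$ and $\lambda := \mu^+ - \mu^-$ gives the desired representation. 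For uniqueness, if two pairs $(c_i, \lambda_i)$ represent the same $f$, sending $x \to -\infty$ in \eqref{eq:folland-representation} and using dominated convergence against $|\lambda_i|$ forces $c_1 = c_2 = f(-\infty)$; then $\lambda_1((-\infty, x]) = \lambda_2((-\infty, x])$ for every $x$, and since half-lines form a $\pi$-system generating the Borel $\sigma$-algebra, the Dynkin $\pi$-$\lambda$ theorem (applied to the positive and negative parts, which are finite measures) forces $\lambda_1 = \lambda_2$.

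For the converse, suppose $f(x) = c + \lambda((-\infty, x])$. Right-continuity follows from continuity of $\lambda$ from above on the decreasing sequence $(-\infty, x + 1/n] \downarrow (-\infty, x]$. For the inequality $\tv(f) \le \|\lambda\|_{\text{TV}}$, for any partition $a = z_1 < \dots < z_{m+1} = b$,
\begin{equation*}
    \sum_{k=1}^{m} |f(z_{k+1}) - f(z_k)| = \sum_{k=1}^{m} |\lambda((z_k, z_{k+1}])| \le \sum_{k=1}^{m} |\lambda|((z_k, z_{k+1}]) \le |\lambda|(\R) = \|\lambda\|_{\text{TV}}.
\end{equation*}
Taking the supremum over partitions and over $[a, b]$ gives the desired bound.

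The delicate step, and the main obstacle, is the reverse inequality $\tv(f) \ge \|\lambda\|_{\text{TV}}$. Here I would invoke the Hahn decomposition $\R = P \sqcup N$, where $\lambda^+$ is supported on $P$ and $\lambda^-$ on $N$, so that $|\lambda| = \lambda^+ + \lambda^-$. Given $\epsilon > 0$, by outer regularity of the finite Borel measures $\lambda^{\pm}$ on $\R$, I can find pairwise disjoint bounded open intervals $J_1, \dots, J_K$, each entirely contained in $P$ or entirely contained in $N$, whose union captures all but $\epsilon$ of $|\lambda|$. Inserting the endpoints of these intervals into a partition of a sufficiently large $[a, b]$, each resulting subinterval $(z_k, z_{k+1}]$ lies essentially inside a single $J_i$ and hence inside either $P$ or $N$, so $|\lambda((z_k, z_{k+1}])| = |\lambda|((z_k, z_{k+1}])$. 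Summing, $\sum_k |f(z_{k+1}) - f(z_k)| \ge \|\lambda\|_{\text{TV}} - O(\epsilon)$, and letting $\epsilon \downarrow 0$ yields the matching lower bound. The care needed is in the regularity approximation and in handling the boundary between the $P$-intervals and $N$-intervals without cancellation; this is the one place where one cannot avoid a limiting argument.
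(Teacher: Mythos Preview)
The paper does not prove this theorem; it is cited verbatim as Theorem~3.29 of Folland and used as a black box. So there is no paper proof to compare your proposal against.

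As an independent argument, your sketch is largely sound, but the step you flag as delicate---the inequality $\|\lambda\|_{\text{TV}} \le \tv(f)$---has a genuine gap. Outer regularity of $\lambda^{\pm}$ gives you open sets \emph{containing} (subsets of) $P$ and $N$, not open intervals \emph{contained in} $P$ or $N$; the Hahn sets are arbitrary Borel sets and need not contain any interval. The claim ``pairwise disjoint bounded open intervals $J_1, \dots, J_K$, each entirely contained in $P$ or entirely contained in $N$'' is therefore unjustified as written. The argument can be repaired (inner-regularize to compacts $K^{\pm} \subseteq P, N$; outer-regularize the opposite measure to get open covers on which the ``wrong'' part is small; extract finite subcovers by compactness), but this takes real work and is not what you wrote.

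There is a much shorter route already implicit in your forward-direction construction: from the Jordan decomposition $f = c + f^+ - f^-$ and the induced measures $\mu^{\pm}$, you have $\lambda = \mu^+ - \mu^-$, hence $\|\lambda\|_{\text{TV}} \le \mu^+(\R) + \mu^-(\R) = \big(f^+(+\infty) - f^+(-\infty)\big) + \big(f^-(+\infty) - f^-(-\infty)\big) = \tv(f)$. Combined with the easy partition bound $\tv(f) \le \|\lambda\|_{\text{TV}}$ that you already have, this gives equality without touching the Hahn decomposition at all. You have all the ingredients; you just did not connect them.
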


\begin{proof}[Proof of \eqref{eq:vinfxgb-1d-formula}]
We first show that 
\begin{equation*}
    \mathcal{F}^{1, 1}_{\infty-\text{ST}}  
    = \big\{f: \tv(f) < \infty \text{ and } 
    f \text{ is right-continuous}\big\} 
\end{equation*}
and that 
\begin{equation*}
    V^{1, 1}_{\infty-\text{XGB}}(f) = \tv(f)
\end{equation*}
for $f \in \mathcal{F}^{1, 1}_{\infty-\text{ST}}$.

Suppose 
$f = f^{1, 1}_{c, \{\nu_{L, U}\}} \in \mathcal{F}^{1, 1}_{\infty-\text{ST}}$.
Since $d = s = 1$, the only admissible pairs of $(L, U)$ with 
$0 < |L| + |U| \le s$ are $(\{1\}, \emptyset)$ and 
$(\emptyset, \{1\})$.
Thus, $f$ can be expressed as
\begin{equation*}
    f(x) = c + \int \ind(x \ge l) \, d\nu_{\{1\}, \emptyset}(l)
    + \int \ind(x < u) \, d\nu_{\emptyset, \{1\}}(u)
\end{equation*}
for $x \in \R$. Define a signed Borel measure $\lambda$ by 
$\lambda = \nu_{\{1\}, \emptyset} - \nu_{\emptyset, \{1\}}$. 
Then,
\begin{equation*}
    f(x) = c + \nu_{\emptyset, \{1\}}(\R)
    + \int \ind(x \ge l) \, d\lambda(l)
\end{equation*}
for $x \in \R$, and  
\begin{equation*}
    \|\lambda\|_{\text{TV}} = |\lambda|(\R)
    \le |\nu_{\{1\}, \emptyset}|(\R) 
    + |\nu_{\emptyset, \{1\}}|(\R) 
    = \|\nu_{\{1\}, \emptyset}\|_{\text{TV}} 
    + \|\nu_{\emptyset, \{1\}}\|_{\text{TV}}. 
\end{equation*}
Hence, every $f \in \mathcal{F}^{1, 1}_{\infty-\text{ST}}$ admits 
the simpler representation
\begin{equation*}
    f_{c, \lambda} (x) :=  c + \int \ind(x \ge l) \, d\lambda(l),
\end{equation*}
and its complexity $V^{1, 1}_{\infty-\text{XGB}}(f)$ can be computed by
\begin{equation*}
    V^{1, 1}_{\infty-\text{XGB}}(f) 
    = \inf\{\|\lambda\|_{\text{TV}}: f_{c, \lambda} \equiv f \}.
\end{equation*}
By Theorem \ref{thm:folland-representation}, the collection of such
functions $f_{c, \lambda}$ is precisely the collection of all
right-continuous functions with finite total variation. Moreover, 
the pair $(c, \lambda)$ with $f_{c, \lambda} \equiv f$ is unique and 
satisfies $\|\lambda\|_{\text{TV}} = \tv(f)$. Consequently, 
\begin{equation*}
    \mathcal{F}^{1, 1}_{\infty-\text{ST}}  
    = \big\{f: \tv(f) < \infty \text{ and } 
    f \text{ is right-continuous}\big\},
\end{equation*}
and for every $f \in \mathcal{F}^{1, 1}_{\infty-\text{ST}}$, 
\begin{equation*}
    V^{1, 1}_{\infty-\text{XGB}}(f) = \tv(f).
\end{equation*}

We next prove that 
\begin{equation*}
    V^{1, 2}_{\infty-\text{XGB}}(f) 
    = \frac{1}{2} \cdot \big(\tv(f) + |\Delta(f)|\big)
\end{equation*}
for all $f \in \mathcal{F}^{1, 2}_{\infty-\text{ST}} 
(= \mathcal{F}^{1, 1}_{\infty-\text{ST}})$.
By the same argument as above, we can show that every 
$f \in \mathcal{F}^{1, 2}_{\infty-\text{ST}}$ admits the representation 
\begin{equation*}
    f_{c, \lambda, \mu} (x) :=  c + \int \ind(x \ge l) \, d\lambda(l) 
    + \int \ind(l \le x < u) \, d\mu(l, u),
\end{equation*}
where $\lambda$ and $\mu$ are finite signed Borel measures on $\R$ and 
$\R^2$, respectively, and its complexity $V^{1, 2}_{\infty-\text{XGB}}(f)$ 
is given by 
\begin{equation*}
    V^{1, 2}_{\infty-\text{XGB}}(f) 
    = \inf\big\{\|\lambda\|_{\text{TV}} + \|\mu\|_{\text{TV}}: 
    f_{c, \lambda, \mu} \equiv f \big\}.
\end{equation*}

First, suppose 
$f \equiv f_{c, \lambda, \mu} \in \mathcal{F}^{1, 2}_{\infty-\text{ST}}$.
For every $x < y$, we have
\begin{equation*}
\begin{aligned}
    |f(x) - f(y)| 
    &= \big|-\lambda((x, y]) + \mu\big((-\infty, x] \times (x, y]\big) 
    - \mu\big((x, y] \times (y, +\infty)\big)\big| \\
    &\le |\lambda|((x, y]) + |\mu|\big(\R \times (x, y]\big) 
    + |\mu|\big((x, y] \times \R\big),
\end{aligned}
\end{equation*}
and it thus follows that 
\begin{equation*}
    \tv(f) \le \|\lambda\|_{\text{TV}} + 2\|\mu\|_{\text{TV}}.    
\end{equation*}
Moreover, we have
\begin{equation*}
    |\Delta(f)| 
    = \big|\lim_{x \to +\infty} f(x) - \lim_{x \to -\infty} f(x)\big|
    = |\lambda(\R)| \le \|\lambda\|_{\text{TV}}.
\end{equation*}
Combining these two inequalities, we obtain
\begin{equation*}
    \frac{1}{2} \cdot \big(\tv(f) + |\Delta(f)|\big) 
    \le \|\lambda\|_{\text{TV}} + \|\mu\|_{\text{TV}}.
\end{equation*}
Taking the infimum over all $\lambda$ and $\mu$ with
$f_{c, \lambda, \mu} \equiv f$ gives
\begin{equation*}
    \frac{1}{2} \cdot \big(\tv(f) + |\Delta(f)|\big)
    \le V^{1, 2}_{\infty-\text{XGB}}(f),
\end{equation*}
which proves one direction of the desired identity.

We now prove the reverse inequality. 
Suppose $f \in \mathcal{F}^{1, 2}_{\infty-\text{ST}} 
(= \mathcal{F}^{1, 1}_{\infty-\text{ST}})$. Since $f$ has finite total 
variation and is right-continuous, 
Theorem \ref{thm:folland-representation} guarantees the existence of a 
constant $c \in \R$ and a finite signed Borel measure $\lambda$ on 
$\R$ such that 
\begin{equation*}
    f(x) = c + \int \ind(x \ge l) \, d\lambda(l)
    \quad \text{for } x \in \R
\end{equation*} 
and $\|\lambda\|_{\text{TV}} = \tv(f)$. 
Let $\lambda = \lambda^+ - \lambda^-$ be the Jordan decomposition of 
$\lambda$, and let $(P, N)$ be a Hahn decomposition. 
Without loss of generality, we assume 
\begin{equation*}
    \lambda^+(\R) \ge \lambda^-(\R).
\end{equation*}
Then, 
\begin{equation*}
    |\Delta(f)| 
    = \big|\lim_{x \to +\infty} f(x) - \lim_{x \to -\infty} f(x)\big|
    = |\lambda(\R)|
    = \lambda^+(\R) - \lambda^-(\R),
\end{equation*}
and thus, 
\begin{equation*}
    \frac{1}{2} \cdot \big(\tv(f) + |\Delta(f)|\big) 
    = \frac{1}{2} \cdot 
    \Big(\lambda^+(\R) + \lambda^-(\R) + \lambda^+(\R) - \lambda^-(\R)\Big) 
    = \lambda^+(\R).
\end{equation*}

Define a Borel measure $\widetilde{\lambda}$ on $\R$ by 
\begin{equation*}
    \widetilde{\lambda}(E) 
    = \Big(1 - \frac{\lambda^-(\R)}{\lambda^+(\R)}\Big) \cdot \lambda^+(E)
\end{equation*}
for Borel sets $E \subseteq \R$. The assumption 
$\lambda^+(\R) \ge \lambda^-(\R)$ ensures that $\widetilde{\lambda}(E)$ 
is nonnegative for all $E$. 
Next, define a signed Borel measure $\mu$ on $\R^2$ by 
\begin{equation*}
    \mu(E_1 \times E_2) 
    = \frac{1}{\lambda^+(\R)} \cdot 
    \Big(\lambda^+(E_1) \cdot \lambda^-(E_2) 
    - \lambda^-(E_1) \cdot \lambda^+(E_2)\Big) 
\end{equation*}
for Borel sets $E_1, E_2 \subseteq \R$.
By construction,
\begin{equation*}
    \mu(E_1 \times E_2) = 
    \begin{cases}
    \lambda^+(E_1) \cdot \lambda^-(E_2) / \lambda^+(\R) \ge 0
    & \text{if } E_1 \subseteq P, E_2 \subseteq N, \\
    -\lambda^-(E_1) \cdot \lambda^+(E_2) / \lambda^+(\R) \le 0 
    & \text{if } E_1 \subseteq N, E_2 \subseteq P, \\
    0 & \text{otherwise},
    \end{cases}
\end{equation*}
and therefore, 
\begin{equation*}
    \|\mu\|_{\text{TV}} = \mu(P \times N) - \mu(N \times P) 
    = \frac{2\lambda^+(P) \cdot \lambda^-(N)}{\lambda^+(\R)} 
    = 2 \lambda^-(\R).
\end{equation*}
Define a signed Borel measure $\widetilde{\mu}$ on $\R^2$ by 
\begin{equation*}
    d\widetilde{\mu}(l, u) = 1(l \le u) \cdot d\mu(l, u).
\end{equation*}
Since $\mu$ is anti-symmetric, i.e., 
$\mu(E_1 \times E_2) = -\mu(E_2 \times E_1)$ for all Borel sets 
$E_1, E_2 \subseteq \R$, we have
\begin{equation*}
    \|\widetilde{\mu}\|_{\text{TV}} 
    = \frac{1}{2} \cdot \|\mu\|_{\text{TV}} = \lambda^-(\R).
\end{equation*}
Hence,
\begin{equation*}
    \|\widetilde{\lambda}\|_{\text{TV}} 
    + \|\widetilde{\mu}\|_{\text{TV}}
    = \widetilde{\lambda}(\R) + \|\widetilde{\mu}\|_{\text{TV}}
    = \lambda^+(\R) - \lambda^-(\R) + \lambda^-(\R)
    = \lambda^+(\R).
\end{equation*}

Now, observe that 
\begin{equation*}
    \int \ind(x \ge l) \, d\widetilde{\lambda}(l)
    = \int \ind(x \ge l) \, d\lambda^+(l) 
    - \frac{\lambda^-(\R)}{\lambda^+(\R)} \cdot \lambda^+((-\infty, x])
\end{equation*}
and that
\begin{equation*}
\begin{aligned}
    &\int \ind(l \le x < u) \, d\widetilde{\mu}(l, u) 
    = \int \ind(l \le x < u) \, d\mu(l, u) 
    = \mu\big((-\infty, x] \times (x, +\infty)\big) \\
    &\quad= \frac{1}{\lambda^+(\R)} \cdot 
    \Big(\lambda^+((-\infty, x]) \cdot \lambda^-((x, +\infty)) 
    - \lambda^-((-\infty, x]) \cdot \lambda^+((x, +\infty))\Big) \\
    &\quad= \frac{\lambda^-(\R)}{\lambda^+(\R)} \cdot \lambda^+((-\infty, x])
    - \lambda^-((-\infty, x]) 
    = \frac{\lambda^-(\R)}{\lambda^+(\R)} \cdot \lambda^+((-\infty, x])
    - \int \ind(x \ge l) \, d\lambda^-(l)
\end{aligned}
\end{equation*}
for every $x \in \R$.
Combining these two equations gives 
\begin{equation*}
    f_{c, \widetilde{\lambda}, \widetilde{\mu}}(x) 
    = c + \int \ind(x \ge l) \, d\widetilde{\lambda}(l)
    + \int \ind(l \le x < u) \, d\widetilde{\mu}(l, u)
    = c + \int \ind(x \ge l) \, d\lambda(l)
    = f(x)
\end{equation*}
for every $x \in \R$. As a result,
\begin{equation*}
    V^{1, 2}_{\infty-\text{XGB}}(f) 
    \le \|\widetilde{\lambda}\|_{\text{TV}} 
    + \|\widetilde{\mu}\|_{\text{TV}} 
    = \lambda^+(\R)
    = \frac{1}{2} \cdot \big(\tv(f) + |\Delta(f)|\big),
\end{equation*}
which proves the reverse inequality.
\end{proof}

\subsubsection{Proof of Proposition \ref{prop:vinfxgb-hka-rel}}
\label{pf:vinfxgb-hka-rel}

In the proof of Proposition \ref{prop:vinfxgb-hka-rel}, we use the 
following theorem, which connects functions on a compact domain with 
finite Hardy–Krause variation to the cumulative distribution functions 
of finite signed Borel measures on the same domain. This result will 
also play a central role in the proofs of 
Propositions \ref{prop:fstinf-charac} and \ref{prop:infimal-convolution}. 

To state the result, we first 
recall the definition of Hardy–Krause variation on compact domains.
Let $f: \prod_{j = 1}^{m} [u_j, v_j] \to \R$ and 
$\abf = (a_1, \dots, a_m) \in \prod_{j = 1}^{m} \{u_j, v_j\}$. 
For each $S \subseteq [m]$, define
\begin{equation*}
    f^S_{(a_j, j \in S^c)}(x_j, j \in S) 
    = f(x_j, j \in S; a_j, j \in S^c)
    \quad \text{for } (x_j, j \in S) \in \R^{|S|}.
\end{equation*}
Since the domain is compact, there is no need to take limits as 
in \eqref{eq:f_section}.
The Hardy–Krause variation of $f$ anchored at $\abf$ on
$\prod_{j = 1}^{m} [u_j, v_j]$ is then defined by
\begin{equation*}
    \hk_{\abf}\Big(f; \prod_{j = 1}^{m} [u_j, v_j]\Big)
    = \sum_{0 < |S| \le m} \vit\Big(f^S_{(a_j, j \in S^c)}; 
    \prod_{j \in S} [u_j, v_j]\Big).
\end{equation*}

\begin{theorem}[Theorem 3 of \citet{aistleitner2015functions}]
\label{thm:aistleitner}
Suppose $f: \prod_{j = 1}^{m} [u_j, v_j] \to \R$ is  
right-continuous and has finite Hardy–Krause variation anchored at 
$\abf = (u_1, \dots, u_m)$. Then, there exists a unique finite 
signed Borel measure $\nu$ on $\prod_{j = 1}^{m} [u_j, v_j]$ such that
\begin{equation}\label{eq:hk-representation}
    f(x_1, \dots, x_m) = \nu\Big(\prod_{j = 1}^{m} [u_j, x_j]\Big) 
    \quad \text{for } (x_1, \dots, x_m) \in \prod_{j = 1}^{m} [u_j, v_j].
\end{equation}
Conversely, if $f$ is of the form \eqref{eq:hk-representation}, then $f$ has 
finite Hardy–Krause variation anchored at $\abf$ and 
\begin{equation*}
    \|\nu\|_{\text{TV}} 
    = \hk_{\abf}\Big(f; \prod_{j = 1}^{m} [u_j, v_j]\Big) 
    + |f(\abf)|.
\end{equation*}
\end{theorem}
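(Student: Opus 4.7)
The plan is to establish the two inequalities separately, and then exhibit examples witnessing tightness. For the upper bound $\vinfxgb(f) \le \inf_{\abf} \hk_{\abf}(f)$, I would fix any anchor $\abf \in \{-\infty, +\infty\}^d$, write $L(\abf) = \{j : a_j = -\infty\}$ and $U(\abf) = \{j : a_j = +\infty\}$, and construct an explicit representation of $f$ in the form \eqref{f_fst} whose total-variation sum equals $\hk_{\abf}(f)$. Using an inclusion-exclusion (ANOVA-type) decomposition based on the well-defined sections $f^S_{(a_j, j \in S^c)}$, one writes $f = f^\emptyset_{\abf} + \sum_{\emptyset \neq S \subseteq [d]} g_S$ where each $g_S$ depends only on the coordinates in $S$, vanishes as any $x_j$ ($j \in S$) approaches $a_j$, and has Vitali variation equal to $\vit(f^S_{(a_j, j \in S^c)})$ (the lower-order ANOVA terms contribute zero quasi-volume on $\R^S$ since they do not depend on all coordinates in $S$). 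For $|S| > s$, the interaction restriction \eqref{eq:interaction-restriction-cond} from Proposition~\ref{prop:fstinf-charac} gives $\vit(g_S) = 0$, and combined with anchor-vanishing, a short induction forces $g_S \equiv 0$. For $|S| \le s$, I would invoke Theorem~\ref{thm:aistleitner}, lifted from the compact-box setting to $\R^{|S|}$ by a monotone limit over increasing rectangles and a change of variables aligning each coordinate's anchor to $-\infty$, to produce a finite signed measure $\nu_S$ with $\|\nu_S\|_{\text{TV}} = \vit(g_S)$ such that $g_S = \int b^{S \cap L(\abf),\, S \cap U(\abf)}_{\lbf, \ubf}\, d\nu_S$. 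Summing over $S$ yields a valid representation $\fcnu \equiv f$ with $\sum_{L,U} \|\nu_{L,U}\|_{\text{TV}} = \sum_S \vit(g_S) = \hk_{\abf}(f)$.

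For the lower bound, the key computation is to bound $\hk_{\abf}(b^{L,U}_{\lbf, \ubf})$ for a single basis function. I would enumerate the subsets $S$ whose section $(b^{L,U}_{\lbf,\ubf})^S_{(a_j, j \in S^c)}$ is not identically zero (these are precisely the $S$ containing every $j \in L$ with $a_j = -\infty$ and every $j \in U$ with $a_j = +\infty$, and contained in $L \cup U$) and compute each $\vit$ via the tensor-product structure of the indicator product, picking up a factor $2^{|L \cap U|}$ from coordinates split in both directions. Enumerating cases according to whether the "essential" subset of $L \cup U$ is empty yields $\hk_{\abf}(b^{L,U}_{\lbf,\ubf}) \le \min(2^s - 1, 2^d)$: the $2^s - 1$ bound dominates in the regime $s \le d$, while the $2^d$ bound is forced when $|L \cap U|$ becomes large (which can only occur for $s > d$). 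Given any representation $\fcnu \equiv f$, the subadditivity of $\hk_{\abf}$ under finite-measure superposition gives
\begin{equation*}
    \hk_{\abf}(f) \le \sum_{0 < |L|+|U| \le s} \int \hk_{\abf}(b^{L,U}_{\lbf,\ubf})\, d|\nu_{L,U}|(\lbf,\ubf) \le \min(2^s-1, 2^d) \cdot \sum_{0 < |L|+|U| \le s} \|\nu_{L,U}\|_{\text{TV}}.
\end{equation*}
Taking the infimum over representations and then the supremum over $\abf$ gives the claimed left inequality.

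For tightness, the right inequality is realized by $f(\xbf) = \prod_{j = 1}^{\min(s,d)} \ind(x_j \ge l_j)$: this is a single basis function so $\vinfxgb(f) \le 1$, while $\hk_{(-\infty,\dots,-\infty)}(f) = 1$, and the left inequality combined with $\sup_\abf \hk_\abf(f) \ge 1$ pins $\vinfxgb(f) = 1$. The left inequality is realized by choosing a basis function that attains the HK bound: for $s \le d$, take $b = \prod_{j=1}^s \ind(x_j < u_j)$, for which a direct computation gives $\hk_{(-\infty,\dots,-\infty)}(b) = 2^s - 1$; for $s > d$, choose $b$ with $L \cap U$ large enough to realize $2^d$. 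In each case $\vinfxgb(b) \le 1$ while the left inequality combined with the computed $\sup_\abf \hk_\abf(b)$ pins the ratio exactly.

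The principal obstacle is the rigorous infinite-domain version of Theorem~\ref{thm:aistleitner}: I need a signed Borel measure $\nu_S$ on $\R^{|S|}$ representing $g_S$ with the exact total-variation identity $\|\nu_S\|_{\text{TV}} = \vit(g_S)$, and this must be extracted as a limit of the compact-box representations of \cite{aistleitner2015functions}. The anchor-vanishing property of $g_S$ is essential here, because it makes the boundary term $|g_S(\abf_{\text{box}})|$ in Aistleitner's identity vanish in the limit, leaving the clean identity. A secondary subtlety is promoting "$\vit(g_S) = 0$ plus anchor-vanishing" to "$g_S \equiv 0$" for $|S| > s$, which is handled by induction on $|S|$ using the fact that vanishing Vitali variation forces $g_S$ to be a sum of functions each depending on a strict subset of $S$, and anchor-vanishing kills all the resulting pieces.
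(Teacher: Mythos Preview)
Your proposal does not address the stated theorem. Theorem~\ref{thm:aistleitner} is the Aistleitner--Dick representation result for right-continuous functions of bounded Hardy--Krause variation on a compact box; it is cited from \cite{aistleitner2015functions} and the paper does not prove it. What you have written is a proof sketch for Proposition~\ref{prop:vinfxgb-hka-rel} (the two-sided comparison between $\vinfxgb$ and $\hk_\abf$), which \emph{invokes} Theorem~\ref{thm:aistleitner} as a black box. As a proof of the statement actually given, this is a mismatch: you are proving the wrong result.

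If your intended target was Proposition~\ref{prop:vinfxgb-hka-rel}, your upper-bound argument is essentially the paper's: both pass to an anchor-aligned representation $f = b + \sum_{0<|S|\le s} \int \prod_{j\in S} \ind(x_j \ge t_j)\, d\mu_S$ and identify $\sum_S \|\mu_S\|_{\text{TV}}$ with $\hk_\abf(f)$ via Theorem~\ref{thm:aistleitner} applied on growing boxes. The lower bounds differ. The paper derives an explicit linear relation \eqref{eq:relation-between-mu-and-nu} between the $\mu_S$ and the original $\nu_{L,U}$, then counts the multiplicity with which each $\nu_{L,U}$ appears to extract the factor $\min(2^s-1,2^d)$ directly; both inequalities thus flow from the single identity $V_\abf(f)=\hk_\abf(f)$. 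Your route instead bounds $\hk_\abf(b^{L,U}_{\lbf,\ubf})$ for individual basis functions and appeals to subadditivity under integral superposition. This is conceptually clean, but the step $\hk_\abf\big(\int b\, d\nu\big) \le \int \hk_\abf(b)\, d|\nu|$ for general finite signed measures is not immediate from the finite-sum triangle inequality and would need a discrete-approximation-plus-limit argument; the paper's combinatorial count avoids this entirely. Your tightness examples match the paper's.
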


\begin{proof}[Proof of Proposition \ref{prop:vinfxgb-hka-rel}]
To prove \eqref{eq:vinfxgb-hka-rel}, it suffices to show that 
\begin{equation*}
    \hk_{\abf}(f) / \min(2^s - 1, 2^d) \le \vinfxgb(f) 
    \le \hk_{\abf}(f)
\end{equation*}
for each anchor point $\abf \in \{-\infty, +\infty\}^d$.
Here, we prove this inequality only for the case 
$\abf = (-\infty, \dots, -\infty)$. The same argument applies to 
the other anchor point choices.

Recall that $\fstinf$ is the collection of all functions 
$\fcnu$ of the form \eqref{f_fst}. 
For each $\fcnu \in \fstinf$, by modifying 
each basis function $b^{L, U}_{\lbf, \ubf}$ as 
\begin{align}\label{eq:modification-of-indicator-function}
    &b^{L, U}_{\lbf, \ubf}(x_1, \dots, x_d) 
    = \prod_{j \in L} \ind(x_j \ge l_j) 
    \cdot \prod_{j \in U} \ind(x_j < u_j) \\
    &\quad= \prod_{j \in L \setminus U} \ind(x_j \ge l_j) 
    \cdot \prod_{j \in U \setminus L} \big(1 - \ind(x_j \ge u_j)\big)
    \cdot \prod_{j \in L \cap U} 
    \big(\ind(x_j \ge l_j) - \ind(x_j \ge u_j)\big)
    \cdot \prod_{j \in L \cap U} \ind(l_j \le u_j), \nonumber
\end{align}
we can represent $\fcnu$ as 
\begin{equation}\label{eq:alternative-representation-of-fstinf}
    \fcnu(x_1, \dots, x_d) 
    = f^{d, s}_{b, \{\mu_S\}}(x_1, \dots, x_d) 
    := b + \sum_{0 < |S| \le s} 
    \int_{\R^{|S|}} \prod_{j \in S} \ind(x_j \ge t_j)
    \, d\mu_S(t_j, j \in S) 
\end{equation}
for some $b \in \R$ and finite signed Borel measures $\mu_S$ 
on $\R^{|S|}$, where the summation runs over all nonempty 
$S \subseteq [d]$ with $|S| \le s$.
Specifically, each $\mu_S$ is related to 
the original measures $\nu_{L, U}$ by
\begin{align}\label{eq:relation-between-mu-and-nu}
    &\mu_S\Big(\prod_{j \in S} E_j\Big) 
    = \sum_{\substack{L, U: L \subseteq S \subseteq L \cup U \\
    |L| + |U| \le s}}
    (-1)^{|(U \setminus L) \cap S|}
    \sum_{K \subseteq L \cap U} (-1)^{|(L \cap U) \setminus K|} \\
    &\qquad \qquad \qquad \qquad \qquad \qquad
    \cdot \widebar{\nu}_{L, U}
    \Big(\prod_{j \in (L \setminus U) \cup K} E_j 
    \times \R^{|(L \cap U) \setminus K|} 
    \times \R^{|(U \setminus S) \cup K|} 
    \times \prod_{\substack{j \in ((L \cap U) \setminus K) \\
    \quad \cup ((U \setminus L) \cap S)}} E_j\Big) \nonumber 
\end{align}
for Borel sets $E_j \subseteq \R$ for $j \in S$, 
where $\widebar{\nu}_{L, U}$ are the signed Borel measures on 
$\R^{|L| + |U|}$ defined by
\begin{equation*}
    d\widebar{\nu}_{L, U}(\lbf, \ubf) 
    = \prod_{j \in L \cap U} \ind(l_j \le u_j) 
    \cdot d\nu_{L, U}(\lbf, \ubf).
\end{equation*}
This relationship between $\mu_S$ and $\nu_{L, U}$ implies
\begin{align}\label{eq:inequality-between-complexities}
    &\sum_{0 < |S| \le s} |\mu_S|(\R^{|S|}) 
    \le \sum_{0 < |S| \le s} 
    \sum_{\substack{L, U: L \subseteq S \subseteq L \cup U \\
    |L| + |U| \le s}}
    \sum_{K \subseteq L \cap U} 
    |\widebar{\nu}_{L, U}|(\R^{|L| + |U|}) \nonumber \\
    &\qquad= \sum_{0 < |L| + |U| \le s}
    \sum_{S: L \subseteq S \subseteq L \cup U, S \neq \emptyset}
    \sum_{K \subseteq L \cap U} 
    |\widebar{\nu}_{L, U}|(\R^{|L| + |U|}) \nonumber \\
    &\qquad= \sum_{0 < |L| + |U| \le s}
    \big(\ind(L \neq \emptyset) \cdot 2^{|U \setminus L|} 
    + \ind(L = \emptyset) \cdot (2^{|U \setminus L|} - 1)\big) 
    \cdot 2^{|L \cap U|} \cdot
    |\widebar{\nu}_{L, U}|(\R^{|L| + |U|}) \nonumber \\
    &\qquad\le \min(2^s - 1, 2^d) \cdot
    \sum_{0 < |L| + |U| \le s}
    |\nu_{L, U}|(\R^{|L| + |U|}).
\end{align}

Define 
\begin{equation}\label{eq:va-def}
    V_{\abf}(f) = \inf \Big\{ 
    \sum_{0 < |S| \le s} \|\mu_S\|_{\text{TV}}: 
    f^{d, s}_{b, \{\mu_S\}} \equiv f \Big\} 
    \quad \text{for } f \in \fstinf.
\end{equation}
By \eqref{eq:inequality-between-complexities}, we have
\begin{equation*}
    \vinfxgb(f) \le V_{\abf}(f) 
    \le \min(2^s - 1, 2^d) \cdot \vinfxgb(f)
    \quad \text{for every } f \in \fstinf.
\end{equation*}
Thus, it suffices to prove that  
\begin{equation*}
    V_{\abf}(f) = \hk_{\abf}(f)
    \quad \text{for every } f \in \fstinf.
\end{equation*}

Fix $f \equiv f^{d, s}_{b, \{\mu_S\}} \in \fstinf$.
For each nonempty $S \subseteq [d]$, we have
\begin{equation*}
    f^S_{(a_j, j \in S^c)}(x_j, j \in S) 
    = b + 
    \sum_{\substack{R: \emptyset \neq R \subseteq S \\ |R| \le s}} 
    \int_{\R^{|R|}} \prod_{j \in R} \ind(x_j \ge t_j) 
    \, d\mu_R(t_j, j \in R).
\end{equation*}
Hence, for each nonempty $T \subseteq [d]$,
\begin{equation*}
    \sum_{S \subseteq T} (-1)^{|T| - |S|} \cdot 
    f^S_{(a_j, j \in S^c)}(x_j, j \in S) 
    = \sum_{\substack{R: \emptyset \neq R \subseteq T \\ |R| \le s}} 
    \Big(\sum_{S: R \subseteq S \subseteq T} (-1)^{|T| - |S|}\Big) 
    \int_{\R^{|R|}} \prod_{j \in R} \ind(x_j \ge t_j) 
    \, d\mu_R(t_j, j \in R).
\end{equation*}
The inner sum vanishes unless $R = T$, in which case it equals $1$.
Therefore, if $|T| \le s$,
\begin{equation*}
    \sum_{S \subseteq T} (-1)^{|T| - |S|} \cdot 
    f^S_{(a_j, j \in S^c)}(x_j, j \in S)
    = \int_{\R^{|T|}} \prod_{j \in T} \ind(x_j \ge t_j) 
    \, d\mu_T(t_j, j \in T),
\end{equation*}
while if $|T| > s$, the expression vanishes.

Now, fix a nonempty $T \subseteq [d]$ with $|T| \le s$.
Since 
\begin{equation*}
    \vit(f^T_{(a_j, j \in T^c)}) 
    = \vit\Big((x_j, j \in T) \mapsto \sum_{S \subseteq T} (-1)^{|T| - |S|} 
    \cdot f^S_{(a_j, j \in S^c)}(x_j, j \in S)\Big),
\end{equation*}
we have
\begin{equation*}
\begin{aligned}
    \vit(f^T_{(a_j, j \in T^c)}) 
    &= \vit\Big((x_j, j \in T) \mapsto \int_{\R^{|T|}} 
    \prod_{j \in T} \ind(x_j \ge t_j) \, d\mu_T(t_j, j \in T)\Big) \\
    &= \sup_{u_j < v_j, j \in T} 
    \vit\Big((x_j, j \in T) \mapsto \int_{\R^{|T|}} 
    \prod_{j \in T} \ind(x_j \ge t_j) 
    \, d\mu_T(t_j, j \in T); 
    \prod_{j \in T} [u_j, v_j] \Big) \\
    &= \sup_{u_j < v_j, j \in T} 
    \vit\bigg((x_j, j \in T) \mapsto 
    \mu_T\Big(\prod_{j \in T} (u_j, x_j]\Big); 
    \prod_{j \in T} [u_j, v_j] \bigg).
\end{aligned}
\end{equation*}
Moreover, by Theorem \ref{thm:aistleitner}, 
\begin{equation*}
\begin{aligned}
    &\vit\bigg((x_j, j \in T) \mapsto 
    \mu_T\Big(\prod_{j \in T} (u_j, x_j]\Big); 
    \prod_{j \in T} [u_j, v_j] \bigg) \\
    &\quad= \hk_{(u_j, j \in T)}
    \bigg((x_j, j \in T) \mapsto 
    \mu_T\Big(\prod_{j \in T} (u_j, x_j]\Big); 
    \prod_{j \in T} [u_j, v_j] \bigg) 
    = |\mu_T|\Big(\prod_{j \in T} (u_j, v_j]\Big).
\end{aligned}
\end{equation*}
Here, the first equality holds because the map vanishes on every section 
containing the anchor point 
$(u_j, j \in T)$; that is, 
it becomes zero whenever $x_j = u_j$ for some $j \in T$. 
Consequently,
\begin{equation*}
    \vit(f^T_{(a_j, j \in T^c)}) = |\mu_T|(\R^{|T|}) 
    = \|\mu_T\|_{\text{TV}}.
\end{equation*}

Since 
\begin{equation*}
    \vit(f^T_{(a_j, j \in T^c)}) = 0
\end{equation*}
for all $|T| > s$, it follows that
\begin{equation*}
    \hk_{\abf}(f) 
    = \sum_{0 < |T| \le d} \vit(f^T_{(a_j, j \in T^c)})
    = \sum_{0 < |T| \le s} \vit(f^T_{(a_j, j \in T^c)})
    = \sum_{0 < |T| \le s} \|\mu_T\|_{\text{TV}}.
\end{equation*}
Thus, there is in fact no need to take the infimum in 
\eqref{eq:va-def}, and 
\begin{equation*}
    \hk_{\abf}(f) = V_{\abf}(f),
\end{equation*}
which completes the proof of \eqref{eq:vinfxgb-hka-rel}.

We now investigate the tightness of the inequalities in 
\eqref{eq:vinfxgb-hka-rel}. Fix $\abf = (-\infty, \dots, -\infty)$.
First, observe that for the function 
$f(x_1, \dots, x_d) = \ind(x_1 \ge 0)$, we have
\begin{equation*}
    \vinfxgb(f) = 1 = \hk_{\abf}(f).
\end{equation*}
This shows that the right inequality in \eqref{eq:vinfxgb-hka-rel} 
is tight. 

To show that the left inequality in \eqref{eq:vinfxgb-hka-rel} is 
also tight, we consider two cases, depending on whether $s \le d$ 
or $s > d$. In the case $s \le d$, consider the function 
$f(x_1, \dots, x_d) = \ind(x_1, \dots, x_s < 0)$. It is clear that 
$\vinfxgb(f) = 1$. Moreover, since
\begin{equation*}
    f(x_1, \dots, x_d) = \prod_{j = 1}^{s} \big(1 - \ind(x_j \ge 0)\big)
    = 1 + \sum_{l = 1}^{s} (-1)^{l}
    \sum_{1 \le j_1 < \cdots < j_l \le s} 
    \ind(x_{j_1} \ge 0, \dots, x_{j_l} \ge 0),
\end{equation*}
it follows that
\begin{equation*}
    \hk_{\abf}(f) = 2^s - 1.
\end{equation*}
This shows that the left inequality in \eqref{eq:vinfxgb-hka-rel} 
is tight when $s \le d$. In the case $s > d$, consider the function 
$f(x_1, \dots, x_d) = \ind(-1 \le x_1, \dots, x_{s - d} < 0, \ 
x_{s - d + 1}, \dots, x_d < 0)$. Again, it is clear that 
$\vinfxgb(f) = 1$. Furthermore, we can write
\begin{equation*}
    f(x_1, \dots, x_d) 
    = \prod_{j = 1}^{s - d} \big(\ind(x_j \ge -1) - \ind(x_j \ge 0)\big)
    \cdot \prod_{j = s - d + 1}^{d} \big(1 - \ind(x_j \ge 0)\big),
\end{equation*}
from which we obtain
\begin{equation*}
    \hk_{\abf}(f) = 2^d.
\end{equation*}
This shows that the left inequality in \eqref{eq:vinfxgb-hka-rel} 
is tight when $s > d$.
\end{proof}

\subsubsection{Proof of Proposition \ref{prop:fstinf-stabilization}}
\label{pf:fstinf-stabilization}
\begin{proof}[Proof of Proposition \ref{prop:fstinf-stabilization}]
First, (a) follows from the right-continuity of each basis function 
$b^{L, U}_{\lbf, \ubf}$ and the dominated 
convergence theorem, together with the fact that each signed measure 
$\nu_{L, U}$ is finite.

For (b) and (c), recall the alternative representation 
$f^{d, s}_{b, \{\mu_S\}}$ of $\fcnu$, given in 
\eqref{eq:alternative-representation-of-fstinf} and 
introduced in the proof of Proposition \ref{prop:vinfxgb-hka-rel}. 
Since the sum in this representation ranges over all 
$S \subseteq [d]$ with $0 < |S| \le s$, the function 
class $\fstinf$ enlarges as $s$ increases. This yields (b). Since 
$|S|$ is always bounded by $d$, the class $\fstinf$ remains unchanged 
once $s \ge d$, which proves (c).

Lastly, (d) follows immediately from the definition.
\end{proof}

\subsubsection{Proof of Proposition \ref{prop:fstinf-charac}}
\label{pf:fstinf-charac}

\begin{proof}[Proof of Proposition \ref{prop:fstinf-charac}] 
\textbf{Step 1: $f \in \fstinfd$ $\Rightarrow$ $f$ is right-continuous, 
and $\hk_{\abf}(f) < \infty$ for all $\abf \in \{-\infty, +\infty\}^d$.} 

This follows directly from Propositions \ref{prop:fstinf-stabilization} 
and \ref{prop:vinfxgb-hka-rel}.

\textbf{Step 2: $f \in \fstinf$ $\Rightarrow$ 
\eqref{eq:interaction-restriction-cond} holds for all 
$S \subseteq [d]$ with $|S| > s$.}

We only consider the case $\abf = (-\infty, \cdots, -\infty)$; 
the argument for other choices of anchor points is entirely analogous.
Suppose that $f \in \fstinf$. Recall from the proof of 
Proposition \ref{prop:vinfxgb-hka-rel} that $f$ admits the 
alternative representation $f \equiv f^{d, s}_{b, \{\mu_S\}}$ of the 
form \eqref{eq:alternative-representation-of-fstinf} for some 
$b \in \R$ and finite signed Borel measures $\mu_S$ on $\R^{|S|}$. 

Fix $T \subseteq [d]$ with $|T| > s$. Using this 
representation, we can express $f^T_{(a_j, j \in T^c)}$ as
\begin{equation*}
    f^T_{(a_j, j \in T^c)}(x_j, j \in T) 
    = b + 
    \sum_{\substack{S: \emptyset \neq S \subseteq T \\ |S| \le s}} 
    \int_{\R^{|S|}} \prod_{j \in S} \ind(x_j \ge t_j) 
    \, d\mu_S(t_j, j \in S).
\end{equation*}
It follows that
\begin{equation*}
\begin{aligned}
    &\sum_{\deltabf \in \{0, 1\}^{|T|}} (-1)^{\sum_{j \in T} 
    \delta_j} \cdot f^T_{(a_j, j \in T^c)}\big((1 - \delta_j) 
    w_j + \delta_j v_j, j \in T\big) \\
    &\quad= 
    \sum_{\substack{S: \emptyset \neq S \subseteq T \\ |S| \le s}}
    \sum_{\deltabf_S \in \{0, 1\}^{|S|}}
    \bigg(\sum_{\deltabf_{T \setminus S} \in \{0, 1\}^{|T \setminus S|}} 
    (-1)^{\sum_{j \in T \setminus S} \delta_j} \bigg) \cdot 
    (-1)^{\sum_{j \in S} \delta_j}
    \int_{\R^{|S|}} \prod_{j \in S} \ind(x_j \ge t_j) 
    \, d\mu_S(t_j, j \in S),
\end{aligned}
\end{equation*}
where $\deltabf_S = (\delta_j, j \in S)$ and 
$\deltabf_{T \setminus S} = (\delta_j, j \in T \setminus S)$. 
In the last expression, since $|S| \le s < |T|$, the innermost 
sum always vanishes. This proves that 
\eqref{eq:interaction-restriction-cond} holds for $T$.

\textbf{Step 3: $f$ is right-continuous, and 
$\hk_{\abf}(f) < \infty$ for some $\abf \in \{-\infty, +\infty\}^d$ 
$\Rightarrow$ $f \in \fstinfd$.}

Assume that $f: \R^d \to \R$ is right-continuous and that 
$\hk_{\abf}(f) < \infty$ for some $\abf \in \{-\infty, +\infty\}^d$. 
Fix a nonempty $S \subseteq [d]$, and for each integer $N \ge 1$, 
define $g^S_N: \R^{|S|} \to \R$ by
\begin{equation*}
    g^S_N(x_j, j \in S) = \sum_{\deltabf \in \{0, 1\}^{|S|}}
    (-1)^{\sum_{j \in S} \delta_j} \cdot 
    f^S_{(a_j, j \in S^c)}\big((1 - \delta_j) x_j + \delta_j (-N), 
    j \in S\big).
\end{equation*}
Clearly, $g^S_N$ inherits the right-continuity of $f$ on the 
coordinates $j \in S$. Moreover,
\begin{equation*}
    \hk_{(-N, j \in S)}
    \big(g^S_N; [-N, N]^{|S|}\big) 
    = \vit\big(g^S_N; [-N, N]^{|S|}\big) 
    = \vit\big(f^S_{(a_j, j \in S^c)}; [-N, N]^{|S|}\big) < \infty.
\end{equation*}
Here, the first equality follows from the fact that $g^S_N$ vanishes
whenever $x_j = -N$ for some $j \in S$. Hence, by 
Theorem \ref{thm:aistleitner}, there exists a unique finite signed 
Borel measure $\nu^S_N$ on $[-N, N]^{|S|}$ such that
\begin{equation*}
    g^S_N(x_j, j \in S) = \nu^S_N\Big(\prod_{j \in S} (-N, x_j]\Big)
    \quad \text{for } (x_j, j \in S) \in [-N, N]^{|S|}.
\end{equation*}
Here, the endpoint $-N$ could be excluded from the intervals because 
$g^S_N$ becomes zero if $x_j = -N$ for some $j \in S$.
Furthermore, Theorem \ref{thm:aistleitner} also gives
\begin{equation*}
    |\nu^S_N|([-N, N]^{|S|}) 
    =  \vit\big(f^S_{(a_j, j \in S^c)}; [-N, N]^{|S|}\big)
    \le \vit\big(f^S_{(a_j, j \in S^c)}\big) < \infty.
\end{equation*}

Now, fix integers $N_2 > N_1 \ge 1$, and observe that
\begin{equation*}
    g^S_{N_1}(x_j, j \in S) 
    = \sum_{\deltabf \in \{0, 1\}^{|S|}}
    (-1)^{\sum_{j \in S} \delta_j} \cdot 
    g^S_{N_2}\big((1 - \delta_j) x_j + \delta_j (-N_1), j \in S\big)
    = \nu^S_{N_2}\Big(\prod_{j \in S} (-N_1, x_j]\Big)
\end{equation*}
for every $(x_j, j \in S) \in [-N_1, N_1]^{|S|}$.
By uniqueness of $\nu^S_{N_1}$, this means that the restriction of 
$\nu^S_{N_2}$ to $[-N_1, N_1]^{|S|}$ coincides with $\nu^S_{N_1}$. 
Hence, $\{\nu^S_{N}\}_{N \ge 1}$ forms a sequence of finite signed Borel 
measures, where $\nu^S_{N_2}$ is an extension of $\nu^S_{N_1}$ whenever 
$N_2 > N_1$.

Using this sequence of signed measures, we define a finite signed Borel 
measure $\nu_S$ on $\R^{|S|}$ extending $\nu^S_{N}$ for all $N \ge 1$.
Specifically, we define $\nu_S$ by 
\begin{equation*}
    \nu_S(E) = \lim_{N \to \infty} \nu^S_{N}(E \cap [-N, N]^{|S|})
    \quad \text{for Borel sets } E \subseteq \R^{|S|}.
\end{equation*}
We first verify that $\nu_S(E)$ is well-defined for each Borel set $E$.
For integers $M > N \ge 1$, 
\begin{equation*}
\begin{aligned}
    \big|\nu^S_{M}(E \cap [-M, M]^{|S|}) 
    - \nu^S_{N}(E \cap [-N, N]^{|S|})\big| 
    &= \big|\nu^S_{M}\big(E \cap 
    ([-M, M]^{|S|} \setminus [-N, N]^{|S|})\big)\big| \\
    &\le |\nu^S_{M}|\big([-M, M]^{|S|} \setminus [-N, N]^{|S|}\big).
\end{aligned}
\end{equation*}
Since 
\begin{equation*}
    \sup_{N \ge 1} |\nu^S_{N}|([-N, N]^{|S|}) 
    \le \vit\big(f^S_{(a_j, j \in S^c)}\big) < \infty,
\end{equation*}
for every $\epsilon > 0$, there exists an integer $N_0 \ge 1$ such that
\begin{equation*}
    |\nu^S_{M}|\big([-M, M]^{|S|} \setminus [-N, N]^{|S|}\big) < \epsilon
    \quad \text{for all } M > N \ge N_0.
\end{equation*}
Thus, $\{\nu^S_{N}(E \cap [-N, N]^{|S|})\}_{N \ge 1}$ 
forms a Cauchy sequence, and hence, $\nu_S(E)$ is well-defined.

Next, we show that $\nu_S$ is countably additive. Suppose 
$E = \cup_{k \ge 1} E_k$ for disjoint Borel sets $E_k$.
For integers $N_2 > N_1 \ge 1$, since $\nu^S_{N_2}$ extends $\nu^S_{N_1}$,
the restriction of $|\nu^S_{N_2}|$ (the variation of $\nu^S_{N_2}$) to 
$[-N_1, N_1]^{|S|}$ also coincides with $|\nu^S_{N_1}|$. Thus, for each 
$k \ge 1$, 
\begin{equation*}
    \big\{|\nu^S_{N}|(E_k \cap [-N, N]^{|S|})\big\}_{N \ge 1}
\end{equation*}
is an increasing sequence of nonnegative numbers. By the monotone 
convergence theorem, we have
\begin{equation*}
\begin{aligned}
    &\sum_{k \ge 1} \lim_{N \to \infty} 
    |\nu^S_{N}|(E_k \cap [-N, N]^{|S|}) 
    = \lim_{N \to \infty} \sum_{k \ge 1} 
    |\nu^S_{N}|(E_k \cap [-N, N]^{|S|}) \\
    &\qquad= \lim_{N \to \infty} |\nu^S_{N}|(E \cap [-N, N]^{|S|})
    \le \sup_{N \ge 1} |\nu^S_{N}|([-N, N]^{|S|}) 
    \le \vit\big(f^S_{(a_j, j \in S^c)}\big) < \infty.
\end{aligned}
\end{equation*}
Moreover, since 
\begin{equation*}
    \big|\nu^S_{N}\big(E_k \cap [-N, N]^{|S|}\big)\big|
    \le |\nu^S_{N}|\big(E_k \cap [-N, N]^{|S|}\big)
    \le \lim_{N \to \infty} |\nu^S_{N}|\big(E_k \cap [-N, N]^{|S|}\big)
\end{equation*}
for each $k$ and $N$, the dominated convergence theorem yields
\begin{equation*}
\begin{aligned}
    \nu_S(E) &= \lim_{N \to \infty} \nu^S_{N}(E \cap [-N, N]^{|S|}) 
    = \lim_{N \to \infty} \sum_{k \ge 1} 
    \nu^S_{N}(E_k \cap [-N, N]^{|S|}) \\
    &= \sum_{k \ge 1} \lim_{N \to \infty} 
    \nu^S_{N}(E_k \cap [-N, N]^{|S|}) 
    = \sum_{k \ge 1} \nu_S(E_k).
\end{aligned}
\end{equation*}
This establishes that $\nu_S$ is countably additive.

For each $N \ge 1$, it is clear from the definition of $\nu_S$ that for any 
Borel set $E \subseteq [-N, N]^{|S|}$, we have $\nu_S(E) = \nu^S_{N}(E)$.
Furthermore,
\begin{equation*}
    |\nu_S|(\R^{|S|}) = \lim_{N \to \infty} |\nu_S|([-N, N]^{|S|}) 
    = \lim_{N \to \infty} |\nu^S_{N}|([-N, N]^{|S|}) 
    \le \vit\big(f^S_{(a_j, j \in S^c)}\big) < \infty.
\end{equation*}
Hence, $\nu_S$ is a finite signed Borel measure on $\R^{|S|}$ 
extending $\nu^S_{N}$ for all $N \ge 1$, as desired.

Define $N_{\abf} = \{j \in [d]: a_j = -\infty\}$. For each integer 
$N \ge 1$, we have
\begin{equation*}
\begin{aligned}
    &\sum_{\deltabf \in \{0, 1\}^{|S|}}
    (-1)^{\sum_{j \in S} \delta_j} \cdot 
    f^S_{(a_j, j \in S^c)}\big((1 - \delta_j) x_j + \delta_j (-N), 
    j \in S \cap N_{\abf};
    (1 - \delta_j) x_j + \delta_j N, j \in S \setminus N_{\abf} \big) \\
    &\quad= \sum_{\deltabf \in \{0, 1\}^{|S|}}
    (-1)^{\sum_{j \in S} \delta_j} \cdot 
    g^S_N\big((1 - \delta_j) x_j + \delta_j (-N), j \in S \cap N_{\abf};
    (1 - \delta_j) x_j + \delta_j N, j \in S \setminus N_{\abf} \big) \\
    &\quad= (-1)^{|S \setminus N_{\abf}|} \cdot
    \nu_S\Big(\prod_{j \in S \cap N_{\abf}} (-N, x_j] 
    \times \prod_{j \in S \setminus N_{\abf}} (x_j, N]\Big).
\end{aligned}
\end{equation*}
Taking the limit as $N \to \infty$ yields
\begin{equation*}
    \sum_{R \subseteq S} (-1)^{|S| - |R|} \cdot 
    f^R_{(a_j, j \in R^c)}(x_j, j \in R) 
    = \widetilde{\nu}_S
    \Big(\prod_{j \in S \cap N_{\abf}} (-\infty, x_j] 
    \times \prod_{j \in S \setminus N_{\abf}} (x_j, +\infty)\Big) 
\end{equation*}
where $\widetilde{\nu}_S$ is the signed Borel measure on $\R^{|S|}$ defined 
by 
$\widetilde{\nu}_S = (-1)^{|S \setminus N_{\abf}|} \cdot \nu_S$.
Moreover, since
\begin{equation*}
    f(x_1, \dots, x_d) 
    = \lim_{\zbf \to \abf} f(\zbf)
    + \sum_{S: \emptyset \neq S \subseteq [d]} \sum_{R \subseteq S} 
    (-1)^{|S| - |R|} \cdot f^R_{(a_j, j \in R^c)}(x_j, j \in R)
    \quad \text{for } (x_1, \dots, x_d) \in \R^d,
\end{equation*}
it follows that
\begin{equation*}
\begin{aligned}
    f(x_1, \dots, x_d) 
    &= \lim_{\zbf \to \abf} f(\zbf) 
    + \sum_{S: \emptyset \neq S \subseteq [d]} \widetilde{\nu}_S
    \Big(\prod_{j \in S \cap N_{\abf}} (-\infty, x_j] 
    \times \prod_{j \in S \setminus N_{\abf}} (x_j, +\infty)\Big) \\
    &= \lim_{\zbf \to \abf} f(\zbf) 
    + \sum_{S: \emptyset \neq S \subseteq [d]} \int_{\R^{|S|}} 
    \Big(\prod_{j \in S \cap N_{\abf}} \ind(x_j \ge t_j)\Big)
    \cdot \Big(\prod_{j \in S \setminus N_{\abf}} \ind(x_j < t_j)\Big)
    \, d\widetilde{\nu}_S(t_j, j \in S)
\end{aligned}
\end{equation*}
for all $(x_1, \dots, x_d) \in \R^d$.
This proves that $f \in \fstinfd$.

\textbf{Step 4: $f \in \fstinfd$ and \eqref{eq:interaction-restriction-cond} 
holds for all $S \subseteq [d]$ with $|S| > s$ $\Rightarrow$ $f \in \fstinf$.}

Now, we assume that $f$ additionally satisfies condition 
\eqref{eq:interaction-restriction-cond} for all $S \subseteq [d]$ with 
$|S| > s$. Since the additional condition is vacuous when $s \ge d$, we 
assume that $s < d$. Here, we present the argument only for the case 
$\abf = (-\infty, \cdots, -\infty)$, but the proof for other anchor 
points is entirely analogous.

Since $f \in \fstinfd$, $f$ admits the alternative representation 
\eqref{eq:alternative-representation-of-fstinf} for some $b \in \R$ and 
finite signed Borel measures $\mu_S$ on $\R^{|S|}$.
For each $S \subseteq [d]$ with $|S| > s$, we have
\begin{equation*}
    \sum_{\deltabf \in \{0, 1\}^{|S|}} (-1)^{\sum_{j \in S} \delta_j} 
    \cdot f^S_{(a_j, j \in S^c)}\big((1 - \delta_j) v_j 
    + \delta_j u_j, j \in S\big) 
    = \mu_S\Big(\prod_{j \in S} (u_j, v_j]\Big)
\end{equation*}
for all $u_j < v_j, j \in S$. Therefore, condition 
\eqref{eq:interaction-restriction-cond} implies that for all such $S$ 
and all $u_j < v_j, j \in S$,
\begin{equation*}
    \mu_S\Big(\prod_{j \in S} (u_j, v_j]\Big) = 0.
\end{equation*}
By Dynkin's $\pi$-$\lambda$ theorem, this forces $\mu_S = 0$.
Hence, all integrals over $\mu_S$ with $|S| > s$ can be 
dropped from \eqref{eq:alternative-representation-of-fstinf}, and we can 
conclude that $f \in \fstinf$.
\end{proof}

\subsubsection{Proof of Proposition \ref{prop:vinfxgb-symmetry}}
\label{pf:vinfxgb-symmetry}

\begin{proof}[Proof of Proposition \ref{prop:vinfxgb-symmetry}]
Fix $j_0 \in [d]$ and $t_{j_0} \in \R$. Define $g: \R^d \to \R$ 
as in the statement of the proposition with $j = j_0$.
By symmetry, it suffices to show that $\vinfxgb(g) \le \vinfxgb(f)$.

Suppose that $f \equiv \fcnu$. For each $L,U \subseteq [d]$ with 
$0 < |L| + |U| \le s$, we define a signed Borel measure $\mu_{L, U}$ 
on $\R^{|L| + |U|}$ as follows. If $j_0 \notin L \cup U$,
set $\mu_{L, U} = \nu_{L, U}$. If $j_0 \in L \setminus U$, define 
$\mu_{L, U}$ as the pushforward of 
$\nu_{L \setminus \{j_0\}, U \cup \{j_0\}}$ under the map
\begin{equation*}
    \big((l_j, j \in L\setminus\{j_0\}), 
    (u_j, j \in U \cup \{j_0\})\big) 
    \mapsto 
    \big((l_j, j \in L\setminus\{j_0\}; t_{j_0} - u_{j_0}), 
    (u_j, j \in U)\big),
\end{equation*}
if $j_0 \in U \setminus L$, define $\mu_{L, U}$ as the 
pushforward of $\nu_{L \cup \{j_0\}, U \setminus \{j_0\}}$ under 
the map
\begin{equation*}
    \big((l_j, j \in L \cup \{j_0\}), 
    (u_j, j \in U \setminus \{j_0\})\big)
    \mapsto 
    \big((l_j, j \in L), 
    (u_j, j \in U \setminus \{j_0\}; t_{j_0} - l_{j_0})\big),
\end{equation*}
and if $j_0 \in L \cap U$, define $\mu_{L, U}$ as the pushforward 
of $\nu_{L, U}$ under the map
\begin{equation*}
    \big((l_j, j \in L), (u_j, j \in U)\big)
    \mapsto 
    \big((l_j, j \in L \setminus \{j_0\}; t_{j_0} - u_{j_0}), 
    (u_j, j \in U \setminus \{j_0\}; t_{j_0} - l_{j_0})\big).   
\end{equation*}

With these definitions, one readily checks that
$f^{d, s}_{c, \{\mu_{L, U}\}} \equiv g$. 
Moreover, there is a one-to-one correspondence between 
the signed measures $\mu_{L, U}$ and the signed measures 
$\nu_{L, U}$, under which the corresponding signed measures have 
the same total variation.
Therefore,  
\begingroup
\allowdisplaybreaks
\begin{align*}
    &\vinfxgb(g) \le \sum_{0 < |L| + |U| \le s} 
    \|\mu_{L, U}\|_{\text{TV}} \\
    &\qquad= 
    \sum_{\substack{0 < |L| + |U| \le s \\ j_0 \not\in L \cup U}} 
    \|\mu_{L, U}\|_{\text{TV}}
    + \sum_{\substack{0 < |L| + |U| \le s \\ j_0 \in L \setminus U}} 
    \|\mu_{L, U}\|_{\text{TV}} 
    + \sum_{\substack{0 < |L| + |U| \le s \\ j_0 \in U \setminus L}} 
    \|\mu_{L, U}\|_{\text{TV}}
    + \sum_{\substack{0 < |L| + |U| \le s \\ j_0 \in L \cap U}} 
    \|\mu_{L, U}\|_{\text{TV}} \\
    &\qquad= \sum_{\substack{0 < |L| + |U| \le s 
    \\ j_0 \not\in L \cup U}} \|\nu_{L, U}\|_{\text{TV}} 
    + \sum_{\substack{0 < |L| + |U| \le s 
    \\ j_0 \in L \setminus U}} 
    \|\nu_{L \setminus \{j_0\}, U \cup \{j_0\}}\|_{\text{TV}} \\
    &\qquad \qquad \quad 
    + \sum_{\substack{0 < |L| + |U| \le s 
    \\ j_0 \in U \setminus L}}
    \|\nu_{L \cup \{j_0\}, U \setminus \{j_0\}}\|_{\text{TV}}
    + \sum_{\substack{0 < |L| + |U| \le s 
    \\ j_0 \in L \cap U}} \|\nu_{L, U}\|_{\text{TV}}  
    = \sum_{0 < |L| + |U| \le s} \|\nu_{L, U}\|_{\text{TV}}.
\end{align*}
\endgroup
Taking the infimum over all representations $\fcnu$ of $f$ 
yields $\vinfxgb(g) \le \vinfxgb(f)$.
\end{proof}

\subsection{Proofs of Theorem and Lemma in Section \ref{sec:optimization}}
\label{pf:optimization}

\subsubsection{Proof of Theorem \ref{thm:xgb-opti-lse-mid}}
\label{pf:xgb-opti-lse-mid}

Since the latter part of the theorem is a direct consequence of 
Lemma \ref{lem:discretization}, we only prove the former part 
concerning existence here. The proof of 
Theorem \ref{thm:xgb-opti-lse} is entirely analogous.

\begin{proof}[Proof of Theorem \ref{thm:xgb-opti-lse-mid}]
When the signed measures $\nu_{L, U}$ satisfy condition (a) of
Lemma \ref{lem:discretization}, the corresponding function 
$\fcnu$ can be written as
\begin{equation*}
    \fcnu(x_1, \dots, x_d) = c + \sum_{(L, U, \pbf, \qbf) \in J} 
    \beta^{L, U}_{\pbf, \qbf} \cdot \prod_{j \in L}
    \ind\big(x_j \ge (v^{(j)}_{p_j} + v^{(j)}_{p_j + 1})/2 \big)
    \cdot \prod_{j \in U} 
    \ind\big(x_j < (v^{(j)}_{q_j} + v^{(j)}_{q_j + 1})/2 \big)
\end{equation*}
where
\begin{equation}\label{eq:index-set}
    J = \bigg\{ (L, U, \pbf, \qbf):  
    L, U \subseteq [d], 0 < |L| + |U| \le s,
    \pbf \in \prod_{j \in L} [n_j - 1], \text{ and } 
    \qbf \in \prod_{j \in U} [n_j - 1] \bigg\}
\end{equation}
and 
\begin{equation*}
    \beta^{L, U}_{\pbf, \qbf} 
    =  \nu_{L, U} \Big(\big\{\big((v^{(j)}_{p_j} + v^{(j)}_{p_j + 1})/2, 
    j \in L; 
    (v^{(j)}_{q_j} + v^{(j)}_{q_j + 1})/2, j \in U \big)\big\}\Big)
\end{equation*}
for each $(L, U, \pbf, \qbf) \in J$, with 
$\pbf = (p_j, j \in L)$ and $\qbf = (q_j, j \in U)$.

Let $(\hat{c}, (\hat{\beta}^{L, U}_{\pbf, \qbf}, 
(L, U, \pbf, \qbf) \in J))$ be a solution to the 
finite-dimensional optimization problem
\begin{equation*}
\begin{aligned}
    &\text{argmin} \sum_{i = 1}^{n} \bigg(y_i - c 
    - \sum_{(L, U, \pbf, \qbf) \in J} 
    \beta^{L, U}_{\pbf, \qbf} \cdot \prod_{j \in L}
    \ind\big(x^{(i)}_j \ge (v^{(j)}_{p_j} + v^{(j)}_{p_j + 1})/2 \big) 
    \cdot \prod_{j \in U} \ind\big(x^{(i)}_j 
    < (v^{(j)}_{q_j} + v^{(j)}_{q_j + 1})/2 \big)\bigg)^2 \\
    &\qquad \text{s.t.} 
    \sum_{(L, U, \pbf, \qbf) \in J} 
    |\beta^{L, U}_{\pbf, \qbf}| \le V.
\end{aligned}
\end{equation*}
The existence of such a solution is immediate.
Define $\lsefstinf: \R^d \to \R$ by 
\begin{equation*}
    \lsefstinf(x_1, \dots, x_d) = \hat{c} 
    + \sum_{(L, U, \pbf, \qbf) \in J} 
    \hat{\beta}^{L, U}_{\pbf, \qbf} \cdot \prod_{j \in L}
    \ind\big(x_j \ge (v^{(j)}_{p_j} + v^{(j)}_{p_j + 1})/2 \big)
    \cdot \prod_{j \in U} 
    \ind\big(x_j < (v^{(j)}_{q_j} + v^{(j)}_{q_j + 1})/2 \big).
\end{equation*}
By construction, $\lsefstinf \in \fstmid$, and it is a solution to 
the problem \eqref{xgb_opti_mid}. Moreover, 
Lemma \ref{lem:discretization} implies that it is also a solution to 
\eqref{lse_fstinf}.
\end{proof}

\subsubsection{Proof of Lemma \ref{lem:discretization}}
\label{pf:discretization}
We use the following lemma for the proof. 
This lemma is proved right after the proof of 
Lemma~\ref{lem:discretization}.
\begin{lemma}\label{lem:reduction-to-discrete-measures}
    For every $\fcnu \in \fstinf$, there exists 
    $f^{d, s}_{b, \{\mu_{L, U}\}} \in \fst$ with discrete signed 
    measures $\mu_{L, U}$ supported on the lattices \eqref{lattice} 
    such that 
    \begin{enumerate}[label = (\alph*)]
    \item $f^{d, s}_{b, \{\mu_{L, U}\}}(\xbf^{(i)})
    = \fcnu(\xbf^{(i)})$ for $i = 1, \dots, n$
    \item 
    \begin{equation*}
        \sum_{0 < |L| + |U| \le s} \|\mu_{L, U}\|_{\text{TV}} 
        \le \sum_{0 < |L| + |U| \le s} \|\nu_{L, U}\|_{\text{TV}}.
    \end{equation*}
    \end{enumerate}
\end{lemma}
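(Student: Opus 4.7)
The plan is to construct the signed measures $\mu_{L, U}$ via a measurable pushforward of $\nu_{L, U}$ that rounds each threshold to the appropriate midpoint on the lattice \eqref{lattice} and transfers mass to lower-order indices whenever a threshold lies outside the observed range of values. The key observation is that for any data point $\xbf^{(i)}$, the indicator $\ind(x^{(i)}_j \ge l_j)$ (and similarly $\ind(x^{(i)}_j < u_j)$) depends on $l_j$ only through which of the intervals $(-\infty, v^{(j)}_1], (v^{(j)}_1, v^{(j)}_2], \dots, (v^{(j)}_{n_j - 1}, v^{(j)}_{n_j}], (v^{(j)}_{n_j}, +\infty)$ contains $l_j$. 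Replacing $l_j$ by the midpoint of the containing interval, or dropping/discarding the coordinate if $l_j$ lies in one of the two extreme intervals, preserves $b^{L, U}_{\lbf, \ubf}(\xbf^{(i)})$ at every data point while concentrating the measure on the lattice.

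Concretely, for each $j \in L$ I classify the threshold $l_j$ as \emph{trivial} if $l_j \le v^{(j)}_1$ (so the indicator equals $1$ at every data point), as \emph{discarded} if $l_j > v^{(j)}_{n_j}$ (indicator is identically $0$), or otherwise as having midpoint $m^j_k := (v^{(j)}_k + v^{(j)}_{k+1})/2$ where $l_j \in (v^{(j)}_k, v^{(j)}_{k+1}]$; the roles of trivial and discarded swap for $j \in U$. I then define a measurable map $\Phi_{L, U}$ on $\R^{|L|+|U|}$ that sends a threshold vector to a designated discard symbol whenever any coordinate is discarded, to a designated trivial symbol whenever every coordinate is trivial, and otherwise to the unique lattice point of $\Lambda_{L', U'}$ (with $L'$ and $U'$ the non-trivial coordinate indices inherited from $L$ and $U$) obtained by rounding each non-trivial threshold to its midpoint, where $\Lambda_{L', U'}$ denotes the midpoint lattice from \eqref{lattice}. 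Then define
\begin{equation*}
    \mu_{L', U'}(\{p\}) := \sum_{\substack{(L, U) \supseteq (L', U') \\ 0 < |L| + |U| \le s}} \nu_{L, U}\big(\Phi_{L, U}^{-1}(\{p\})\big) \quad \text{for } p \in \Lambda_{L', U'}
\end{equation*}
and set $b := c + \sum_{0 < |L| + |U| \le s} \nu_{L, U}\big(\Phi_{L, U}^{-1}(\text{trivial symbol})\big)$, which absorbs all the trivial contributions into the constant term.

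Condition (a) is immediate from the design of $\Phi_{L, U}$: on each preimage, the original basis function either agrees with the rounded basis function at every data point, is identically $0$ (discard case), or is identically $1$ (trivial case), with the last contribution absorbed into $b$. For condition (b), the crucial structural fact is that, for each fixed $(L, U)$, the preimages of the distinct lattice points, of the trivial symbol, and of the discard symbol form a \emph{disjoint} partition of $\R^{|L|+|U|}$. Applying the triangle inequality for signed measures to the definition of $\mu_{L', U'}$, summing over lattice points $p$, swapping the order of summation to aggregate by $(L, U)$, and invoking disjointness yields
\begin{equation*}
    \sum_{0 < |L'| + |U'| \le s} \|\mu_{L', U'}\|_{\text{TV}}
    \le \sum_{0 < |L| + |U| \le s} \sum_{\substack{(L', U') \subseteq (L, U) \\ |L'| + |U'| > 0}} |\nu_{L, U}|\big(\Phi_{L, U}^{-1}(\Lambda_{L', U'})\big)
    \le \sum_{0 < |L| + |U| \le s} \|\nu_{L, U}\|_{\text{TV}}.
\end{equation*}
The main obstacle is precisely this bookkeeping step: because each $\mu_{L', U'}$ receives contributions from every $(L, U) \supseteq (L', U')$, the naive bound grows with the number of contributing indices, and one must carefully swap the order of summation and exploit the disjointness of preimages within each fixed $(L, U)$ so that the sum of total variations remains sub-additive.
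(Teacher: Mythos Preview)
Your proposal is correct and takes essentially the same approach as the paper. The paper writes out explicit formulas for $\mu_{L,U}$ by summing $\nu_{\widetilde L,\widetilde U}$ over product regions of the form $\prod_{j\in L}I^{(j)}_{p_j}\times\prod_{j\in\widetilde L\setminus L}\underline O^{(j)}\times\prod_{j\in U}I^{(j)}_{q_j}\times\prod_{j\in\widetilde U\setminus U}\overline O^{(j)}$, which is exactly your trivial/discard/round-to-midpoint classification recast without the pushforward language; the verification of (a) and the swap-and-disjointness argument for (b) are likewise the same.
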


\begin{proof}[Proof of Lemma \ref{lem:discretization}]
For $z_1, \dots, z_n \in \R$, define 
\begin{equation*}
    \vinfxgb(z_1, \dots, z_n) = \inf \bigg\{ 
    \sum_{0 < |L| + |U| \le s} \|\nu_{L, U}\|_{\text{TV}}: 
    \fcnu(\xbf^{(i)}) 
    = z_i \text{ for } i = 1, \dots, n \bigg\}.
\end{equation*}
For simplicity, we suppress the dependence on the design points 
$\xbf^{(1)}, \dots, \xbf^{(n)}$.
By definition,
\begin{equation*}
    \vinfxgb(f(\xbf^{(1)}), \dots, f(\xbf^{(n)})) 
    \le \vinfxgb(f) \quad \text{for every } f \in \fstinf.
\end{equation*}

By Lemma \ref{lem:reduction-to-discrete-measures}, for each 
$z_1, \dots, z_n$, we have
\begin{equation*}
\begin{aligned}
    \vinfxgb(z_1, \dots, z_n) &= \inf \bigg\{ 
    \sum_{0 < |L| + |U| \le s} \|\nu_{L, U}\|_{\text{TV}}: 
    \fcnu(\xbf^{(i)}) 
    = z_i \text{ for } i = 1, \dots, n, \\
    &\qquad \qquad \qquad \qquad \qquad \qquad \qquad 
    \text{ with } \nu_{L, U} 
    \text{ supported on the lattices \eqref{lattice}}\bigg\}.
\end{aligned}
\end{equation*}
Recall the index set $J$ from \eqref{eq:index-set}, and let $\xbf$ 
denote the $n \times |J|$ matrix with entries
\begin{equation*}
    X_{i, (L, U, \pbf, \qbf)} 
    = \prod_{j \in L} 
    \ind\big(x^{(i)}_j \ge (v^{(j)}_{p_j} + v^{(j)}_{p_j + 1})/2\big)
    \cdot \prod_{j \in U} 
    \ind\big(x^{(i)}_j < (v^{(j)}_{q_j} + v^{(j)}_{q_j + 1})/2\big)
\end{equation*}
for $i = 1, \dots, n$ and $(L, U, \pbf, \qbf) \in J$.
We parametrize discrete signed measures $\nu_{L, U}$ 
supported on the lattices \eqref{lattice} by 
\begin{equation}\label{eq:measures-discrete-parametrization}
    \nu_{L, U} 
    \big(\{((v^{(j)}_{p_j} + v^{(j)}_{p_j + 1})/2, j \in L) 
    \times ((v^{(j)}_{q_j} + v^{(j)}_{q_j + 1})/2, j \in U)\}\big)
    = \beta^{L, U}_{\pbf, \qbf}
\end{equation}
for $L, U \subseteq [d]$ with $0 < |L| + |U| \le s$, 
$\pbf = (p_j, j \in L) \in \prod_{j \in L} [n_j - 1]$, 
and $\qbf = (q_j, j \in U) \in \prod_{j \in U} [n_j - 1]$.
With this parametrization, we can express $\vinfxgb(z_1, \dots, z_n)$ as 
\begin{equation}\label{eq:discrete-complexity}
    \vinfxgb(z_1, \dots, z_n) = \inf \big\{\|\betabf\|_1: \xbf \betabf 
    = \zbf - c \onevec \text{ for some } c \in \R\big\},
\end{equation}
where $\zbf = (z_1, \dots, z_n)$, and $\onevec$ is the all-ones 
vector.

Next, we show that if the set 
\begin{equation}\label{eq:discrete-feasible-set}
    \mathcal{D}_{\zbf} := \big\{\betabf \in \R^{|J|}: 
    \xbf \betabf = \zbf - c \onevec 
    \text{ for some } c \in \R\big\}
\end{equation}
is nonempty, which is clearly the case when
$\zbf = (f(\xbf^{(1)}), \dots, f(\xbf^{(n)}))$ 
for some $f \in \fstinf$, then there exists $\hat{\betabf}$ that achieves 
the minimum in \eqref{eq:discrete-complexity}.
Fix $\zbf \in \R^n$ such that $\mathcal{D}_{\zbf}$ is 
nonempty, and suppose $\xbf \betabf_0 = \zbf - c_0 \onevec$ 
for some $\betabf_0$ and $c_0$.
Clearly, the infimum on the right-hand side of 
\eqref{eq:discrete-complexity} remains unchanged if we further 
constrain $\betabf$ to satisfy $\|\betabf\|_1 \le \|\betabf_0\|_1$:
\begin{equation*}
    \vinfxgb(z_1, \dots, z_n) = \inf \big\{\|\betabf\|_1: \xbf \betabf 
    = \zbf - c \onevec \text{ for some } c \in \R 
    \text{ and } \|\betabf\|_1 \le \|\betabf_0\|_1\big\}.
\end{equation*}
It is also straightforward to verify that the set
\begin{equation*}
    \mathcal{D}_{\zbf} \cap \{\betabf \in \R^{|J|}: 
    \|\betabf\|_1 \le \|\betabf_0\|_1\}
\end{equation*}
is nonempty, closed, and bounded.
Since the map $\betabf \mapsto \|\betabf\|_1$ is continuous, it follows 
that there exists $\hat{\betabf}$ that attains the minimum in
\eqref{eq:discrete-complexity}.

Using the results established above, we now prove the lemma. 
Fix $f \in \fstinf$, and let $\hat{\betabf}$ be the minimizer of 
\eqref{eq:discrete-complexity} for $\zbf = (f(\xbf^{(1)}), \dots, 
f(\xbf^{(n)}))$.
Let $\hat{c}$ be the corresponding constant from 
\eqref{eq:discrete-feasible-set}, and let 
$\nu_{L, U}$ denote the signed Borel measures associated with 
$\hat{\betabf}$ via \eqref{eq:measures-discrete-parametrization}.
By construction, $\fcnu \in \fst$ 
satisfies the first two conditions of the lemma. 
Moreover, since
\begin{equation*}
\begin{aligned}
    &\vinfxgb(f(\xbf^{(1)}), \dots, f(\xbf^{(n)})) 
    = \|\hat{\betabf}\|_1 
    = \sum_{0 < |L| + |U| \le s} \|\nu_{L, U}\|_{\text{TV}} \\
    &\qquad \ge \vinfxgb(\fcnu)
    \ge \vinfxgb(f(\xbf^{(1)}), \dots, f(\xbf^{(n)})),
\end{aligned}
\end{equation*} 
we have
\begin{equation*}
    \vinfxgb(\fcnu) 
    = \sum_{0 < |L| + |U| \le s} \|\nu_{L, U}\|_{\text{TV}}
    = \vinfxgb(f(\xbf^{(1)}), \dots, f(\xbf^{(n)})) 
    \le \vinfxgb(f).
\end{equation*}
Hence, $\fcnu \in \fstmid$ is a function that satisfies 
all the desired properties.
\end{proof}

\begin{proof}[Proof of Lemma \ref{lem:reduction-to-discrete-measures}]
For each $j \in [d]$, define
\begin{equation*}
    I^{(j)}_{m_j} = (v^{(j)}_{m_j}, v^{(j)}_{m_j + 1}] 
    \qquad \text{for } m_j \in [n_j - 1],
\end{equation*}
and
\begin{equation*}
    \underline{I}^{(j)}_{m_j} = (v^{(j)}_{1}, v^{(j)}_{m_j}]
    \ \text{ and } \
    \overline{I}^{(j)}_{m_j} = (v^{(j)}_{m_j}, v^{(j)}_{n_j}]
    \qquad \text{for } m_j \in [n_j].
\end{equation*}
Also, let
\begin{equation*}
    \underline{O}^{(j)} = (-\infty, v^{(j)}_{1}]
    \ \text{ and } \
    \overline{O}^{(j)} = (v^{(j)}_{n_j}, +\infty).
\end{equation*}

With these notations, we define discrete signed measures 
$\mu_{L, U}$ and a constant $b$ as follows.
For $L, U \subseteq [d]$ with $0 < |L| + |U| \le s$, let 
$\mu_{L, U}$ be the discrete signed measure 
supported on the lattice \eqref{lattice}, defined by
\begin{equation*}
\begin{aligned}
    &\mu_{L, U} \big(\{(
    (v^{(j)}_{p_j} + v^{(j)}_{p_j + 1})/2, j \in L) 
    \times ((v^{(j)}_{q_j} + v^{(j)}_{q_j + 1})/2, j \in U)\}\big) \\
    &\quad= \sum_{\substack{\widetilde{L}, \widetilde{U}: 
    \widetilde{L} \supseteq L, \widetilde{U} \supseteq U \\
    |\widetilde{L}| + |\widetilde{U}| \le s}}
    \nu_{\widetilde{L}, \widetilde{U}} 
    \Big(\prod_{j \in L} I^{(j)}_{p_j} 
    \times \prod_{j \in \widetilde{L} \setminus L} \underline{O}^{(j)} 
    \times \prod_{j \in U} I^{(j)}_{q_j}
    \times \prod_{j \in \widetilde{U} \setminus U} 
    \overline{O}^{(j)}\Big)
\end{aligned}
\end{equation*}
for $(p_j, j \in L) \in \prod_{j \in L} [n_j - 1]$ and 
$(q_j, j \in U) \in \prod_{j \in U} [n_j - 1]$.
Also, let
\begin{equation*}
    b = c + \sum_{0 < |L| + |U| \le s} \nu_{L, U} 
    \Big(\prod_{j \in L} \underline{O}^{(j)} 
    \times \prod_{j \in U} \overline{O}^{(j)}\Big).
\end{equation*}
By construction, for each $(m_1, \dots, m_d) 
\in \prod_{j = 1}^{d} [n_j]$, we have
\begin{equation*}
\begin{aligned}
    &\int_{\R^{|L| + |U|}} \prod_{j \in L} 
    \ind\big(v^{(j)}_{m_j} \ge l_j\big)  
    \cdot \prod_{j \in U} \ind\big(v^{(j)}_{m_j} < u_j\big)
    \, d\mu_{L, U}(\lbf, \ubf) \\
    &\quad = 
    \sum_{\rbf \in \prod\limits_{j \in L} \{1, \dots, m_j - 1\}
    \times \prod\limits_{j \in U} \{m_j, \dots, n_j - 1\}} 
    \mu_{L, U} \big(\{((v^{(j)}_{r_j} + v^{(j)}_{r_j + 1})/2, j \in L) 
    \times ((v^{(j)}_{r_j} + v^{(j)}_{r_j + 1})/2, j \in U)\}\big) \\
    &\quad= \sum_{\substack{\widetilde{L}, \widetilde{U}: 
    \widetilde{L} \supseteq L, \widetilde{U} \supseteq U \\
    |\widetilde{L}| + |\widetilde{U}| \le s}}
    \nu_{\widetilde{L}, \widetilde{U}} 
    \Big(\prod_{j \in L} \underline{I}^{(j)}_{m_j} 
    \times \prod_{j \in \widetilde{L} \setminus L} \underline{O}^{(j)} 
    \times \prod_{j \in U} \overline{I}^{(j)}_{m_j}
    \times \prod_{j \in \widetilde{U} \setminus U} 
    \overline{O}^{(j)}\Big).
\end{aligned}
\end{equation*}
It follows that for each 
$(m_1, \dots, m_d) \in \prod_{j = 1}^{d} [n_j]$, 
\begin{equation*}
\begin{aligned}
    &f^{d, s}_{b, \{\mu_{L, U}\}}(v^{(1)}_{m_1}, 
    \dots, v^{(d)}_{m_d}) = b + \sum_{0 < |L| + |U| \le s} 
    \sum_{\substack{\widetilde{L}, \widetilde{U}: 
    \widetilde{L} \supseteq L, \widetilde{U} \supseteq U \\
    |\widetilde{L}| + |\widetilde{U}| \le s}}
    \nu_{\widetilde{L}, \widetilde{U}} 
    \Big(\prod_{j \in L} \underline{I}^{(j)}_{m_j} 
    \times \prod_{j \in \widetilde{L} \setminus L} \underline{O}^{(j)} 
    \times \prod_{j \in U} \overline{I}^{(j)}_{m_j}
    \times \prod_{j \in \widetilde{U} \setminus U} 
    \overline{O}^{(j)}\Big) \\
    &\quad = c + \sum_{0 < |\widetilde{L}| + |\widetilde{U}| \le s} 
    \sum_{L \subseteq \widetilde{L}}
    \sum_{U \subseteq \widetilde{U}}
    \nu_{\widetilde{L}, \widetilde{U}} 
    \Big(\prod_{j \in L} \underline{I}^{(j)}_{m_j} 
    \times \prod_{j \in \widetilde{L} \setminus L} \underline{O}^{(j)} 
    \times \prod_{j \in U} \overline{I}^{(j)}_{m_j}
    \times \prod_{j \in \widetilde{U} \setminus U} 
    \overline{O}^{(j)}\Big) \\
    &\quad = c + \sum_{0 < |\widetilde{L}| + |\widetilde{U}| \le s} 
    \nu_{\widetilde{L}, \widetilde{U}} 
    \Big(\prod_{j \in \widetilde{L}} 
    \big(\underline{I}^{(j)}_{m_j} \cup \underline{O}^{(j)}\big)
    \times \prod_{j \in \widetilde{U}} 
    \big(\overline{I}^{(j)}_{m_j} \cup \overline{O}^{(j)}\big)\Big) 
    = \fcnu(v^{(1)}_{m_1}, \dots, v^{(d)}_{m_d}),
\end{aligned}
\end{equation*}
which implies that $f^{d, s}_{b, \{\mu_{L, U}\}}$ agrees with 
$\fcnu$ at all design points 
$\xbf^{(1)}, \dots, \xbf^{(n)}$. Moreover, 
\begin{equation*}
\begin{aligned}
    &\sum_{0 < |L| + |U| \le s} |\mu_{L, U}|(\R^{|L| + |U|}) 
    \le \sum_{0 < |L| + |U| \le s} 
    \sum_{\pbf \in \prod_{j \in L} [n_j - 1]}
    \sum_{\qbf \in \prod_{j \in U} [n_j - 1]}
    \sum_{\substack{\widetilde{L}, \widetilde{U}: 
    \widetilde{L} \supseteq L, \widetilde{U} \supseteq U \\
    |\widetilde{L}| + |\widetilde{U}| \le s}} \\
    &\qquad \qquad \qquad \qquad \qquad \qquad \qquad \qquad \qquad \qquad 
    \quad
    |\nu_{\widetilde{L}, \widetilde{U}}|
    \Big(\prod_{j \in L} I^{(j)}_{p_j} 
    \times \prod_{j \in \widetilde{L} \setminus L} \underline{O}^{(j)} 
    \times \prod_{j \in U} I^{(j)}_{q_j}
    \times \prod_{j \in \widetilde{U} \setminus U} 
    \overline{O}^{(j)}\Big) \\
    &\qquad = \sum_{0 < |L| + |U| \le s} 
    \sum_{\substack{\widetilde{L}, \widetilde{U}: 
    \widetilde{L} \supseteq L, \widetilde{U} \supseteq U \\
    |\widetilde{L}| + |\widetilde{U}| \le s}}  
    |\nu_{\widetilde{L}, \widetilde{U}}|
    \Big(\prod_{j \in L} \underline{I}^{(j)}_{n_j} 
    \times \prod_{j \in \widetilde{L} \setminus L} \underline{O}^{(j)} 
    \times \prod_{j \in U} \overline{I}^{(j)}_{1}
    \times \prod_{j \in \widetilde{U} \setminus U}
    \overline{O}^{(j)}\Big) \\
    &\qquad \le \sum_{0 < |\widetilde{L}| + |\widetilde{U}| \le s} 
    \sum_{L \subseteq \widetilde{L}}
    \sum_{U \subseteq \widetilde{U}}
    |\nu_{\widetilde{L}, \widetilde{U}}|
    \Big(\prod_{j \in L} \underline{I}^{(j)}_{n_j} 
    \times \prod_{j \in \widetilde{L} \setminus L} \underline{O}^{(j)} 
    \times \prod_{j \in U} \overline{I}^{(j)}_{1}
    \times \prod_{j \in \widetilde{U} \setminus U} 
    \overline{O}^{(j)}\Big) \\
    &\qquad = \sum_{0 < |\widetilde{L}| + |\widetilde{U}| \le s} 
    |\nu_{\widetilde{L}, \widetilde{U}}|
    \Big(\prod_{j \in \widetilde{L}} 
    \big(\underline{I}^{(j)}_{n_j} \cup \underline{O}^{(j)}\big)
    \times \prod_{j \in \widetilde{U}} 
    \big(\overline{I}^{(j)}_{1} \cup \overline{O}^{(j)}\big)\Big) 
    \le \sum_{0 < |\widetilde{L}| + |\widetilde{U}| \le s} 
    |\nu_{\widetilde{L}, \widetilde{U}}|(\R^{|\widetilde{L}| 
    + |\widetilde{U}|}).
\end{aligned}
\end{equation*}
This proves that $f^{d, s}_{b, \{\mu_{L, U}\}}$ is the desired function 
satisfying the conditions of the lemma.
\end{proof}

\subsection{Proofs of Theorems, Lemma, and Corollary in Section \ref{sec:minimax-rate}}
\label{pf:minimax-rate}
\subsubsection{Proof of Theorem \ref{thm:risk-upper-bound}}
\label{pf:risk-upper-bound}
We will use the following three results from empirical process theory to prove 
the theorem. Theorem \ref{thm:moment-inequality} provides a moment inequality 
for the expected supremum of multiplier empirical processes. 
Lemma \ref{lem:bracketing-integral-bound} bounds the expected supremum of 
empirical processes with Rademacher multipliers in terms of bracketing entropy 
integrals. Theorem \ref{thm:han2019convergence} reduces the problem of 
controlling the expected supremum of general multiplier empirical processes to 
the case with Rademacher multipliers. While Theorems \ref{thm:moment-inequality} and
\ref{thm:han2019convergence} are general results,
Lemma \ref{lem:bracketing-integral-bound} is more specific to our setting. 
We provide the proof of Lemma \ref{lem:bracketing-integral-bound} in Appendix 
\ref{pf:bracketing-integral-bound}.

\begin{theorem}[Proposition 3.1 of \citet{gine2000exponential}]
\label{thm:moment-inequality}
Suppose $\F$ is a countable collection of functions from $\mathcal{X}$ to $\R$.
Assume that $\xbf^{(1)}, \dots, \xbf^{(n)}$ are i.i.d. with law $P$ 
on $\mathcal{X}$ and that $\xi_1, \dots, \xi_n$ are independent mean-zero random 
variables, independent of $\xbf^{(1)}, \dots, \xbf^{(n)}$. Then, 
there exists a constant $C > 0$ such that
\begin{equation*}
\begin{aligned}
    \E\Big[\sup_{f \in \F} \Big|\sum_{i = 1}^{n} 
    \xi_i f(\xbf^{(i)})\Big|^p \Big] 
    &\le C^p \bigg[ \E \Big[\sup_{f \in \F} \Big|\sum_{i = 1}^{n} 
    \xi_i f(\xbf^{(i)})\Big|\Big]^p
    + p^{p/2} n^{p/2} \Big(\sup_{f \in \F} \|f\|_{P, 2}\Big)^p 
    \cdot \max_i \|\xi_i\|_2^p \\
    &\qquad \quad + p^p \E \Big[\max_i \Big(|\xi_i|^p \cdot 
    \sup_{f \in \F} |f(\xbf^{(i)})|^p \Big)\Big]\bigg]
\end{aligned}
\end{equation*}
for every $p \ge 1$. Here, $\|\cdot\|_{P, 2}$ is defined by
\begin{equation*}
    \|f\|_{P, 2} = \big(\E_{X \sim P} [f^2(X)]\big)^{1/2}.
\end{equation*}
\end{theorem}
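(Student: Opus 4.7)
The plan is to view the supremum as the norm of a sum of independent Banach-space-valued random variables and to deploy a Rosenthal-type moment inequality for such sums, following the scheme of \citet{gine2000exponential}.

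First, I would set $Y_i := (\xi_i f(\xbf^{(i)}))_{f \in \F}$, so that $Y_1, \dots, Y_n$ are independent, mean-zero random elements of $\ell^\infty(\F)$, and the left-hand side equals $\E \|\sum_{i = 1}^n Y_i\|^p$ under the supremum norm on $\ell^\infty(\F)$; countability of $\F$ secures the needed measurability. A first pass via the classical Hoffmann--Jorgensen inequality yields $\E \|\sum_i Y_i\|^p \le C^p \bigl[(\E \|\sum_i Y_i\|)^p + \E \max_i \|Y_i\|^p\bigr]$. Since $\|Y_i\| = |\xi_i| \sup_{f \in \F} |f(\xbf^{(i)})|$, this already accounts for the first and third summands in the target bound, with the $p^p$ dependence arising from the quantitative form of the Hoffmann--Jorgensen constants.

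The second step is to extract the $p^{p/2} n^{p/2}$ variance term. Hoffmann--Jorgensen alone is not sharp in $p$, so I would insert an intermediate $L^2$ layer between the first-moment term and the jump term. Conditionally on $(\xbf^{(i)})_i$, for each fixed $f \in \F$ the sum $\sum_i \xi_i f(\xbf^{(i)})$ is a sum of independent mean-zero random variables with variance $\sum_i \E[\xi_i^2] f(\xbf^{(i)})^2$. A Rosenthal/Pisier-type inequality in Banach spaces, combined with the iteration used in the Hoffmann--Jorgensen proof, bounds the $p$-th moment by $(\E \|\sum_i Y_i\|)^p$ plus $p^{p/2} \bigl(\sup_{f \in \F} \sum_i \E[\xi_i^2] f(\xbf^{(i)})^2\bigr)^{p/2}$ plus the jump term. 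Taking the outer expectation over $(\xbf^{(i)})_i$ and bounding $\sup_f \sum_i \E[\xi_i^2] f(\xbf^{(i)})^2 \le n \cdot \sup_f \|f\|_{P,2}^2 \cdot \max_i \|\xi_i\|_2^2$ yields the middle summand.

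The main obstacle is obtaining the $L^2(P)$-envelope $\sup_f \|f\|_{P, 2}$ in the variance term rather than a cruder $L^\infty$ envelope, together with the sharp $p^{p/2}$ dependence. A naive Hoffmann--Jorgensen application would instead produce $\sup_f \|f\|_\infty$, which is far too large for nonparametric applications. Overcoming this requires a genuine Banach-space Rosenthal inequality combined with a Fubini step that lets the supremum over $\F$ escape from the conditional weak variance by exploiting the $L^2$-boundedness of each $\xi_i$. Careful tracking of constants through this iteration reproduces the stated three-term inequality with the explicit $p^p$, $p^{p/2}$, and universal $C^p$ factors, matching the cited statement of \citet{gine2000exponential}.
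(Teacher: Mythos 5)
The paper does not actually prove this statement: it is imported verbatim (specialized to the multiplier setting) as Proposition 3.1 of \citet{gine2000exponential} and used as a black box in the proof of Theorem \ref{thm:risk-upper-bound}, so there is no internal argument to compare yours against. Judged on its own terms, your sketch correctly identifies the result as a Hoffmann--J{\o}rgensen/Rosenthal-type moment inequality for the $\ell^\infty(\F)$-valued sum $\sum_i Y_i$ with $Y_i = (\xi_i f(\xbf^{(i)}))_{f \in \F}$, and your final Fubini step --- bounding the weak variance $\sup_f \sum_i \E[\xi_i^2 f^2(\xbf^{(i)})]$ by $n \,(\sup_f \|f\|_{P,2})^2 \max_i \|\xi_i\|_2^2$ using the independence of the $\xi_i$ and the $\xbf^{(i)}$ --- is exactly how the general weak-variance form of the proposition specializes to the statement above.

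There are, however, two genuine gaps. First, the classical Hoffmann--J{\o}rgensen inequality does not deliver $C^p\big[(\E\|\sum_i Y_i\|)^p + p^p \E\max_i\|Y_i\|^p\big]$ with the $p$-dependence confined to the maximal term: HJ controls $\E\|S\|^p$ by a quantile of $\|S\|$ at level roughly $4^{-p}$, and converting that quantile to $\E\|S\|$ via Markov's inequality pollutes the first term with a constant that grows with $p$ (indeed super-exponentially under naive iteration). The clean three-regime separation --- $C^p$ on the mean term, $p^{p/2}$ on the variance term, $p^p$ on the maximum term --- is precisely the content of the Gin\'e--Lata\l a--Zinn proposition, and their proof rests on Talagrand's isoperimetric inequality for product measures (the ``control by $q$ points'' inequality), which is absent from your outline. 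Second, the ``genuine Banach-space Rosenthal inequality combined with the iteration used in the Hoffmann--J{\o}rgensen proof'' that you invoke for the middle term is not an off-the-shelf lemma from which the result follows; it \emph{is} the result, so the core step of the proposal is circular. As a citation-level justification the sketch is acceptable --- which is all the paper itself offers --- but as a self-contained proof it does not go through.
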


\begin{lemma}\label{lem:bracketing-integral-bound}
    Suppose $\xbf^{(1)}, \dots, \xbf^{(k)}$ are i.i.d. random 
    variables on $\R^d$ with density $p_0$, and let 
    $\epsilon_1, \dots, \epsilon_k$ be independent Rademacher random variables, 
    independent of $\xbf^{(1)}, \dots, \xbf^{(k)}$. Let $\F$ be a 
    countable collection of functions from $\R^d$ to $\R$, and suppose there 
    exist $t, D > 0$ such that $\|f\|_{p_0, 2} \le t$ and 
    $\|f\|_{\infty} \le D$ for all $f \in \F$. Then, 
    \begin{equation*}
        \E\Big[\sup_{f \in \F} \Big|\frac{1}{\sqrt{k}} 
        \sum_{i = 1}^{k} \epsilon_i f(\xbf^{(i)})\Big|\Big] 
        \le C J_{[ \ ]}(t, \F, \| \cdot \|_{p_0, 2}) 
        \cdot \Big(1 + D \cdot 
        \frac{J_{[ \ ]}(t, \F, \| \cdot \|_{p_0, 2})}{t^2 \sqrt{k}}\Big),
    \end{equation*}
    where $C$ is a universal constant, and 
    $J_{[ \ ]}(t, \F, \| \cdot \|_{p_0, 2})$ is the bracketing entropy integral defined by
    \begin{equation*}
        J_{[ \ ]}(t, \F, \| \cdot \|_{p_0, 2}) = \int_{0}^{t} 
        \sqrt{1 + \log N_{[ \ ]}(\epsilon, \F, \| \cdot \|_{p_0, 2})} \, d\epsilon,
    \end{equation*}
    with $N_{[ \ ]}(\epsilon, \F, \| \cdot \|_{p_0, 2})$ denoting the 
    $\epsilon$-bracketing number of $\F$ with respect to $\| \cdot \|_{p_0, 2}$.
\end{lemma}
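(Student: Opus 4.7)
My plan is to reduce the Rademacher multiplier process to the centered empirical process, and then invoke a classical bracketing-based maximal inequality. The starting point is the decomposition
$$\sum_{i=1}^{k} \epsilon_i f(\xbf^{(i)}) \,=\, \sum_{i=1}^{k} \epsilon_i \big(f(\xbf^{(i)}) - \E f(\xbf)\big) \,+\, \E f(\xbf) \cdot \sum_{i=1}^{k} \epsilon_i.$$
The supremum over $\F$ of the second contribution, divided by $\sqrt{k}$, has expectation at most $t$, since $|\E f(\xbf)| \le \|f\|_{p_0,2} \le t$ while $\E\big|\sum_{i=1}^{k} \epsilon_i\big|/\sqrt{k} \le 1$. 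Because $J_{[\,]}(t, \F, \|\cdot\|_{p_0, 2}) \ge t$ (the integrand is at least $1$), this piece is absorbed into the leading term of the asserted bound.

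For the first contribution, I would set $Y_i(f) := f(\xbf^{(i)}) - \E f(\xbf)$, so that the $Y_i(f)$ are independent and mean-zero. Introducing an independent copy $Y_i'(f) := f(\xbf^{(i)'}) - \E f(\xbf)$ and combining Jensen's inequality with the distributional identity that $\epsilon_i(Y_i - Y_i')$ agrees in law with $Y_i - Y_i'$ gives the desymmetrization inequality
$$\E \sup_{f \in \F} \Big|\frac{1}{\sqrt{k}} \sum_{i=1}^{k} \epsilon_i Y_i(f)\Big| \,\le\, 2 \,\E \sup_{f \in \F} \Big|\frac{1}{\sqrt{k}} \sum_{i=1}^{k} Y_i(f)\Big|.$$
I would then apply a classical bracketing maximal inequality (in the spirit of Theorem 3.4.1 of van der Vaart and Wellner) to $\F$, specialized to a class of uniform $L_2(p_0)$-radius at most $t$ and uniform $L^\infty$-envelope $D$; this yields an upper bound for the right-hand side of the form
$$C \cdot J_{[\,]}(t, \F, \|\cdot\|_{p_0, 2}) \cdot \Big(1 + \frac{D \cdot J_{[\,]}(t, \F, \|\cdot\|_{p_0, 2})}{t^2 \sqrt{k}}\Big).$$
Adding the boundary term from the previous step gives the stated inequality, possibly after adjusting the universal constant $C$.

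The main technical point is verifying that the envelope $D$ enters the bracketing maximal inequality in the precise form stated, rather than through a weaker dependence. This requires tracking how $D$ propagates through the chaining argument underlying the inequality: the sub-Gaussian chaining contributes the leading $J_{[\,]}$ term, whereas at the finest chaining levels a Bernstein-type correction takes over and produces the second-order term proportional to $D \cdot J_{[\,]}^2/(t^2 \sqrt{k})$. Once the reduction to the centered empirical process is established, this bookkeeping is standard, and the conclusion of the lemma follows.
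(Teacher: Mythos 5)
Your proposal is correct, but it follows a genuinely different route from the paper. The paper never desymmetrizes: it lifts the problem to the product space $\R^d \times \{-1,1\}$, views $\epsilon_i f(\xbf^{(i)})$ as an ordinary empirical process indexed by the (automatically mean-zero) functions $\Phi_f(\xbf,\epsilon) = \epsilon f(\xbf)/(2D)$, bounds their Bernstein norms by $\|f\|_{p_0,2}$ up to the factor $a/(2D)$, translates brackets for $\F$ into brackets for $\{\Phi_f\}$, and applies the Bernstein-norm maximal inequality (Lemma 3.4.3 of van der Vaart and Wellner). You instead split off the term $\E f(\xbf)\cdot\sum_i\epsilon_i$ (correctly absorbed using $|\E f|\le t\le J_{[\ ]}$), desymmetrize the centered Rademacher process to the plain centered empirical process via the standard Jensen-plus-symmetry argument for mean-zero summands, and then invoke the $L_2$-radius-plus-envelope form of the bracketing maximal inequality. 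Two remarks. First, the result you want is Lemma 3.4.2 of van der Vaart and Wellner, not Theorem 3.4.1; its statement is exactly $C J_{[\ ]}(\delta,\F,L_2(P))\big(1 + M J_{[\ ]}(\delta,\F,L_2(P))/(\delta^2\sqrt{n})\big)$ with $\delta = t$ and $M = D$, so the ``bookkeeping'' you defer in your last paragraph is already packaged there and nothing remains to verify (note also that $\mathbb{G}_k f = \mathbb{G}_k(f - \E f)$, so you can apply it to $\F$ directly rather than to the centered class). Second, the trade-off: your route is shorter if Lemma 3.4.2 is taken as a black box, at the cost of a desymmetrization factor and the extra mean term; the paper's product-space device keeps the multiplier structure intact in a single empirical process, which is why the same template extends with ``minimal modifications'' to the weighted process involving $(f_0 - f^*)$ used later in the proof of Theorem \ref{thm:risk-upper-bound}.
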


\begin{theorem}[Corollary 1 of \citet{han2019convergence}]
\label{thm:han2019convergence}
    Let $\F_1, \dots, \F_n$ be countable collections of functions from 
    $\mathcal{X}$ to $\R$ such that $\F_k \supseteq \F_n$ for every 
    $1 \le k \le n$. Suppose $\xbf^{(1)}, \dots, \xbf^{(n)}$
    are permutation invariant random variables on $\mathcal{X}$, and let 
    $\xi_1, \dots, \xi_n$ be i.i.d. mean-zero random variables, 
    independent of $\xbf^{(1)}, \dots, \xbf^{(n)}$. Assume that
    there exist $p \ge 1$ and $C > 0$ such that
    \begin{equation*}
        \E\Big[\sup_{f \in \F_k} 
        \Big|\sum_{i = 1}^{k} \epsilon_i f(\xbf^{(i)})\Big| \Big]
        \le C k^{1/p}
    \end{equation*}
    for every $1 \le k \le n$, where $\epsilon_1, \dots, \epsilon_n$ are 
    independent Rademacher random variables, independent of 
    $\xbf^{(1)}, \dots, \xbf^{(n)}$. 
    Then, for every $q \ge 1$, 
    \begin{equation*}
        \E\Big[\sup_{f \in \F_n} 
        \Big|\sum_{i = 1}^{n} \xi_i f(\xbf^{(i)})\Big| \Big]
        \le 4C \|\xi_1\|_{\min(p, q), 1} \cdot k^{1/\min(p, q)},
    \end{equation*}
    where for each $r \ge 1$,
    \begin{equation*}
        \|\xi_1\|_{r, 1} := \int_{0}^{\infty} \P(|\xi_1| > t)^{1/r} \, dt.
    \end{equation*}
\end{theorem}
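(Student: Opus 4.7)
The plan is to prove this multiplier inequality by an ordered-rearrangement argument: permutation invariance of $(\xbf^{(i)})$ lets us sort the multipliers by magnitude, Abel summation converts the resulting partial sums into a weighted combination of Rademacher-type suprema at each subsample size $k$, and the hypothesized bound is applied via the nested inclusion $\F_n \subseteq \F_k$.

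Concretely, condition on $(\xi_i)$ and let $\sigma$, measurable with respect to $(\xi_i)$ alone, be the permutation sorting magnitudes in decreasing order, so that $|\xi_{(1)}| \ge \cdots \ge |\xi_{(n)}|$ with $\xi_{(i)} := \xi_{\sigma(i)}$. Since $(\xbf^{(i)})$ is permutation-invariant and independent of $(\xi_i)$,
\begin{equation*}
    \E \sup_{f \in \F_n} \Big|\sum_{i = 1}^n \xi_i f(\xbf^{(i)})\Big|
    = \E \sup_{f \in \F_n} \Big|\sum_{i = 1}^n \xi_{(i)} f(\xbf^{(i)})\Big|.
\end{equation*}
Setting $|\xi_{(n+1)}| := 0$, Abel summation rewrites the inner sum as
\begin{equation*}
    \sum_{k = 1}^n (|\xi_{(k)}| - |\xi_{(k+1)}|) \sum_{i = 1}^k
    \mathrm{sign}(\xi_{(i)}) f(\xbf^{(i)}).
\end{equation*}
Pulling the supremum inside the nonnegative outer sum, using $\F_n \subseteq \F_k$ to enlarge each inner supremum to the $k$-th class, symmetrizing the inner sum to replace $\mathrm{sign}(\xi_{(i)})$ by an independent Rademacher block $(\epsilon_i)$ (valid given $(\xi_i)$ because the subsample $(\xbf^{(i)})_{i \le k}$ is symmetric), and then invoking the hypothesis yields
\begin{equation*}
    \E \sup_{f \in \F_n} \Big|\sum_{i = 1}^n \xi_i f(\xbf^{(i)})\Big|
    \le 2C \, \E \sum_{k = 1}^n (|\xi_{(k)}| - |\xi_{(k+1)}|) k^{1/p}.
\end{equation*}

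The next step converts this ordered-statistic sum into the Lorentz-type norm $\|\xi_1\|_{p, 1}$. Reversing Abel summation and using $k^{1/p} - (k - 1)^{1/p} \le C_p \, k^{1/p - 1}$ bounds the right-hand side by $C'_p \, \E \sum_k |\xi_{(k)}| \cdot k^{1/p - 1}$. The layer-cake identity $|\xi_{(k)}| = \int_0^\infty \ind(|\xi_{(k)}| > t) \, dt$ and Fubini then give
\begin{equation*}
    \E \sum_{k = 1}^n |\xi_{(k)}| \, k^{1/p - 1}
    = \int_0^\infty \E\Big[\sum_{k = 1}^{N(t)} k^{1/p - 1}\Big] dt
    \le C''_p \int_0^\infty \E[N(t)^{1/p}] \, dt,
\end{equation*}
where $N(t) := \#\{i : |\xi_i| > t\}$. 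By concavity of $x \mapsto x^{1/p}$ and Jensen's inequality, $\E[N(t)^{1/p}] \le (\E N(t))^{1/p} = n^{1/p} \, \P(|\xi_1| > t)^{1/p}$, and integrating produces $n^{1/p} \|\xi_1\|_{p, 1}$ up to universal constants.

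To obtain the $\min(p, q)$ refinement, note that for any $r \in [1, p]$ the hypothesis automatically holds with the weaker rate $k^{1/r}$, since $k^{1/r} \ge k^{1/p}$ for $k \ge 1$; choosing $r = \min(p, q)$ and repeating the entire argument with $p$ replaced by $r$ gives the stated bound with $\|\xi_1\|_{\min(p, q), 1}$ and $n^{1/\min(p, q)}$. The main obstacle is the symmetrization step at each subsample size $k$: one must legitimately decouple the signs $\mathrm{sign}(\xi_{(i)})$, which are random but $(\xi_i)$-measurable, from $(\xbf^{(i)})$ and replace them by an independent Rademacher sequence while preserving the conditional law of the truncated sub-sample $(\xbf^{(i)})_{i \le k}$. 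Once this conditioning-and-symmetrization is set up carefully via the mutual independence of $(\xi_i)$ and $(\xbf^{(i)})$ and the fact that $\sigma$ is measurable in $(\xi_i)$ alone, the remaining ingredients are straightforward computations.
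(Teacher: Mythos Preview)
The paper does not prove this theorem; it is quoted as Corollary~1 of \citet{han2019convergence} and used as a black box in the proof of Theorem~\ref{thm:risk-upper-bound}. So there is no proof in the paper to compare against, and your argument has to stand on its own.

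The overall architecture you propose (reorder by magnitude, Abel-sum, apply the hypothesis at each scale $k$, then convert to the Lorentz norm via layer-cake and Jensen) is indeed the standard one for multiplier inequalities of this type, and your Lorentz-norm computation and the $\min(p,q)$ observation are both correct. The gap is the placement of the symmetrization step. After sorting by $|\xi_i|$ and Abel-summing, you are left with partial sums $\sum_{i\le k} \mathrm{sign}(\xi_{(i)})\, f(\xbf^{(i)})$ where the signs $s_i := \mathrm{sign}(\xi_{(i)})$ are fixed once you condition on $(\xi_i)$. You then claim these can be replaced by independent Rademachers at a cost of a factor~$2$ ``because the subsample $(\xbf^{(i)})_{i\le k}$ is symmetric.'' This is not valid: permutation invariance of the $\xbf^{(i)}$ only shows that the bound depends on the multiset $\{s_1,\dots,s_k\}$, not that it is controlled by the Rademacher average. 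Concretely, take $\F_k$ containing the constant function~$1$ and let $\xi$ take values $2$ and $-1$ with probabilities $1/3$ and $2/3$ (mean zero). Sorting by magnitude puts all the $+2$'s first, so the leading signs are all $+1$; then $\sup_f |\sum_{i\le k} s_i f(\xbf^{(i)})| = k$ for $k$ up to the number of $+2$'s, whereas the Rademacher version is of order $\sqrt{k}$. For non-symmetric $\xi$, the signs after sorting by magnitude are simply not Rademacher in distribution.

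The fix is to symmetrize \emph{before} reordering and Abel-summing. Since the $\xi_i$ are independent, mean-zero, and independent of $(\xbf^{(i)})$, standard symmetrization gives
\[
\E\sup_{f\in\F_n}\Big|\sum_i \xi_i f(\xbf^{(i)})\Big|
\;\le\; 2\,\E\sup_{f\in\F_n}\Big|\sum_i \epsilon_i \xi_i f(\xbf^{(i)})\Big|
\;=\; 2\,\E\sup_{f\in\F_n}\Big|\sum_i \epsilon_i |\xi_i|\, f(\xbf^{(i)})\Big|,
\]
the last equality because, conditionally on $(\xi_i)$, the map $\epsilon_i\mapsto\epsilon_i\,\mathrm{sign}(\xi_i)$ is a bijection on $\{-1,1\}^n$. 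Now the Rademachers are already present: apply the sorting permutation (which depends only on $(|\xi_i|)$) to the jointly exchangeable block $(\epsilon_i,\xbf^{(i)})$, Abel-sum to obtain $\sum_k(|\xi_{(k)}|-|\xi_{(k+1)}|)\sum_{i\le k}\epsilon_i f(\xbf^{(i)})$, enlarge $\F_n$ to $\F_k$, and invoke the hypothesis directly---no further symmetrization is needed. The rest of your argument then goes through unchanged.
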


\begin{remark}\label{rmk:measurability}
    The function classes in the above results are assumed to be countable 
    to ensure measurability of the suprema inside the expectations. For an 
    uncountable function class $\F$ and a stochastic process 
    $(\Phi(f): f \in \F)$ indexed by $\F$, the supremum 
    $\sup_{f \in \F} \Phi(f)$ may not be measurable.
    
    In the proof of Theorem \ref{thm:risk-upper-bound}, to avoid such a
    measurability issue, we define the expected supremum of 
    $\Phi$ over $\F$ as 
    \begin{equation*}
        \E\Big[\sup_{f \in \F} \Phi(f)\Big] 
        := \sup \Big\{\E\Big[\sup_{f \in \G} \Phi(f)\Big]: 
        \G \subseteq \F \text{ is countable} \Big\},
    \end{equation*}
    following \citet{talagrand2022upper}.
    Similarly, for any $c \in \R$, we define 
    \begin{equation*}
        \P\Big(\sup_{f \in \F} \Phi(f) \ge c\Big) 
        := \sup \Big\{\P\Big(\sup_{f \in \G} \Phi(f) > c\Big): 
        \G \subseteq \F \text{ is countable} \Big\}.
    \end{equation*}
    With these definitions, we can avoid measurability concerns, and 
    the above theorems and lemma also extend to uncountable function classes.
\end{remark}

\begin{proof}[Proof of Theorem \ref{thm:risk-upper-bound}]
Let $\F_{\Mbf}(V)$ denote the collection of all functions 
$\fcnu \in \fstinf$ of the form 
\eqref{f_fst} satisfying the following conditions:
\begin{enumerate}[label = (\alph*)]
\item $\nu_{L, U}$ are supported on 
$\prod_{j \in L} (-M_j/2, M_j/2] \times \prod_{j \in U} (-M_j/2, M_j/2]$
\item
\begin{equation*}
    \sum_{L, U: 0 < |L| + |U| \le s} \|\nu_{L, U}\|_{\text{TV}} 
    \le V.
\end{equation*}
\end{enumerate}
It is clear from the definition of $\lsefstinf$ that
\begin{equation*}
    \lsefstinf \in \F_{\Mbf}(V) \subseteq \{f \in \fstinf: \vinfxgb(f) \le V\}.
\end{equation*}
Also, the following lemma, proved in 
Appendix \ref{pf:reduction-to-compact-domain}, guarantees the existence of 
$f_{0, \Mbf} \in \F_{\Mbf}(V)$ such that $f_{0, \Mbf}(\cdot) = f_0(\cdot)$ on 
$\prod_{j = 1}^{d} [-M_j/2, M_j/2]$. 
    
\begin{lemma}\label{lem:reduction-to-compact-domain}
    For every $f \in \fstinf$ with $\vinfxgb(f) < V$, there exists 
    $f_{\Mbf} \in \F_{\Mbf}(V)$ such that $f_{\Mbf}(\cdot) = f(\cdot)$ on 
    $\prod_{j = 1}^{d} [-M_j/2, M_j/2]$.
\end{lemma}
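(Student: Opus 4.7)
The plan is to start from a near-optimal representation of $f$ and surgically cut off the parts of the threshold measures that lie outside the compact rectangle $\prod_{j=1}^{d}[-M_j/2,M_j/2]$, without increasing the total variation budget. Since $\vinfxgb(f)<V$, I would pick a constant $c$ and signed measures $\{\nu_{L,U}\}$ with $\fcnu\equiv f$ and $\sum_{0<|L|+|U|\le s}\|\nu_{L,U}\|_{\text{TV}}<V$.

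First, observe how each basis function $b^{L,U}_{\lbf,\ubf}$ behaves on the compact rectangle. For $x_j\in[-M_j/2,M_j/2]$, we have $\ind(x_j\ge l_j)\equiv 1$ when $l_j\le -M_j/2$, $\ind(x_j\ge l_j)\equiv 0$ when $l_j>M_j/2$, and otherwise $l_j\in(-M_j/2,M_j/2]$ and the indicator retains its genuine dependence on $x_j$. Symmetrically for the $u$-side: $\ind(x_j<u_j)\equiv 0$ when $u_j\le -M_j/2$, $\ind(x_j<u_j)\equiv 1$ when $u_j>M_j/2$, and the indicator is genuine when $u_j\in(-M_j/2,M_j/2]$. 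Accordingly, for each $(L,U)$ I would partition $\R^{|L|+|U|}$ into cells indexed by a triple $(L_{\text{in}},L_{\text{abs}},L_{\text{kill}})$ forming a partition of $L$ (with $l_j\in(-M_j/2,M_j/2]$, $l_j\le -M_j/2$, $l_j>M_j/2$ respectively) and an analogous triple for $U$. Any cell with $L_{\text{kill}}\cup U_{\text{kill}}\ne\emptyset$ makes $b^{L,U}_{\lbf,\ubf}(\xbf)$ vanish on the compact rectangle, so it may be discarded; on every other cell, the restriction of $b^{L,U}_{\lbf,\ubf}$ to the rectangle agrees with $b^{L_{\text{in}},U_{\text{in}}}_{\lbf_{\text{in}},\ubf_{\text{in}}}$, because the absorbed coordinates contribute identically-$1$ indicators.

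Next, for each target pair $(L',U')$ with $0<|L'|+|U'|\le s$, I would define a new finite signed Borel measure $\nu'_{L',U'}$ on $\prod_{j\in L'}(-M_j/2,M_j/2]\times\prod_{j\in U'}(-M_j/2,M_j/2]$ by summing the pushforwards, onto the $L_{\text{in}}=L'$ and $U_{\text{in}}=U'$ coordinates, of the surviving restrictions of each $\nu_{L,U}$ (taken over all $(L,U)$ with $L\supseteq L'$, $U\supseteq U'$, $|L|+|U|\le s$, and all compatible absorption choices). The pieces in which $L_{\text{in}}=U_{\text{in}}=\emptyset$ contribute constant functions and will be lumped into a new constant $c'$. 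Setting $f_{\Mbf}:=f^{d,s}_{c',\{\nu'_{L',U'}\}}$, the identities above ensure $f_{\Mbf}(\xbf)=f(\xbf)$ for every $\xbf\in\prod_{j=1}^d[-M_j/2,M_j/2]$, and each $\nu'_{L',U'}$ is supported where required. For the total variation constraint, the partition step preserves total variation (the total variation of a signed measure equals the sum of total variations of its restrictions to any measurable partition), the pushforward onto a subset of coordinates can only decrease total variation, and aggregation across source pairs $(L,U)$ is controlled by the triangle inequality; chaining these yields
\[
\sum_{0<|L'|+|U'|\le s}\|\nu'_{L',U'}\|_{\text{TV}}\;\le\;\sum_{0<|L|+|U|\le s}\|\nu_{L,U}\|_{\text{TV}}\;<\;V,
\]
so $f_{\Mbf}\in\F_{\Mbf}(V)$.

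The main obstacle I expect is the bookkeeping in the aggregation step: one must carefully verify that summing pushforwards across all compatible $(L,U,L_{\text{abs}},U_{\text{abs}})$ indexed by a fixed $(L',U')$ does not double-count the original mass, and that the reindexing interacts correctly with the triangle inequality so that the global bound survives. Everything else reduces to elementary properties of signed measures and the indicator identities highlighted above.
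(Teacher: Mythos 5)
Your proposal is correct and is essentially the paper's own argument: the paper reduces Lemma \ref{lem:reduction-to-compact-domain} to Lemma \ref{lem:reduction-to-compact-domain-2}, whose (omitted) proof is exactly your truncation scheme — partition each threshold space into cells where the indicator is genuine, identically one, or identically zero on the rectangle, discard the zero cells, push the absorbed coordinates into smaller index pairs $(L',U')$ or the constant, and bound the total variation via disjointness of the cells plus the triangle inequality (mirroring the proof of Lemma \ref{lem:reduction-to-discrete-measures}). Your bookkeeping concern is resolved by noting that each surviving cell of each source pair $(L,U)$ maps to exactly one target pair, so no mass is counted twice.
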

    
For each $t > 0$, define 
\begin{equation*}
    B(V, t) = \{f \in \F_{\Mbf}(V): \|f\|_{p_0, 2} \le t\}.
\end{equation*}
We suppress the dependence of $B(V, t)$ on 
${\Mbf} = (M_1, \dots, M_d)$ for brevity. The following lemma, 
proved in Appendix \ref{pf:BVt-bracketing-integral-bound}, 
provides a bracketing entropy integral bound for $B(V, t)$, 
which will play a crucial role throughout the proof.
\begin{lemma}\label{lem:BVt-bracketing-integral-bound}
    There exists a constant $C_{B, s} > 0$, 
    depending on $B$ and $s$, such that for all $t > 0$,
    \begin{equation*}
        J_{[ \ ]}(t, B(V, t), \| \cdot \|_{p_0, 2}) 
        \le C_{B, s} d^{\widebar{s}}(1 + \log d)^{\widebar{s} - 1} 
        \bigg(t \log\Big(2 + \frac{V}{t}\Big) + V^{1/2}t^{1/2} 
        \Big[\log\Big(2 + \frac{V}{t}\Big)\Big]^{\widebar{s} - 1}\bigg).
    \end{equation*}
\end{lemma}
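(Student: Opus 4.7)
The plan is to $\epsilon$-bracket the constant component and each of the integral components of $f = f_{c, \{\nu_{L,U}\}} \in B(V,t)$ separately, and then combine. First, I would observe that $\|f - c\|_\infty \le \sum_{L,U} \|\nu_{L,U}\|_{\text{TV}} \le V$ because each basis function $b^{L,U}_{\lbf, \ubf}$ is bounded by $1$; moreover $|c| = \|c\|_{p_0, 2} \le \|f\|_{p_0, 2} + \|f - c\|_{p_0, 2} \le t + V$. Bracketing the scalar $c$ by a uniform grid on $[-(V+t), V+t]$ of spacing proportional to $\epsilon$ yields the first term $\log(2 + C_s(V+t)/\epsilon)$.

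For each pair $(L, U)$ with $0 < |L| + |U| \le s$, consider the integral component $g_{L,U}(x) = \int b^{L,U}_{\lbf, \ubf}(x)\,d\nu_{L,U}(\lbf, \ubf)$. After a coordinate reflection $x_j \mapsto -x_j$ for $j \in U \setminus L$ with matching reparameterization of the thresholds, each basis function $b^{L,U}_{\lbf, \ubf}$ becomes an indicator of a product of half-lines (one per $j \in L \cup U$, two per $j \in L \cap U$), so $g_{L,U}$ is, up to an overall sign, a scaled multivariate cumulative distribution function in $d_{L,U} := |L|+|U|$ variables on the compact rectangle $\prod_{j \in L}(-M_j/2, M_j/2] \times \prod_{j \in U}(-M_j/2, M_j/2]$. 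Applying Jordan decomposition $\nu_{L,U} = \nu^+_{L,U} - \nu^-_{L,U}$ with $v^\pm_{L,U} = \|\nu^\pm_{L,U}\|_{\text{TV}}$, each $g_{L,U}$ is then a signed combination of two genuine CDFs weighted by $v^\pm_{L,U}$.

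Rescaling the compact rectangle to $[0,1]^{d_{L,U}}$ and invoking $\|h\|_{p_0, 2}^2 \le (B/\prod_j M_j)\|h\|_2^2$, the multivariate bracketing bound of \citet{gao2013bracketing} together with a standard discretization of the weights $v^\pm_{L,U}$ yields, for the class $\mathcal{G}_{L,U}(v)$ of such functions with $\|\nu_{L,U}\|_{\text{TV}} \le v$,
\begin{equation*}
    \log N_{[ \ ]}\bigl(\delta, \mathcal{G}_{L,U}(v), \| \cdot \|_{p_0, 2}\bigr)
    \le C_{B,s}\, d^{d_{L,U}}(1 + \log d)^{d_{L,U} - 1}\,
    \frac{v}{\delta}\,\bigl[\log(2 + v/\delta)\bigr]^{2(d_{L,U} - 1)}.
\end{equation*}
To combine, I allocate the bracketing budget proportionally by setting $\delta_{L,U} = \epsilon \cdot v_{L,U}/V$ (so $\sum \delta_{L,U} \le \epsilon$) and pre-discretizing each $v_{L,U}$ on an $O(\epsilon/V)$-grid to absorb the unknown allocation into a logarithmic factor. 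This choice cancels $v_{L,U}$ in each piece, giving a uniform $(V/\epsilon)[\log(2 + V/\epsilon)]^{2(d_{L,U}-1)}$ contribution per pair. Summing over the at most $\sum_{k=1}^{s} \binom{d}{k}^2 \le C_s\, d^{\widebar{s}}$ pairs with $|L|+|U| \le s$ and dominating $d_{L,U} - 1 \le \widebar{s} - 1$ in the logarithmic exponents produces the stated bound with prefactor $C_{B,s}\, d^{2\widebar{s}}(1+\log d)^{2(\widebar{s}-1)}$.

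The main obstacle is the final bookkeeping: ensuring that combining brackets across the $(L, U)$ pairs gives only polynomial $d^{2\widebar{s}}$ dependence and yields a leading factor $V/\epsilon$ rather than $(V/\epsilon)^{\text{(number of pairs)}}$. The proportional allocation $\delta_{L,U} \propto v_{L,U}$ is essential, since it makes each individual bracket count scale uniformly with $V/\epsilon$ (independently of $v_{L,U}$) rather than with $v_{L,U}/\epsilon$ summed additively. A secondary subtlety is that $p_0$ lives on $\R^d$ while $g_{L,U}$ depends only on coordinates in $L \cup U$; this is handled transparently by marginalizing $p_0$ and absorbing the marginal density's $L^\infty$ bound into $B$.
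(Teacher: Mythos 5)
There is a genuine gap: you have set out to prove the wrong quantity. The lemma bounds the bracketing entropy \emph{integral} $J_{[\ ]}(t, B(V,t), \|\cdot\|_{p_0,2}) = \int_0^t \sqrt{1 + \log N_{[\ ]}(\epsilon, B(V,t), \|\cdot\|_{p_0,2})}\,d\epsilon$, but your entire argument constructs brackets and counts them, terminating in a bound on $\log N_{[\ ]}(\epsilon, \cdot)$ of the form $\log(2 + C_s(V+t)/\epsilon) + C_{B,s}\,d^{2\widebar{s}}(1+\log d)^{2(\widebar{s}-1)}(V/\epsilon)[\log(2+V/\epsilon)]^{2(\widebar{s}-1)}$. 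That is (essentially) the paper's Lemma~\ref{lem:bracketing-entropy-bound}, which is stated and proved separately and is only the \emph{input} to the present lemma. You never take the square root or integrate over $\epsilon \in (0,t]$, and these two operations are exactly what produce the asserted form: the square root halves every exponent (turning $d^{2\widebar{s}}(1+\log d)^{2(\widebar{s}-1)}$ into $d^{\widebar{s}}(1+\log d)^{\widebar{s}-1}$ and $[\log]^{2(\widebar{s}-1)}$ into $[\log]^{\widebar{s}-1}$), and integrating $(V/\epsilon)^{1/2}[\log(2+V/\epsilon)]^{\widebar{s}-1}$ over $(0,t]$ yields the $V^{1/2}t^{1/2}[\log(2+V/t)]^{\widebar{s}-1}$ term while the constant-part term integrates to $C\,t\log(2+V/t)$ (the paper does this via an integration-by-parts estimate, its Lemma~\ref{lem:integration-by-parts}). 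Your claim that the summation over $(L,U)$ pairs "produces the stated bound with prefactor $C_{B,s}\,d^{2\widebar{s}}(1+\log d)^{2(\widebar{s}-1)}$" cannot be reconciled with the lemma's prefactor $d^{\widebar{s}}(1+\log d)^{\widebar{s}-1}$; the mismatch is a symptom of the missing integration step, not a constant-tracking issue.

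Two secondary points on the entropy-bounding portion itself (which is otherwise in the spirit of the paper's proof of Lemma~\ref{lem:bracketing-entropy-bound}): first, your count of admissible pairs $(L,U)$ with $|L|+|U|\le s$ is of order $d^{\min(s,2d)}$, not $d^{\widebar{s}} = d^{\min(s,d)}$, since $L$ and $U$ need not be disjoint; the paper avoids this by first collapsing the representation to measures $\mu_S$ indexed by single subsets $S$ with $|S| \le \widebar{s}$ (via \eqref{eq:modification-of-indicator-function}), at the cost of a factor $\min(2^s-1, 2^d)$ in the total variation. Second, your proportional allocation $\delta_{L,U} = \epsilon\,v_{L,U}/V$ with pre-discretization of the weights is a workable alternative to the paper's uniform division of $\epsilon$ by the number of pieces, but either way the combination step only yields an entropy bound; the integral bound still has to be derived from it.
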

    
Now, suppose we have $t_n > 4 \|f_0 - f^*\|_{p_0, 2}$ such that for every $r \ge 1$,  
\begin{equation}\label{eq:expected-suprema-bound}
\begin{aligned}
    &\E\Big[ \sup_{f \in B(V, rt_n)} \Big| \frac{1}{\sqrt{n}} 
    \sum_{i = 1}^{n} \epsilon_i f(\xbf^{(i)})\Big|\Big] 
    \le r\sqrt{n}t_n^2 / (V + 1), \\
    &\E\Big[ \sup_{f \in B(V, rt_n)} \Big| \frac{1}{\sqrt{n}} 
    \sum_{i = 1}^{n} \xi_i f(\xbf^{(i)})\Big|\Big] 
    \le r\sqrt{n}t_n^2 / (V + 1), \text{ and } \\
    &\E\Big[ \sup_{f \in B(V, rt_n)} \Big| \frac{1}{\sqrt{n}} 
    \sum_{i = 1}^{n} \epsilon_i f(\xbf^{(i)})\cdot 
    (f_0 - f^*)(\xbf^{(i)})\Big|\Big] 
    \le r\sqrt{n}t_n^2 / (V + 1),
\end{aligned}
\end{equation}
where $\epsilon_i$ are Rademacher random variables 
independent of $\xbf^{(i)}$, and the expectations are taken over 
$\xbf^{(i)}$, $\epsilon_i$, and $\xi_i$.
In what follows, we will first see how these bounds on the expected suprema can 
be used to obtain a risk bound for $\lsefstinf$. The value of $t_n$ satisfying the 
above inequalities will be specified in the next step, after which 
more precise risk bounds will be derived. The subscript $n$ 
emphasizes that $t_n$ depends on $n$, while its dependence on other parameters 
is suppressed for notational simplicity.
    
We first aim to bound $\P(\|\lsefstinf - f_0\|_{p_0, 2} > t)$ for $t \ge t_n$. 
Fix $r \ge 1$, and for each integer $j \ge 2$, define 
\begin{equation*}
    \F_j = \big\{ f \in \F_{\Mbf}(V): 
    2^{j - 2} r t_n < \|f - f_{0, \Mbf}\|_{p_0, 2} < 2^j r t_n \big\}.
\end{equation*}
By construction, 
\begin{equation*}
    \P\big(\|\lsefstinf - f_{0}\|_{p_0, 2} > rt_n \big)
    = \P\big(\|\lsefstinf - f_{0, \Mbf}\|_{p_0, 2} > rt_n \big) 
    \le \sum_{j = 2}^{\infty} \P\big(\lsefstinf \in \F_j\big).
\end{equation*}
Next, let $(M_n(f): f \in \F_{\Mbf}(V))$ denote the stochastic processes defined by
\begin{equation*}
    M_n(f) = \frac{2}{n} \sum_{i = 1}^{n} \xi_i (f - f^*)(\xbf^{(i)}) 
    - \frac{1}{n} \sum_{i = 1}^{n} (f - f^*)^2(\xbf^{(i)})
\end{equation*}
and define $(M(f): f \in \F_{\Mbf}(V))$ by 
\begin{equation*}
    M(f) = - \|f - f^*\|_{p_0, 2}^2.
\end{equation*}
Since 
\begin{equation*}
    M_n(f) = \frac{2}{n} \sum_{i = 1}^{n} \xi_i (f - f^*)(\xbf^{(i)}) 
    - \frac{1}{n} \sum_{i = 1}^{n} (f - f^*)^2(\xbf^{(i)})
    = - \frac{1}{n} \sum_{i = 1}^{n} \big(y_i - f(\xbf^{(i)})\big)^2 
    + \frac{1}{n} \sum_{i = 1}^{n} \xi_i^2, 
\end{equation*}
we have 
\begin{equation*}
    M_n(\lsefstinf) - M_n(f_{0, \Mbf}) \ge 0,
\end{equation*}
as $\lsefstinf$ minimizes the least squares over $\F_{\Mbf}(V)$.
Moreover, 
\begin{equation*}
\begin{aligned}
    M_n(f) - M_n(f_{0, \Mbf}) 
    &= \frac{2}{n} \sum_{i = 1}^{n} \xi_i (f - f_{0, \Mbf})(\xbf^{(i)})
    - \frac{1}{n} \sum_{i = 1}^{n} (f - f_{0, \Mbf})^2(\xbf^{(i)}) \\
    &\quad- \frac{2}{n} \sum_{i = 1}^{n} (f - f_{0, \Mbf})(\xbf^{(i)})
    \cdot (f_{0, \Mbf} - f^*)(\xbf^{(i)})
\end{aligned}
\end{equation*}
and
\begin{equation*}
    M(f) - M(f_{0, \Mbf}) 
    = - \|f - f_{0, \Mbf}\|_{p_0, 2}^2 
    - 2 \E_{\xbf \sim p_0} \big[(f - f_{0, \Mbf})(\xbf) 
    \cdot (f_{0, \Mbf} - f^*)(\xbf)\big].
\end{equation*}
The assumption $t_n > 4 \|f_0 - f^*\|_{p_0, 2} 
= 4 \|f_{0, \Mbf} - f^*\|_{p_0, 2}$ implies that for every $f \in \F_j$,
\begin{equation*}
\begin{aligned}
    -(M(f) - M(f_{0, \Mbf})) 
    &\ge \|f - f_{0, \Mbf}\|_{p_0, 2} \cdot 
    \big(\|f - f_{0, \Mbf}\|_{p_0, 2} - 2 \|f_{0, \Mbf} - f^*\|_{p_0, 2}\big) \\ 
    &\ge 2^{j - 2} r t_n \cdot 
    \Big(2^{j - 2} r t_n - \frac{t_n}{2}\Big) 
    \ge 2^{2j - 5} r^2 t_n^2.
\end{aligned}
\end{equation*}
Therefore,\footnote{Because of our definitions in Remark \ref{rmk:measurability},
introduced to avoid measurability issues, some additional care is required in 
justifying the first inequality. A more detailed argument is provided in 
the remark following the proof.}
\begin{align}\label{eq:peeling-bound}
    &\P\big(\lsefstinf \in \F_j\big) 
    \le
    \P\Big(\sup_{f \in \F_j} \big(M_n(f) - M_n(f_{0, \Mbf})\big) \ge 0 \Big) \nonumber \\
    &\quad\le 
    \P\Big(\sup_{f \in \F_j} \big((M_n(f) - M_n(f_{0, \Mbf})) 
    - (M(f) - M(f_{0, \Mbf}))\big) \ge 2^{2j - 5} r^2 t_n^2 \Big) \nonumber \\
    &\quad\le \P\Big(\sup_{f \in \F_j} \Big|\frac{1}{n} 
    \sum_{i = 1}^{n} \xi_i (f - f_{0, \Mbf})(\xbf^{(i)})\Big| 
    \ge 2^{2j - 7} r^2 t_n^2 \Big) \nonumber \\
    &\quad \qquad + \P\Big(\sup_{f \in \F_j} \Big|\frac{1}{n} 
    \sum_{i = 1}^{n} (f - f_{0, \Mbf})^2(\xbf^{(i)}) 
    -\|f - f_{0, \Mbf}\|_{p_0, 2}^2\Big| \ge 2^{2j - 7} r^2 t_n^2 \Big) \nonumber \\
    &\quad \qquad + \P\Big(\sup_{f \in \F_j} \Big|\frac{1}{n} 
    \sum_{i = 1}^{n} (f - f_{0, \Mbf})(\xbf^{(i)})
    \cdot (f_{0, \Mbf} - f^*)(\xbf^{(i)}) \nonumber \\
    &\quad \qquad \qquad \qquad \quad
    - \E_{\xbf \sim p_0} \big[(f - f_{0, \Mbf})(\xbf) \cdot
    (f_{0, \Mbf} - f^*)(\xbf)\big]\Big|
    \ge 2^{2j - 8} r^2 t_n^2 \Big) \nonumber \\
    &\quad \le \P\Big(\sup_{f \in B(2V, 2^j r t_n)} \Big|\frac{1}{\sqrt{n}} 
    \sum_{i = 1}^{n} \xi_i f(\xbf^{(i)})\Big| 
    \ge 2^{2j - 7} r^2 \sqrt{n}t_n^2 \Big) \nonumber \\
    &\quad \qquad + \P\Big(\sup_{f \in B(2V, 2^j r t_n)} 
    \Big|\frac{1}{\sqrt{n}} \sum_{i = 1}^{n} \big( f^2(\xbf^{(i)}) 
    -\|f\|_{p_0, 2}^2\big) \Big| \ge 2^{2j - 7} r^2 \sqrt{n}t_n^2 \Big) \\
    &\quad \qquad 
    + \P\Big(\sup_{f \in B(2V, 2^j r t_n)} \Big|\frac{1}{\sqrt{n}} 
    \sum_{i = 1}^{n} \Big(f(\xbf^{(i)})
    \cdot (f_{0, \Mbf} - f^*)(\xbf^{(i)}) \nonumber \\
    &\qquad \qquad \qquad \qquad \qquad \qquad \qquad \qquad
    - \E_{\xbf \sim p_0} \big[f(\xbf) \cdot
    (f_{0, \Mbf} - f^*)(\xbf)\big]\Big)\Big|
    \ge 2^{2j - 8} r^2 \sqrt{n} t_n^2 \Big), \nonumber
\end{align}
where the second inequality uses that 
$-(M(f) - M(f_{0, \Mbf})) \ge 2^{2j - 5} r^2 t_n^2$ for all $f \in \F_j$, 
and the last inequality follows because 
$f - f_{0, \Mbf} \in B(2V, 2^j r t_n)$ for all $f \in \F_j$.
    
We next bound each term on the right-hand side of \eqref{eq:peeling-bound}.
As a preliminary step, we show that there exists a constant $C > 0$ such that 
$\|f\|_{\infty} \le C(V + t)$ for every $f \in B(V, t)$. 
Suppose $f \in B(V, t)$ is of the form 
\begin{equation*}
    f(x_1, \dots, x_d) = c + \sum_{0 < |L| + |U| \le s} 
    \int_{\R^{|L| + |U|}} \prod_{j \in L} \ind(x_j \ge l_j) 
    \cdot \prod_{j \in U} \ind(x_j < u_j) 
    \, d\nu_{L, U}(\lbf, \ubf)
\end{equation*}
where 
\begin{equation*}
    \sum_{0 < |L| + |U| \le s} \|\nu_{L, U}\|_{\text{TV}}
    \le V.
\end{equation*}
Since the sum of the total variations of the signed measures is bounded by $V$, 
the second term in the above representation of $f$ is uniformly bounded in 
absolute value by $V$. 
Hence, by Cauchy inequality, 
\begin{equation*}
    t^2 \ge \|f\|_{p_0, 2}^2 = \int_{\prod_{j = 1}^{d} [-M_j/2, M_j/2]} 
    f^2(\xbf) \cdot p_0(\xbf) \, d\xbf 
    \ge \int_{\prod_{j = 1}^{d} [-M_j/2, M_j/2]} \Big(\frac{c^2}{2} - V^2\Big) 
    \cdot p_0(\xbf) \, d\xbf 
    = \frac{c^2}{2} - V^2.
\end{equation*}
It follows that 
\begin{equation*}
    \|f\|_{\infty} \le |c| + V \le C(V + t)
\end{equation*}
for some universal constant $C > 0$.
    
We now bound the first term on the right-hand side of \eqref{eq:peeling-bound}. 
By Markov's inequality, 
\begin{equation*}
    \P\Big(\sup_{f \in B(2V, 2^j r t_n)} \Big|\frac{1}{\sqrt{n}} 
    \sum_{i = 1}^{n} \xi_i f(\xbf^{(i)})\Big| 
    \ge 2^{2j - 7} r^2 \sqrt{n}t_n^2 \Big) 
    \le \frac{1}{2^{6j - 21} r^6 n^{3/2} t_n^6} \cdot
    \E\Big[\sup_{f \in B(2V, 2^j r t_n)} \Big|\frac{1}{\sqrt{n}} 
    \sum_{i = 1}^{n} \xi_i f(\xbf^{(i)})\Big|^3 \Big]. 
\end{equation*}
To bound the expectation on the right, we apply Theorem \ref{thm:moment-inequality},
which gives
\begin{align}\label{eq:moment-expansion}
    \E\Big[\sup_{f \in B(2V, 2^j r t_n)} \Big|\frac{1}{\sqrt{n}} 
    \sum_{i = 1}^{n} \xi_i f(\xbf^{(i)})\Big|^3 \Big] 
    &\le C \cdot \E\Big[\sup_{f \in B(2V, 2^j r t_n)} \Big|\frac{1}{\sqrt{n}} 
    \sum_{i = 1}^{n} \xi_i f(\xbf^{(i)})\Big| \Big]^3
    + C \cdot 2^{3j} r^3 t_n^3 \|\xi_1\|_2^3 \nonumber \\
    &\quad + C n^{-3/2} \cdot \E \Big[\max_i \Big(|\xi_i|^3 \cdot 
    \sup_{f \in B(2V, 2^j r t_n)} |f(\xbf^{(i)})|^3 \Big)\Big].
\end{align}
Using \eqref{eq:expected-suprema-bound}, the inequality
\begin{equation*}
    \max_i |\xi_i|^3 \le \sum_{i = 1}^{n} |\xi_i|^3,
\end{equation*}
and the preliminary result
\begin{equation}\label{eq:preliminary-bound}
    \|f\|_{\infty} \le C(V + 2^j r t_n) 
    \ \text{ for all } f \in B(2V, 2^j r t_n),
\end{equation}
we deduce from \eqref{eq:moment-expansion} that
\begin{equation*}
    \E\Big[\sup_{f \in B(2V, 2^j r t_n)} \Big|\frac{1}{\sqrt{n}} 
    \sum_{i = 1}^{n} \xi_i f(\xbf^{(i)})\Big|^3 \Big]
    \le C \cdot 2^{3j} r^3 n^{3/2} t_n^6 
    + C \cdot 2^{3j} r^3 t_n^3 \|\xi_1\|_2^3 
    + C n^{-1/2} \|\xi_1\|_3^3 (V^3 + 2^{3j} r^3 t_n^3).
\end{equation*}
Substituting this back into the Markov inequality bound, we obtain
\begin{equation*}
    \P\Big(\sup_{f \in B(2V, 2^j r t_n)} \Big|\frac{1}{\sqrt{n}} 
    \sum_{i = 1}^{n} \xi_i f(\xbf^{(i)})\Big| 
    \ge 2^{2j - 7} r^2 \sqrt{n}t_n^2 \Big) 
    \le \frac{C}{2^{3j} r^3} 
    + \frac{C\|\xi_1\|_2^3}{2^{3j} r^3 n^{3/2} t_n^3} 
    + \frac{C\|\xi_1\|_3^3 V^3}{2^{6j} r^6 n^2 t_n^6} 
    + \frac{C\|\xi_1\|_3^3}{2^{3j}r^3 n^2 t_n^3}.
\end{equation*}
    
We next bound the second term on the right-hand side of \eqref{eq:peeling-bound}.
We divide into two cases depending on whether $2^j r t_n \le V$ or not.
First, suppose $2^j r t_n \le V$.
By Markov's inequality, 
\begin{equation}\label{eq:markov-inequality-first-case-second-term}
\begin{aligned}
    &\P\Big(\sup_{f \in B(2V, 2^j r t_n)} 
    \Big|\frac{1}{\sqrt{n}} \sum_{i = 1}^{n} \big(f^2(\xbf^{(i)}) 
    -\|f\|_{p_0, 2}^2\big)\Big| \ge 2^{2j - 7} r^2 \sqrt{n}t_n^2 \Big) \\
    &\qquad \le \frac{1}{2^{6j - 21} r^6 n^{3/2} t_n^6} \cdot 
    \E\Big[\sup_{f \in B(2V, 2^j r t_n)} 
    \Big|\frac{1}{\sqrt{n}} \sum_{i = 1}^{n} \big(f^2(\xbf^{(i)}) 
    -\|f\|_{p_0, 2}^2\big) \Big|^3 \Big].
\end{aligned}
\end{equation}
Also, by the standard argument of symmetrization (see, e.g., 
\citet[Lemma 2.3.1]{vaartwellner96book} and 
\citet[Theorem 16.1]{van2016estimation}),
\begin{equation}\label{eq:symmetrization-first-case-second-term}
    \E\Big[\sup_{f \in B(2V, 2^j r t_n)} 
    \Big|\frac{1}{\sqrt{n}} \sum_{i = 1}^{n} \big(f^2(\xbf^{(i)}) 
    -\|f\|_{p_0, 2}^2\big) \Big|^3 \Big] 
    \le 8 \E\Big[\sup_{f \in B(2V, 2^j r t_n)} 
    \Big|\frac{1}{\sqrt{n}} \sum_{i = 1}^{n} \epsilon_i f^2(\xbf^{(i)}) 
    \Big|^3 \Big], 
\end{equation}
where $\epsilon_i$ are Rademacher random variables independent of 
$\xbf^{(i)}$.
Applying Theorem \ref{thm:moment-inequality}, we can bound the expectation on 
the right-hand side as
\begin{equation*}
\begin{aligned}
    &\E\Big[\sup_{f \in B(2V, 2^j r t_n)} 
    \Big|\frac{1}{\sqrt{n}} \sum_{i = 1}^{n} \epsilon_i f^2(\xbf^{(i)}) 
    \Big|^3 \Big] 
    \le C \cdot \E\Big[\sup_{f \in B(2V, 2^j r t_n)} 
    \Big|\frac{1}{\sqrt{n}} \sum_{i = 1}^{n} \epsilon_i f^2(\xbf^{(i)}) 
    \Big| \Big]^3 \\ 
    &\qquad \qquad \qquad \qquad \qquad \qquad
    + C \Big(\sup_{f \in B(2V, 2^j r t_n)} \|f^2\|_{p_0, 2}\Big)^3
    + Cn^{-3/2} \cdot \E \Big[\max_i \sup_{f \in B(2V, 2^j r t_n)} 
    |f(\xbf^{(i)})|^6\Big].
\end{aligned}
\end{equation*}
The contraction principle (see, e.g., 
\citet[Proposition A.3.2]{vaartwellner96book} and 
\citet[Theorem 4.12]{ledoux1991probability}), together with 
\eqref{eq:expected-suprema-bound} and
\eqref{eq:preliminary-bound}, gives
\begin{equation*}
\begin{aligned}
    \E\Big[\sup_{f \in B(2V, 2^j r t_n)} 
    \Big|\frac{1}{\sqrt{n}} \sum_{i = 1}^{n} \epsilon_i f^2(\xbf^{(i)}) 
    \Big| \Big] 
    &\le C (V + 2^j r t_n) \cdot 
    \E\Big[\sup_{f \in B(2V, 2^j r t_n)} 
    \Big|\frac{1}{\sqrt{n}} \sum_{i = 1}^{n} \epsilon_i f(\xbf^{(i)}) 
    \Big| \Big] \\
    &\le C(V + 2^j r t_n) \cdot 2^j r \sqrt{n} t_n^2 / V.
\end{aligned}
\end{equation*}
Moreover, we have 
\begin{equation*}
    \sup_{f \in B(2V, 2^j r t_n)} \|f^2\|_{p_0, 2} 
    \le \sup_{f \in B(2V, 2^j r t_n)} \|f\|_{\infty} \cdot
    \sup_{f \in B(2V, 2^j r t_n)} \|f\|_{p_0, 2}  
    \le C(V + 2^j r t_n) \cdot 2^j r t_n.
\end{equation*}
Therefore,
\begin{equation*}
\begin{aligned}
    &\E\Big[\sup_{f \in B(2V, 2^j r t_n)} 
    \Big|\frac{1}{\sqrt{n}} \sum_{i = 1}^{n} \epsilon_i f^2(\xbf^{(i)}) 
    \Big|^3 \Big] \\ 
    &\qquad \le 
    C(V + 2^j r t_n)^3 \cdot 2^{3j} r^3 n^{3/2} t_n^6 / V^3
    + C(V + 2^j r t_n)^3 \cdot 2^{3j} r^3 t_n^3 
    + Cn^{-3/2}(V + 2^j r t_n)^6.
\end{aligned}
\end{equation*}
Combining this with \eqref{eq:markov-inequality-first-case-second-term} and 
\eqref{eq:symmetrization-first-case-second-term} yields
\begin{equation*}
\begin{aligned}
    &\P\Big(\sup_{f \in B(2V, 2^j r t_n)} 
    \Big|\frac{1}{\sqrt{n}} \sum_{i = 1}^{n} \big(f^2(\xbf^{(i)}) 
    -\|f\|_{p_0, 2}^2\big) \Big| \ge 2^{2j - 7} r^2 \sqrt{n}t_n^2 \Big) \\
    &\qquad \le \frac{C(V + 2^j r t_n)^3}{2^{3j} r^3 V^3} 
    + \frac{C(V + 2^j r t_n)^3}{2^{3j} r^3 n^{3/2} t_n^3} 
    + \frac{C(V + 2^j r t_n)^6}{2^{6j} r^6 n^3 t_n^6} 
    \le \frac{C}{2^{3j} r^3} 
    + \frac{CV^3}{2^{3j} r^3 n^{3/2} t_n^3} 
    + \frac{CV^6}{2^{6j} r^6 n^3 t_n^6}.
\end{aligned}
\end{equation*}
    
Next, assume that $2^j r t_n > V$. For each $f \in B(2V, 2^j r t_n)$, as seen 
in the preliminary step, we can decompose $f$ as $f = c + g$ where $c$ is a 
constant with $|c| \le C(V + 2^j r t_n)$ and $g$ is a function uniformly 
bounded in absolute value by $2V$. Using this decomposition, we can write
\begin{equation*}
    \frac{1}{\sqrt{n}} \sum_{i = 1}^{n} \big( f^2(\xbf^{(i)}) 
    -\|f\|_{p_0, 2}^2\big) 
    = 2c \cdot \frac{1}{\sqrt{n}} \sum_{i = 1}^{n} \big(g(\xbf^{(i)}) 
    - \E_{X \sim p_0}[g(X)]\big) 
    + \frac{1}{\sqrt{n}} \sum_{i = 1}^{n} \big(g^2(\xbf^{(i)}) 
    -\|g\|_{p_0, 2}^2\big).
\end{equation*}
It follows that
\begin{align}\label{eq:triangle-inequality-second-case}
    &\P\Big(\sup_{f \in B(2V, 2^j r t_n)} 
    \Big|\frac{1}{\sqrt{n}} \sum_{i = 1}^{n} \big(f^2(\xbf^{(i)}) 
    -\|f\|_{p_0, 2}^2\big)\Big| \ge 2^{2j - 7} r^2 \sqrt{n}t_n^2 \Big) 
    \nonumber \\
    \begin{split}
    &\qquad \le 
    \P\Big(\sup_{\substack{g \in F_{\Mbf}(2V) \\ \|g\|_{\infty} \le 2V}} 
    \Big|\frac{1}{\sqrt{n}} \sum_{i = 1}^{n} \big(g(\xbf^{(i)}) 
    - \E_{X \sim p_0}[g(X)]\big)\Big| 
    \ge \frac{2^{2j - 8} r^2 \sqrt{n}t_n^2}{C(V + 2^j r t_n)} \Big) \\
    & \qquad \qquad  
    + \P\Big(\sup_{\substack{g \in F_{\Mbf}(2V) \\ \|g\|_{\infty} \le 2V}} 
    \Big|\frac{1}{\sqrt{n}} \sum_{i = 1}^{n} \big(g^2(\xbf^{(i)}) 
    -\|g\|_{p_0, 2}^2\big)\Big| \ge 2^{2j - 8} r^2 \sqrt{n}t_n^2 \Big).
    \end{split}
\end{align}
By Markov's inequality, the first term of 
\eqref{eq:triangle-inequality-second-case} is bounded as
\begin{equation}\label{eq:markov-inequality-second-case-first-term}
\begin{aligned}
    &\P\Big(\sup_{\substack{g \in F_{\Mbf}(2V) \\ \|g\|_{\infty} \le 2V}} 
    \Big|\frac{1}{\sqrt{n}} \sum_{i = 1}^{n} \big(g(\xbf^{(i)}) 
    - \E_{X \sim p_0}[g(X)]\big)\Big| 
    \ge \frac{2^{2j - 8} r^2 \sqrt{n}t_n^2}{C(V + 2^j r t_n)} \Big) \\
    &\qquad \le \frac{C(V + 2^j r t_n)^3}{2^{6j - 24} r^6 n^{3/2} t_n^6} \cdot 
    \E\Big[\sup_{\substack{g \in F_{\Mbf}(2V) \\ \|g\|_{\infty} \le 2V}} 
    \Big|\frac{1}{\sqrt{n}} \sum_{i = 1}^{n} \big(g(\xbf^{(i)}) 
    - \E_{X \sim p_0}[g(X)]\big)\Big|^3 \Big]. 
\end{aligned}
\end{equation}
Also, by the standard argument of symmetrization,
\begin{equation}\label{eq:symmetrization-second-case-first-term}
    \E\Big[\sup_{\substack{g \in F_{\Mbf}(2V) \\ \|g\|_{\infty} \le 2V}} 
    \Big|\frac{1}{\sqrt{n}} \sum_{i = 1}^{n} \big(g(\xbf^{(i)}) 
    - \E_{X \sim p_0}[g(X)]\big)\Big|^3 \Big] 
    \le 8 \E\Big[\sup_{\substack{g \in F_{\Mbf}(2V) \\ \|g\|_{\infty} \le 2V}} 
    \Big|\frac{1}{\sqrt{n}} \sum_{i = 1}^{n} 
    \epsilon_i g(\xbf^{(i)}) \Big|^3 \Big].
\end{equation}
Using Theorem \ref{thm:moment-inequality}, the expectation on the right-hand 
side can be bounded as
\begin{equation}\label{eq:moment-inequality-second-case-first-term}
    \E\Big[\sup_{\substack{g \in F_{\Mbf}(2V) \\ \|g\|_{\infty} \le 2V}} 
    \Big|\frac{1}{\sqrt{n}} \sum_{i = 1}^{n} 
    \epsilon_i g(\xbf^{(i)}) \Big|^3 \Big] 
    \le C \cdot \E\Big[\sup_{\substack{g \in F_{\Mbf}(2V) \\ \|g\|_{\infty} \le 2V}} 
    \Big|\frac{1}{\sqrt{n}} \sum_{i = 1}^{n} 
    \epsilon_i g(\xbf^{(i)}) \Big|\Big]^3
    + C V^3. 
\end{equation}
Applying Lemma \ref{lem:bracketing-integral-bound} and 
Lemma \ref{lem:BVt-bracketing-integral-bound}, we obtain
\begin{equation*}
\begin{aligned}
    &\E\Big[\sup_{\substack{g \in F_{\Mbf}(2V) \\ \|g\|_{\infty} \le 2V}} 
    \Big|\frac{1}{\sqrt{n}} \sum_{i = 1}^{n} 
    \epsilon_i g(\xbf^{(i)}) \Big|\Big]
    = \E\Big[\sup_{\substack{g \in B(2V, 2V) \\ \|g\|_{\infty} \le 2V}} 
    \Big|\frac{1}{\sqrt{n}} \sum_{i = 1}^{n} 
    \epsilon_i g(\xbf^{(i)}) \Big|\Big] \\
    &\qquad \le C J_{[ \ ]}(2V, B(2V, 2V), \| \cdot \|_{p_0, 2}) 
    \cdot \Big(1 + 2V \cdot 
    \frac{J_{[ \ ]}(2V, B(2V, 2V), \| \cdot \|_{p_0, 2})}{4V^2 \sqrt{n}}\Big)
    \le C_{B, s} a_{d, s}^2 V,
\end{aligned}
\end{equation*}
where $a_{d, s} := d^{\widebar{s}}(1 + \log d)^{\widebar{s} - 1}$.
Combining this with \eqref{eq:markov-inequality-second-case-first-term}, 
\eqref{eq:symmetrization-second-case-first-term}, and
\eqref{eq:moment-inequality-second-case-first-term} yields
\begin{equation*}
    \P\Big(\sup_{\substack{g \in F_{\Mbf}(2V) \\ \|g\|_{\infty} \le 2V}} 
    \Big|\frac{1}{\sqrt{n}} \sum_{i = 1}^{n} \big(g(\xbf^{(i)}) 
    - \E_{X \sim p_0}[g(X)]\big)\Big| 
    \ge \frac{2^{2j - 8} r^2 \sqrt{n}t_n^2}{C(V + 2^j r t_n)} \Big) \le \frac{C_{B, s} 
    a_{d, s}^6 V^3(V + 2^j r t_n)^3}{2^{6j - 24} r^6 n^{3/2} t_n^6}
    \le \frac{C_{B, s} a_{d, s}^6 V^3}{2^{3j} r^3 n^{3/2} t_n^3},
\end{equation*}
where the last inequality follows from the assumption that $2^j r t_n > V$.
By a similar argument, the second term in
\eqref{eq:triangle-inequality-second-case} is bounded by
\begin{equation*}
\begin{aligned}
    &\P\Big(\sup_{\substack{g \in F_{\Mbf}(2V) \\ \|g\|_{\infty} \le 2V}} 
    \Big|\frac{1}{\sqrt{n}} \sum_{i = 1}^{n} \big(g^2(\xbf^{(i)}) 
    -\|g\|_{p_0, 2}^2\big)\Big| \ge 2^{2j - 8} r^2 \sqrt{n}t_n^2 \Big) 
    \le \frac{C_{B, s} a_{d, s}^6 V^6}{2^{6j} r^6 n^{3/2} t_n^6} 
    \le \frac{C_{B, s} a_{d, s}^6 V^3}{2^{3j} r^3 n^{3/2} t_n^3}.
\end{aligned}
\end{equation*}
Substituting these bounds back into \eqref{eq:triangle-inequality-second-case}
gives
\begin{equation*}
    \P\Big(\sup_{f \in B(2V, 2^j r t_n)} 
    \Big|\frac{1}{\sqrt{n}} \sum_{i = 1}^{n} \big(f^2(\xbf^{(i)}) 
    -\|f\|_{p_0, 2}^2\big)\Big| \ge 2^{2j - 7} r^2 \sqrt{n}t_n^2 \Big)
    \le \frac{C_{B, s} a_{d, s}^6 V^3}{2^{3j} r^3 n^{3/2} t_n^3}.
\end{equation*}
Thus, whether or not $2^j r t_n \le V$, we have
\begin{equation*}
    \P\Big(\sup_{f \in B(2V, 2^j r t_n)} 
    \Big|\frac{1}{\sqrt{n}} \sum_{i = 1}^{n} \big(f^2(\xbf^{(i)}) 
    -\|f\|_{p_0, 2}^2\big) \Big| \ge 2^{2j - 7} r^2 \sqrt{n}t_n^2 \Big)
    \le \frac{C}{2^{3j} r^3} 
    + \frac{C_{B, s} a_{d, s}^6 V^3}{2^{3j} r^3 n^{3/2} t_n^3} 
    + \frac{CV^6}{2^{6j} r^6 n^3 t_n^6}.
\end{equation*}
    
The third term on the right-hand side of \eqref{eq:peeling-bound} can 
be bounded similarly to the second term in the case $2^j r t_n \le V$.
Applying Markov's inequality, the symmetrization argument, and 
Theorem \ref{thm:moment-inequality} in turn yields
\begin{equation*}
\begin{aligned}
    &\P\Big(\sup_{f \in B(2V, 2^j r t_n)} \Big|\frac{1}{\sqrt{n}} 
    \sum_{i = 1}^{n} \Big(f(\xbf^{(i)})
    \cdot (f_{0, \Mbf} - f^*)(\xbf^{(i)}) 
    - \E_{\xbf \sim p_0} \big[f(\xbf) \cdot
    (f_{0, \Mbf} - f^*)(\xbf)\big]\Big)\Big|
    \ge 2^{2j - 8} r^2 \sqrt{n} t_n^2 \Big) \\
    &\qquad \le 
    \frac{C}{2^{3j} r^3} 
    + \frac{C\|f_{0} - f^*\|_{\infty, \Mbf}^3}{2^{3j} r^3 n^{3/2} t_n^3} 
    + \frac{C\|f_{0} - f^*\|_{\infty, \Mbf}^3 V^3}{2^{6j} r^6 n^3 t_n^6},
\end{aligned}
\end{equation*}
where $\|\cdot\|_{\infty, \Mbf}$ denotes the supremum norm over  
$\prod_{j = 1}^{d} [-M_j/2, M_j/2]$
\begin{equation*}
    \|g\|_{\infty, \Mbf} 
    := \sup_{\xbf \in \prod_{j = 1}^{d} [-M_j/2, M_j/2]} |g(\xbf)|.
\end{equation*}
    
As a result, we have
\begin{equation*}
\begin{aligned}
    &\P\big(\|\lsefstinf - f_0\|_{p_0, 2} > rt_n \big) 
    = \sum_{j = 2}^{\infty} \P\big(\lsefstinf \in \F_j\big) \\
    &\qquad \le \sum_{j = 2}^{\infty} \Big[ \frac{C}{2^{3j} r^3} 
    + \frac{C(\|\xi_1\|_2^3 + \|f_{0} - f^*\|_{\infty, \Mbf}^3)}{2^{3j} r^3 n^{3/2} t_n^3} 
    + \frac{C\|\xi_1\|_3^3 V^3}{2^{6j} r^6 n^2 t_n^6} 
    + \frac{C\|\xi_1\|_3^3}{2^{3j}r^3 n^2 t_n^3} \\
    &\qquad \qquad \qquad
    + \frac{C_{B, s} a_{d, s}^6 V^3}{2^{3j} r^3 n^{3/2} t_n^3}
    + \frac{CV^3(V^3 + \|f_{0} - f^*\|_{\infty, 
    \Mbf}^3)}{2^{6j} r^6 n^3 t_n^6}\Big] \\
    &\qquad \le \frac{C}{r^3}
    + \frac{C (\|\xi_1\|_2^3 + \|f_{0} - f^*\|_{\infty, 
    \Mbf}^3) + C_{B, s} a_{d, s}^6 V^3}{r^3 n^{3/2} t_n^3} 
    + \frac{C\|\xi_1\|_3^3 V^3}{r^6 n^2 t_n^6}
    + \frac{C\|\xi_1\|_3^3}{r^3 n^2 t_n^3} 
    + \frac{CV^3(V^3 + \|f_{0} - f^*\|_{\infty, \Mbf}^3)}{r^6 n^3 t_n^6}.
\end{aligned}
\end{equation*}
Plugging in $r = t/t_n$, we obtain
\begin{equation*}
\begin{aligned}
    &\P\big(\|\lsefstinf - f_{0}\|_{p_0, 2} > t \big)
    \le \frac{C t_n^3}{t^3}
    + \frac{C (\|\xi_1\|_2^3 + \|f_{0} - f^*\|_{\infty, 
    \Mbf}^3) + C_{B, s} a_{d, s}^6 V^3}{n^{3/2} t^3} \\
    &\qquad \qquad \qquad \qquad \qquad \quad
    + \frac{C\|\xi_1\|_3^3 V^3}{n^2 t^6} 
    + \frac{C\|\xi_1\|_3^3}{n^2 t^3} 
    + \frac{CV^3(V^3 + \|f_{0} - f^*\|_{\infty, \Mbf}^3)}{n^3 t^6},
\end{aligned}
\end{equation*}
which holds for all $t \ge t_n$.
Thus, for every $t \ge 2t_n$, we have
\begin{equation*}
\begin{aligned}
    &\P\big(\|\lsefstinf - f^*\|_{p_0, 2} > t \big) 
    \le \P\big(\|\lsefstinf - f_0\|_{p_0, 2} > t - \|f_0 - f^*\|_{p_0, 2}\big) 
    \le \P\Big(\|\lsefstinf - f_0\|_{p_0, 2} > \frac{t}{2}\Big) \\
    &\quad \le \frac{C t_n^3}{t^3}
    + \frac{C (\|\xi_1\|_2^3 + \|f_{0} - f^*\|_{\infty, 
    \Mbf}^3) + C_{B, s} a_{d, s}^6 V^3}{n^{3/2} t^3}
    + \frac{C\|\xi_1\|_3^3 V^3}{n^2 t^6} 
    + \frac{C\|\xi_1\|_3^3}{n^2 t^3} + \frac{CV^3(V^3 + \|f_{0} - f^*\|_{\infty, 
    \Mbf}^3)}{n^3 t^6},
\end{aligned}
\end{equation*}
where the second inequality follows from the assumption 
$t_n > 4 \|f_0 - f^*\|_{p_0, 2}$. Since 
\begin{equation*}
    \int_{a}^{\infty} 2y \cdot \P(Y \ge y) dy = \E[(Y^2 - a^2)_+],
\end{equation*}
where $(\cdot)_+$ denotes the positive part, it follows that
\begin{align}\label{eq:risk-bound-t-n}
    \E\big[\|\lsefstinf - f^*\|_{p_0, 2}^2\big]
    &\le 4t_n^2 
    + \int_{2t_n}^{\infty} 2t \cdot 
    \P\big(\|\lsefstinf - f^*\|_{p_0, 2} > t\big) \, dt \nonumber \\ 
    \begin{split}
    &\le C t_n^2 
    + \frac{C (\|\xi_1\|_2^3 + \|f_{0} - f^*\|_{\infty, 
    \Mbf}^3) + C_{B, s} a_{d, s}^6 V^3}{n^{3/2} t_n}
    + \frac{C\|\xi_1\|_3^3 V^3}{n^2 t_n^4} \\
    &\qquad
    + \frac{C\|\xi_1\|_3^3}{n^2 t_n} 
    + \frac{CV^3(V^3 + \|f_{0} - f^*\|_{\infty, 
    \Mbf}^3)}{n^3 t_n^4}.
    \end{split}
\end{align}
    
We have just seen that once we establish the bounds 
\eqref{eq:expected-suprema-bound} on the expected suprema with some 
$t_n > 4 \|f_0 - f^*\|_{p_0, 2}$, 
we can bound the risk of $\lsefstinf$ in terms of $t_n$ as in the above display.
Our next goal is therefore to identify a suitable $t_n$ satisfying 
\eqref{eq:expected-suprema-bound}.
To this end, we first bound 
\begin{equation*}
    \E\Big[\sup_{f \in B(V, t)} \Big|\frac{1}{\sqrt{k}}
    \sum_{i = 1}^{k} \epsilon_i f(\xbf^{(i)})\Big|\Big]
\end{equation*}
for each $t > 0$ and $k = 1, \dots, n$, and then apply 
Theorem \ref{thm:han2019convergence} to transfer this bound to the expected 
supremum with $\xi_i$'s.
    
Fix $k \in \{1, \dots, n\}$.
Since $\|f\|_{\infty} \le C(V + t)$ for every $f \in B(V, t)$, 
Lemma \ref{lem:bracketing-integral-bound} gives
\begin{equation*}
    \E\Big[\sup_{f \in B(V, t)} \Big|\frac{1}{\sqrt{k}}
    \sum_{i = 1}^{k} \epsilon_i f(\xbf^{(i)})\Big|\Big]
    \le C J_{[ \ ]}(t, B(V, t), \| \cdot \|_{p_0, 2}) 
    \cdot \Big(1 + C(V + t) \cdot 
    \frac{J_{[ \ ]}(t, B(V, t), \| \cdot \|_{p_0, 2})}{t^2 \sqrt{k}}\Big).
\end{equation*}
Applying the entropy integral bound from 
Lemma \ref{lem:BVt-bracketing-integral-bound}, we obtain
\begin{align}\label{eq:expected-supremum-bound-k}
    &\E\Big[\sup_{f \in B(V, t)} \Big|\frac{1}{\sqrt{k}}
    \sum_{i = 1}^{k} \epsilon_i f(\xbf^{(i)})\Big|\Big]
    \le C_{B, s} a_{d, s} \bigg(t \log\Big(2 + \frac{V}{t}\Big)
    + V^{1/2} t^{1/2} 
    \Big[\log\Big(2 + \frac{V}{t}\Big)\Big]^{\widebar{s} - 1} \bigg) \nonumber \\
    &\qquad \qquad \qquad \qquad 
    \cdot \bigg[1 + C_{B, s} a_{d, s}(V + t)k^{-1/2}t^{-2} 
    \bigg(t \log\Big(2 + \frac{V}{t}\Big)
    + V^{1/2} t^{1/2} 
    \Big[\log\Big(2 + \frac{V}{t}\Big)\Big]^{\widebar{s} - 1}\bigg)\bigg] \nonumber \\
    &\quad \le 
    C_{B, s} a_{d, s} t \log\Big(2 + \frac{V}{t}\Big)
    + C_{B, s} a_{d, s} V^{1/2} t^{1/2} \Big[\log\Big(2 + \frac{V}{t}\Big)\Big]^{\widebar{s} - 1} 
    \nonumber + C_{B, s} a_{d, s}^2 k^{-1/2} V^{3/2} t^{-1/2} \Big[\log\Big(2 + \frac{V}{t}\Big)\Big]^{\widebar{s}} \\ 
    &\qquad \quad
    + C_{B, s} a_{d, s}^2 k^{-1/2} V^2 t^{-1} \Big[\log\Big(2 + \frac{V}{t}\Big)\Big]^{2(\widebar{s} - 1)} 
    + C_{B, s} a_{d, s}^2 k^{-1/2} t \Big[\log\Big(2 + \frac{V}{t}\Big)\Big]^2 \nonumber 
    \nonumber \\
    &\qquad \quad
    + C_{B, s} a_{d, s}^2 k^{-1/2} V^{1/2} t^{1/2} \Big[\log\Big(2 + \frac{V}{t}\Big)\Big]^{\widebar{s}} 
    + C_{B, s} a_{d, s}^2 k^{-1/2} V \Big[\log\Big(2 + \frac{V}{t}\Big)\Big]^{\max(2, 2(\widebar{s} - 1))}.
\end{align}
Recall that $a_{d, s} = d^{\widebar{s}}(1 + \log d)^{\widebar{s} - 1}$.
Let $\Psi: \R \to \R$ denote the function given by the right-hand side of
\eqref{eq:expected-supremum-bound-k}. 
A direct calculation shows that if
\begin{equation*}
\begin{aligned}
    t &\ge \max\Big(
    C_{B, s} a_{d, s} (V + 1) k^{-1/2} \log(2 + k),
    C_{B, s} a_{d, s}^{2/3} (V + 1) k^{-1/3} [\log(2 + k)]^{2(\widebar{s} - 1)/3}, \\
    &\qquad \qquad
    C_{B, s} a_{d, s}^{4/5} (V + 1) k^{-2/5} [\log(2 + k)]^{2\widebar{s}/5}, 
    C_{B, s} a_{d, s}^2 (V + 1) k^{-1} [\log(2 + k)]^{2}, \\
    &\qquad \qquad 
    C_{B, s} a_{d, s}^{4/3} (V + 1) k^{-2/3} [\log(2 + k)]^{2\widebar{s}/3}, 
    C_{B, s} a_{d, s} (V + 1) k^{-1/2} [\log(2 + k)]^{\max(1, \widebar{s} - 1)}
    \Big),
\end{aligned}
\end{equation*}
then
\begin{equation*}
    \Psi(t) \le \sqrt{k} t^2 / (V + 1).
\end{equation*}
To simplify this maximum, observe that for suitable constants $C, C_s > 0$, we have
\begin{equation*}
\begin{aligned}
    &\log(2 + x) \le x^{1/6} \ \text{ for all } x \ge C, \\
    &[\log(2 + x)]^{2\widebar{s}/5} \le x^{1/15} \ \text{ for all } x \ge C_s, \\
    &[\log(2 + x)]^2 \le x^{2/3} \ \text{ for all } x \ge C, \\
    &[\log(2 + x)]^{2\widebar{s}/3} \le x^{1/3} \ \text{ for all } x \ge C_s, \text{ and} \\
    &[\log(2 + x)]^{\max(1, \widebar{s} - 1)} \le x^{1/6} \ \text{ for all } x \ge C_s.
\end{aligned}
\end{equation*}
Using these inequalities, we can bound terms in the maximum as follows:
\begin{equation*}
\begin{aligned}
    &k^{-1/2} \log(2 + k) 
    \le C k^{-1/2} + k^{-1/2} k^{1/6} 
    \le C k^{-1/3}, \\
    &k^{-2/5} [\log(2 + k)]^{2\widebar{s}/5}
    \le C_s k^{-2/5} + k^{-2/5} k^{1/15} 
    \le C_s k^{-1/3}, \\
    &k^{-1} [\log(2 + k)]^{2} 
    \le C k^{-1} + k^{-1} k^{2/3}
    \le C k^{-1/3}, \\
    &k^{-2/3} [\log(2 + k)]^{2\widebar{s}/3} 
    \le C_s k^{-2/3} + k^{-2/3} k^{1/3}
    \le C_s k^{-1/3}, \text{ and} \\
    &k^{-1/2} [\log(2 + k)]^{\max(1, \widebar{s} - 1)} 
    \le C_s k^{-1/2} + k^{-1/2} k^{1/6} 
    \le C_s k^{-1/3}.
\end{aligned}
\end{equation*}
Thus, if we set
\begin{equation*}
    \widetilde{t}_k = C_{B, s} a_{d, s}^2 (V + 1) k^{-1/3}
    [\log(2 + n)]^{2(\widebar{s} - 1)/3},
\end{equation*}
then we have
\begin{equation*}
    \Psi(\widetilde{t}_k) \le \sqrt{k} \widetilde{t}_k^2 / (V + 1).
\end{equation*}
Since the map $t \mapsto \Psi(t)/t$ is decreasing, it follows that
\begin{equation*}
    \E\Big[\sup_{f \in B(V, r\widetilde{t}_k)} \Big|\frac{1}{\sqrt{k}}
    \sum_{i = 1}^{k} \epsilon_i f(\xbf^{(i)})\Big|\Big]
    \le \Psi(r\widetilde{t}_k) \le r\Psi(\widetilde{t}_k) 
    \le r\sqrt{k} \widetilde{t}_k^2 / (V + 1)
\end{equation*}
for every $r \ge 1$.
    
We have just shown that for each $k = 1, \dots, n$,
\begin{equation*}
    \E\Big[\sup_{f \in B(V, r\widetilde{t}_k)} \Big|
    \sum_{i = 1}^{k} \epsilon_i f(\xbf^{(i)})\Big|\Big]
    \le rk \widetilde{t}_k^2 / (V + 1)
\end{equation*}
for every $r \ge 1$. By Theorem \ref{thm:han2019convergence}, this bound 
transfers to the expected supremum with $\xi_i$, giving
\begin{equation*}
    \E\Big[\sup_{f \in B(V, r\widetilde{t}_n)} \Big|
    \sum_{i = 1}^{n} \xi_i f(\xbf^{(i)})\Big|\Big]
    \le 4 \|\xi_1\|_{3, 1} \cdot rn \widetilde{t}_n^2 / (V + 1)
\end{equation*}
for every $r \ge 1$.
Therefore, redefining $\widetilde{t}_n$ by multiplying it by the factor $(1 + 4 \|\xi_1\|_{3, 1})$
yields the first two inequalities in \eqref{eq:expected-suprema-bound} 
(with $t_n = \widetilde{t}_n$) for $r \ge 1$.
    
We now bound 
\begin{equation*}
    \E\Big[ \sup_{f \in B(V, t)} \Big| \frac{1}{\sqrt{n}} 
    \sum_{i = 1}^{n} \epsilon_i f(\xbf^{(i)})\cdot 
    (f_0 - f^*)(\xbf^{(i)})\Big|\Big] 
\end{equation*}
for each $t > 0$. By following the proof of Lemma \ref{lem:bracketing-integral-bound} 
with minimal modifications, we can show that
\begin{equation*}
\begin{aligned}
    &\E\Big[ \sup_{f \in B(V, t)} \Big| \frac{1}{\sqrt{n}} 
    \sum_{i = 1}^{n} \epsilon_i f(\xbf^{(i)})\cdot 
    (f_0 - f^*)(\xbf^{(i)})\Big|\Big] \\
    &\qquad\le C \|f_0 - f^*\|_{\infty, \Mbf} \cdot 
    J_{[ \ ]}(t, B(V, t), \| \cdot \|_{p_0, 2}) 
    \cdot \Big(1 + C(V + t) \cdot 
    \frac{J_{[ \ ]}(t, B(V, t), \| \cdot \|_{p_0, 2})}{t^2 \sqrt{n}}\Big).
\end{aligned}
\end{equation*}
Hence, repeating the computations above, we find that if we define  
$\widebar{t}_n$ as
\begin{equation*}
    \widebar{t}_n 
    = (1 + \|f_0 - f^*\|_{\infty, \Mbf}) 
    \cdot C_{B, s} a_{d, s}^2 (V + 1) n^{-1/3}
    [\log(2 + n)]^{2(\widebar{s} - 1)/3},
\end{equation*}
then
\begin{equation*}
    \E\Big[ \sup_{f \in B(V, \widebar{t}_n)} \Big| \frac{1}{\sqrt{n}} 
    \sum_{i = 1}^{n} \epsilon_i f(\xbf^{(i)})\cdot 
    (f_0 - f^*)(\xbf^{(i)})\Big|\Big] 
    \le \sqrt{n} \widebar{t}_n^2 / (V + 1),
\end{equation*}
from which the last inequality in \eqref{eq:expected-suprema-bound} (with $t_n = \widebar{t}_n$) 
follows for all $r \ge 1$.
    
Using $\widetilde{t}_n$ and $\widebar{t}_n$, we define $t_n$ as
\begin{equation*}
    t_n = 4\|f_0 - f^*\|_{p_0, 2} 
    + \max(\widetilde{t}_n, \widebar{t}_n).
\end{equation*}
Then, for every $r \ge 1$, 
\begin{equation*}
    \E\Big[\sup_{f \in B(V, rt_n)} \Big|\frac{1}{\sqrt{n}}
    \sum_{i = 1}^{n} \epsilon_i f(\xbf^{(i)})\Big|\Big]
    \le (rt_n / \widetilde{t}_n) \cdot \sqrt{n} \widetilde{t}_n^2 / (V + 1)
    \le r \sqrt{n} t_n^2 / (V + 1)
\end{equation*}
and the remaining two inequalities in \eqref{eq:expected-suprema-bound} 
follow by the same argument. Hence, $t_n$ satisfies all inequalities in 
\eqref{eq:expected-suprema-bound} for all $r \ge 1$, and we use this $t_n$
to derive a risk bound for $\lsefstinf$.
    
As a last step, we substitute our $t_n$ into \eqref{eq:risk-bound-t-n}. 
This yields
\begin{equation*}
\begin{aligned}
    &\E\big[\|\lsefstinf - f^*\|_{p_0, 2}^2\big] 
    \le  C \|f_0 - f^*\|_{p_0, 2}^2 
    + a_{d, s}^4 (V + 1)^2 \Big[C_{B, s} 
    (1 + \|\xi_1\|_{3, 1}^2 + \|f_0 - f^*\|_{\infty, \Mbf}^2) \\
    &\qquad \qquad \qquad \qquad \qquad \qquad \qquad \qquad \qquad \qquad
    \qquad \quad
    \cdot n^{-2/3} [\log(2 + n)]^{4(\widebar{s} - 1)/3} + O(n^{-2/3})\Big],
\end{aligned}
\end{equation*}
when $s \ge 2$. On the other hand, if $s = 1$, we obtain
\begin{equation*}
\begin{aligned}
    \E\big[\|\lsefstinf - f^*\|_{p_0, 2}^2\big] 
    &\le C\|f_0 - f^*\|_{p_0, 2}^2 
    + a_{d, 1}^4 (V + 1)^2 \Big[C\Big(\frac{\max(\widetilde{t}_n, 
    \widebar{t}_n)}{a_{d, 1}^2 (V + 1)}\Big)^2 \\
    &\qquad \qquad \qquad \qquad \qquad \qquad \qquad \quad 
    + \frac{C\|\xi_1\|_3^3 V^3}{a_{d, 1}^4 (V + 1)^2 n^2 
    (\max(\widetilde{t}_n, \widebar{t}_n))^4} + o(n^{-2/3})\Big] \\
    &= C\|f_0 - f^*\|_{p_0, 2}^2 
    + a_{d, 1}^4 (V + 1)^2 \cdot O(n^{-2/3}),
\end{aligned}
\end{equation*}
where the constant factors underlying $O(\cdot)$ 
depend on $B, s$, the moments of $\xi_i$, and 
$\|f_0 - f^*\|_{\infty, \Mbf}$.
\end{proof}
    
\begin{remark}
    For the first inequality in \eqref{eq:peeling-bound}, we in fact need to 
    show that
    \begin{equation*}
        \P\big(\lsefstinf \in \F_j\big) 
        \le \sup \Big\{\P\Big(\sup_{f \in \G} 
        \big(M_n(f) - M_n(f_{0, \Mbf})\big) \ge 0 \Big): 
        \G \subseteq \F_j \text{ is countable} \Big\},
    \end{equation*}
    since $\F_j$ may not be countable.
    Here, we give a more careful argument for this.
    
    For each integer $N \ge 1$, let $\G_N$ denote the subcollection of $\F_j$ 
    consisting of all $\fcnu$ (of the 
    form \eqref{f_fst}) that additionally satisfy the following two conditions:
    \begin{enumerate}[label = (\alph*)]
    \item $\nu_{L, U}$ are supported on 
    $\prod_{j \in L} ((1/N)\mathbb{Z} \cap (-M_j/2, M_j/2]) 
    \times \prod_{j \in U} ((1/N)\mathbb{Z} \cap (-M_j/2, M_j/2])$, where 
    $(1/N)\mathbb{Z} := \{m/N: m \in \mathbb{Z}\}$
    \item $c \in \mathbb{Q}$ and $\nu_{L, U}(\{(p_j, j \in L) 
    \times (q_j, j \in U)\}) \in \mathbb{Q}$ 
    for every $(p_j, j \in L) \times (q_j, j \in U) \in \R^{|L| + |U|}$.
    \end{enumerate}
    Clearly, each $\G_N$ is countable, and thus, $\G := \cup_{N \ge 1} \G_N$ is 
    countable as well. Since $\lsefstinf$ is constructed from discrete signed  
    measures with finite support, 
    it can be easily shown that there 
    exists a sequence $\{g_N\}_{N \ge 1}$ with $g_N \in \G_N \subseteq \G$ such 
    that $g_N(\xbf) \to \lsefstinf(\xbf)$ as $N \to \infty$ for every 
    $\xbf \in \R^d$. Hence, if $\lsefstinf \in \F_j$, then
    \begin{equation*}
        \sup_{f \in \G} \big(M_n(f) - M_n(f_{0, \Mbf})\big) 
        \ge \lim_{N \to \infty} \big(M_n(g_N) - M_n(f_{0, \Mbf})\big) 
        = M_n(\lsefstinf) - M_n(f_{0, \Mbf}) \ge 0.
    \end{equation*}
    Consequently,
    \begin{equation*}
    \begin{aligned}
        \P\big(\lsefstinf \in \F_j\big) 
        &\le \P\Big(\sup_{f \in \G} \big(M_n(f) - M_n(f_{0, \Mbf})\big) \ge 0\Big) \\
        &\le \sup \Big\{\P\Big(\sup_{f \in \mathcal{H}} \big(M_n(f) - M_n(f_{0, \Mbf})\big) \ge 0 \Big): 
        \mathcal{H} \subseteq \F_j \text{ is countable} \Big\}.
    \end{aligned}
    \end{equation*}
\end{remark}

\subsubsection{Proof of Lemma \ref{lem:bracketing-entropy-bound}}
\label{pf:bracketing-entropy-bound}

\begin{proof}[Proof of Lemma \ref{lem:bracketing-entropy-bound}]
    For each $\fcnu \in B(V, t)$, by modifying each 
basis function $b^{L, U}_{\lbf, \ubf}$ as in 
\eqref{eq:modification-of-indicator-function},
we can express $\fcnu$ as
\begin{equation}\label{eq:expression-of-f-b-mu-S}
    \fcnu(x_1, \dots, x_d) 
    = f^{d, s}_{b, \{\mu_S\}}(x_1, \dots, x_d) 
    := b + \sum_{0 < |S| \le s} 
    \int_{\R^{|S|}} \prod_{j \in S} \ind(x_j \ge l_j)
    \, d\mu_S(l_j, j \in S) 
\end{equation}
for some $b \in \R$ and finite signed Borel measures $\mu_S$ 
on $\R^{|S|}$, related to the original measures $\nu_{L, U}$ 
through \eqref{eq:relation-between-mu-and-nu}. 
Here, the summation runs over all nonempty subsets  
$S \subseteq [d]$ with $|S| \le s$.
Since each $\nu_{L, U}$ is supported on $\prod_{j \in L} (-M_j/2, M_j/2] 
\times \prod_{j \in U} (-M_j/2, M_j/2]$, the relation 
\eqref{eq:relation-between-mu-and-nu} implies that each $\mu_S$ is 
supported on $\prod_{j \in S} (-M_j/2, M_j/2]$. 
Moreover, by \eqref{eq:inequality-between-complexities}, 
\begin{equation*}
    \sum_{0 < |S| \le s} \|\mu_S\|_{\text{TV}}
    \le \min(2^s - 1, 2^d) \cdot \sum_{0 < |L| + |U| \le s} 
    \|\nu_{L, U}\|_{\text{TV}}
    \le (2^s - 1) V
    \le C_s V.
\end{equation*}
Hence, if we define $\widetilde{B}(V, t)$ as the collection of all 
functions $f^{d, s}_{b, \{\mu_S\}} \in \fstinf$ with 
$\|f^{d, s}_{b, \{\mu_S\}}\|_{p_0, 2} \le t$ such that $\mu_S$ are supported 
on $\prod_{j \in S} (-M_j/2, M_j/2]$ and satisfy
\begin{equation*}
    \sum_{0 < |S| \le s} \|\mu_S\|_{\text{TV}} \le V,
\end{equation*}
then we have $B(V, t) \subseteq \widetilde{B}(C_s V, t)$. 

We now split $\widetilde{B}(V, t)$ into pieces, compute the bracketing entropy 
of each piece, and then put them 
together to obtain a bracketing entropy bound for $\widetilde{B}(V, t)$, 
which will in turn yield a bound for the bracketing entropy of $B(V, t)$.
For every $f^{d, s}_{b, \{\mu_S\}} \in \widetilde{B}(V, t)$, by repeating the argument 
(using Cauchy inequality) in the proof of Theorem \ref{thm:risk-upper-bound}, 
we can show that $|b| \le C(V + t)$ for some constant $C > 0$.
Set $K = \floor{C(V + t)/\epsilon}$, and for each $k = -(K + 1), \dots, K$, 
let $\G_k$ denote the collection of all functions 
$f^{d, s}_{b, \{\mu_S\}}$ of the form \eqref{eq:expression-of-f-b-mu-S}
with $k \epsilon \le b \le (k + 1) \epsilon$ and with signed Borel measures 
$\mu_S$ supported on $\prod_{j \in S} (-M_j/2, M_j/2]$ and satisfying 
$\sum_{0 < |S| \le s} \|\mu_S\|_{\text{TV}} \le V$. 
It is clear that 
\begin{equation*}
    \widetilde{B}(V, t) \subseteq \bigcup_{k = -(K + 1), \dots, K} \G_k,
\end{equation*}
and hence,
\begin{align}\label{eq:constant-split}
    &\log N_{[ \ ]}(\epsilon, \widetilde{B}(V, t), \| \cdot \|_{p_0, 2}) 
    \le \log \Big( \sum_{k = -(K + 1), \dots, K} 
    N_{[ \ ]}(\epsilon, \G_k, \| \cdot \|_{p_0, 2}) \Big) \\
    &\quad \le \log\Big(2 + \frac{C(V + t)}{\epsilon}\Big) 
    + \sup_k \log N_{[ \ ]}(\epsilon, \G_k, \| \cdot \|_{p_0, 2})
    \le \log\Big(2 + \frac{C(V + t)}{\epsilon}\Big) 
    + \log N_{[ \ ]}(\epsilon, \G_0, \| \cdot \|_{p_0, 2}) \nonumber.
\end{align}
    
Now, let $\G_{\emptyset}$ denote the collection of all constant functions 
on $\R^d$ with values in $[0, \epsilon]$.
Also, for each nonempty $S \subseteq [d]$ with $|S| \le s$, define 
$\G_{S}$ as the collection of all functions on $\R^d$ of the form
\begin{equation*}
    (x_1, \dots, x_d) \mapsto 
    \int_{\prod_{j \in S} [-M_j/2, M_j/2]} \prod_{j \in S} \ind(x_j \ge l_j) 
    \, d\mu_S(l_j, j \in S),
\end{equation*}
where $\mu_S$ is a finite signed Borel measure 
on $\prod_{j \in S} [-M_j/2, M_j/2]$ with $\|\mu_S\|_{\text{TV}} \le V$. 
By construction, 
\begin{equation*}
    \G_0 \subseteq \G_{\emptyset} 
    + \bigplus_{0 < |S| \le s} \G_{S},
\end{equation*}
where $A + B = \{a + b: a \in A, b \in B\}$.
It follows that
\begin{equation}\label{eq:integral-split}
    \log N_{[ \ ]}(\epsilon, \G_0, \| \cdot \|_{p_0, 2}) 
    \le C_s(1 + \log d) + 
    \sum_{0 < |S| \le s} 
    \log N_{[ \ ]}\big(\epsilon/(C_s d^{\widebar{s}}), 
    \G_{S}, \| \cdot \|_{p_0, 2}\big),
\end{equation}
where $\widebar{s} = \min(s, d)$.

To proceed, for each nonempty $S \subseteq [d]$ with $|S| \le s$, let 
$\widetilde{\G}_S$ denote the collection of all functions on the 
truncated section $\prod_{j \in S} [-M_j/2, M_j/2]$ of the original 
domain $\R^d$, obtained by restricting to the coordinates indexed by 
$S$, of the form
\begin{equation*}
    (x_j, j \in S) \mapsto 
    \int_{\prod_{j \in S} [-M_j/2, M_j/2]} \prod_{j \in S} 
    \ind(x_j \ge l_j) \, d\mu_S(l_j, j \in S),
\end{equation*}
where $\mu_S$ is as in the definition of $\G_{S}$.
Since $p_0$ is uniformly bounded by $B/\prod_{j = 1}^{d} M_j$, we have
\begin{equation}\label{eq:sectional-domain}
    N_{[ \ ]}(\epsilon, \G_{S}, \| \cdot \|_{p_0, 2})
    \le N_{[ \ ]}\Big(\Big(\frac{\prod_{j \in S} M_j}{B}\Big)^{1/2} 
    \epsilon, \widetilde{\G}_S, \| \cdot \|_2\Big).
\end{equation}
Furthermore, for each nonempty $S \subseteq [d]$ with $|S| \le s$, let 
$\widebar{\G}_S$ denote the collection of all functions on the scaled domain 
$[0, 1]^{|S|}$ of the form
\begin{equation*}
    (x_j, j \in S) \mapsto 
    \int_{[0, 1]^{|S|}} \prod_{j \in S} 
    \ind(x_j \ge l_j) \, d\mu_S(l_j, j \in S),
\end{equation*}
where $\mu_S$ is a finite signed Borel measure on $[0, 1]^{|S|}$ with 
$\|\mu_S\|_{\text{TV}} \le V$. Through a straightforward scaling 
argument, it can be readily verified that
\begin{equation}\label{eq:integral-scaling}
    N_{[ \ ]}(\epsilon, \widetilde{\G}_S, \| \cdot \|_2) 
    \le N_{[ \ ]}\Big(
    \Big(\frac{1}{\prod_{j \in S} M_j}\Big)^{1/2} \epsilon, 
    \widebar{\G}_S, \| \cdot \|_2\Big) 
    = N_{[ \ ]}\Big(
    \Big(\frac{1}{\prod_{j \in S} M_j}\Big)^{1/2} \epsilon, 
    \widebar{\G}_{[|S|]}, \| \cdot \|_2\Big).
\end{equation}

Next, for each integer $m \ge 1$ and $R > 0$, let $\mathcal{H}_m(R)$ denote the 
collection of all functions on $[0, 1]^m$ of the form
\begin{equation*}
    (x_1, \dots, x_m) \mapsto \int_{[0, 1]^m} \prod_{j = 1}^m \ind(x_j \ge l_j) 
    \, d\mu(l_1, \dots, l_m)
\end{equation*}
where $\mu$ is a finite Borel measure (not a signed measure) on $[0, 1]^m$ with 
$\|\mu\|_{\text{TV}} \le R$. It was proved in \citet[Theorem 1.1]{gao2013bracketing} 
that
\begin{equation*}
    \log N_{[ \ ]}(\epsilon, \mathcal{H}_m(R), \| \cdot \|_2) 
    \le C_m \Big(2 + \frac{R}{\epsilon}\Big) 
    \Big[\log\Big(2 + \frac{R}{\epsilon}\Big)\Big]^{2(m - 1)}.
\end{equation*}
By the Jordan decomposition of signed measures, 
\begin{equation*}
    \widebar{\G}_{[|S|]} \subseteq 
    \mathcal{H}_{|S|}(V) - \mathcal{H}_{|S|}(V),
\end{equation*}
where $A - B = \{a - b: a \in A, b \in B\}$. It follows that
\begin{equation*}
    \log N_{[ \ ]}(\epsilon, \widebar{\G}_{[|S|]}, \| \cdot \|_2) 
    \le 2 \log N_{[ \ ]}\big(\frac{\epsilon}{2}, 
    \mathcal{H}_{|S|}(V), \| \cdot \|_2\big) 
    \le C_{|S|} \Big(2 + \frac{2V}{\epsilon} \Big) 
    \Big[\log\Big(2 + \frac{2V}{\epsilon}\Big)\Big]^{2(|S| - 1)}.
\end{equation*}
Substituting this bound back into \eqref{eq:integral-scaling}, 
\eqref{eq:sectional-domain}, \eqref{eq:integral-split}, and
\eqref{eq:constant-split} in turn, we obtain
\begin{equation*}
    \log N_{[ \ ]}(\epsilon, \widetilde{B}(V, t), \| \cdot \|_{p_0, 2}) 
    \le \log\Big(2 + \frac{C(V + t)}{\epsilon}\Big) 
    + C_{B, s} d^{2\widebar{s}}(1 + \log d)^{2(\widebar{s} - 1)}
    \Big(2 + \frac{V}{\epsilon}\Big) 
    \Big[\log\Big(2 + \frac{V}{\epsilon}\Big)\Big]^{2(\widebar{s} - 1)}.
\end{equation*}
Lastly, since $B(V, t) \subseteq \widetilde{B}(C_s V, t)$, we arrive at
\begin{equation*}
    \log N_{[ \ ]}(\epsilon, B(V, t), \| \cdot \|_{p_0, 2}) 
    \le \log\Big(2 + \frac{C_s(V + t)}{\epsilon}\Big) 
    + C_{B, s} d^{2\widebar{s}}(1 + \log d)^{2(\widebar{s} - 1)}
    \Big(2 + \frac{V}{\epsilon}\Big) 
    \Big[\log\Big(2 + \frac{V}{\epsilon}\Big)\Big]^{2(\widebar{s} - 1)},
\end{equation*}
which completes the proof.
\end{proof}

\subsubsection{Proof of Corollary \ref{cor:minimax-upper-bound}}
\label{pf:minimax-upper-bound}
\begin{proof}[Proof of Corollary \ref{cor:minimax-upper-bound}]
Observe from the proof of Theorem \ref{thm:risk-upper-bound} that 
the risk bound for $\lsefstinf$ depends continuously on $V$.
Consequently, the desired bound in the corollary follows directly 
from the risk bound for $\lsefstinf$ and the following inequality:
\begin{equation*}
    \minimax \le \liminf_{\epsilon \to 0+} 
    \sup_{\substack{f^* \in \fstinf \\ 
    \vinfxgb(f^*) \le V}} 
    \E \|\hat{f}^{d, s}_{n, V + \epsilon} - f^*\|_{p_0, 2}^2.
\end{equation*}
\end{proof}

\subsubsection{Proof of Theorem \ref{thm:minimax-lower-bound}}
\label{pf:minimax-lower-bound}
Our proof of Theorem \ref{thm:minimax-lower-bound} builds on the proof ideas of
\citet[Theorem 4.6]{fang2021multivariate}, which itself is motivated by the 
ideas in \citet[Section 4]{blei2007metric}. As in \citet[Theorem 4.6]{fang2021multivariate}, 
we use Assouad's lemma in the following form.

\begin{lemma}[Lemma 24.3 of \citet{van2000asymptotic} 
and Lemma 11.20 of \citet{ki2024mars}]
\label{lem:assouad}
Suppose $q$ is a positive integer, and we have $f_{\etabf} \in \fstinf$ with 
$\vinfxgb(f_{\etabf}) \le V$ for each $\etabf \in \{-1, 1\}^q$. Then, we have 
the following lower bound for the minimax risk $\minimax$:
\begin{equation*}
    \minimax \ge \frac{q}{8} \cdot \min_{\etabf \neq \etabf'} 
    \frac{\| f_\etabf - f_{\etabf'} \|_{p_0, 2}^2}{H(\etabf, \etabf')} 
    \cdot \min_{H(\etabf, \etabf') = 1} \bigg(1 - \sqrt{\frac{1}{2} \E 
    \big[ K(\P_{f_{\etabf}}, \P_{f_{\etabf'}}) \big]}\bigg).
\end{equation*}
Here, $H(\cdot, \cdot)$ denotes the Hamming distance 
$H(\etabf, \etabf') := \sum_{j = 1}^{q} 1\{\eta_j \neq \eta'_j\}$, 
$\P_f$ represents the probability distribution of $(y_1, \dots, y_n)$ given 
$(\xbf^{(1)}, \dots, \xbf^{(n)})$ when $f^{*} = f$, and $K(\cdot, \cdot)$ 
denotes the Kullback divergence between two probability distributions.
\end{lemma}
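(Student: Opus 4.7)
\textbf{Proof plan for Lemma \ref{lem:assouad}.} The plan is to prove the stated inequality by the standard two-stage Assouad reduction: first, convert the nonparametric estimation problem into a multi-way hypothesis-testing problem via a minimum-distance test, and then bound the error of this test from below, coordinate by coordinate, using Le Cam's two-point lemma together with Pinsker's inequality. The only genuinely nonstandard piece is keeping careful track of how conditional KL divergences interact with random design.

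First, I would fix an arbitrary estimator $\hat{f}_n$ and associate to it the minimum-distance selector
\begin{equation*}
    \hat{\etabf} \in \argmin_{\etabf \in \{-1, 1\}^q}
    \| \hat{f}_n - f_{\etabf} \|_{p_0, 2}.
\end{equation*}
For any true parameter $\etabf^* \in \{-1, 1\}^q$, the triangle inequality and the optimality of $\hat{\etabf}$ give
$\| f_{\hat{\etabf}} - f_{\etabf^*} \|_{p_0, 2}
\le \| \hat{f}_n - f_{\hat{\etabf}} \|_{p_0, 2} + \| \hat{f}_n - f_{\etabf^*} \|_{p_0, 2}
\le 2 \| \hat{f}_n - f_{\etabf^*} \|_{p_0, 2}$.
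Setting $c := \min_{\etabf \neq \etabf'} \| f_{\etabf} - f_{\etabf'} \|_{p_0, 2}^2 / H(\etabf, \etabf')$, one has $\| f_{\etabf} - f_{\etabf'} \|_{p_0, 2}^2 \ge c \cdot H(\etabf, \etabf')$ for all $\etabf, \etabf'$, so $\| \hat{f}_n - f_{\etabf^*} \|_{p_0, 2}^2 \ge (c/4) \cdot H(\hat{\etabf}, \etabf^*)$. Replacing the supremum over $\etabf^*$ with the uniform average over $\{-1, 1\}^q$ and exchanging sum and expectation then yields
\begin{equation*}
    \minimax \ge \frac{c}{4} \cdot \frac{1}{2^q}
    \sum_{j = 1}^{q} \sum_{\etabf^*} \P_{\etabf^*}(\hat{\eta}_j \neq \eta_j^*),
\end{equation*}
where $\P_{\etabf}$ denotes the joint distribution of $(\xbf^{(1)}, y_1, \dots, \xbf^{(n)}, y_n)$ under $f^* = f_{\etabf}$.

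Next, for each coordinate $j$, I would pair up the $2^q$ vectors by flipping the $j$-th sign: writing $\etabf^{j, \pm}$ for the completion of a fixed $\etabf_{-j} \in \{-1, 1\}^{q-1}$ with $\eta_j = \pm 1$, Le Cam's two-point inequality gives
$\P_{\etabf^{j, -}}(\hat{\eta}_j = 1) + \P_{\etabf^{j, +}}(\hat{\eta}_j = -1)
\ge 1 - \| \P_{\etabf^{j, +}} - \P_{\etabf^{j, -}} \|_{\tv}$.
Since the covariates are drawn from a common distribution not depending on the parameter, the joint KL divergence tensorizes as
$K(\P_{\etabf^{j, +}}, \P_{\etabf^{j, -}}) = \E [K(\P_{f_{\etabf^{j, +}}}, \P_{f_{\etabf^{j, -}}})]$,
with the inner $K$ being the conditional KL of the $y$-distribution given the $\xbf$'s, and the expectation being over the $\xbf^{(i)}$'s. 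Pinsker's inequality then yields
$\| \P_{\etabf^{j, +}} - \P_{\etabf^{j, -}} \|_{\tv}
\le \sqrt{(1/2) \E [K(\P_{f_{\etabf^{j, +}}}, \P_{f_{\etabf^{j, -}}})]}$.
Bounding each of the $q \cdot 2^{q - 1}$ pairs by the worst-case pair at Hamming distance $1$ produces the factor $(q/2) \cdot \min_{H(\etabf, \etabf') = 1} (1 - \sqrt{(1/2) \E[K(\P_{f_\etabf}, \P_{f_{\etabf'}})]})$, and combining this with the $c/4$ prefactor yields the desired lower bound.

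The only subtle point is the correct handling of random design: the KL in the statement is a random variable (it depends on $\xbf^{(1)}, \dots, \xbf^{(n)}$), and the expectation must be taken \emph{before} the square root. This is exactly what the joint-Pinsker + tensorization argument above delivers, since Pinsker is applied to joint distributions on $(\xbf, y)$, whereas the KL splits into the marginal of $\xbf$ (which cancels) plus the conditional KL of $y$ given $\xbf$. Apart from this, the argument is the classical Assouad chain of inequalities, and I do not anticipate any essential obstacle.
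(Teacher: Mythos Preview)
The paper does not provide its own proof of this lemma: it is stated as a citation of Lemma 24.3 of van der Vaart (2000) and Lemma 11.20 of Ki and Guntuboyina (2024), and then applied directly in the proof of Theorem \ref{thm:minimax-lower-bound}. Your proposal is the standard Assouad argument (minimum-distance reduction, coordinate-wise Le Cam, Pinsker with KL tensorization over the random design) and is correct; it is essentially the proof one finds in the cited references.
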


\begin{proof}[Proof of Theorem \ref{thm:minimax-lower-bound}]
Fix an integer $l$ as 
\begin{equation*}
    l = \bigg\lceil\frac{1}{3 \log 2} \Big\{\log\Big(
    \frac{C_{B, \widebar{s}} n V^2}{\sigma^2}\Big) 
    - (\widebar{s} - 1) \log\log\Big(
    \frac{C_{B, \widebar{s}} n V^2}{\sigma^2}
    \Big)\Big\}\bigg\rceil
\end{equation*}
where $C_{B, \widebar{s}} = B 2^{-4\widebar{s} + 1} 
(6 \log 2)^{\widebar{s} - 1} \cdot (\widebar{s} - 1)!$ and 
$\ceil{x}$ denotes the smallest integer greater than or equal to $x$. 
This choice of $l$ ensures that
\begin{equation}\label{2-minus-l}
    2^{-l} \le \Big(\frac{\sigma^2}{C_{B, \widebar{s}} n V^2}\Big)^{1/3} 
    \bigg[\log\Big(\frac{C_{B, \widebar{s}} 
    n V^2}{\sigma^2}\Big)\bigg]^{(\widebar{s} - 1)/3} 
    < 2^{-l + 1}.
\end{equation} 
Define 
\begin{equation*}
    P_l = \bigg\{(p_1, \dots, p_{\widebar{s}}) 
    \in \mathbb{Z}_{\ge 0}^{\widebar{s}}: 
    \sum_{j = 1}^{\widebar{s}} p_j = l\bigg\},
\end{equation*}
and, for each $\pbf = (p_1, \dots, p_{\widebar{s}}) \in P_l$, let 
\begin{equation*}
    I_{\pbf} = \big\{(i_1, \dots, i_{\widebar{s}}): i_j \in [2^{p_j}] 
    \mbox{ for each } j \in [\widebar{s}] \big\}.
\end{equation*}
Recall that $[m] = \{1, \dots, m\}$ for each integer $m \ge 1$. It is 
clear that $|I_{\pbf}| = 2^l$ for every $\pbf \in P_l$, and 
\begin{equation*}
    |P_l| = \binom{\widebar{s} + l - 1}{\widebar{s} - 1} 
    \ge \frac{l^{\widebar{s}- 1}}{(\widebar{s} - 1)!}.
\end{equation*}
Next, define
\begin{equation*}
    Q = \big\{(\pbf, \ibf): 
    \pbf \in P_l \mbox{ and } \ibf \in I_{\pbf} \big\}
\end{equation*}
and let $q = |Q| = |P_l| \cdot 2^l$.
In this proof, functions will be indexed by vectors $\etabf \in \{-1, 1\}^q$, 
whose components are indexed by the set $Q$.

For an integer $m \ge 1$ and $k \in [2^m]$, denote by $\psi_{m, k}$ the 
real-valued function on $(0, 1)$ defined by
\begin{equation*}
\begin{aligned}
    \psi_{m, k}(x) = 
    \begin{cases}
        1 &\mbox{if } x \in \big((k-1)2^{-m}, (k-3/4)2^{-m}\big) \cup 
        \big((k-1/4)2^{-m}, k2^{-m}\big), \\
        -1 &\mbox{if } x \in \big((k-3/4)2^{-m}, 
        (k-1/4)2^{-m}\big), \\
        0 &\mbox{otherwise}. 
    \end{cases}
\end{aligned}
\end{equation*}
Using these functions $\psi_{m, k}$, we construct $f_{\etabf} \in \fstinf$ 
for $\etabf \in \{-1, 1\}^q$ as follows, with which we will use Lemma 
\ref{lem:assouad} 
to prove the lower bound of the minimax risk $\minimax$. For each 
$\etabf \in \{-1, 1\}^q$, let $\nu_{\etabf}$ be the signed Borel measure on 
$\prod_{j = 1}^{\widebar{s}} (-M_j/2, M_j/2)$ defined by
\begin{equation*}
    d\nu_{\etabf}(\tbf) = \frac{1}{M_1 \cdots M_{\widebar{s}}} 
    \cdot \frac{V}{\sqrt{|P_l|}} 
    \sum_{\pbf \in P_l} \sum_{\ibf \in I_{\pbf}} 
    \eta_{\pbf, \ibf} \bigg(\prod_{j = 1}^{\widebar{s}} 
    \psi_{p_j, i_j}\Big(\frac{t_j}{M_j} + \frac{1}{2}\Big)\bigg) 
    \, d\tbf,
\end{equation*}
and define $f_{\etabf}: \R^d \rightarrow \R$ as
\begin{equation*}
    f_{\etabf}(x_1, \dots, x_d) 
    = \int_{\prod_{j = 1}^{\widebar{s}} (-M_j/2, M_j/2)} 
    \prod_{j = 1}^{\widebar{s}} \ind(x_j \ge t_j) 
    \, d\nu_{\etabf}(\tbf).
\end{equation*}
Clearly, $f_{\etabf} \in \fstinf$ for every $\etabf \in \{-1, 1\}^q$. 
The following lemma, whose proof is deferred to 
Appendix \ref{pf:f-eta-prop}, summarizes the key properties of the 
functions $f_{\etabf}$ we need for the proof.

\begin{lemma}\label{lem:f-eta-prop}
For each $\etabf \in \{-1, 1\}^q$, the complexity of $f_{\etabf}$ is 
bounded by $V$, i.e.,
\begin{equation}\label{f-eta-compl}
    \vinfxgb(f_\etabf) \le V.
\end{equation}
We also have 
\begin{equation}\label{f-eta-first-dist-prop}
    \max_{H(\etabf, \etabf') = 1} 
    \| f_\etabf - f_{\etabf'} \|_{p_0, 2}^2 
    \le \frac{B V^2}{|P_l|} \cdot 2^{-3l - 4\widebar{s} + 2}
\end{equation}
and 
\begin{equation}\label{f-eta-second-dist-prop}
    \min_{\etabf \neq \etabf'} 
    \frac{\| f_\etabf - f_{\etabf'}\|_{p_0, 2}^2}{H(\etabf, \etabf')} 
    \ge \frac{b V^2}{|P_l|} \cdot 2^{-3l - 6\widebar{s} + 2}.
\end{equation}
\end{lemma}

It is straightforward to check that the Kullback divergence between 
$\P_{f_\etabf}$ and $\P_{f_{\etabf'}}$ for 
$\etabf, \etabf' \in \{-1, 1\}^q$ can be computed by 
\begin{equation*}
    K(\P_{f_{\etabf}}, \P_{f_{\etabf'}}) 
    = \frac{1}{2\sigma^2} \sum_{i=1}^{n} 
    \big(f_{\etabf}(\xbf^{(i)}) - f_{\etabf'}(\xbf^{(i)})\big)^2.
\end{equation*}
Hence, \eqref{f-eta-first-dist-prop} gives
\begin{equation}\label{prob-dist-prop}
    \max_{H(\etabf, \etabf') = 1} 
    \E \big[K(\P_{f_{\etabf}}, \P_{f_{\etabf'}})\big] 
    = \frac{n}{2\sigma^2} \cdot  \max_{H(\etabf, \etabf') = 1} 
    \| f_{\etabf} - f_{\etabf'} 
    \|_{p_0, 2}^2 \le \frac{B n V^2}{\sigma^2|P_l|} 
    \cdot 2^{-3l - 4\widebar{s} + 1}.
\end{equation}

Applying Lemma \ref{lem:assouad}, along with 
\eqref{f-eta-second-dist-prop} and \eqref{prob-dist-prop}, we can bound 
the minimax risk $\minimax$ as
\begin{equation*}  
\begin{aligned}
    \minimax &\ge \frac{q}{8} \cdot \frac{b V^2}{|P_l|} 
    \cdot 2^{-3l - 6\widebar{s} + 2} 
    \bigg(1 - \sqrt{\frac{B n V^2}{\sigma^2|P_l|} 
    \cdot 2^{-3l - 4\widebar{s}}}\bigg) \\
    &\ge b V^2 2^{-2l - 6\widebar{s} - 1} 
    \bigg(1 - \sqrt{\frac{1}{2(6 \log 2)^{\widebar{s} - 1}} 
    \cdot \frac{C_{B, \widebar{s}} n V^2}{\sigma^2} 
    \cdot \frac{2^{-3l}}{l^{\widebar{s} - 1}}}\bigg).
\end{aligned}
\end{equation*}
Recall that $q = |P_l| \cdot 2^l$, 
$|P_l| \ge l^{\widebar{s}- 1}/(\widebar{s} - 1)!$, and 
$C_{B, \widebar{s}} = B 2^{-4\widebar{s} + 1} 
(6 \log 2)^{\widebar{s} - 1} \cdot (\widebar{s} - 1)!$.
Our choice of $l$ (at the beginning of the proof) implies that
\begin{equation*}
\begin{aligned}
    \frac{1}{2(6 \log 2)^{\widebar{s} - 1}} \cdot 
    \frac{C_{B, \widebar{s}} n V^2}{\sigma^2} 
    \cdot \frac{2^{-3l}}{l^{\widebar{s} - 1}} 
    &\le \frac{1}{2} \cdot \Bigg[\frac{\log\big(
        \frac{C_{B, \widebar{s}} n V^2}{\sigma^2}\big)}
        {2\Big(\log\big(\frac{C_{B, \widebar{s}} n V^2}{\sigma^2}\big) 
        - (\widebar{s} - 1) \log \log \big(\frac{C_{B, \widebar{s}} 
        n V^2}{\sigma^2}\big)\Big)}\Bigg]^{\widebar{s} - 1} \\
    &\le \frac{1}{2} \cdot \bigg[2\bigg(1 - (\widebar{s} - 1) 
    \frac{\log \log\big(\frac{C_{B, \widebar{s}} n V^2}{\sigma^2}\big)}
    {\log \big(\frac{C_{B, \widebar{s}} n V^2}{\sigma^2}\big)}
    \bigg)\bigg]^{-(\widebar{s} - 1)} \\
    &\le \frac{1}{2} \cdot \bigg[2\bigg(1 - (\widebar{s} - 1) 
    \Big\{\log\Big(\frac{C_{B, \widebar{s}} 
    n V^2}{\sigma^2}\Big)\Big\}^{-\frac{1}{2}} 
    \bigg)\bigg]^{-(\widebar{s} - 1)}.
\end{aligned}
\end{equation*}
Here, the first inequality follows from \eqref{2-minus-l} and 
\begin{equation*}
    l \ge \frac{1}{3 \log 2}\bigg\{\log\Big(\frac{C_{B, \widebar{s}} 
    n V^2}{\sigma^2}\Big) 
    - (\widebar{s} - 1) \log \log\Big(\frac{C_{B, \widebar{s}} 
    n V^2}{\sigma^2}\Big)\bigg\},
\end{equation*}
and the last inequality is from the inequality 
$\log \log x / \log x \le (\log x)^{-1/2}$, which is valid for all 
$x > 1$. If we assume that 
\begin{equation*}
    n \ge \frac{e^{4\widebar{s}^2}}{C_{B, \widebar{s}}} \cdot 
    \frac{\sigma^2}{V^2},
\end{equation*}
then 
\begin{equation*}
    \frac{1}{2(6 \log 2)^{\widebar{s} - 1}} \cdot 
    \frac{C_{B, \widebar{s}} n V^2}{\sigma^2} \cdot 
    \frac{2^{-3l}}{l^{\widebar{s} - 1}} \le 
    \frac{1}{2} \cdot \Big(1 
    + \frac{1}{\widebar{s}}\Big)^{-(\widebar{s} - 1)} 
    \le \frac{1}{2},
\end{equation*}
and thereby, we have
\begin{equation*}
\begin{aligned}
    \minimax &\ge \bigg(1 - \sqrt{\frac{1}{2}}\bigg) 
    \cdot b V^2 2^{-2l - 6\widebar{s} - 1} 
    \ge \bigg(1 - \sqrt{\frac{1}{2}}\bigg) 
    \cdot b V^2 2^{-6\widebar{s} - 3} \cdot 
    \Big(\frac{\sigma^2}{C_{B, \widebar{s}} n V^2}\Big)^{2/3}
    \bigg[\log\Big(\frac{C_{B, \widebar{s}} 
    n V^2}{\sigma^2}\Big)\bigg]^{2(\widebar{s} - 1)/3} \\
    &\ge C_{b, B, \widebar{s}}\Big(\frac{\sigma^2 V}{n}\Big)^{2/3}
    \bigg[\log\Big(\frac{C_{B, \widebar{s}} 
    n V^2}{\sigma^2}\Big)\bigg]^{2(\widebar{s} - 1)/3},
\end{aligned}
\end{equation*}
where $C_{b, B, \widebar{s}} = (1 - 1/\sqrt{2}) 
\cdot b 2^{-6\widebar{s} - 3} C_{B, \widebar{s}}^{-2/3}$.
Here, \eqref{2-minus-l} is used again for the second inequality.
Lastly, since 
\begin{equation*}
    \log\Big(\frac{C_{B, \widebar{s}} n V^2}{\sigma^2}\Big) 
    \ge \frac{1}{2} 
    \log\Big(\frac{n V^2}{\sigma^2}\Big)
\end{equation*}
provided that $n \ge (1/C_{B, \widebar{s}}^2) \cdot (\sigma^2/V^2)$, 
by further assuming that 
\begin{equation*}
    n \ge \max\Big\{\frac{e^{4\widebar{s}^2}}{C_{B, \widebar{s}}} 
    \cdot \frac{\sigma^2}{V^2}, 
    \frac{1}{C_{B, \widebar{s}}^2} \cdot \frac{\sigma^2}{V^2}\Big\},
\end{equation*}
we can derive the lower bound
\begin{equation*}
    \minimax \ge C_{b, B, \widebar{s}}'
    \Big(\frac{\sigma^2 V}{n}\Big)^{2/3}
    \bigg[\log\Big(\frac{n V^2}{\sigma^2}\Big)
    \bigg]^{2(\widebar{s} - 1)/3}
\end{equation*}
where $C_{b, B, \widebar{s}}' = C_{b, B, \widebar{s}} 
\cdot 2^{-2(\widebar{s}-1)/3}$.
\end{proof}

\subsection{Proofs of Proposition and Lemma in Section \ref{sec:discussion}}

\subsubsection{Proof of Proposition \ref{prop:infimal-convolution}}
\label{pf:infimal-convolution}

\begin{proof}[Proof of Proposition \ref{prop:infimal-convolution}]
Suppose $f_{\abf} \in \fstinf$ for  
$\abf \in \{-\infty, +\infty\}^d$ and 
$\sum_{\abf \in \{-\infty, +\infty\}^d} f_{\abf} \equiv f$.
By repeating the argument in the proof of 
Proposition \ref{prop:vinfxgb-hka-rel}, it can be shown that 
for every $g \in \fstinf$,
\begin{equation*}
    \oldvinfxgb(g) \le V_{\abf}(g) = \hk_{\abf}(g),
\end{equation*}
where $V_{\abf}(\cdot)$ is defined as in \eqref{eq:va-def}.
Applying this inequality to each $f_{\abf} \in \fstinf$ yields
\begin{equation*}
    \sum_{\abf \in \{-\infty, +\infty\}^d} 
    \hk_{\abf}(f_{\abf}) 
    \ge \sum_{\abf \in \{-\infty, +\infty\}^d} 
    \oldvinfxgb(f_{\abf}) 
    \ge \oldvinfxgb(f), 
\end{equation*}
which proves one direction of the desired identity:
\begin{equation*}
    \oldvinfxgb(f) \le \inf \Big\{\sum_{\abf \in \{-\infty, +\infty\}^d} 
    \hk_{\abf}(f_{\abf}): 
    \sum_{\abf \in \{-\infty, +\infty\}^d} f_{\abf} \equiv f,
    \ f_{\abf} \in \fstinf \ \forall \abf\Big\}.
\end{equation*}

We now turn to the reverse inequality. Suppose 
$f \in \fstinf$ is expressed as
\begin{equation*}
    f(x_1, \dots, x_d) =  c +
    \sum_{\substack{L, U : L \cap U = \emptyset \\
    0 < |L| + |U| \leq s }} 
    \int_{\R^{|L| + |U|}} \prod_{j \in L} \ind(x_j \ge l_j) 
    \cdot \prod_{j \in U} \ind(x_j < u_j)
    \, d \nu_{L, U}(\lbf, \ubf)
\end{equation*}
for some $c \in \R$ and signed Borel measures 
$\nu_{L, U}$ on $\R^{|L| + |U|}$. For each 
$\abf \in \{-\infty, +\infty\}^d$, define 
$f_{\abf}: \R^d \to \R$ by
\begin{equation*}
\begin{aligned}
    &f_{\abf}(x_1, \dots, x_d) 
    = c \cdot \prod_{j = 1}^{d} \ind(a_j = -\infty) 
    + \sum_{\substack{L, U : L \cap U = \emptyset \\
    0 < |L| + |U| \leq s }}
    \prod_{j \in U^c} \ind(a_j = -\infty)
    \cdot \prod_{j \in U} \ind(a_j = +\infty) \\
    &\qquad \qquad \qquad \qquad \qquad \qquad \qquad \qquad 
    \qquad \qquad \qquad
    \cdot \int_{\R^{|L| + |U|}} \prod_{j \in L} \ind(x_j \ge l_j) 
    \cdot \prod_{j \in U} \ind(x_j < u_j)
    \, d\nu_{L, U}(\lbf, \ubf).
\end{aligned}
\end{equation*}
It is clear that $f_{\abf} \in \fstinf$ for all 
$\abf \in \{-\infty, +\infty\}^d$.
Moreover, since for each integral over $\nu_{L, U}$, 
there is exactly one value of $\abf$ that makes all multiplied 
indicator functions equal to one, we have
\begin{equation*}
    \sum_{\abf \in \{-\infty, +\infty\}^d} f_{\abf} 
    \equiv f.
\end{equation*}

Fix $\abf \in \{-\infty, +\infty\}^d$. For each nonempty 
$S \subseteq [d]$, we have
\begin{equation*}
\begin{aligned}
    &f^S_{(a_j, j \in S^c)}(x_j, j \in S) 
    = c \cdot \prod_{j = 1}^{d} \ind(a_j = -\infty) 
    + \sum_{\substack{L, U \subseteq S: L \cap U = \emptyset \\
    0 < |L| + |U| \leq s }}
    \prod_{j \in U^c} \ind(a_j = -\infty)
    \cdot \prod_{j \in U} \ind(a_j = +\infty) \\
    &\qquad \qquad \qquad \qquad \qquad \qquad \qquad \qquad 
    \qquad \qquad \qquad
    \cdot \int_{\R^{|L| + |U|}} \prod_{j \in L} \ind(x_j \ge l_j) 
    \cdot \prod_{j \in U} \ind(x_j < u_j)
    \, d\nu_{L, U}(\lbf, \ubf).
\end{aligned}
\end{equation*}
Hence, for each nonempty $T \subseteq [d]$,
\begin{equation*}
\begin{aligned}
    &\sum_{S \subseteq T} (-1)^{|T| - |S|} \cdot 
    f^S_{(a_j, j \in S^c)}(x_j, j \in S) \\
    &\ \ = \sum_{S \subseteq T} (-1)^{|T| - |S|} 
    \sum_{\substack{L, U \subseteq S: L \cap U = \emptyset \\
    0 < |L| + |U| \leq s }}
    \prod_{j \in U^c} \ind(a_j = -\infty)
    \cdot \prod_{j \in U} \ind(a_j = +\infty) \\
    &\qquad \qquad \qquad \qquad \qquad \qquad \qquad \quad 
    \cdot \int_{\R^{|L| + |U|}} \prod_{j \in L} \ind(x_j \ge l_j) 
    \cdot \prod_{j \in U} \ind(x_j < u_j)
    \, d\nu_{L, U}(\lbf, \ubf) \\
    &\ \ = \sum_{\substack{L, U: L \cap U = \emptyset \\
    0 < |L| + |U| \leq s }}
    \Big(\sum_{S: L \cup U \subseteq S \subseteq T} (-1)^{|T| - |S|}\Big)
    \prod_{j \in U^c} \ind(a_j = -\infty)
    \cdot \prod_{j \in U} \ind(a_j = +\infty) \\
    &\qquad \qquad \qquad \quad
    \cdot \int_{\R^{|L| + |U|}} \prod_{j \in L} \ind(x_j \ge l_j) 
    \cdot \prod_{j \in U} \ind(x_j < u_j)
    \, d\nu_{L, U}(\lbf, \ubf) \\
    &\ \ = \sum_{\substack{L, U: L \cap U = \emptyset, L \cup U = T \\
    0 < |L| + |U| \leq s }}
    \prod_{j \in U^c} \ind(a_j = -\infty)
    \cdot \prod_{j \in U} \ind(a_j = +\infty) 
    \cdot \int_{\R^{|L| + |U|}} \prod_{j \in L} \ind(x_j \ge l_j) 
    \cdot \prod_{j \in U} \ind(x_j < u_j)
    \, d\nu_{L, U}(\lbf, \ubf)
\end{aligned}
\end{equation*}
if $|T| \le s$, and it vanishes otherwise.
For each nonempty $T \subseteq [d]$ with $|T| \le s$, since there is at 
most one pair of $(L, U)$ with $L \cap U = \emptyset$ and
$L \cup U = T$ such that
\begin{equation*}
    \prod_{j \in U^c} \ind(a_j = -\infty)
    \cdot \prod_{j \in U} \ind(a_j = +\infty) = 1,
\end{equation*}
by repeating the computation in the proof of 
Proposition \ref{prop:vinfxgb-hka-rel}, we obtain
\begin{equation*}
\begin{aligned}
    \vit(f^T_{(a_j, j \in T^c)}) 
    &= \vit\Big((x_j, j \in T) \mapsto \sum_{S \subseteq T} (-1)^{|T| - |S|} 
    \cdot f^S_{(a_j, j \in S^c)}(x_j, j \in S)\Big) \\
    &= \sum_{\substack{L, U: L \cap U = \emptyset, L \cup U = T \\
    0 < |L| + |U| \leq s }}
    \prod_{j \in U^c} \ind(a_j = -\infty)
    \cdot \prod_{j \in U} \ind(a_j = +\infty) 
    \cdot \|\nu_{L, U}\|_{\text{TV}}.
\end{aligned}
\end{equation*}
Therefore,
\begin{equation*}
\begin{aligned}
    \hk_{\abf}(f_{\abf}) 
    &= \sum_{0 < |T| \le d} \vit(f^T_{(a_j, j \in T^c)}) 
    = \sum_{0 < |T| \le s} \vit(f^T_{(a_j, j \in T^c)}) \\
    &= \sum_{0 < |T| \le s} 
    \sum_{\substack{L, U: L \cap U = \emptyset, L \cup U = T \\
    0 < |L| + |U| \leq s }}
    \prod_{j \in U^c} \ind(a_j = -\infty)
    \cdot \prod_{j \in U} \ind(a_j = +\infty) 
    \cdot \|\nu_{L, U}\|_{\text{TV}} \\
    &= \sum_{\substack{L, U: L \cap U = \emptyset \\
    0 < |L| + |U| \leq s }}
    \prod_{j \in U^c} \ind(a_j = -\infty)
    \cdot \prod_{j \in U} \ind(a_j = +\infty) 
    \cdot \|\nu_{L, U}\|_{\text{TV}}.
\end{aligned}
\end{equation*}

Summing the above identity over all 
$\abf \in \{-\infty, +\infty\}^d$ gives
\begin{equation*}
\begin{aligned}
    &\inf \Big\{\sum_{\abf \in \{-\infty, +\infty\}^d} 
    \hk_{\abf}(f_{\abf}): 
    \sum_{\abf \in \{-\infty, +\infty\}^d} f_{\abf} \equiv f,
    \ f_{\abf} \in \fstinf \ \forall \abf\Big\}
    \le \sum_{\abf \in \{-\infty, +\infty\}^d} 
    \hk_{\abf}(f_{\abf}) \\
    &\qquad = 
    \sum_{\substack{L, U: L \cap U = \emptyset \\
    0 < |L| + |U| \leq s }}
    \sum_{\abf \in \{-\infty, +\infty\}^d}
    \Big(\prod_{j \in U^c} \ind(a_j = -\infty)
    \cdot \prod_{j \in U} \ind(a_j = +\infty)\Big)
    \cdot \|\nu_{L, U}\|_{\text{TV}} 
    = \sum_{\substack{L, U: L \cap U = \emptyset \\
    0 < |L| + |U| \leq s }}
    \|\nu_{L, U}\|_{\text{TV}}.
\end{aligned}
\end{equation*}
Taking the infimum over all possible representations 
$\fcnu$ of $f$, we arrive at
\begin{equation*}
    \inf \Big\{\sum_{\abf \in \{-\infty, +\infty\}^d} 
    \hk_{\abf}(f_{\abf}): 
    \sum_{\abf \in \{-\infty, +\infty\}^d} f_{\abf} \equiv f,
    \ f_{\abf} \in \fstinf \ \forall \abf\Big\}
    \le \oldvinfxgb(f),
\end{equation*}
which completes the proof.
\end{proof}

\subsubsection{Proof of Lemma \ref{lem:zero-xgb-penalty}}
\label{pf:zero-xgb-penalty}

\begin{proof}[Proof of Lemma \ref{lem:zero-xgb-penalty}]
Suppose that $f \in \fst$ and that $f = \sum_{k = 1}^K f_k$, where 
each $f_k$ is a regression tree with right-continuous splits and 
depth at most $s$. Let $\wbf_k$ denote the leaf-weight vector of $f_k$.

Fix an integer $L \ge 1$. For each $k$, define the regression tree 
$g_{k, L} := (1/L) f_k$, obtained by scaling each leaf weight of $f_k$ 
by $1/L$ while keeping the same tree structure. Using these $g_{k, L}$, 
we can represent $f$ as a sum of $KL$ trees:
\begin{equation*}
    f = \sum_{k = 1}^K \sum_{l = 1}^{L} g_{k, L}.
\end{equation*}

For this representation, the sum of the $p^{\text{th}}$ powers of the 
leaf weights equals
\begin{equation*}
    \sum_{k = 1}^K \sum_{l = 1}^{L} \|(1/L) \cdot \wbf_{k}\|_p^p 
    = \frac{1}{L^{p - 1}} \sum_{k = 1}^K \|\wbf_{k}\|_p^p.
\end{equation*}
Because $p > 1$, this quantity converges to $0$ as $L \to \infty$.  
This proves that $\vxgb(f; p) = 0$.
\end{proof}

\subsection{Proofs of Lemmas in Appendix \ref{pf:minimax-rate}}

\subsubsection{Proof of Lemma \ref{lem:bracketing-integral-bound}}
\label{pf:bracketing-integral-bound}
Lemma \ref{lem:bracketing-integral-bound} is a corollary of the following 
more general result involving Bernstein norm. For a random variable $X$ 
with law $P$ on $\mathcal{X}$ and a function $f: \mathcal{X} \to \R$, 
the Bernstein norm of $f$ is defined by 
\begin{equation*}
    \|f\|_{P, B} = \Big(2 \E_P\big[\exp(|f(X)|) - 1 - |f(X)|\big]\Big)^{1/2}.
\end{equation*}
Although $\|\cdot\|_{P, B}$ does not satisfy homogeneity or the triangle 
inequality and therefore is not actually a norm, it is conventionally called a 
norm and can still be used for measuring the ``size" of functions.

\begin{lemma}[Lemma 3.4.3 of \citet{vaartwellner96book}]
\label{lem:entropy-integral-bound-Bernstein}
    Suppose $\xbf^{(1)}, \dots, \xbf^{(k)}$ are i.i.d. with law $P$ 
    on $\mathcal{X}$ and $\F$ is a countable collection of functions from 
    $\mathcal{X}$ to $\R$ where $\|f\|_{P, B} \le \delta$ for all $f \in \F$. 
    Then, there exists a constant $C > 0$ such that
    \begin{equation*}
        \E_P\Big[\sup_{f \in \F} \Big| \frac{1}{\sqrt{k}} 
        \sum_{i = 1}^{k} f(\xbf^{(i)})\Big|\Big] 
        \le C J_{[ \ ]}(\delta, \F, \| \cdot \|_{P, B}) 
        \cdot \Big(1 + \frac{J_{[ \ ]}(\delta, \F, 
        \| \cdot \|_{P, B})}{\delta^2 \sqrt{k}}\Big).
    \end{equation*}
\end{lemma}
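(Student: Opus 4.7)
The plan is to deduce the result from Lemma \ref{lem:entropy-integral-bound-Bernstein} via a product-space reduction together with a rescaling by $D$.

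First, I would absorb the Rademacher signs into an empirical process over an enlarged space. Let $Q$ denote the product of the uniform law on $\{-1, +1\}$ with the law $P_0$ of density $p_0$, and for each $f \in \F$ set $\tilde f(\epsilon, \xbf) := \epsilon f(\xbf)$. Then $\E_Q \tilde f = 0$, the identity $(1/\sqrt{k}) \sum_i \epsilon_i f(\xbf^{(i)}) = (1/\sqrt{k}) \sum_i \tilde f(\epsilon_i, \xbf^{(i)})$ holds, and the transferred class $\widetilde \F := \{\tilde f : f \in \F\}$ satisfies $\|\tilde f\|_{Q, 2} = \|f\|_{p_0, 2} \le t$ and $\|\tilde f\|_\infty \le D$, so Lemma \ref{lem:entropy-integral-bound-Bernstein} is in principle available.

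The obstacle to applying that lemma directly is that Bernstein and $L^2$ norms differ by a factor exponential in $D$ for functions bounded by $D$, whereas the target inequality depends only linearly on $D$. I would bypass this by first rescaling to envelope $1$: set $\widehat \F := \{\tilde f / D : f \in \F\}$. On $[-1, 1]$ the elementary inequality $e^{|y|} - 1 - |y| \le (e - 2) y^2$ yields $\|g\|_{Q, B} \le \sqrt{2(e - 2)} \, \|g\|_{Q, 2}$ for every $g$ with $\|g\|_\infty \le 1$, so Bernstein and $L^2$ norms on $\widehat \F$ are comparable up to a universal constant; in particular $\|\hat f\|_{Q, B} \le C t / D$ for all $\hat f \in \widehat \F$.

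Next I would transfer brackets. Given an $L^2(p_0)$-bracket $[l, u]$ for $\F$ of diameter $\eta$, truncated without loss of generality so that $|l|, |u| \le D$, I would form the $\widehat \F$-bracket $[\tilde L, \tilde U]$ with $\tilde L(\epsilon, \xbf) := D^{-1}(l(\xbf) \ind(\epsilon = 1) - u(\xbf) \ind(\epsilon = -1))$ and $\tilde U$ analogously. Then $\tilde U - \tilde L \equiv (u - l)/D$ pointwise, so this bracket has $L^2(Q)$-diameter $\eta/D$ and, by the previous paragraph, Bernstein $Q$-diameter at most $C \eta/D$. A change of variable in the entropy integral therefore yields $J_{[ \ ]}(Ct/D, \widehat \F, \|\cdot\|_{Q, B}) \le (C/D) J_{[ \ ]}(t, \F, \|\cdot\|_{p_0, 2})$. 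Writing $J := J_{[ \ ]}(t, \F, \|\cdot\|_{p_0, 2})$ and invoking Lemma \ref{lem:entropy-integral-bound-Bernstein} for $\widehat \F$ under $Q$ with $\delta := C t / D$ delivers
\begin{equation*}
    \E \sup_{\hat f \in \widehat \F} \Big| \frac{1}{\sqrt{k}} \sum_{i = 1}^{k} \hat f(\epsilon_i, \xbf^{(i)}) \Big| \le \frac{C J}{D} \Big( 1 + \frac{(J/D)}{(t/D)^2 \sqrt{k}} \Big) = \frac{C J}{D} \Big( 1 + D \cdot \frac{J}{t^2 \sqrt{k}} \Big).
\end{equation*}
Since $\hat f(\epsilon_i, \xbf^{(i)}) = \epsilon_i f(\xbf^{(i)}) / D$, multiplying through by $D$ recovers exactly the claimed bound.

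The main delicate point is the norm comparison in the second paragraph: the rescaling factor $1/D$ must be chosen precisely so that $\|\cdot\|_{Q, B}$ is controlled by $\|\cdot\|_{Q, 2}$ up to a universal constant, and one must then track carefully how this scaling propagates through the entropy integral and the final multiplication by $D$ in order to obtain the desired linear (rather than exponential) dependence on $D$ in the envelope term.
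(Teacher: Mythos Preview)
Your proposal does not prove the stated lemma. The statement you were asked to prove is Lemma~\ref{lem:entropy-integral-bound-Bernstein}, the Bernstein-norm entropy-integral bound quoted from \citet[Lemma 3.4.3]{vaartwellner96book}. But your opening sentence invokes that same lemma as an input (``deduce the result from Lemma~\ref{lem:entropy-integral-bound-Bernstein}''), which is circular. What your argument actually establishes is Lemma~\ref{lem:bracketing-integral-bound}: the Rademacher-multiplier bound with uniform envelope $D$ and $L^2(p_0)$ bracketing, concluding with $C J\,(1 + D J/(t^2\sqrt{k}))$. That is a different statement with different hypotheses (Rademacher multipliers, $\|f\|_\infty\le D$, $\|f\|_{p_0,2}\le t$) and a different conclusion.

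The paper does not prove Lemma~\ref{lem:entropy-integral-bound-Bernstein} at all; it is imported from the literature as a black box. A self-contained proof would require the Bernstein-bracket chaining argument of \citet{vaartwellner96book}, which your proposal does not address.

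For what it is worth, viewed as a proof of Lemma~\ref{lem:bracketing-integral-bound} your argument is correct and essentially identical to the paper's own proof in Appendix~\ref{pf:bracketing-integral-bound}: embed the Rademacher signs into a product space, rescale by the envelope so that $e^{|y|}-1-|y|\le (e-2)y^2$ on $[-1,1]$ forces the Bernstein norm to be controlled by the $L^2$ norm up to a universal constant, transfer $L^2(p_0)$ brackets to Bernstein brackets via the sign-split construction $[\,l\ind(\epsilon=1)-u\ind(\epsilon=-1),\;u\ind(\epsilon=1)-l\ind(\epsilon=-1)\,]$, apply Lemma~\ref{lem:entropy-integral-bound-Bernstein}, and undo the scaling. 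The only cosmetic difference is that the paper divides by $2D$ rather than $D$.
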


\begin{proof}[Proof of Lemma \ref{lem:bracketing-integral-bound}]
Let $P$ be the law of $(\xbf^{(i)}, \epsilon_i)$ on 
$\R^d \times \{-1, 1\}$ and let $\G$ denote the collection of all functions 
$\Phi_f$ on $\R^d \times \{-1, 1\}$, one for each $f \in \F$, defined by 
\begin{equation*}
    \Phi_f(\xbf, \epsilon) = \frac{\epsilon f(\xbf)}{2D}
    \quad \text{for } (\xbf, \epsilon) \in \R^d \times \{-1, 1\}.
\end{equation*}
For every $\Phi_f \in \G$, we have
\begin{equation*}
\begin{aligned}
    \|\Phi_f\|_{P, B} 
    &= \bigg(2\E_{(\xbf, \epsilon) \sim P} 
    \Big[\exp\Big(\Big|\frac{\epsilon f(\xbf)}{2D}\Big|\Big)
    - 1 - \Big|\frac{\epsilon f(\xbf)}{2D}\Big|\Big]\bigg)^{1/2}
    = \bigg(2 \sum_{m = 2}^{\infty} \frac{1}{m!} \cdot 
    \E_{\xbf \sim p_0} 
    \Big[\Big|\frac{f(\xbf)}{2D}\Big|^m \Big]\bigg)^{1/2} \\
    &\le \bigg(2 \sum_{m = 2}^{\infty} \frac{1}{m!} \cdot 
    \E_{\xbf \sim p_0} 
    \Big[\Big|\frac{f(\xbf)}{2D}\Big|^2 \Big]\bigg)^{1/2} 
    \le \frac{(e - 2)^{1/2} t}{2^{1/2}D} 
    := \frac{a t}{2D},
\end{aligned}
\end{equation*}
where the first inequality uses the fact that $\|f\|_{\infty} \le D$, 
and the second inequality follows from the fact that $\|f\|_{p_0, 2} \le t$.
Applying Lemma \ref{lem:entropy-integral-bound-Bernstein} with $\G$ and 
$\delta = a t/2D$, we obtain
\begin{equation}\label{eq:bracketing-integral-bound-Bernstein-G}
\begin{aligned}
    \E\Big[\sup_{f \in \F} \Big|\frac{1}{\sqrt{k}} 
    \sum_{i = 1}^{k} \epsilon_i f(\xbf^{(i)})\Big|\Big] 
    &= 2D \cdot \E\Big[\sup_{\Phi_f \in \G} \Big|\frac{1}{\sqrt{k}} 
    \sum_{i = 1}^{k} \Phi_f(\xbf^{(i)}, \epsilon_i)\Big|\Big] \\
    &\le 2D \cdot C J_{[ \ ]}\Big(\frac{a t}{2D}, \G, \| \cdot \|_{P, B}\Big) 
    \cdot \Big(1 +  
    \frac{J_{[ \ ]}(\frac{a t}{2D}, \G, 
    \| \cdot \|_{P, B})}{(\frac{a t}{2D})^2 \sqrt{k}}\Big).
\end{aligned}
\end{equation}

Next, we relate the bracketing entropy integral of $\G$ in the Bernstein norm 
to that of $\F$ in the $\|\cdot\|_{p_0, 2}$ norm.
Fix $\Phi_f \in \G$, and let $[f_1, f_2]$ be a bracket containing $f$. 
Since $\|f\|_{\infty} \le D$, 
by replacing $f_1$ with $\xbf \mapsto \max(\min(f_1(\xbf), D), -D)$ 
if necessary, we assume that $\|f_1\|_{\infty} \le D$.
Similarly, we assume that $\|f_2\|_{\infty} \le D$.
Define $\Phi_1,\Phi_2:\R^d \times \{-1, 1\} \to \R$ by
\begin{equation*}
    \Phi_1(\xbf, \epsilon) = f_1(\xbf) \cdot \ind(\epsilon = 1) 
    - f_2(\xbf) \cdot \ind(\epsilon = -1) 
    \quad \text{for } (\xbf, \epsilon) \in \R^d \times \{-1, 1\}
\end{equation*}
and
\begin{equation*}
    \Phi_2(\xbf, \epsilon) = f_2(\xbf) \cdot \ind(\epsilon = 1) 
    - f_1(\xbf) \cdot \ind(\epsilon = -1)
    \quad \text{for } (\xbf, \epsilon) \in \R^d \times \{-1, 1\}.
\end{equation*}
Clearly, the bracket $[\Phi_1, \Phi_2]$ contains $\Phi_f$.
Moreover, since $\|f_2 - f_1\|_{\infty} \le 2D$, by the same argument as
above, we obtain
\begin{equation*}
    \|\Phi_2 - \Phi_1\|_{P, B} 
    = \bigg(2\E_{\xbf \sim p_0} 
    \Big[\exp\Big(\Big|\frac{(f_2 - f_1)(\xbf)}{2D}\Big|\Big)
    - 1 - \Big|\frac{(f_2 - f_1)(\xbf)}{2D}\Big|\Big]\bigg)^{1/2} 
    \le \frac{a}{2D} \cdot \|f_2 - f_1\|_{p_0, 2}.
\end{equation*}
It follows that
\begin{equation*}
    N_{[ \ ]}\Big(\frac{a\epsilon}{2D}, \G, \| \cdot \|_{P, B}\Big) 
    \le N_{[ \ ]}(\epsilon, \F, \| \cdot \|_{p_0, 2}) 
    \quad \text{for } \epsilon > 0.
\end{equation*}
Hence, 
\begin{equation*}
\begin{aligned}
    J_{[ \ ]}\Big(\frac{a t}{2D}, \G, \| \cdot \|_{P, B}\Big) 
    &= \int_{0}^{at/2D} \sqrt{1 + N_{[ \ ]}(\epsilon, \G, 
    \| \cdot \|_{P, B})} \, d\epsilon 
    = \frac{a}{2D} \int_{0}^{t} \sqrt{1 
    + N_{[ \ ]}\Big(\frac{a\epsilon}{2D}, 
    \G, \| \cdot \|_{P, B}\Big)} \, d\epsilon \\
    &\le \frac{a}{2D} \cdot J_{[ \ ]}(t, \F, \| \cdot \|_{p_0, 2}).
\end{aligned}
\end{equation*}
Substituting this bound into \eqref{eq:bracketing-integral-bound-Bernstein-G} 
completes the proof.
\end{proof}

\subsubsection{Proof of Lemma \ref{lem:reduction-to-compact-domain}}
\label{pf:reduction-to-compact-domain}
We use the following lemma, which ensures that the supports of the 
signed measures can be restricted to 
$\prod_{j \in L} (-M_j/2, M_j/2] \times \prod_{j \in U} (-M_j/2, M_j/2]$ 
without changing the function on $\prod_{j = 1}^{d} [-M_j/2, M_j/2]$. 
The proof is omitted, as it can be proved similarly to 
Lemma \ref{lem:reduction-to-discrete-measures}.
\begin{lemma}\label{lem:reduction-to-compact-domain-2}
    For every $\fcnu$, there 
    exists $f^{d, s}_{b, \{\mu_{L, U}\}}$ with 
    finite signed Borel measures $\mu_{L, U}$ 
    supported on $\prod_{j \in L} (-M_j/2, M_j/2] 
    \times \prod_{j \in U} (-M_j/2, M_j/2]$ such that
    \begin{enumerate}[label = (\alph*)]
        \item $f^{d, s}_{b, \{\mu_{L, U}\}}(\cdot) 
        = \fcnu(\cdot)$ 
        on $\prod_{j = 1}^{d} [-M_j/2, M_j/2]$
        \item 
        \begin{equation*}
            \sum_{0 < |L| + |U| \le s}  
            \|\mu_{L, U}\|_{\text{TV}} 
            \le \sum_{0 < |L| + |U| \le s}  
            \|\nu_{L, U}\|_{\text{TV}}.
        \end{equation*}
    \end{enumerate}
\end{lemma}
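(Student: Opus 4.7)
The plan is to construct the desired measures $\mu_{L, U}$ by partitioning the original measures $\nu_{L, U}$ according to where their thresholds lie relative to the compact rectangle, and then absorbing ``trivial'' indicator factors as lower-order terms or constants. For each pair $(L, U)$ with $0 < |L| + |U| \le s$, I would partition $\R^{|L| + |U|}$ into product pieces indexed by a three-way classification of each threshold. For $j \in L$, the value $l_j$ falls into $(-\infty, -M_j/2]$ (giving $\ind(x_j \ge l_j) \equiv 1$ on $[-M_j/2, M_j/2]$), $(-M_j/2, M_j/2]$ (nontrivial), or $(M_j/2, +\infty)$ (giving $\ind(x_j \ge l_j) \equiv 0$). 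Symmetrically, for $j \in U$, the value $u_j$ falls into $(M_j/2, +\infty)$ (giving $\ind(x_j < u_j) \equiv 1$), $(-M_j/2, M_j/2]$, or $(-\infty, -M_j/2]$ (giving $\ind(x_j < u_j) \equiv 0$).

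Next, I would discard every piece containing any coordinate in its ``identically zero'' class, since such pieces contribute nothing to $\fcnu(\xbf)$ for $\xbf \in \prod_{j = 1}^{d} [-M_j/2, M_j/2]$. For each surviving piece, the remaining trivial indicators are identically $1$ on the rectangle, so the contribution reduces to an integral over the reduced sets $L' \subseteq L$ and $U' \subseteq U$ consisting of the ``interior'' coordinates, with integrating measure given by the pushforward onto those coordinates of $\nu_{L, U}$ restricted to the piece. I would then define each $\mu_{L', U'}$ to be the sum, over all $(L, U)$ and over all surviving pieces whose reduced sets equal $(L', U')$, of these pushforward measures, and absorb any contribution with $(L', U') = (\emptyset, \emptyset)$ into a new constant $b$. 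By construction, $\mu_{L', U'}$ is supported on $\prod_{j \in L'} (-M_j/2, M_j/2] \times \prod_{j \in U'} (-M_j/2, M_j/2]$, and the resulting $f^{d, s}_{b, \{\mu_{L, U}\}}$ agrees with $\fcnu$ on $\prod_{j = 1}^{d} [-M_j/2, M_j/2]$, giving condition (a).

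The total-variation bound in condition (b) will follow from the disjointness of the pieces within each $\nu_{L, U}$: the total variations of the restrictions sum to $\|\nu_{L, U}\|_{\text{TV}}$, and pushforward is TV-nonincreasing, so the subadditivity of $\|\cdot\|_{\text{TV}}$ under summation of signed measures yields $\sum_{L', U'} \|\mu_{L', U'}\|_{\text{TV}} \le \sum_{L, U} \|\nu_{L, U}\|_{\text{TV}}$. The only obstacle I foresee is purely bookkeeping: ensuring that the partition of $\R^{|L| + |U|}$ into at most $3^{|L| + |U|}$ product pieces is described cleanly, that each $\mu_{L', U'}$ is well-defined as a finite sum of pushforward measures, and that no piece is counted more than once when consolidating contributions from different $(L, U)$ with $L \supseteq L'$ and $U \supseteq U'$. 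Once this accounting is in place, both conditions follow immediately.
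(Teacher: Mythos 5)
Your proposal is correct and follows essentially the same route as the paper: the proof is omitted there with a pointer to the argument for Lemma \ref{lem:reduction-to-discrete-measures}, which uses exactly your strategy of classifying each threshold coordinate as trivially one, trivially zero, or interior, discarding the zero pieces, collapsing the trivially-one coordinates by pushing the restricted measures forward onto the reduced index sets $(L', U')$ (with the all-trivial case absorbed into the constant $b$), and deducing the total-variation bound from the disjointness of the pieces together with the TV-nonincreasing property of pushforwards. The only difference is cosmetic: here no discretization onto a lattice is needed, so the interior pieces are kept intact rather than mapped to midpoints.
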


\begin{proof}[Proof of Lemma \ref{lem:reduction-to-compact-domain}]
Suppose $f \in \fstinf$ with $\vinfxgb(f) < V$. By the definition of
the complexity measure $\vinfxgb(\cdot)$, there exists 
$\fcnu \in \fstinf$ such that $\fcnu \equiv f$ and 
\begin{equation*}
    \sum_{0 < |L| + |U| \le s}  
    \|\nu_{L, U}\|_{\text{TV}} \le V.
\end{equation*}
Lemma \ref{lem:reduction-to-compact-domain-2} also guarantees the 
existence of $f^{d, s}_{b, \{\mu_{L, U}\}}$ with finite signed Borel measures 
$\mu_{L, U}$ supported on $\prod_{j \in L} (-M_j/2, M_j/2] 
\times \prod_{j \in U} (-M_j/2, M_j/2]$ satisfying conditions (a) and (b) 
of the lemma.
By condition (a), $f^{d, s}_{b, \{\mu_{L, U}\}}(\cdot) 
= \fcnu(\cdot) = f(\cdot)$ on 
$\prod_{j = 1}^{d} [-M_j/2, M_j/2]$.
Moreover, by condition (b), 
\begin{equation*}
    \sum_{0 < |L| + |U| \le s}  
    \|\mu_{L, U}\|_{\text{TV}} \le 
    \sum_{0 < |L| + |U| \le s}  
    \|\nu_{L, U}\|_{\text{TV}} \le V.
\end{equation*}
Hence, $f^{d, s}_{b, \{\mu_{L, U}\}}$ is a desired function satisfying 
the conditions of Lemma \ref{lem:reduction-to-compact-domain}.
\end{proof}

\subsubsection{Proof of Lemma \ref{lem:BVt-bracketing-integral-bound}}
\label{pf:BVt-bracketing-integral-bound}
\begin{proof}[Proof of Lemma \ref{lem:BVt-bracketing-integral-bound}]
We use the bracketing entropy bound for $B(V, t)$ established in 
Lemma \ref{lem:bracketing-entropy-bound}, which gives
\begin{equation*}
\begin{aligned}
    &J_{[ \ ]}(t, B(V, t), \| \cdot \|_{p_0, 2})
    = \int_{0}^{t} \sqrt{1 
    + \log N_{[ \ ]}(\epsilon, B(V, t), \| \cdot \|_{p_0, 2})} \, d\epsilon \\
    &\qquad \le t + \int_{0}^{t} \Big[\log\Big(2 
    + \frac{C_s(V + t)}{\epsilon}\Big)\Big]^{1/2} \, d\epsilon
    + C_{B, s} d^{\widebar{s}}(1 + \log d)^{\widebar{s} - 1} 
    \int_{0}^{t} \Big(2 + \frac{V}{\epsilon}\Big)^{1/2}
    \Big[\log\Big(2 + \frac{V}{\epsilon}\Big)\Big]^{\widebar{s} - 1} \, d\epsilon.
\end{aligned}
\end{equation*}
To handle the integrals on the right-hand side, we use the following lemma, 
which is a straightforward consequence of integration by parts (see, e.g., 
\citet[Lemma 11.7]{ki2024mars}).
\begin{lemma}\label{lem:integration-by-parts}
    For $u > t$, 
    \begin{equation*}
        \int_{0}^{t} \Big[\log\Big(\frac{u}{\epsilon}\Big)\Big]^{1/2} 
        \, d\epsilon
        = \frac{t}{2\sqrt{\tau}} \cdot (1 + 2\tau)
    \end{equation*} 
    and 
    \begin{equation*}
        \int_{0}^{t} \Big(\frac{u}{\epsilon}\Big)^{1/2} 
        \Big[\log\Big(\frac{u}{\epsilon}\Big)\Big]^k 
        \, d\epsilon
        \le C_k u^{1/2} t^{1/2} (1 + \tau^k),
    \end{equation*}
    where $\tau = \log(u/t)$ and $C_k$ is a constant depending on $k$.
\end{lemma}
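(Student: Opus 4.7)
The plan is to reduce each integral to a tail integral of the form $\int_a^\infty v^\alpha e^{-\beta v}\,dv$ via the substitution $\epsilon = u e^{-v}$, under which $d\epsilon = -u e^{-v}\,dv$ and the range $\epsilon \in (0,t]$ is sent to $v \in [\tau,\infty)$ with $\tau = \log(u/t)$. The first integral then becomes $u \int_\tau^\infty v^{1/2} e^{-v}\,dv$ and the second becomes $u \int_\tau^\infty v^k e^{-v/2}\,dv$. Both are standard incomplete-gamma-type tails, and the stated bounds will follow from one application of integration by parts together with the identities $u e^{-\tau} = t$ and $u e^{-\tau/2} = u^{1/2} t^{1/2}$.

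For the first integral, I would apply integration by parts with $z = v^{1/2}$ and $dw = e^{-v}\,dv$, obtaining
\[
u \int_\tau^\infty v^{1/2} e^{-v}\,dv \;=\; u \tau^{1/2} e^{-\tau} + \frac{u}{2} \int_\tau^\infty v^{-1/2} e^{-v}\,dv.
\]
Since $v^{-1/2} \le \tau^{-1/2}$ on $[\tau,\infty)$, the remaining tail integral is bounded above by $\tau^{-1/2} e^{-\tau}$. Combining these estimates and substituting $u e^{-\tau} = t$ gives exactly $\tfrac{t}{2\sqrt{\tau}}(1 + 2\tau)$, matching the first claim of the lemma.

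For the second integral, I would first rescale with $w = v/2$, producing
\[
u \int_\tau^\infty v^k e^{-v/2}\,dv \;=\; 2^{k+1} u \int_{\tau/2}^\infty w^k e^{-w}\,dw.
\]
The tail estimate $\int_a^\infty w^k e^{-w}\,dw \le C_k(1 + a^k) e^{-a}$ follows by induction on $k$: the base case $k=0$ is trivial, and the recursion $\int_a^\infty w^k e^{-w}\,dw = a^k e^{-a} + k \int_a^\infty w^{k-1} e^{-w}\,dw$ from integration by parts feeds the induction. Applying this with $a = \tau/2$ and converting $u e^{-\tau/2} = u^{1/2} t^{1/2}$ yields a bound $2^{k+1} C_k \cdot u^{1/2} t^{1/2} (1 + (\tau/2)^k)$, which after absorbing numerical factors into a new constant $C_k$ gives the stated inequality.

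There is essentially no obstacle in this proof; the only care is bookkeeping of constants during the induction and ensuring that the boundary term at $\epsilon \to 0^+$ in the integration-by-parts argument for the first integral vanishes (which follows from $\epsilon \cdot [\log(u/\epsilon)]^{1/2} \to 0$). Since the paper cites \citet[Lemma 11.7]{ki2024mars} for this result, the full argument is routine and the proposal above can be compressed to a few lines if needed.
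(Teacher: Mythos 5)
Your argument is correct and is essentially the paper's intended route: the paper offers no proof beyond citing integration by parts, and your substitution $\epsilon = ue^{-v}$ followed by integration by parts on the resulting incomplete-gamma tails is exactly that computation. One caveat: for the first integral your derivation yields only the \emph{upper bound} $\tfrac{t}{2\sqrt{\tau}}(1+2\tau)$ (you bound $\int_\tau^\infty v^{-1/2}e^{-v}\,dv$ by $\tau^{-1/2}e^{-\tau}$), not the equality you claim to have "exactly" matched -- and indeed the equality as stated in the lemma is false (e.g.\ $u=e$, $t=1$ gives $e\,\Gamma(3/2,1)\approx 1.38 \neq 3/2$), so the "$=$" should read "$\le$"; since only the upper bound is used downstream, your proof establishes everything that is actually needed.
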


By the first inequality in Lemma \ref{lem:integration-by-parts}, 
\begin{equation*}
\begin{aligned}
    &\int_{0}^{t} \Big[\log\Big(2 
    + \frac{C_s(V + t)}{\epsilon}\Big)\Big]^{1/2} \, d\epsilon 
    \le \int_{0}^{t} \Big[\log\Big( 
    \frac{2t + C_s(V + t)}{\epsilon}\Big)\Big]^{1/2} \, d\epsilon \\ 
    &\qquad \quad \le Ct \Big[1 + 2\log\Big(\frac{2t + C_s(V + t)}{t}\Big)\Big]
    \le C_s t \log\Big(2 + \frac{V}{t}\Big).
\end{aligned}
\end{equation*}
Also, by the second inequality in Lemma \ref{lem:integration-by-parts} and the 
inequality $(x + y)^{1/2} \le x^{1/2} + y^{1/2}$, we have 
\begin{equation*}
\begin{aligned}
    &\int_{0}^{t} \Big(2 + \frac{V}{\epsilon}\Big)^{1/2}
    \Big[\log\Big(2 + \frac{V}{\epsilon}\Big)\Big]^{\widebar{s} - 1} \, d\epsilon 
    \le \int_{0}^{t} \Big(\frac{2t + V}{\epsilon}\Big)^{1/2}
    \Big[\log\Big(\frac{2t + V}{\epsilon}\Big)\Big]^{\widebar{s} - 1} \, d\epsilon \\
    &\qquad \le C_s (2t + V)^{1/2}t^{1/2} 
    \bigg(1 + \Big[\log\Big(2 + \frac{V}{t}\Big)\Big]^{\widebar{s} - 1}\bigg) 
    \le C_s t \Big[\log\Big(2 + \frac{V}{t}\Big)\Big]^{\widebar{s} - 1} 
    + C_s V^{1/2}t^{1/2} \Big[\log\Big(2 + \frac{V}{t}\Big)\Big]^{\widebar{s} - 1}. 
\end{aligned}
\end{equation*}
Combining these bounds yields
\begin{equation*}
\begin{aligned}
    &J_{[ \ ]}(t, B(V, t), \| \cdot \|_{p_0, 2}) 
    \le C_s t \log\Big(2 + \frac{V}{t}\Big)
    + C_{B, s} d^{\widebar{s}}(1 + \log d)^{\widebar{s} - 1} 
    t \Big[\log\Big(2 + \frac{V}{t}\Big)\Big]^{\widebar{s} - 1} \\
    &\qquad \qquad \qquad \qquad \qquad \quad
    + C_{B, s} d^{\widebar{s}}(1 + \log d)^{\widebar{s} - 1} 
    V^{1/2}t^{1/2} \Big[\log\Big(2 + \frac{V}{t}\Big)\Big]^{\widebar{s} - 1}.
\end{aligned}
\end{equation*}
If $t \le V$, 
\begin{equation*}
    t \Big[\log\Big(2 + \frac{V}{t}\Big)\Big]^{\widebar{s} - 1}
    \le V^{1/2} t^{1/2} 
    \Big[\log\Big(2 + \frac{V}{t}\Big)\Big]^{\widebar{s} - 1},
\end{equation*}
and otherwise,
\begin{equation*}
    t \Big[\log\Big(2 + \frac{V}{t}\Big)\Big]^{\widebar{s} - 1}
    \le C_s t \le C_s t \log\Big(2 + \frac{V}{t}\Big).
\end{equation*}
Hence, in both cases, we have
\begin{equation*}
    J_{[ \ ]}(t, B(V, t), \| \cdot \|_{p_0, 2})  
    \le C_{B, s} d^{\widebar{s}}(1 + \log d)^{\widebar{s} - 1} 
    \bigg(t \log\Big(2 + \frac{V}{t}\Big)
    + V^{1/2}t^{1/2} 
    \Big[\log\Big(2 + \frac{V}{t}\Big)\Big]^{\widebar{s} - 1}\bigg).
\end{equation*}
\end{proof}

\subsubsection{Proof of Lemma \ref{lem:f-eta-prop}}
\label{pf:f-eta-prop}
\begin{proof}[Proof of \eqref{f-eta-compl}]
By definition, for each $\etabf \in \{-1, 1\}^q$, we have
\begin{equation*}
\begin{aligned}
    &|\nu_{\etabf}|\bigg(\prod_{j = 1}^{\widebar{s}} 
    \Big(-\frac{M_j}{2}, \frac{M_j}{2}\Big)\bigg)
    = \frac{1}{M_1 \cdots M_{\widebar{s}}} 
    \cdot \frac{V}{\sqrt{|P_l|}} 
    \int_{\prod_{j = 1}^{\widebar{s}} (-M_j/2, M_j/2)} 
    \bigg|\sum_{\pbf \in P_l} 
    \sum_{\ibf \in I_{\pbf}} 
    \eta_{\pbf, \ibf} \prod_{j = 1}^{\widebar{s}} 
    \psi_{p_j, i_j} \Big(\frac{t_j}{M_j} 
    + \frac{1}{2}\Big) \bigg| \, d\tbf \\
    &\quad= \frac{V}{\sqrt{|P_l|}} 
    \int_{(0, 1)^{\widebar{s}}} \bigg| 
    \sum_{\pbf \in P_l} 
    \sum_{\ibf \in I_{\pbf}} 
    \eta_{\pbf, \ibf} 
    \prod_{j = 1}^{\widebar{s}} \psi_{p_j, i_j}(t_j) \bigg| \, d\tbf 
    \le \frac{V}{\sqrt{|P_l|}} \Bigg(\int_{(0, 1)^{\widebar{s}}} 
    \bigg(\sum_{\pbf \in P_l} 
    \sum_{\ibf \in I_{\pbf}} \eta_{\pbf, \ibf} 
    \prod_{j = 1}^{\widebar{s}} \psi_{p_j, i_j}(t_j) \bigg)^2 
    \, d\tbf \Bigg)^{1/2} \\
    &\quad= \frac{V}{\sqrt{|P_l|}} \Bigg(\int_{(0, 1)^{\widebar{s}}} 
    \sum_{\pbf \in P_l} \bigg(\sum_{\ibf \in I_{\pbf}} 
    \eta_{\pbf, \ibf} \prod_{j = 1}^{\widebar{s}} 
    \psi_{p_j, i_j}(t_j) \bigg)^2 \, d\tbf \Bigg)^{1/2} 
    = \frac{V}{\sqrt{|P_l|}} \Bigg(\sum_{\pbf \in P_l} 
    \sum_{\ibf \in I_{\pbf}} 
    \int_{(0, 1)^{\widebar{s}}} \bigg(\prod_{j = 1}^{\widebar{s}} 
    \psi_{p_j, i_j}(t_j)\bigg)^2 \, d\tbf \Bigg)^{1/2} \\
    &\quad= \frac{V}{\sqrt{|P_l|}} \bigg(\sum_{\pbf \in P_l} 
    \sum_{\ibf \in I_{\pbf}} 
    \prod_{j = 1}^{\widebar{s}} 
    \int_{0}^{1} (\psi_{p_j, i_j}(t_j))^2 \, dt_j \bigg)^{1/2} 
    = \frac{V}{\sqrt{|P_l|}} \bigg(\sum_{\pbf \in P_l} 
    \sum_{\ibf \in I_{\pbf}} 
    \prod_{j = 1}^{\widebar{s}} 2^{-p_j} \bigg)^{1/2} = V.
\end{aligned}
\end{equation*}
Here, the inequality is from Cauchy inequality, the third equality 
follows from the fact that 
\begin{equation*}
    \int_{0}^{1} \psi_{m,k}(x) \psi_{m', k'}(x) \, dx = 0
\end{equation*}
for distinct $m$ and $m'$, and the fourth equality is due to that 
$\psi_{m,k} \psi_{m, k'} \equiv 0$ provided $k \neq k'$. This proves that 
$\vinfxgb(f_{\etabf}) \le V$ for every $\etabf \in \{-1, 1\}^q$. 
\end{proof}

\begin{proof}[Proof of \eqref{f-eta-first-dist-prop}]
For an integer $m \ge 1$ and $k \in [2^m]$, let $\Psi_{m, k}$ be the 
real-valued function on $[0, 1]$ defined by  
\begin{equation*}
    \Psi_{m, k}(x) = \int_{0}^{x} \psi_{m, k}(t) \, dt.
\end{equation*}
It can be readily verified that
\begin{equation}\label{psi-properties}
\begin{aligned}
    &\text{(i) } \Psi_{m, k}(x) = 0 
    \ \text{ if } x \le (k-1)2^{-m} \text{ or } x \ge k2^{-m} \\
    &\text{(ii) } \Psi_{m, k}(x + 2^{-m - 1}) = -\Psi_{m, k}(x) 
    \ \text{ for } x \in [(k-1) 2^{-m}, (k - 1/2) 2^{-m}] \\
    &\text{(iii) } |\Psi_{m, k}(x)| \le 2^{-m-2} 
    \ \text{ for all } x \in [0, 1].
\end{aligned}
\end{equation}
Also, for every $(x_1, \dots, x_{\widebar{s}}) 
\in [0, 1]^{\widebar{s}}$, we have
\begin{align}\label{f-eta-alt-rep}
    &f_{\etabf}\Big(M_1 x_1 - \frac{M_1}{2}, \dots, 
    M_{\widebar{s}} x_{\widebar{s}} - \frac{M_{\widebar{s}}}{2}\Big) 
    \nonumber \\
    &\quad = \frac{1}{M_1 \cdots M_{\widebar{s}}} 
    \cdot \frac{V}{\sqrt{|P_l|}} 
    \sum_{\pbf \in P_l} \sum_{\ibf \in I_{\pbf}} 
    \eta_{\pbf, \ibf} 
    \int_{\prod_{j = 1}^{\widebar{s}} (-M_j/2, M_j/2)} 
    \prod_{j = 1}^{\widebar{s}} \Big[
    \ind\Big(M_j x_j - \frac{M_j}{2} \ge t_j\Big) \cdot 
    \psi_{p_j, i_j}\Big(\frac{t_j}{M_j} + \frac{1}{2}\Big)\Big] 
    \, d\tbf \nonumber \\
    &\quad = \frac{V}{\sqrt{|P_l|}} \sum_{\pbf \in P_l} 
    \sum_{\ibf \in I_{\pbf}} \eta_{\pbf, \ibf} 
    \cdot \int_{(0, 1)^{\widebar{s}}} \prod_{j = 1}^{\widebar{s}} 
    \big[\ind(x_j \ge t_j) \cdot \psi_{p_j, i_j}(t_j)\big] 
    \, d\tbf \nonumber \\
    &\quad = \frac{V}{\sqrt{|P_l|}} \sum_{\pbf \in P_l} 
    \sum_{\ibf \in I_{\pbf}} 
    \eta_{\pbf, \ibf} \prod_{j = 1}^{\widebar{s}} 
    \int_{0}^{x_j} \psi_{p_j, i_j}(t_j) \, dt_j 
    = \frac{V}{\sqrt{|P_l|}} \sum_{\pbf \in P_l} 
    \sum_{\ibf \in I_{\pbf}} 
    \eta_{\pbf, \ibf} 
    \cdot \prod_{j = 1}^{\widebar{s}} \Psi_{p_j, i_j}(x_j).
\end{align}

We prove \eqref{f-eta-first-dist-prop} using equation 
\eqref{f-eta-alt-rep}. Assume that we are given 
$\etabf, \etabf' \in \{-1, 1\}^q$ with $H(\etabf, \etabf') = 1$ and 
that $(\pbf, \ibf)$ is a unique element in $Q$ for which 
$\eta_{\pbf, \ibf} \neq \eta'_{\pbf, \ibf}$. 
We then have
\begin{equation*}
    (f_{\etabf} - f_{\etabf'})\Big(M_1 x_1 - \frac{M_1}{2}, \dots, 
    M_{\widebar{s}} x_{\widebar{s}} - \frac{M_{\widebar{s}}}{2}\Big) 
    = \frac{V}{\sqrt{|P_l|}} \cdot
    (\eta_{\pbf, \ibf} - \eta'_{\pbf, \ibf}) 
    \prod_{j=1}^{\widebar{s}} \Psi_{p_j, i_j} (x_j), 
\end{equation*}
from which it follows that 
\begin{equation*}
\begin{aligned}
    \| f_{\etabf} - f_{\etabf'}\|_{p_0, 2}^2 
    &\le \frac{B}{M_1 \cdots M_d} 
    \cdot \int_{\prod_{j = 1}^{d} [-M_j/2, M_j/2]} 
    \big((f_{\etabf} - f_{\etabf'})(x_1, \dots, x_{\widebar{s}})\big)^2 
    \, d\xbf \\
    &= B \cdot \int_{[0, 1]^{\widebar{s}}} 
    \Big((f_{\etabf} - f_{\etabf'})\Big(M_1 x_1 - \frac{M_1}{2}, \dots, 
    M_{\widebar{s}} x_{\widebar{s}} 
    - \frac{M_{\widebar{s}}}{2}\Big)\Big)^2 \, d\xbf \\
    &= \frac{4BV^2}{|P_l|} \cdot \prod_{j = 1}^{\widebar{s}} 
    \int_{0}^{1} \big( \Psi_{p_j, i_j} (x_j) \big)^2 \, dx_j 
    \le \frac{4 BV^2}{|P_l|} \cdot 
    \prod_{j = 1}^{\widebar{s}} 2^{-p_j} \cdot 2^{-2p_j - 4}
    = \frac{BV^2}{|P_l|} \cdot 2^{-3l - 4\widebar{s} + 2}. 
\end{aligned}
\end{equation*}
Recall that $B = M_1 \cdots M_d \cdot \sup_{\xbf} p_0(\xbf)$ 
for the first inequality.
\end{proof}

\begin{proof}[Proof of \eqref{f-eta-second-dist-prop}]    
Fix $\etabf \neq \etabf' \in \{-1, 1\}^q$.
For an integer $m \ge 1$ and $k \in [2^m]$, let $h_{m, k}$ be the 
real-valued function on $[0, 1]$ defined by 
\begin{equation*}
    h_{m, k}(x) = 
    \begin{cases}
        2^{m/2} &\mbox{if } (k - 1)2^{-m} < x
        < \big(k - 1/2\big) 2^{-m}, \\
        -2^{m/2} &\mbox{if } \big(k - 1/2\big)2^{-m} < x
        < k 2^{-m}, \\
        0 &\mbox{otherwise},
    \end{cases}
\end{equation*}
and, for each $(\pbf, \ibf) \in Q$, let 
$H_{\pbf, \ibf}$ be the real-valued function on 
$[0, 1]^{\widebar{s}}$ defined by 
\begin{equation*}
    H_{\pbf, \ibf}(x_1, \dots, x_{\widebar{s}}) 
    = \prod_{j = 1}^{\widebar{s}} h_{p_j, i_j}(x_j).
\end{equation*}
It can be readily checked that 
$\{H_{\pbf, \ibf}: (\pbf, \ibf) \in Q\}$ is an 
orthonormal set in $L^2([0, 1]^{\widebar{s}})$.
Consider the function $g_{\etabf, \etabf'}: [0, 1]^{\widebar{s}} 
\rightarrow \R$ 
defined by
\begin{equation*}
    g_{\etabf, \etabf'}(x_1, \dots, x_{\widebar{s}}) 
    = \frac{V}{\sqrt{|P_l|}} \sum_{\pbf \in P_l} 
    \sum_{\ibf \in I_{\pbf}} 
    \big(\eta_{\pbf, \ibf} 
    - \eta'_{\pbf, \ibf}\big) 
    \prod_{j=1}^{\widebar{s}} \Psi_{p_j, i_j} (x_j).
\end{equation*}
Since 
\begin{equation*}
    b = M_1 \cdots M_d \cdot \inf_{\xbf \in \prod_{j = 1}^d 
    [-M_j/2, M_j/2]} p_0(\xbf) > 0,
\end{equation*}
we have
\begin{equation*}
\begin{aligned} 
    \| f_{\etabf} - f_{\etabf'} \|_{p_0, 2}^2 
    &\ge \frac{b}{M_1 \cdots M_d} \cdot 
    \int_{\prod_{j = 1}^{d} [-M_j/2, M_j/2]} 
    \big((f_{\etabf} - f_{\etabf'})(x_1, \dots, 
    x_{\widebar{s}})\big)^2 \, d\xbf \\
    &= b \cdot \int_{[0, 1]^{\widebar{s}}} 
    \Big((f_{\etabf} - f_{\etabf'})\Big(M_1 x_1 
    - \frac{M_1}{2}, \dots, M_{\widebar{s}} x_{\widebar{s}} 
    - \frac{M_{\widebar{s}}}{2}\Big)\Big)^2 \, d\xbf
    = b \|g_{\etabf, \etabf'}\|_2^2, 
\end{aligned}
\end{equation*}
where $\|\cdot\|_2$ denotes the $L^2$ norm.
Recall \eqref{f-eta-alt-rep} in the proof of 
\eqref{f-eta-first-dist-prop} for the last equality. By Bessel's 
inequality, it thus follows that
\begin{equation}\label{bessel-inequality}
    \| f_{\etabf} - f_{\etabf'} \|_{p_0, 2}^2 
    \ge b \|g_{\etabf, \etabf'}\|_2^2 \ge 
    b\sum_{\pbf' \in P_l} \sum_{\ibf' \in I_{\pbf'}} 
    \inner{g_{\etabf, \etabf'}}{H_{\pbf', \ibf'}}^2,
\end{equation}
where $\inner{\cdot}{\cdot}$ denotes the $L^2$ inner product. 

Observe that for each $(\pbf', \ibf') \in Q$,  
\begin{equation}\label{bessel-inequality-application}
    \inner{g_{\etabf, \etabf'}}{H_{\pbf', \ibf'}} 
    = \frac{V}{\sqrt{|P_l|}} \cdot
    \sum_{\pbf \in P_l} \sum_{\ibf \in I_{\pbf}} 
    \big(\eta_{\pbf, \ibf} 
    - \eta'_{\pbf, \ibf}\big) 
    \prod_{j=1}^{\widebar{s}} \inner{\Psi_{p_j, i_j}}{h_{p'_j, i'_j}}. 
\end{equation}
We conclude the proof by showing that for 
$(\pbf, \ibf), (\pbf', \ibf') \in Q$, 
\begin{equation}\label{inner-product-claim}
    \prod_{j = 1}^{\widebar{s}} 
    \inner{\Psi_{p_j, i_j}}{h_{p'_j, i'_j}} =
    \begin{cases}
        2^{-3l/2 - 3\widebar{s}} &\mbox{if } 
        (\pbf, \ibf) = (\pbf', \ibf') \\
        0 &\mbox{otherwise}. 
    \end{cases}
\end{equation}
Once \eqref{inner-product-claim} is proved, by combining it with 
\eqref{bessel-inequality} and \eqref{bessel-inequality-application}, 
we can derive that 
\begin{equation*}
    \| f_{\etabf} - f_{\etabf'}\|_{p_0, 2}^2 
    \ge \frac{b V^2}{|P_l|} \cdot 
    2^{-3l - 6\widebar{s}} \sum_{\pbf \in P_l} 
    \sum_{\ibf \in I_{\pbf}} 
    (\eta_{\pbf, \ibf} - \eta'_{\pbf, \ibf})^2 
    = \frac{b V^2}{|P_l|} \cdot 2^{-3l - 6\widebar{s} + 2} 
    \cdot H(\etabf, \etabf'),
\end{equation*}
from which \eqref{f-eta-second-dist-prop} directly follows.
We first consider the case where 
$(\pbf, \ibf) \neq (\pbf', \ibf')$. 
If $\pbf \neq \mathbf{p'}$, then, 
since $\sum_{j = 1}^{\widebar{s}} p_j = l 
= \sum_{j = 1}^{\widebar{s}} p'_j$, there exists $j \in [\widebar{s}]$ 
such that $p_j > p'_j$. In this case, $h_{p'_j, i'_j}$ is constant 
on $((i_j - 1)2^{-p_j}, i_j 2^{-p_j})$, and hence, 
\eqref{psi-properties} implies that 
$\inner{\Psi_{p_j, i_j}}{h_{p'_j, i'_j}} = 0$. If 
$\pbf = \mathbf{p'}$, then $i$ and $i'$ must be distinct, and 
thus, there exists $j \in [\widebar{s}]$ such that $i_j \neq i'_j$. In 
this case, $\Psi_{p_j, i_j}(x) \cdot h_{p'_j, i'_j}(x) = 0$ for all 
$x \in [0, 1]$, and clearly, 
$\inner{\Psi_{p_j, i_j}}{h_{p'_j, i'_j}} = 0$. For the case where 
$(\pbf, \ibf) = (\pbf', \ibf')$, 
\eqref{inner-product-claim} follows from the fact that
\begin{equation*}
    \inner{\Psi_{p_j, i_j}}{h_{p_j, i_j}} 
    = \int_{(i_j - 1)2^{-p_j}}^{i_j2^{-p_j}} \Psi_{p_j, i_j}(x) \cdot
    h_{p_j, i_j}(x) \, dx 
    = 2^{-3p_j/2 - 3}
\end{equation*}
for each $j \in [\widebar{s}]$.
\end{proof}

\end{document}